\newtheorem{theorem}{Theorem}[section]
\newtheorem{proposition}[theorem]{Proposition}
\newtheorem{corollary}[theorem]{Corollary}
\newtheorem{lemma}[theorem]{Lemma}
\newtheorem{question}[theorem]{Question}
\newtheorem*{theorem*}{Theorem}
\theoremstyle{definition}
\newtheorem{example}[theorem]{Example}
\newtheorem{construction}[theorem]{Construction}
\theoremstyle{remark}
\newtheorem*{remark}{Remark}
\numberwithin{equation}{section}
\newcommand{\Ext}{\mathop{\mathrm{Ext}}\nolimits}
\newcommand{\Hom}{\mathop{\mathrm{Hom}}\nolimits}
\def\id{\mathop{\mathrm{id}}}
\def\Im{\mathop{\rm Im}}
\newcommand{\Ker}{\mathop{\rm Ker}}
\newcommand{\rank}{\mathop{\mathrm{rank}}}
\def\C{\mathbb C}
\def\Q{\mathbb Q}
\def\R{\mathbb R}
\def\Z{\mathbb Z}
\newcommand{\OSU}{\varOmega^{SU}}
\newcommand{\OU}{\varOmega^U}
\def\MU{\mathit{MU}}
\def\MSU{\mathit{MSU}}
\def\BU{\mathit{BU}}
\def\BSU{\mathit{BSU}}
\def\SU{\mathit{SU}}
\def\cf{c^{\scriptscriptstyle U}}
\def\du{d^{\scriptscriptstyle U}}
\def\bdu{\overline{d}\phantom{d}\!\!\!^{\scriptscriptstyle U}}
\def\ge{\geqslant}
\def\le{\leqslant}
\newcommand{\llra}{\relbar\joinrel\hspace{-1pt}\longrightarrow}
\newcommand{\lllra}{\relbar\joinrel\hspace{-1pt}\llra}
\newcommand{\bin}[2]{{\textstyle\binom{#1}{#2}}} 
\newcommand{\mb}[1]{{\textbf {\textit#1}}}
\def\pd{ p_{{}_{\!\varDelta}} }
\newcommand{\cs}{\mathbin{\#}}
\def\pt{\mathit{pt}}
\begin{document}

\title[$SU$-bordism]{$SU$-bordism: structure results and geometric representatives}

\author{Georgy Chernykh}
\address{Department of Mathematics and Mechanics, Moscow
State University, Leninskie Gory, 119991 Moscow, Russia}
\email{aaa057721@gmail.com}

\author{Ivan Limonchenko}
\address{National Research University ``Higher School of Economics'', Moscow, Russia}
\email{iylim@mail.ru}

\author{Taras Panov}
\address{Department of Mathematics and Mechanics, Moscow
State University, Leninskie Gory, 119991 Moscow, Russia}
\address{Institute for Information Transmission Problems of the Russian Academy of Sciences, Moscow}
\address{Institute of Theoretical and Experimental Physics, Moscow}
\email{tpanov@mech.math.msu.su}

\thanks{The first and third authors were partially supported by the Russian Foundation for Basic Research (grants no.~17-01-00671, 18-51-50005).
The research of the second author was carried out within the HSE University Basic Research Program 
and funded by the Russian Academic Excellence Project '5-100'.
The third author was also supported by the Simons Foundation.}

\subjclass[2010]{Primary 55N22, 57R77; Secondary 55T15, 14M25, 14J32}

\keywords{Special unitary bordism, SU-manifold, Chern number, toric variety, quasitoric manifold,  Calabi--Yau manifold}

\begin{abstract}
In the first part of this survey we give a modernised exposition of the structure of the special unitary bordism ring, by combining the classical geometric methods of Conner--Floyd, Wall and Stong with the Adams--Novikov spectral sequence and formal group law techniques that emerged after the fundamental 1967 work of Novikov. In the second part we use toric topology to describe geometric representatives in $SU$-bordism classes, including toric, quasitoric and Calabi--Yau manifolds.
\end{abstract}

\dedicatory{Dedicated to Sergei Petrovich Novikov on the occasion of his 80th birthday}

\maketitle

\tableofcontents

\section*{Introduction}
$SU$-bordism is the bordism theory of smooth manifolds with a special unitary structure in the stable tangent bundle. Geometrically, an $SU$-structure on a manifold $M$ is defined by a reduction of the structure group of the stable tangent bundle of~$M$ to the group $SU(N)$. Homotopically, an $SU$-structure is the homotopy class of a lift of the map $M\to BO(2N)$ classifying the stable tangent bundle to a map $M\to BSU(N)$. A manifold $M$ admits an $SU$-structure whenever it admits a stably complex structure with $c_1(\mathcal TM)=0$.

The theory of bordism and cobordism experienced a spectacular development in the beginning of the 1960s. Most leading topologists of the time contributed to this development. The idea of bordism was first explicitly formulated by Pontryagin~\cite{pont55} who related the theory of framed bordism to the stable homotopy groups of spheres. In the early works such as Rokhlin~\cite{rohl59} bordism was called ``intrinsic homology'', referring to  Poincar\'e's original idea of homological cycles. The most basic of bordism theories, unoriented bordism, was the subject of the fundamental work of Thom~\cite{thom54}, who calculated the unoriented bordism ring~$\varOmega^O$ completely. The description of the oriented bordism ring $\varOmega^{SO}$ was completed by the end of the 1950s in the works of Novikov~\cite{novi60,novi62} (the ring structure modulo torsion) and Wall~\cite{wall60} (products of torsion elements), with important earlier contribution made by Thom~\cite{thom54} (description of the ring $\varOmega^{SO}\otimes\Q$), Averbuch~\cite{aver59} (absence of odd torsion), Milnor~\cite{miln60} (the additive structure modulo torsion) and Rokhlin~\cite{rohl59}.

The theory culminated in the calculation of the complex (or unitary) bordism ring $\OU$ in the works of Milnor~\cite{miln60} and Novikov~\cite{novi60,novi62}. The ring $\OU$ was shown to be isomorphic to a graded integral polynomial ring $\Z[a_i\colon i\ge1]$ on infinitely many generators, with one generator in every even degree, $\deg a_i=2i$. This result has since found numerous applications in algebraic topology and beyond. We review the unitary bordism theory in Section~\ref{combor}, since it is instrumental in the subsequent description of the structure of the $SU$-bordism ring.

The study of $SU$-bordism in the 1960s outlined the limits of applicability of methods of algebraic topology. The coefficient ring $\OSU$ is considered to be known. It is not a polynomial ring, although it becomes so after inverting~$2$. The main contributors here are Novikov~\cite{novi62} (description of the ring $\OSU\otimes\Z[\frac12]$), Conner and Floyd~\cite{co-fl66m} (products of torsion elements), Wall~\cite{wall66} and Stong~\cite{ston68} (the multiplicative structure of $\OSU/\mathrm{Tors}$).
Nevertheless, as noted by Stong~\cite[p.~266]{ston68}, ``an intrinsic description of $\OSU/\mathrm{Tors}$ is extremely complicated''. The best known description of the ring $\OSU/\mathrm{Tors}$ is a subtly embedded subring in the polynomial ring $\mathcal W$, the coefficient ring of Conner--Floyd's theory of $c_1$-spherical manifolds (see the details in Section~\ref{Wsect}).

The Adams--Novikov spectral sequence and formal group law techniques brought in topology by the fundamental work of Novikov~\cite{novi67} led to a new systematic approach to earlier geometric calculations with the $SU$-bordism ring. In particular, the exact sequence of Conner and Floyd~\eqref{CF5seq} relating the graded components of the rings $\OSU$ and~$\mathcal W$ admits an intrinsic description in terms of nontrivial differentials in the Adams--Novikov spectral sequence for the $MSU$ spectrum (see Section~\ref{calcsect}).
This approach was further developed in the context of bordism of manifolds with singularities in the works of Mironov~\cite{miro75}, Botvinnik~\cite{botv92} and Vershinin~\cite{vers93}. The main purpose was to describe the coefficient ring $\varOmega^{Sp}$ of the next classical bordism theory, symplectic bordism (nowadays also known as quaternionic bordism), which still remains unknown and mysterious. See~\cite[\S3]{buch75} for an account of results on $\varOmega^{Sp}$ known by 1975. The Adams--Novikov spectral sequence has also become the main computational tool for the stable homotopy groups of spheres~\cite{rave86}.

\medskip

There is also the classical problem of finding geometric representatives of bordism classes in different bordism theories, in particular, for the unitary and special unitary bordism rings. The importance of this problem was emphasised in the original works such as Conner and Floyd~\cite{co-fl66m}.

Over the rationals, the bordism rings are generated by projective spaces, but the integral generators are more subtle as they involve divisibility conditions on characteristic numbers. One of
the few general results on geometric representatives for bordism classes known from the early 1960s is that the complex bordism ring $\OU$, which is an integral polynomial ring, can be generated
by the so-called Milnor hypersurfaces $H(n_1,n_2)$. These
are hyperplane sections of the Segre embeddings of products $\C
P^{n_1}\times\C P^{n_2}$ of complex projective spaces. Similar
generators exist for unoriented and oriented bordism rings.

The early progress was impeded by the lack of examples of
higher-dimensional (stably) complex manifolds for which the
characteristic numbers can be calculated explicitly. With the
appearance of toric varieties in the late 1970s and
subsequent development of toric topology in the beginning of this century~\cite{bu-pa15}, a host of
explicitly constructed concrete examples of stably complex and $SU$-manifolds  with a large torus symmetry has been produced. The characteristic numbers of these manifolds can be
calculated effectively using combinatorial-geometric techniques. These developments enriched bordism and cobordism theory with new geometric methods. 

In~\cite{bu-ra98}, Buchstaber and Ray constructed a set of
generators for $\OU$ consisting entirely of complex projective
toric manifolds $B(n_1,n_2)$, which are projectivisations of sums
of line bundles over the bounded flag manifolds. Another toric family $\{L(n_1,n_2)\}$ with the same property is presented in Section~\ref{toricsect}. Characteristic numbers of toric manifolds satisfy quite
restrictive conditions (e.\,g. their Todd genus is always~$1$) which
prevent the existence of a toric representative in every bordism
class; quasitoric manifolds enjoy more flexibility. Wilfong~\cite{wilf14} identified low-dimensional complex bordism classes which contain projective toric manifolds (there is a full description in dimensions up to~$6$, and partial results in dimension~$8$). Furthermore, by a result of Solomadin and Ustinovskiy~\cite{so-us16}, polynomial generators of the ring $\OU$ can be chosen among projective toric manifolds (a partial result of this sort was obtained earlier in~\cite{wilf16}).
Quasitoric manifolds enjoy more flexibility:
it was shownby Buchstaber, Panov and Ray~\cite{b-p-r07} that one can get a geometric representative in \emph{every} complex bordism class if toric manifolds are relaxed
to quasitoric ones; the latter still have a large torus
action, but are only stably complex instead of being complex.
In part~II of this survey we review similar results in the context of $SU$-bordism.

A renewed interest in $SU$-manifolds has been stimulated by the study of mirror symmetry and other geometric constructions motivated by theoretical physics; the notion of a Calabi--Yau manifold plays a central role here. By a Calabi--Yau manifold one usually understands a K\"ahler
$SU$-manifold; it has a Ricci flat metric by a theorem of Yau. The relationship between Calabi--Yau manifolds and $SU$-bordism is discussed in Sections~\ref{SUtoricsect}--\ref{lowdimsect} of this survey.

\medskip

Part~I contains the structure results on the $SU$-bordism ring~$\OSU$. We combine geometric methods of Conner--Floyd, Wall and Stong with the Adams--Novikov spectral sequence and formal group law techniques in this description.

\smallskip

Section~\ref{combor} is a summary of complex bordism theory. By a theorem of Milnor and Novikov, 
\[
  \OU\cong\mathbb{Z}[a_{i}\colon i\ge 1],\quad \deg{a_i}=2i,
\]
and two stably complex manifolds are bordant if and only if they have identical
Chern characteristic numbers. Polynomial generators are detected by a special characteristic number $s_i$ (sometimes called the Milnor number). For any integer $i\ge 1$, set 
\[
  m_{i}=\begin{cases}
  1&\text{if $i+1\neq p^k$ for any prime $p$;}\\
  p&\text{if $i+1=p^k$ for some prime $p$ and integer $k>0$.}
  \end{cases}
\]
Then the bordism class of a stably complex manifold $M^{2i}$ may be taken to be the $2i$-dimensional generator $a_i$ if and only if $s_{i}[M^{2i}]=\pm m_{i}$. 

\smallskip

$SU$-manifolds and $SU$-bordism are introduced in Section~\ref{SUsect}. By a theorem of Novikov, $\OSU\otimes\Z[{\textstyle\frac12}]$ is a polynomial algebra with one generator
in every even degree~$\ge4$:
\[
  \OSU\otimes\Z[{\textstyle\frac12}]\cong
  \Z[{\textstyle\frac12}][y_i\colon i\ge2],\quad \deg y_i=2i.
\]
The bordism class of an $SU$-manifold $M^{2i}$ may be taken to be the $2i$-dimensional generator $y_i$ if and only if $s_i[M^{2i}]=\pm m_im_{i-1}$ up to a power of~$2$. The extra divisibility in dimensions $2p^k$ comes from the simple observation that the $s_i$-number of an $SU$-manifold $M^{2i}$ of dimension $2i=2p^k$ is divisible by~$p$ (Proposition~\ref{pkSU}).

\smallskip

The algebra of operations $A^U$ in complex cobordism and the Adams--Novikov spectral sequence are considered in Section~\ref{opersect}.

\smallskip

The $A^U$-module structure of $U^*(MSU)$ needed for calculations with the Adams--Novikov spectral sequence is determined in Section~\ref{UMSUsect}.
Two geometric operations are introduced. The boundary homomorphism $\partial\colon\OU_{2n}\to\OU_{2n-2}$ sends a bordism class $[M^{2n}]$ to the bordism class $[N^{2n-2}]$ dual to $c_1(M)=c_1(\det\mathcal TM)$. The restriction of $\det\mathcal TM$ to $N$ is the normal bundle $\nu(N\subset M)$.
The stably complex structure on~$N$ is
defined via the isomorphism $\mathcal T M|_N\cong\mathcal T
N\oplus\nu(N\subset M)$. Then $c_1(N)=0$, so $N$ is an $SU$-manifold. This implies that $\partial^2=0$.

Similarly, the homomorphism $\varDelta\colon\OU_{2n}\to\OU_{2n-4}$ takes a bordism class $[M^{2n}]$ to the bordism class of the submanifold $L^{2n-4}$ dual to $-c_1^2(M)=c_1(\det\mathcal 
TM)c_1(\overline {\det\mathcal TM})$ with the restriction of~$\det\mathcal TM\oplus\overline{\det\mathcal T M}$ giving the complex structure in the normal bundle.

The $A^U$-module $U^*(MSU)$ is then identified with the quotient $A^U/(A^U \varDelta + A^U \partial)$ (Theorem~\ref{umsu2}).

\smallskip

The Adams--Novikov spectral sequence for the $MSU$ spectrum is calculated in Section~\ref{calcsect}, and the consequences are drawn for the structure of the $SU$-bordism ring~$\OSU$. It is proved in Theorem~\ref{osutors} that the kernel of the forgetful homomorphism $\OSU\to\OU$ consists of torsion elements, and every torsion element in $\OSU$ has order~$2$.

To describe the torsion part of $\OSU$, Conner and Floyd~\cite{co-fl66m} introduced the group 
\[
  \mathcal W_{2n}=\Ker(\varDelta\colon\OU_{2n}\to\OU_{2n-4})
\]
and identified it with the the subgroup of $\OU_{2n}$ consisting of
bordism classes $[M^{2n}]$ such that every Chern number of
$M^{2n}$ of which $c_1^2$ is a factor vanishes (see Theorem~\ref{W3def}). The forgetful
homomorphism decomposes as $\OSU_{2n}\to\mathcal
W_{2n}\to\OU_{2n}$, and the restriction of the boundary
homomorphism $\partial\colon\mathcal W_{2n}\to\mathcal W_{2n-2}$
is defined. (A similar approach was previously used by Wall~\cite{wall60} to identify the torsion of the oriented bordism ring~$\varOmega^{SO}$.)

The relationship between the groups $\OSU_*$ and $\mathcal W_*$ is described by the following exact sequence of Conner and Floyd:
\begin{equation}\label{CF5seq}
  0\longrightarrow\OSU_{2n-1}\stackrel\theta\longrightarrow
  \OSU_{2n}\stackrel\alpha\longrightarrow\mathcal W_{2n}
  \stackrel\beta\longrightarrow\OSU_{2n-2}
  \stackrel\theta\longrightarrow\OSU_{2n-1}\longrightarrow0,
\end{equation}
where $\theta$ is the multiplication by the generator
$\theta\in\OSU_1\cong\Z_2$, $\alpha$ is the forgetful
homomorphism, and $\alpha\beta=-\partial\colon\mathcal W_{2n}\to\mathcal W_{2n-2}$. This exact sequence has the form of an exact couple, whose derived couple can be identified with the $E_2$ term of the Adams--Novikov spectral sequence for $MSU$ (see Lemma~\ref{cf3}).

Homology of $(\mathcal W_*,\partial)$ was described by Conner and Floyd~\cite[Theorem~11.8]{co-fl66m} as a polynomial algebra over $\Z_2$ on the following generators:
\[
  H(\mathcal W_*,\partial)\cong\Z_2[\omega_2,\omega_{4k}\colon k\ge 2], \quad 
  \deg \omega_2=4,\;\deg \omega_{4k}=8k.
\]
This leads to the following description of the free and torsion parts of~$\OSU$ (Theorem~\ref{freetors}):

\begin{itemize}
\item[(a)] $\mathop{\mathrm{Tors}}\OSU_n=0$ unless $n=8k+1$
or $8k+2$, in which case $\mathop{\mathrm{Tors}}\OSU_n$ is a
$\Z_2$-vector space of rank equal to the number of partitions
of~$k$.

\smallskip

\item[(b)] $\OSU_{2i}/\mathop{\mathrm{Tors}}$ is isomorphic to
$\Ker(\partial\colon\mathcal W\to\mathcal W)$ if $2i\not\equiv
4\mod 8$ and is isomorphic to
$\mathop{\mathrm{Im}}(\partial\colon\mathcal W\to\mathcal W)$ if
$2i\equiv 4\mod 8$.

\smallskip

\item[(c)] There exist $SU$-bordism classes $w_{4k}\in\OSU_{8k}$, $k\ge1$, such that every torsion element of $\OSU$ is uniquely expressible in the
form $P\cdot\theta$ or $P\cdot\theta^2$ where $P$ is a polynomial
in $w_{4k}$ with coefficients $0$ or~$1$.
An element $w_{4k}\in\OSU_{8k}$ is determined by the condition that it represents a polynomial generator $\omega_{4k}$ in $H_{8k}(\mathcal W_*,\partial)$ for $k\ge 2$, and $w_4\in\OSU_8$ represents~$\omega_2^2$.
\end{itemize}

The direct sum $\mathcal W=\bigoplus_{i\ge0}\mathcal W_{2i}$ is
\emph{not} a subring of~$\OU$: one has $[\C P^1]\in\mathcal W_2$,
but $c_1^2[\C P^1\times\C P^1]=8\ne0$, so $[\C P^1]\times[\C
P^1]\notin\mathcal W_4$. However, $\mathcal W$ becomes a
commutative ring with unit with respect to the \emph{twisted
product}
\[
  a\mathbin{*}b=a\cdot b+2[V^4]\cdot\partial a\cdot\partial b,
\]
where $\cdot$ denotes the product in $\OU$ and $V^4=\C P^1\times\C P^1-\C P^2$. 
This leads to a complex-oriented multiplicative cohomology theory introduced and studied by Buchstaber in~\cite{buch72}.

The ring structure of $\mathcal W$ is described in Theorem~\ref{Wring}:
$\mathcal W$ is an integral polynomial ring on generators in every even
degree except~$4$:
\[
  \mathcal W\cong
  \Z[x_1,x_i\colon i\ge3],\quad x_1=[\C P^1],\quad\deg x_i=2i,
\]
with $s_i(x_i)=m_im_{i-1}$ for $i\ge3$. 
The boundary operator
$\partial\colon\mathcal W\to\mathcal W$, $\partial^2=0$,
satisfies the identity
\[
  \partial(a\mathbin{*} b)=a\mathbin{*}\partial b+
  \partial a\mathbin{*} b-
  x_1\mathbin{*}\partial a\mathbin{*}\partial b.
\]
and the polynomial generators of $\mathcal W$ can be chosen so as to satisfy the relations
\[
  \partial x_1=2, \quad \partial x_{2i}=x_{2i-1}.
\]

\smallskip

The ring structure of $\OSU$ is described in Section~\ref{ringosusec}.
The forgetful map $\alpha\colon\OSU\to\mathcal W$ is a ring
homomorphism.
Therefore, the ring $\OSU/\mathrm{Tors}$ can be described as a subring of~$\mathcal W$.

We have
\[
  \mathcal W\otimes\Z[{\textstyle\frac12}]\cong
  \Z[{\textstyle\frac12}][x_1,x_{2k-1},2x_{2k}-x_1x_{2k-1}\colon k\ge2],
\]
where $x_1^2=x_1\mathbin{*}x_1$ is a $\partial$-cycle, and each of the
elements $x_{2k-1}$ and $2x_{2k}-x_1x_{2k-1}$ with $k\ge2$ is a $\partial$-cycle.

It follows from the description of the ring $\mathcal W$ that there exist indecomposable elements $y_i\in\OSU_{2i}$, $i\ge2$, such that
$s_i(y_i)=m_im_{i-1}$ if $i$ is odd, $s_2(y_2)=-48$, and
$s_i(y_i)=2m_{i}m_{i-1}$ if $i$ is even and $i>2$. These elements
are mapped as follows under the forgetful homomorphism
$\alpha\colon\OSU\to\mathcal W$:
\[
  y_2\mapsto 2x_1^2,\quad y_{2k-1}\mapsto x_{2k-1},\quad
  y_{2k}\mapsto 2x_{2k}-x_1x_{2k-1},\quad k\ge2.
\]
In
particular, $\OSU\otimes\Z[\frac12]\cong\Z[\frac12][y_i\colon i\ge2]$ embeds in $\mathcal W\otimes\Z[{\textstyle\frac12}]$ as
the polynomial subring generated by $x_1^2$, $x_{2k-1}$ and
$2x_{2k}-x_1x_{2k-1}$.

\medskip

In Part II we describe geometric representatives for $SU$-bordism classes arising from toric topology.

\smallskip

In Section~\ref{toricsect} we collect the necessary facts about toric varieties and quasitoric manifolds, their cohomology rings and characteristic classes.

\smallskip

In Section~\ref{qtSUsect} we provide explicitly constructed families of quasitoric manifolds that admit an $SU$-structure, following L\"u and Panov~\cite{lu-pa16}. Quasitoric $SU$-manifolds can be
constructed by taking iterated complex projectivisations (which
are projective toric manifolds) and then modifying the stably
complex structure so that the first Chern class becomes zero. The
underlying smooth manifold of the result is still toric, but the
stably complex structure is not the standard one. Nevertheless, the resulting $SU$-structures on quasitoric manifolds are invariant under the torus actions. The first examples of this sort were obtained by
L\"u and Wang in~\cite{lu-wa16}.

\smallskip

In Section~\ref{qtSUgensect} we describe quasitoric generators for the $SU$-bordism ring. According to a result of~\cite{lu-pa16} (which we include as Theorem~\ref{mainth}), there exist quasitoric $SU$-manifolds $M^{2i}$ of dimension $2i\ge10$ with
$s_i(M^{2i})=m_im_{i-1}$ if $i$ is odd and
$s_i(M^{2i})=2m_{i}m_{i-1}$ if $i$ is even. These quasitoric
manifolds represent the indecomposable elements $y_i\in\OSU$ which are polynomial generators of $\OSU\otimes\Z[\frac12]$. In low dimensions $2i<10$, it is known that 
quasitoric $SU$-manifolds $M^{2i}$ are null-bordant. It is therefore interesting to ask which $SU$-bordism classes of dimension $>8$ can be represented by quasitoric manifolds.

As we have seen from the description of the ring $\OSU$ above,
characteristic numbers of $SU$-manifolds satisfy intricate
divisibility conditions. Ochanine's theorem~\cite{ocha81}
asserting that the signature of an $(8k+4)$-dimensional $SU$-manifold
is divisible by~16 is one of the most famous examples. We
therefore find it quite miraculous that polynomial generators for
the $SU$-bordism ring $\OSU\otimes\Z[\frac12]$ occur within the most basic families
of examples that one can produce using toric methods: 2-stage
complex projectivisations, and 3-stage projectivisations with the
first stage being just~$\C P^1$. The proof of Theorem~\ref{mainth}
involves calculating the characteristic numbers and checking
divisibility conditions. Some interesting results on binomial coefficients modulo a prime are obtained as a byproduct.

\smallskip

In Section~\ref{SUtoricsect} we review Batyrev's construction~\cite{baty94} of Calabi--Yau manifolds arising from toric geometry. In its most basic form, this construction gives an algebraic hypersurface representing the $SU$-bordism class $\partial[V]$ for a smooth toric Fano variety~$V$. A more general construction produces (smooth) Calabi--Yau manifolds from hypersurfaces in toric Fano varieties with Gorenstein singularities, using a special desingularisation. Gorenstein toric Fano varieties correspond to so-called reflexive polytopes, and there are finitely many of them in each dimension. Four-dimensional reflexive polytopes and Calabi--Yau threefolds arising from them are completely classified~\cite{kr-sk}, \cite{a-g-h-j-n15}; there are also classification results for five-dimensional reflexive polytopes and Calabi--Yau fourfolds.

\smallskip

The $SU$-bordism classes of the Calabi--Yau hypersurfaces in smooth toric Fano varieties generate the $SU$-bordism ring $\OSU\otimes\Z[\frac12]$. More precisely, the indecomposable elements $y_i\in\OSU$ defined above  can be represented by integral linear combinations
of the bordism classes of Calabi--Yau hypersurfaces. This result, proved in~\cite{l-l-p18}, is reviewed in Section~\ref{CYSUsect}  (unlike the situation with quasitoric manifolds, there is no restriction on the dimension of $y_i$ here).

It is interesting to ask which bordism classes in $\varOmega^{SU}$ can be represented by Calabi--Yau manifolds. This question is an $SU$-analogue of the following well-known open problem of Hirzebruch: which bordism classes in $\varOmega^U$ contain connected (irreducible) non-singular algebraic varieties? If one drops the connectedness assumption, then any $U$-bordism class of positive dimension can be represented by an algebraic variety in view of a theorem of Milnor (see~\cite[p.~130]{ston68}). Since a product and a positive integral linear combination of algebraic classes are also algebraic classes (possibly, disconnected), one only needs to find in each dimension $i$ algebraic varieties $M$ and $N$ with $s_i(M)=m_i$ and $s_i(N)=-m_i$. For $SU$-bordism, the situation is different: if a class $a\in\varOmega^{SU}$ can be represented by a Calabi--Yau manifold, then $-a$ does not necessarily have this property.

This issue already occurs in complex dimension~$2$: the class $y_2\in\OU_4$ can be represented by a Calabi--Yau surface (a $K3$-surface), while $-y_2$ cannot be represented by a smooth complex surface. 
The situation is different in dimension~$3$, where both generators $y_3$ and $-y_3$ can be represented by Calabi--Yau threefolds. The same holds in complex dimension~$4$, as shown by Theorem~\ref{ldCYSUthm}.


\medskip

The authors are grateful to Victor Buchstaber and Peter Landweber for their attention to our work and for many useful comments and suggestions.

\part{Structure results}

\section{Complex bordism}\label{combor}
We briefly summarise the basic definitions and constructions of complex bordism (also known as \emph{unitary bordism} or~\emph{$U$-bordism}). More details can be found 
in~\cite{co-fl66m}, \cite{ston68}, \cite{buch12} and~\cite{bu-pa15}. 

Let $\eta_n$ denote the universal (tautological) complex $n$-plane bundle over the infinite-dimensional Grassmannian $BU(n)$. Let $\zeta$ be a real $2n$-plane bundle over a cellular space  (a $CW$-complex)~$X$. A \emph{complex structure} on $\zeta$ can be defined in one of the following equivalent ways:
\begin{itemize}
\item[(1)] an equivalence class of real vector bundle isomorphism $\zeta\to\xi$, where $\xi$ is a complex $n$-plane bundle over $X$, and two such isomorphisms are equivalent if they differ by composing with an isomorphism of complex vector bundles;

\item[(2)] a homotopy class of real $2n$-plane bundle maps $\zeta\to\eta_n$ which are isomorphisms on each fibre;

\item[(3)] a homotopy class of a lift of the map $X\to BO(2n)$ classifying the bundle $\zeta$ to a map $X\to BU(n)$.
\end{itemize}

All manifolds are smooth, compact and without boundary (unless otherwise specified).
A \emph{stably complex structure} (a \emph{unitary structure}, or a \emph{$U$-structure}) on a manifold $M$ (possibly, with boundary) is an equivalence class of complex structures on the stable tangent bundle of~$M$, that is, an equivalence class of bundle isomorphisms
\begin{equation}\label{stabcom}
  c_{\mathcal T}\colon\mathcal 
  T{M}\oplus\underline{\mathbb{R}}^k\stackrel\cong\longrightarrow\xi,
\end{equation}
where $\xi$ is complex vector bundle, and $\underline{\R}^k$ denotes the trivial real $k$-plane bundle over~$M$. Two such complex structures are said to be \emph{equivalent} if they differ by adding trivial complex summands and composing with isomorphisms of complex vector bundles. An isomorphism~\eqref{stabcom} defines a lift of the map $M\to BO(2l)$ classifying the bundle $\mathcal TM\oplus\underline\R^k$ to a map $M\to BU(l)$; here $2l=\dim_\R\xi=\dim M+k$. Composing $c_{\mathcal T}$ with an isomorphism of complex bundles results in a homotopy of the lift, and adding a trivial complex summand $\underline\C^m$ to~\eqref{stabcom} results in composing the lift with the canonical map $\BU(l)\to\BU(l+m)$. Therefore, stably complex structures on $M$ correspond naturally and bijectively to the homotopy classes of lifts of the classifying map $M\to BO$ to a map $M\to BU$. 

\begin{remark}
Instead of defining a stably complex structure as an equivalence class of isomorphisms~\eqref{stabcom}, one can define it by fixing a single isomorphism for sufficiently large~$k$. The reason is that adding trivial complex summands induces a canonical one-to-one correspondence between complex structures on the bundles $\mathcal T{M}\oplus\underline{\mathbb{R}}^k$ with different $k$ if $k\ge2$, see~\cite[Theorem~2.3]{co-fl66m}.
\end{remark}

A \emph{stably complex manifold} (a \emph{unitary manifold} or a \emph{$U$-manifold}) is a pair $(M,c_{\mathcal T})$ consisting of a manifold and a stably complex structure on it.
 
Complex (co)bordism is a generalised (co)homology theory arising from $U$-manifolds. It can be defined either geometrically or homotopically. 

In the geometric approach, the bordism group $U_n(X)$ is defined as the set of bordism classes of maps $M\to X$, where $M$ is an $n$-dimensional $U$-manifold. The details of the geometric approach are described in~\cite[\S1]{co-fl66m} (see also~\cite[Appendix~D]{bu-pa15}). We briefly recall the key points here.

\begin{construction}[geometric $U$-bordism]
A stably complex manifold $M$ \emph{bords} (or is \emph{null-bordant}) if there is a stably complex manifold with boundary $W$ such that $\partial W=M$ and the stably complex structure induced on the boundary of $W$ coincides with that of~$M$. The induced stably complex structure on $\partial W$ is defined via the isomorphism $\mathcal T W|_{\partial W}\cong\mathcal T M\oplus\underline\R$. This isomorphism depends on whether we choose an inward or outward pointing normal vector to $M$ in $W$ as a basis for~$\underline\R$, and whether we place this normal vector at the beginning or at the end of the tangent frame of~$M$. Our choice is to use the outward pointing normal and place it at the end. Then using the stably complex structure on $W$ we obtain a stably complex structure on $M=\partial W$ by means of the isomorphism
\[
  \mathcal TM\oplus\underline\R^{k+1}\cong
  \mathcal TW|_{\partial W}\oplus\underline\R^k\cong\xi.
\]

If we choose the inward pointing normal instead of the outward pointing, then the resulting stably complex structure on $M=\partial W$ will be different. If $c_{\mathcal T}\colon\mathcal 
  T{M}\oplus\underline{\mathbb{R}}^{k+1}\stackrel\cong\longrightarrow\xi$ is the stably complex structure on $M$ described above, then it can be seen that the stably complex structure resulting from the inward pointing is equivalent to the following:
\begin{equation}\label{oppcs}
  {\mathcal T}\!M\oplus\underline{\R}^{k+1}\oplus\underline{\C}\stackrel{c_{\mathcal
  T}\oplus\tau}{\lllra}\xi\oplus\underline{\C}
\end{equation}
where $\tau\colon\C\to\C$ is the complex conjugation. 

Given a stably complex manifold $(M,c_{\mathcal T})$, we refer to the stably complex structure defined by~\eqref{oppcs} as the \emph{opposite} to $c_{\mathcal T}$ and denote it by $-c_{\mathcal T}$. When $c_{\mathcal T}$ is clear from the context, we use $M$ instead of $(M,c_{\mathcal T})$ and $-M$ instead of $(M,-c_{\mathcal T})$.

For a fixed topological pair $(X,A)$ and a nonnegative integer $n$, consider pairs $(M,f)$, where $M$ is a compact $n$-dimensional $U$-manifold with boundary and $f\colon(M,\partial M)\to (X,A)$. Such a pair $(M,f)$ \emph{bords} (or is \emph{null-bordant}) if there exists a compact $(n+1)$-dimensional $U$-manifold $W$ with boundary and a map $F\colon W\to X$ such that
\begin{itemize}
\item[(a)] $M$ is a regularly embedded submanifold of $\partial W$, and the $U$-structure on $M$ is obtained by restricting the $U$-structure on~$\partial W$;
\item[(b)] $F|_{M}=f$ and $F(\partial W\setminus M)\subset A$. 
\end{itemize} 

The pairs $(M_{1},f_{1})$ and $(M_2,f_{2})$ are \emph{bordant} if the disjoint union $(M_{1},f_{1})\sqcup (-M_{2},f_{2})$ bords. Bordism is an equivalence relation: reflexivity follows by considering the stably complex structure on $M\times I$ such that $\partial(M\times I)=M\sqcup(-M)$, and transitivity uses the angle straightening procedure. The resulting equivalence class is referred to as the \emph{bordism class} of~$(M,f)$.

Denote by $[M,f]$ the bordism class of $(M,f)$. Bordism classes $[M,f]$ form an abelian group with respect to the disjoint union, which we denote $U'_n(X,A)$ for a moment, and refer to as the (geometric) \emph{unitary bordism group} of~$(X,A)$. Geometric $U$-bordism is a generalised homology theory, satisfying the Eilenberg--Steenrod axioms except for the dimension axiom. 
\end{construction}

The homotopic approach is based on the notion of \emph{$\MU$-spectrum}, which we also recall briefly. 

\begin{construction}[homotopic $U$-bordism]
The Thom space of the universal complex $n$-plane bundle $\eta_n$ over $\BU(n)$ is denoted by $\MU(n)$. The Thom spectrum $\MU=\{Y_i,\; \varSigma Y_{i}\to Y_{i+1}\colon i\ge 0\}$ has $Y_{2k}=\MU(k)$, $Y_{2k+1}=\varSigma Y_{2k}$, the map $\varSigma Y_{2k}\to Y_{2k+1}$ is the identity, and $\varSigma Y_{2k+1}\to Y_{2k+2}$ is defined as the map $\varSigma^2\MU(k)=S^2\wedge\MU(k)\to\MU(k+1)$ of Thom spaces corresponding to the bundle map $\eta_k\oplus\underline{\C}\to\eta_{k+1}$ classifying~$\eta_k\oplus\underline{\C}$. The $MU$-spectrum defines a generalised (co)homology theory, known as (homotopic) \emph{unitary (co)bordism}, with bordism and cobordism groups of a cellular pair $(X,A)$ given by
\begin{equation}\label{hbgro}
\begin{aligned}
  U_n(X,A)&=\lim_{k\to\infty}\pi_{2k+n}\bigl((X/A)\wedge\MU(k)\bigr),\\
  U^n(X,A)&=\lim_{k\to\infty}\bigl[\varSigma^{2k-n}(X/A),\MU(k)\bigr].
\end{aligned}
\end{equation}
The bordism groups of a single space $X$ are defined as $U_n(X):=U_n(X,\varnothing)$. We shall use the notation $X_+$ for $X/\varnothing$, which is $X$ with a disjoint basepoint added.
When $(X,A)$ is a finite cellular pair, the bordism group $U_n(X,A)$ is isomorphic to $\pi_{2k+n}((X/A)\wedge\MU(k))$ for sufficiently large~$k$, and similarly for $U^n(X,A)$.

By definition, the homotopic bordism and cobordism groups of a point satisfy 
\[
  U_n(\pt)=U^{-n}(\pt)=\pi_{2k+n}(\MU(k))
\]
for sufficiently large~$k$, and $U_n(\pt)=0$ for $n<0$. 
\end{construction}

The equivalence of the geometric and homotopic approaches  to complex bordism is established by the following result of Conner and Floyd.

\begin{theorem}[{\cite[(3.1)]{co-fl66m}}]
The generalised homology theory $U'_*(\cdot)$ is isomorphic, over the category of cellular pairs and continuous maps, to the generalised homology theory $U_*(\cdot)$. 
\end{theorem}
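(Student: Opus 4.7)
The plan is to construct mutually inverse natural transformations between the geometric theory $U'_*(\cdot)$ and the homotopic theory $U_*(\cdot)$ by means of a parametrised Pontryagin--Thom construction, and to verify the Eilenberg--Steenrod axioms match up so that an isomorphism on a point propagates to all cellular pairs.

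First I would define $\tau\colon U'_n(X,A)\to U_n(X,A)$. Given a representative $(M,f)$ with $M$ a compact $n$-dimensional $U$-manifold (possibly with boundary mapping into $A$), embed $M$ into a half-space $\R^{2k+n}_+$ for $k$ sufficiently large, with $\partial M$ embedded into the bounding hyperplane, and extend to $S^{2k+n}$ by one-point compactification. The stable tangent isomorphism $\mathcal TM\oplus\underline{\R}^{k}\cong\xi$ supplied by the $U$-structure, together with the outward normal convention fixed in the excerpt, determines (up to stabilisation) a complex vector bundle structure on the normal bundle $\nu$ of $M$ in $\R^{2k+n}$, hence a classifying map $\nu\to\eta_k$ covering a map $M\to\BU(k)$. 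Applying the Pontryagin--Thom collapse $S^{2k+n}\to\mathrm{Th}(\nu)$, composing with $\mathrm{Th}(\nu)\to\MU(k)$, and smashing with the collapse $(M/\partial M)\to (X/A)$ induced by $f$, one obtains a based map $S^{2k+n}\to (X/A)\wedge\MU(k)$, i.e.\ an element of $U_n(X,A)$ by~\eqref{hbgro}.

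Next I would verify that $\tau$ is well defined: independence of the embedding follows from the standard isotopy argument for embeddings in codimension large enough; independence of $k$ follows because adding a trivial complex summand to the $U$-structure corresponds to the stabilisation map in the $\MU$-spectrum; independence of the representative $(M,f)$ within its bordism class is proved by taking a null-bordism $(W,F)$, embedding it in $\R^{2k+n}_+\times I$ transversally to the boundary, and observing that the resulting Pontryagin--Thom map furnishes a based homotopy in $(X/A)\wedge\MU(k)$. Additivity under disjoint union is immediate because the collapse map from disjoint embeddings is the sum in $\pi_{2k+n}$.

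For the inverse transformation $\sigma\colon U_n(X,A)\to U'_n(X,A)$ I would use Thom's transversality theorem. Represent an element of $U_n(X,A)$ by a based map $\varphi\colon S^{2k+n}\to (X/A)\wedge\MU(k)$. Since $\MU(k)$ is the Thom space of $\eta_k\to\BU(k)$, the zero section gives a copy of $(X/A)\wedge\BU(k)_+$ inside $(X/A)\wedge\MU(k)$; by cellular approximation we may assume $\varphi$ lands in $(X/A)\wedge\MU(k)$ with $\varphi^{-1}(\infty)$ controlled, and then by transversality we may deform $\varphi$ so that the preimage $M$ of $(X/A)\wedge\BU(k)$ is a smooth compact $n$-submanifold of $S^{2k+n}$ whose normal bundle is the pullback of $\eta_k$, hence carries a complex structure inducing a $U$-structure on $M$. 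The map $\varphi$ restricted to $M$, composed with the projection $(X/A)\wedge\BU(k)_+\to X/A$, yields a map $(M,\partial M)\to (X,A)$, and the boundary lands in $A$ because points mapping to the basepoint of $X/A$ are removed. Independence of the choices made follows from another application of transversality in $S^{2k+n}\times I$.

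The main obstacle is the careful handling of boundary conditions in the relative case: one must verify that the Pontryagin--Thom constructions and their transversality inverses produce $U$-manifolds with boundary mapped into $A$ in a fashion compatible with the equivalence relation defining $U'_n(X,A)$, and that the stably complex structure inherited by the boundary coincides, up to the outward-normal convention already fixed, with the restriction of the structure on the bordism. Once this is settled, $\tau$ and $\sigma$ are readily seen to be mutually inverse natural transformations of generalised homology theories; since both theories satisfy the Eilenberg--Steenrod axioms except dimension, and agree on a point by construction (in both cases the group is $\pi_{2k+n}(\MU(k))$ for large $k$), the isomorphism over all cellular pairs follows by the usual Mayer--Vietoris and colimit argument.
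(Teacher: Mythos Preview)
Your approach is the same Pontryagin--Thom/transversality strategy the paper uses, but you do more work than necessary and the extra work introduces some rough edges.

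The paper's sketch differs from yours in two ways. First, it immediately reduces to the absolute case $A=\varnothing$ via the isomorphism $U'_n(X,A)\cong U'_n(X/A,\pt)$, so it never has to embed manifolds with boundary in half-spaces. Second, and this is the real economy, it constructs the inverse transversality map \emph{only for $X=\pt$}, where $\MU(k)$ is an honest Thom space and transversality to the zero section $\BU(k)\subset\MU(k)$ is unproblematic; the isomorphism for general cellular pairs then follows from the comparison theorem for generalised homology theories (a natural transformation which is an isomorphism on coefficients is an isomorphism everywhere), which you invoke anyway in your last sentence.

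Your construction of $\sigma$ for general $(X,A)$ is where the rough edges appear. The target $(X/A)\wedge\MU(k)$ is not a smooth manifold, so ``transversality to $(X/A)\wedge\BU(k)_+$'' has to be interpreted as transversality in the $\MU(k)$-fibre direction only, and setting this up cleanly takes some care you have not supplied. More seriously, once you have $M=\varphi^{-1}\bigl((X/A)\wedge\BU(k)_+\bigr)$, the projection gives a map $M\to X/A$, not a map of pairs $(M,\partial M)\to(X,A)$; passing from one to the other is precisely what the excision isomorphism $U'_n(X,A)\cong U'_n(X/A,\pt)$ encodes, and you have not provided that step. None of this is fatal, but since you already appeal to the comparison theorem at the end, the cleanest fix is to follow the paper: build only $\tau$ as a natural transformation, check it is an isomorphism on a point by the ordinary transversality argument there, and conclude.
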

\begin{proof}[Sketch of proof]
The proof follows the original ideas of Thom~\cite{thom54} in the oriented case (see 
also~\cite[Chapter~1]{co-fl64}). We define a functor $\varphi\colon U'_n(X,A)\to U_n(X,A)$ between homology theories and prove that it induces an isomorphism on homology of a point. 

For a cellular pair $(X,A)$, there is an isomorphism $U'_n(X,A)\cong U'_n(X/A,\pt)$, so we can restrict attention to the case $A=\varnothing$ and define the maps $\varphi\colon U'_n(X)\to U_n(X)$ only.

Take a geometric bordism class $[M,f]\in U'_n(X)$ represented by a map $f\colon M\to X$ from a $U$-manifold~$M$. We embed $M$ into some $\R^{n+2k}$ and denote by $\nu$ the normal bundle of this embedding. The real bundle isomorphism $\mathcal TM\oplus\nu\cong\underline{\R}^{n+2k}$ allows us to convert stably complex structures on $M$ to complex structures on the normal bundle~$\nu$.  (This can be done in the most naive way by working with tangent and normal frames, but one needs to check that this conversion procedure is compatible with the appropriate stabilisations, see also~{\cite[(2.3)]{co-fl66m}}.)

The \emph{Pontryagin--Thom map}
\[
  S^{2k+n}\to\mathop{\mathit{Th}}(\nu)
\]
identifies a closed tubular neighbourhood of $M$ in $\R^{2k+n}\subset S^{2k+n}$
with the total space $D(\nu)$ of the disc bundle of~$\nu$, and
collapses the closure of the complement of the tubular neighbourhood to the
basepoint of the Thom space~$\mathop{\mathit{Th}}(\nu)=D(\nu)/S(\nu)$.

Now we define a map $D(\nu)\to X \times D(\eta_k)$ in which the first component is the composite $D(\nu)\longrightarrow M\stackrel f\longrightarrow X$ and the second component is the disc bundle map corresponding to the classifying map $\nu\to\eta_k$ for the above defined complex structure on~$\nu$. Doing the same for the sphere bundles, we obtain a map of pairs
\[
  \bigl(D(\nu),S(\nu)\bigr)\to\bigl(X\times D(\eta_k),X\times S(\eta_k)\bigr)
\]
and therefore a map of Thom spaces 
\[
  \mathop\mathit{Th}(\nu)\to(X/\varnothing)\wedge\MU(k).
\]
Composing with the Pontryagin--Thom map, we obtain a map $S^{2k+n}\to(X/\varnothing)\wedge\MU(k)$ representing a class in the homotopy bordism group $U_n(X)$, see~\eqref{hbgro}. One needs to check that the maps resulting from bordant pairs $(M,f)$ are homotopic, therefore defining a functor $\varphi\colon U'_*(\cdot)\to U_*(\cdot)$.

\smallskip

To show that $\varphi\colon U'_*(\pt)\to U_*(\pt)$ is an isomorphism, we construct an inverse map 
$U_*(\pt)\to U'_*(\pt)$ as follows. Take a homotopy class of maps $g\colon S^{2k+n}\to\MU(k)$ representing an element in the homotopic bordism group $U_n(\pt)$. By changing $g$ within its homotopy class we may achieve that $g$ is smooth and transverse along the zero section $\BU(k)\subset\MU(k)$. Then $M:=g^{-1}(\BU(k))$ is an $n$-dimensional submanifold in~$S^{2k+n}$. Furthermore, there is a complex bundle map from the normal bundle $\nu$ of $M$ in $S^{2k+n}$ to the normal bundle of $\BU(k)$ in $\MU(k)$, which is~$\eta_k$. We therefore obtain a complex structure on $\nu$, which can be converted into a stably complex structure on~$M$. The result is a geometric bordism class in $U'_n(\pt)$, giving an inverse map to~$\varphi$.
\end{proof}

Hereafter we denote both geometric and homotopic unitary bordism groups by~$U_*(\cdot)$.

\begin{construction}[products]
For the product bundle $\eta_m\times\eta_n$, there is the corresponding classifying map $\BU(m)\times\BU(n)\to \BU(m+n)$ (unique up to a homotopy) and the bundle map $\eta_m\times\eta_n\to\eta_{m+n}$. It induces a map of Thom spaces
\[
  \MU(m)\wedge\MU(n)\to\MU(n+m),
\]
which is associative and commutative up to homotopy. The map above is used to define product operations in complex (co)bordism, turning it into a multiplicative (co)homology theory.
Namely, there is a canonical pairing (the \emph{Kronecker product})
\[
  \langle\;\,,\,\rangle\colon U^m(X)\otimes U_n(X)\to
  \varOmega^U_{n-m},
\]
the \emph{$\frown$-product}
\[
  \frown\colon U^m(X)\otimes U_n(X)\to
  U_{n-m}(X),
\]
and the \emph{$\smile$-product} (or simply \emph{product})
\[
  \smile\colon U^m(X)\otimes U^n(X)\to
  U^{m+n}(X),
\]
defined as follows. Assume given a cobordism class $x\in U^m(X)$
represented by a map $\varSigma^{2l-m}X_+\to MU(l)$ and a bordism
class $\alpha\in U_n(X)$ represented by a map $S^{2k+n}\to
X_+\wedge MU(k)$. Then $\langle
x,\alpha\rangle\in\varOmega^U_{n-m}$ is represented by the
composite
\[
\begin{CD}
  S^{2k+2l+n-m} @>\varSigma^{2l-m}\alpha>>
  \varSigma^{2l-m}X_+\wedge MU(k)
  @>x\wedge\,\id>> MU(l)\wedge MU(k)\to MU(l{+}k)
\end{CD}
\]
If $\varDelta\colon X_+\to (X\times X)_+=X_+\wedge X_+$ is the
diagonal map, then $x\frown\alpha\in U_{n-m}(X)$ is represented by
the composite map
\begin{gather*}
\begin{CD}
  S^{2k+2l+n-m} @>\varSigma^{2l-m}\alpha>>
  \varSigma^{2l-m}X_+\wedge MU(k) @>\varSigma^{2l-m}\!\varDelta\,\wedge\,\id>>
  X_+\wedge\varSigma^{2l-m}X_+\wedge MU(k)
\end{CD}\\
\begin{CD}
  @>\id\wedge x\wedge\id>> X_+\wedge MU(l)\wedge MU(k)\to X_+\wedge MU(l+k)
\end{CD}
\end{gather*}
The $\smile$-product is defined similarly; it turns
$U^*(X)=\bigoplus_{n\in\Z}U^n(X)$ into a graded commutative ring, called the
\emph{complex cobordism ring of~$X$}. The direct sum 
\[
  \varOmega_U:=U^*(\pt)=\bigoplus_n U^{n}(\pt)
\]
is often called simply the \emph{complex cobordism ring}. It is graded by nonpositive integers. We also use the notation $\OU$ for the nonnegatively graded ring $U_*(\pt)=\bigoplus_n U_{n}(\pt)$, the \emph{complex bordism ring}, where $U_n(\pt)=U^{-n}(\pt)$. Each ring $U^*(X)$ is a module over~$\varOmega_U$.

A stably complex $n$-manifold $M$ has the \emph{fundamental bordism class} $[M]\in U_n(M)$, which is defined geometrically as the bordism class of the identity map $M\to M$. There are the
\emph{Poincar\'e--Atiyah duality} isomorphisms~\cite{atiy61}, see also~\cite[Construction~D.3.4]{bu-pa15}:
\[
  D_U\colon U^k(M)\stackrel\cong\longrightarrow U_{n-k}(M),\quad x\mapsto x\frown[M].
\]
\end{construction}

We have 
\[  
  H^*(BU(n);\Z)\cong\Z[c_1,\ldots,c_n],\quad \deg c_i=2i,
\] 
where the $c_i$ are the universal Chern characteristic classes. Given a partition $\omega=(i_1,\ldots,i_k)$ of $n=|\omega|=i_1+\cdots+i_k$ by positive integers, define the monomial $c_\omega=c_{i_1}\cdots c_{i_k}$ of degree
$2|\omega|$ and the corresponding
characteristic class $c_\omega(\xi)$ of a complex $n$-plane
bundle~$\xi$. The corresponding tangential Chern
\emph{characteristic number} of a stably complex manifold $M$ is
defined by 
\[
  c_\omega[M]:=\langle c_\omega(\mathcal T M),[M]\rangle.
\]
Here $[M]$ is the fundamental homology class of~$M$,
and $\mathcal T M$ is regarded as a complex bundle via the
isomorphism~\eqref{stabcom}. We often write $c_\omega(M)$ instead of $c_\omega(\mathcal T M)$ for a stably complex manifold~$M$. The number $c_\omega[M]$ is assumed
to be zero when $2|\omega|\ne\dim M$.

One important characteristic class is $s_n$. It is defined as
the polynomial in $c_1,\ldots,c_n$ obtained by expressing the
symmetric polynomial $x_1^n+\cdots+x_n^n$ via the elementary
symmetric functions $i_i(x_1,\ldots,x_n)$ and then replacing
each $i_i$ by~$c_i$. Define the corresponding characteristic
number as 
\[
  s_n[M]:=\langle s_n(\mathcal T M),[M]\rangle.
\]
It is known as the \emph{$s$-number} or the \emph{Milnor number} of~$M$.

For any integer $i\ge 1$, set 
\begin{equation}\label{mi}
  m_{i}=\begin{cases}
  1&\text{if $i+1\neq p^k$ for any prime $p$;}\\
  p&\text{if $i+1=p^k$ for some prime $p$ and integer $k>0$.}
  \end{cases}
\end{equation}

The structure of the $U$-bordism ring $\OU$ is described by the following fundamental result of Milnor and Novikov:

\begin{theorem}[Milnor, Novikov]\label{Ustructure}\ 
\begin{itemize}
\item[(a)]
The complex bordism ring $\OU$ is a polynomial ring over $\Z$ with one generator in every positive even dimension:
\[
  \OU\cong\mathbb{Z}[a_{i}\colon i\ge 1],\quad \deg{a_i}=2i.
\]

\item[(b)]
The bordism class of a stably complex manifold $M^{2i}$ may be taken to be the $2i$-dimensional generator $a_i$ if and only if
\[
  s_{i}[M^{2i}]=\pm m_{i}.
\]

\item[(c)]
Two stably complex manifolds are bordant if and only if they have identical
sets of Chern characteristic numbers.
\end{itemize}
\end{theorem}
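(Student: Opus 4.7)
The plan is to follow the classical Milnor--Novikov route: first identify $\OU\otimes\Q$ as a polynomial algebra, then prove integral torsion-freeness via the homology of the Thom spectrum, and finally use additivity of the $s$-number together with an arithmetic computation on Milnor hypersurfaces to pin down the divisibility~$m_i$. The starting input is the cohomology computation $H^*(BU;\Z)\cong\Z[c_1,c_2,\ldots]$, which via the Thom isomorphism yields $H^*(MU;\Z)\cong\Z[c_1,c_2,\ldots]\cdot u$ and dually $H_*(MU;\Z)\cong\Z[b_1,b_2,\ldots]$ with $\deg b_i=2i$, a torsion-free polynomial ring concentrated in even degrees.

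For the rational statement, a Serre-class argument (or rational homotopy of the simply connected ring spectrum $MU$) identifies the Hurewicz map $\OU\otimes\Q\to H_*(MU;\Q)$ with an isomorphism of graded rings. The projective spaces $\C P^i$ are rationally indecomposable because $s_i[\C P^i]=i+1\neq 0$, so they serve as rational polynomial generators. For integral torsion-freeness I would invoke the Adams spectral sequence at each prime~$p$: Milnor's description of $H_*(MU;\F_p)$ as an extended comodule over the dual Steenrod algebra $\mathcal A_p^*$ collapses the $E_2$-page into filtration zero and exhibits $\OU\otimes\F_p$ as a polynomial ring on classes in every positive even dimension, forcing the integral ring $\OU$ to be torsion-free with the correct Poincar\'e series. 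Combining rational and mod-$p$ freeness with the compatible Hurewicz splittings identifies $\OU$ as a polynomial subring of $H_*(MU;\Z)$ on generators of dimensions $2,4,6,\ldots$; this establishes part~(a).

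For part~(b), the Cartan-type identity $s_n(\xi\oplus\eta)=s_n(\xi)+s_n(\eta)$ implies $s_n$ kills products in $\OU_{2n}$, so it descends to a homomorphism $Q(\OU)_{2n}\cong\Z\to\Z$ whose image has the form $d_n\Z$. A polynomial generator $a_i$ is then any class on which $s_i$ attains $\pm d_i$. The computation $d_i=m_i$ is the arithmetic core: evaluating $s_i$ on the Milnor hypersurfaces $H(j,i-j+1)\subset\C P^{j}\times\C P^{i-j+1}$ gives the values $\binom{i+2}{j+1}$ up to sign, and the standard Kummer-style analysis of $\gcd\{\binom{i+2}{j+1}\colon 1\le j\le i\}$ shows this gcd equals $p$ when $i+1=p^k$ and~$1$ otherwise. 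Hence one can realise $s_i[M^{2i}]=\pm m_i$ by a suitable integral linear combination of Milnor hypersurfaces.

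Part~(c) follows formally once we know $\OU$ is torsion-free and $\OU\otimes\Q$ injects into $H_*(MU;\Q)$ via the Hurewicz map: the composition $\OU\to H_*(MU;\Z)\hookrightarrow H_*(MU;\Q)$ is injective, and under Thom--Poincar\'e duality the coordinates of the Hurewicz image are exactly the tangential Chern numbers $c_\omega[M]$ paired with the polynomial basis of $H^*(MU;\Z)\cong\Z[c_1,c_2,\ldots]u$. The main obstacle is Step~2: the Adams spectral sequence collapse requires the full $\mathcal A_p^*$-comodule structure of $H_*(MU;\F_p)$, which is the deepest structural input. The number-theoretic computation of the gcd of binomial coefficients in Step~3 is the most delicate elementary ingredient, and I would handle it prime by prime using Kummer's theorem on $p$-adic valuations of binomials.
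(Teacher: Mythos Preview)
The paper does not actually prove this theorem: it is stated as a foundational background result, followed by a paragraph of historical commentary pointing to the original arguments (Novikov's Adams spectral sequence proof, Milnor's additive computation, Stong's geometric approach, and Quillen's formal-group-law proof) and a short explanation of why part~(c) reduces to torsion-freeness via the Hurewicz--Thom composite. Your outline is exactly the classical Milnor--Novikov route that the paper is referencing, so in substance there is nothing to compare.

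Two small points are worth tightening. First, an indexing slip: the $s$-number of the Milnor hypersurface $H(j,i{-}j{+}1)\subset\C P^{j}\times\C P^{i-j+1}$ is $-\binom{i+1}{j}$, not $\pm\binom{i+2}{j+1}$; together with $s_i[\C P^i]=i+1=\binom{i+1}{1}$ the relevant gcd is $\gcd_{1\le k\le i}\binom{i+1}{k}$, which indeed equals~$m_i$. Second, and more substantively, computing this gcd only shows that $m_i$ lies in the image of $s_i$, i.e.\ $d_i\mid m_i$. You still need the reverse divisibility $m_i\mid s_i[M]$ for \emph{every} $2i$-dimensional stably complex $M$. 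This does not follow from the gcd of a finite list of examples unless you already know those examples span the indecomposables, which is circular. The standard fix is either to read it off the Hurewicz image using the Adams spectral sequence computation you already invoked for part~(a), or to give the direct characteristic-class argument that $s_i\equiv 0\pmod p$ whenever $i+1=p^k$.
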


Part (c) of Theorem~\ref{Ustructure} can be restated by saying that the universal characteristic numbers homomorphism $e\colon\OU_{2n}\to H_{2n}(BU)$ is a monomorphism is each dimension. The latter homomorphism (for the normal characteristic numbers) can be identified with the composite
\[
  \OU_{2n}=\pi_{2n+2N}(MU(N))\longrightarrow H_{2n+2N}(MU(N))\longrightarrow 
  H_{2n}(BU(N))
\]
of the Hurewicz homomorphism and Thom isomorphism. By Serre's Theorem, the Hurewicz homorphism above is an isomorphism modulo the class of finite groups. The injectivity of $e\colon\OU_{2n}\to H_{2n}(BU)$ then follows from the absence of torsion in~$\OU$.

The ring isomorphism  $\OU\cong\mathbb{Z}[a_{i}\colon i\ge 1]$, $\deg{a_i}=2i$, was proved in 1960 by Novikov~\cite{novi60} using the Adams spectral sequence and the structure theory of Hopf algebras. A more detailed account of this argument was given in~\cite{novi62}. Milnor's work~\cite{miln60} contained only the proof of the additive isomorphism (including the absence of torsion in~$\OU$ and the ranks calculation); the ring structure of $\OU$ was intended to be included in the second part of~\cite{miln60}, which was not published. Another geometric proof for the ring isomorphism was given by Stong in 1965 and included in his monograph~\cite{ston68}. All these results preceded the introduction of formal group law techniques in cobordism by Novikov~\cite{novi67}. Quillen~\cite{quil71} used formal group laws and tom Dieck's power operations to prove that the classifying map from Lazard's universal formal group law to the formal group law in complex cobordism induces the ring isomorphism $\mathbb{Z}[a_{i}\colon i\ge 1]\cong\OU$.


\begin{construction}[formal group law of geometric cobordisms]\label{confgl}
Let $X$ be a cellular space. Since $\C P^\infty\simeq\MU(1)$, the cohomology group $H^2(X)=[X,\C P^\infty]$ is a subset (not a subgroup!) of the cobordism group $U^2(X)$. That is, every element $x\in H^2(X)$ determines a cobordism class $u_x\in U^2(X)$. The elements of $U^2(X)$ obtained in this way are called \emph{geometric cobordisms} of~$X$.

When $X=X^{k}$ is a manifold, a class $x\in H^2(X)$ is Poincar\'e dual to a submanifold $M\subset X$ of codimension~$2$ with a fixed complex structure on the normal bundle. Furthermore, if $X$ is a stably complex manifold representing a bordism class $[X]\in\OU_k$, then we have 
\[
  [M]=\varepsilon D_U(u_x)\in\OU_{k-2},
\]
where $D_U\colon U^2(X)\to U_{k-2}(X)$ is the Poincar\'e--Atiyah duality map and $\varepsilon\colon U_{k-2}(X)\to\OU_{k-2}$ is the augmentation. By definition, $\varepsilon D_U$ is the Kronecker product with~$[X]$.

Given two geometric cobordisms $u,v\in U^2(X)$ corresponding to elements $x,y\in
H^2(X)$ respectively, we denote by $u+_{\!{}_H}\!v$ the geometric
cobordism corresponding to the cohomology class $x+y$. Then following relation holds in $U^2(X)$:
\begin{equation}\label{fglgc}
  u+_{\!{}_H}\!v=F_U(u,v)=u+v+\sum_{k\ge1,\,l\ge1}\alpha_{kl}\,u^kv^l,
\end{equation}
where the coefficients $\alpha_{kl}\in\varOmega_U^{-2(k+l-1)}$ do
not depend on $u,v$ and~$X$. The series $F_U(u,v)$ given by~\eqref{fglgc} is
a (commutative one-dimensional) formal group law over the complex cobordism ring~$\varOmega_U$. It was introduced by Novikov in~\cite[\S5, Appendix~1]{novi67} and called the \emph{formal group law of geometric cobordisms}.
More details of this  construction can be found in~\cite{buch12} and~\cite[Appendix~E]{bu-pa15}.
\end{construction}

We have
\[
  U^*(BU)=\varOmega_U[[\cf_1,\cf_2,\ldots,\cf_i,\ldots]],
\]
where $\cf_i$ is the $i$th universal Conner--Floyd characteristic class, and the identity above is understood as an isomorphism between the graded components.
For a complex vector bundle $\xi$ over a cellular space $X$, the Conner--Floyd characteristic class $\cf_i(\xi)\in U^{2i}(X)$ is defined as the pullback $f^*(\cf_i)$ along the map $f\colon X\to\BU$ classifying~$\xi$.

Let $\eta$ be the tautological line bundle over~$\C P^\infty$ and let $\bar\eta$ be its conjugate (the line bundle of a hyperplane). The class $u=\cf_1(\bar\eta)\in U^2(\C P^\infty)$ is
the cobordism class corresponding to the inclusion $\C P^\infty=\BU(1)\to\MU(1)$, which is a homotopy equivalence. In other words, $\cf_1(\bar\eta)$ is the geometric cobordism corresponding to the first Chern class $c_1(\bar\eta)\in H^2(\C P^\infty)$.  Then $\cf_1(\eta)\in U^2(\C P^\infty)$ is the power series inverse to $u=\cf_1(\bar\eta)$ in the formal group law~$F_U$; we denote this series by~$\overline{u}$.

Similarly, for a complex line bundle $\xi$ over a cellular space~$X$, the first Conner--Floyd class $\cf_1(\xi)\in U^2(X)$ coincides with the geometric cobordism corresponding to $c_1(\xi)\in H^2(X)$. 
The formal group law of geometric cobordisms gives the expression of the first Conner--Floyd class of the tensor product $\xi\otimes\zeta$ of line bundles over~$X$ in terms of the classes
$u=\cf_1(\xi)$ and $v=\cf_1(\zeta)$:
\[
  \cf_1(\xi\otimes\zeta)=F_U(u,v).
\]

If $\xi$ is a complex vector bundle of an arbitrary dimension over~$X$, then the geometric cobordism corresponding to $c_1(\xi)\in H^2(X)$ is $\cf_1(\det\xi)\in U^2(X)$ (it is defined by the map $X\to\C P^\infty$ classifying the determinant line bundle $\det\xi$). In general,
$\cf_1(\det\xi)\ne\cf_1(\xi)$. Consider the determinant homomorphism $\det\colon U\to U(1)$ and the corresponding map $\det\colon BU\to BU(1)=\C P^\infty$. We define the universal characteristic class $\du=\det^*u\in U^2(BU)$. Then we have $\du(\xi)=\cf_1(\det\xi)$.

\section{$SU$-manifolds and the $SU$-spectrum}\label{SUsect}

A \emph{special unitary structure} (an \emph{SU-structure}) on a manifold $M$ is a stably complex structure $c_{\mathcal T}$, see~\eqref{stabcom}, with a choice of an $SU$-structure on the complex vector bundle~$\xi$. Equivalently, an $SU$-structure is the homotopy class of a lift of the map $M\to BU$ classifying $\xi$ to a map $M\to BSU$. A stably complex manifold $(M,c_{\mathcal T})$ admits an $SU$-structure if and only if 
the first (integral) Chern class of $\xi$ vanishes: $c_1(\xi)=0$. Furthermore, such an $SU$-structure is unique if $H^1(M;\mathbb Z)=0$ (the latter follows by considering the homotopy fibration sequence corresponding to the fibration $\BSU\to\BU$ with fibre~$S^1$). An \emph{$SU$-manifold} is a stably complex manifold with a fixed $SU$-structure. By some abuse of notation, we often refer to a stably complex manifold $M$ with $c_1(M)$ as an $SU$-manifold, meaning that such a manifold admits an $SU$-structure.

There is a generalised homology theory resulting from $SU$-structures, known as $SU$-bordism. As in the case of $U$-bordism, it can be defined either geometrically or homotopically. 

In the geometric approach, the bordism group $SU_n(X)$ is defined as the set of bordism classes of maps $M\to X$, where $M$ is an $n$-dimensional $SU$-manifold. The homotopic approach is based on the notion of the \emph{$\MSU$-spectrum}. Let $\widetilde\eta_n$ denote the universal (tautological) complex $n$-plane bundle over $\BSU(n)$. The Thom space of $\widetilde\eta_n$ is denoted by $\MSU(n)$. The Thom spectrum $\MSU=\{Z_i,\; \varSigma Z_{i}\to Z_{i+1}\colon i\ge 0\}$ has $Z_{2k}=\MSU(k)$ and $Z_{2k+1}=\varSigma Z_{2k}$. The \emph{$\SU$-bordism} and \emph{cobordism} groups of a cellular pair $(X,A)$ are given by
\[
\begin{aligned}
  SU_n(X,A)&=\lim_{k\to\infty}\pi_{2k+n}\bigl((X/A)\wedge\MSU(k)\bigr),\\
  SU^n(X,A)&=\lim_{k\to\infty}\bigl[\varSigma^{2k-n}(X/A),\MSU(k)\bigr].
\end{aligned}
\]
These define a multiplicative generalised (co)homology theory, as in the case of $U$-bordism.

The \emph{$SU$-bordism ring} is defined as $\OSU=SU_*(\pt)$. 

Unlike $\OU$, the ring $\OSU$ has torsion. The first torsion element appears already in dimension~$1$: the fact that $\MSU(k)$ has no cells in dimensions $2k+1$ through $2k+3$ implies that $\OSU_1=\pi_1^s=\Z_2$. The generator $\theta$ of $\OSU_1$ is represented by a circle with a nontrivial framing inducing a nontrivial $SU$-structure. 

The first structure result on the ring $\OSU$ was a theorem of Novikov from 1962, showing that $\OSU$ becomes a polynomial ring if we invert~$2$ (otherwise it is not a polynomial ring, even modulo torsion).
Recall from Theorem~\ref{Ustructure} that a bordism class $[M^{2i}]\in\OU_{2i}$ is a  polynomial generator of $\OU$ whenever $s_i[M^{2i}]=\pm m_i$, where the numbers $m_i$ are defined in~\eqref{mi}. More intricate divisibility conditions on the $s_i$-number are required to identify polynomial generators in the ring $\OSU\otimes\Z[{\textstyle\frac12}]$.

\begin{theorem}[{Novikov~\cite[Appendix~1]{novi62}}]\label{noviosu}
$\OSU\otimes\Z[{\textstyle\frac12}]$ is a polynomial algebra with one generator
in every even degree~$\ge4$:
\[
  \OSU\otimes\Z[{\textstyle\frac12}]\cong
  \Z[{\textstyle\frac12}][y_i\colon i\ge2],\quad \deg y_i=2i.
\]
The bordism class of an $SU$-manifold $M^{2i}$ may be taken to be the $2i$-dimensional generator $y_i$ if and only if
\[
  s_i[M^{2i}]=\pm m_im_{i-1}\quad\text{up to a power of~$2$}.
\]
\end{theorem}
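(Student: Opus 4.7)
The plan is to prove this theorem by bootstrapping from the detailed structural description of $\OSU$ that will be developed in Sections~\ref{calcsect}--\ref{ringosusec}. The key preliminary inputs I expect to establish first are: (i) every torsion element of $\OSU$ has order $2$ (Theorem~\ref{osutors}), so after inverting $2$ the ring $\OSU\otimes\Z[\tfrac12]$ is torsion-free; (ii) the ring structure of $\mathcal W$ (Theorem~\ref{Wring}) as a polynomial ring on $x_1, x_i$ for $i\ge 3$ with $\partial x_1 = 2$ and $\partial x_{2i} = x_{2i-1}$; and (iii) the Conner--Floyd exact sequence~\eqref{CF5seq}. All three ultimately rest on the Adams--Novikov spectral sequence calculations for the $\MSU$ spectrum.

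With these in hand, the first main step is to identify $\OSU\otimes\Z[\tfrac12]$ inside $\mathcal W\otimes\Z[\tfrac12]$. Since $\theta\in\OSU_1\cong\Z_2$ vanishes after inverting $2$, the exact sequence~\eqref{CF5seq} collapses into short exact sequences
\[
  0\to\OSU_{2n}\otimes\Z[\tfrac12]\xrightarrow{\alpha}\mathcal W_{2n}\otimes\Z[\tfrac12]\xrightarrow{\beta}\OSU_{2n-2}\otimes\Z[\tfrac12]\to 0.
\]
The relation $\alpha\beta=-\partial$ combined with the (now immediate) injectivity of $\alpha$ identifies $\OSU_*\otimes\Z[\tfrac12]$ with $\ker\bigl(\partial\colon\mathcal W_*\to\mathcal W_{*-2}\bigr)\otimes\Z[\tfrac12]$ as a graded subring of $\mathcal W\otimes\Z[\tfrac12]$.

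Next I compute this kernel. Over $\Z[\tfrac12]$, the polynomial ring $\mathcal W\otimes\Z[\tfrac12]$ admits the alternative generating set $x_1, x_{2k-1}, 2x_{2k}-x_1x_{2k-1}$ for $k\ge2$. Using the twisted Leibniz rule $\partial(a\mathbin{*}b)=a\mathbin{*}\partial b+\partial a\mathbin{*}b-x_1\mathbin{*}\partial a\mathbin{*}\partial b$ together with $\partial x_1=2$, a direct check shows that each of $x_1^2$, $x_{2k-1}$ and $2x_{2k}-x_1x_{2k-1}$ lies in $\ker\partial$. Since $\partial x_1=2$ is a unit in $\Z[\tfrac12]$, a standard derivation-style argument (filtering by $x_1$-degree) shows that $\ker(\partial)\otimes\Z[\tfrac12]$ is precisely the polynomial subring on these cycles, giving one generator $y_i$ in every even degree $2i\ge4$. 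The final step is to verify the $s$-number condition by translating the twisted product back into the ordinary product via $a\mathbin{*}b = a\cdot b + 2[V^4]\partial a\cdot\partial b$ and computing $s_i$ of each concrete generator; this yields $s_i(y_i)=\pm m_im_{i-1}$ for $i$ odd and $\pm2m_im_{i-1}$ for $i$ even, both of which are $\pm m_im_{i-1}$ up to a power of $2$. The main obstacle is precisely this last verification: the twisted product obscures the characteristic numbers of $\alpha(y_i)\in\OU$, and one must carefully track how the extra divisibility by $m_{i-1}$ (absent from Theorem~\ref{Ustructure}) emerges. This extra factor is exactly the content of Proposition~\ref{pkSU} on the $p$-divisibility of $s_{p^k}[M]$ for $SU$-manifolds, and matching the parity-of-$i$ cases to the correct image of $y_i$ in $\mathcal W$ is the only real combinatorial work.
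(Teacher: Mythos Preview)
Your plan has a genuine circularity problem. You propose to derive Theorem~\ref{noviosu} from Theorem~\ref{Wring} (the polynomial structure of $\mathcal W$ with its $\partial$-relations), but in this paper the proof of Theorem~\ref{Wring} \emph{uses} Theorem~\ref{noviosu} as an input. Specifically, in the proof of Theorem~\ref{Wring} the existence of elements $y_i\in\OSU_{2i}$ with $s_i(y_i)=2^{k_i}m_im_{i-1}$ is taken from Theorem~\ref{noviosu} (see~\eqref{yiki}); these $y_i$ are then combined with the Stong classes $b_i$ to produce the generators $x_i$ with the exact $s$-numbers $m_im_{i-1}$. A second invocation occurs at the end of that proof, where Theorem~\ref{noviosu} is used again to certify that the elements~\eqref{yidefi} are polynomial generators of $\OSU\otimes\Z[\tfrac12]$, which is precisely what establishes surjectivity of $\varphi\otimes\Z[\tfrac12]$. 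So both the generator construction and the rank count in Theorem~\ref{Wring} rest on Theorem~\ref{noviosu}.

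In fact the paper does not give an independent proof of Theorem~\ref{noviosu} at all: it is quoted from Novikov's 1962 paper and used as a black box throughout Part~I. The later refinement, Theorem~\ref{SUstructure}, pins down the exact $s$-numbers (not just up to a power of~$2$), but its proof also goes through Theorem~\ref{Wring} and hence through Theorem~\ref{noviosu}. Your inputs (i) and (iii)~--- Theorem~\ref{osutors} and the Conner--Floyd exact sequence --- are indeed available without circularity, since they come from the Adams--Novikov spectral sequence and Conner--Floyd's Theorem~\ref{cfcoh}. But input (ii) is not: without an independent argument that $\mathcal W$ is polynomial over $\Z$ (or at least over $\Z[\tfrac12]$) on generators with the stated $\partial$-behaviour, your kernel computation has nothing to stand on. If you want a self-contained route, you would need to replace the appeal to Theorem~\ref{noviosu} inside the proof of Theorem~\ref{Wring} by some direct rank calculation for $\OSU\otimes\Z[\tfrac12]$ (e.g.\ via $H_*(\BSU;\Z[\tfrac12])$ and the Hurewicz map), which is essentially what Novikov's original argument does.
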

Note that up to a power of $2$ we have
\[
  m_im_{i-1}=\begin{cases}
  1&\text{if $i\ne p^k$, $i\ne p^k-1$ for an odd prime $p$},\\
  p&\text{if $i= p^k$ or $i= p^k-1$ for an odd prime $p$}.
  \end{cases}
\]
The extra divisibility in dimensions $2i=2p^k$ comes from the following simple observation:

\begin{proposition}\label{pkSU}
If $M^{2n}$ is an $SU$-manifold of dimension $2n=2p^k$ for a prime $p$, then
\[
  s_n[M^{2n}]=0\mod p.  
\]
\end{proposition}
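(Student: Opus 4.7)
The plan is to establish a universal polynomial identity, valid modulo $p$, relating the characteristic class $s_n$ to a power of $c_1$, and then to specialize to an $SU$-manifold where $c_1$ vanishes.

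First, I would recall the definition of $s_n$ via Chern roots. If $x_1,\ldots,x_d$ denote the formal Chern roots of a complex vector bundle $\xi$, so that $c_i(\xi)$ is the $i$-th elementary symmetric polynomial in the $x_j$, then by definition the characteristic class $s_n(\xi)$ is the value on $\xi$ of the Newton power sum $s_n = x_1^n + \cdots + x_d^n$. In particular $\sum_j x_j = c_1(\xi)$.

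Next, I would invoke the Frobenius (``freshman's dream'') identity in characteristic~$p$. Since $(a+b)^p \equiv a^p + b^p \pmod p$, iterating $k$ times yields
\[
  \Bigl(\sum_{j} x_j\Bigr)^{p^k} \equiv \sum_{j} x_j^{p^k} \pmod p
\]
as an identity of symmetric polynomials in $x_1,\ldots,x_d$ with integer coefficients. Reading this off as a polynomial identity in the Chern classes, we obtain the universal congruence
\[
  s_{p^k} \equiv c_1^{p^k} \pmod p
\]
in $\mathbb Z[c_1, c_2, \ldots]$, and hence $s_{p^k}(\xi) \equiv c_1(\xi)^{p^k} \pmod p$ for any complex vector bundle $\xi$ over any space.

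Finally, I would specialize to $\xi = \mathcal T M$ for an $SU$-manifold $M^{2p^k}$. Since the $SU$-structure forces $c_1(\mathcal T M) = 0$, the above congruence gives $s_{p^k}(\mathcal T M) \equiv 0 \pmod p$ as a class in $H^{2p^k}(M;\mathbb Z)$. Evaluating on the fundamental homology class $[M]$ then yields $s_n[M^{2n}] \equiv 0 \pmod p$, as required. There is no serious obstacle here: the only subtlety is the elementary verification that the Frobenius congruence among symmetric polynomials descends to a congruence among the integral polynomials expressing $s_n$ and $c_1^n$ in terms of the elementary symmetric functions, which is automatic since both sides are symmetric with integer coefficients.
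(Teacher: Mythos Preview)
Your proof is correct and is essentially the same as the paper's: the paper writes the one-line identity $s_n=x_1^n+\cdots+x_n^n\equiv(x_1+\cdots+x_n)^n=c_1^n\pmod p$ for $n=p^k$ and then uses $c_1=0$ for an $SU$-manifold. Your version simply spells out the Frobenius step and the passage from the symmetric-polynomial identity to the integral congruence among characteristic classes more carefully.
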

\begin{proof}
For $n=p^k$ we have
\[
  s_n(M^{2n})=x_1^n+\cdots+x_n^n\equiv(x_1+\cdots+x_n)^n=c_1^{n}(M^{2n})=0\mod p\qedhere
\]
\end{proof}

As in the case of unitary bordism, Theorem~\ref{noviosu} implies that the $SU$-bordism class of an $SU$-manifold is determined modulo $2$-primary torsion by its characteristic numbers. By the result of Anderson, Brown and Peterson~\cite{a-b-p66}, $KO$-theory chracteristic numbers together with the ordinary characteristic numbers determine the $SU$-bordism class completely.

\section{Operations in complex cobordism and the Adams--Novikov spectral sequence}\label{opersect}

A (stable) \emph{operation} $\theta$ of degree $n$ in complex cobordism is a family of additive maps
\[
  \theta\colon U^k(X,A)\to U^{k+n}(X,A),
\] 
defined for all cellular pairs $(X,A)$, which are functorial in $(X,A)$ and commute with the suspension isomorphisms. The set of all operations is a ring with respect to addition and composition; furthermore, there is an algebra structure over the ring~$\varOmega_U$.  This algebra is denoted by~$A^U$; it was described in the works of Landweber~\cite{land67} and Novikov~\cite[\S5]{novi67}.

\begin{construction}[operations and characteristic classes]
There is an isomorphism of $\varOmega_U$-modules
\[
  A^U\cong U^*(MU)=\lim\limits_{\longleftarrow}U^{*+2N}(\MU(N)).
\]
Given an element $a \in U^n(MU)$ of $A^U$ represented by a map of spectra $a \colon MU \to \varSigma^n MU$, we denote the corresponding operation by 
\[
  a^*\colon U^*(X)\to U^{*+n}(X),
\]
where $X$ is cellular space. The operation $a^*$ is described as follows.
Given an element $x\in U^m(X)$ represented by a map $x \colon X \to \varSigma^m MU$, the element $a^*x \in U^{m+n}(X)$ is represented by the composite 
\[
  X \stackrel{x} \longrightarrow \varSigma^m MU \stackrel{\varSigma^m a} 
  \lllra \varSigma ^{m+n} MU.
\]
This defines a left action of $A^U$ on the cobordism groups of~$X$, and turns $U^*$ into a functor to the category of graded left $A^U$-modules.

There is a similarly defined action 
\[  
  a_*\colon U_*(X)\to U_{*-n}(X)
\]
 of $A^U$ on the bordism groups. Given an element $x\in U_m(X)$ represented by a map $x \colon \varSigma^m S \to X \wedge MU$, the element $a_*x \in U_{m-n}(X)$ is represented by the composite 
\[
  \varSigma^{m-n} S \xrightarrow{\varSigma^{-n} x} \varSigma^{-n} (X \wedge MU)  
  \xrightarrow{\varSigma^{-n} (1 \wedge a)} X \wedge MU.
\]

There are natural Thom isomorphisms 
\[
  \varphi^N_* \colon U_{n+2N} (MU(N)) \to U_n (BU(N)),
  \quad \varphi_N^* \colon U^n (BU(N)) \to U^{n+2N} (MU(N)).
\]  
As $U_n(BU)$ is the direct limit of $U_n(BU(N))$, and $U^n(BU)$ is the inverse limit of $U^n(BU(N))$, and similarly for $MU$, we also have the stable Thom isomorphisms 
\[
  \varphi_* \colon U_{n} (MU) \to U_n (BU),\quad
  \varphi^* \colon U^n (BU) \to U^{n} (MU).
\]  
It follows that every universal characteristic class $\alpha\in U^n(BU)$ defines an operation $a = \varphi^* (\alpha)\in U^n(MU)$, and vice versa.

If $x\in U_m(X)$ is represented by a singular manifold $M^m \stackrel{f} \longrightarrow X$, then $a_* x$ can be interpreted geometrically as follows. Let $\alpha=(\varphi^*)^{-1}a$ be the characteristic class corresponding to~$a$.
Consider $\alpha (-\mathcal TM) \in U^n(M^m)$, where $\mathcal TM$ is the tangent bundle and  $-\mathcal TM$ is the stable normal bundle of~$M$. Applying the Poincar\'e--Atiyah duality operator $D_U \colon U^n(M^m) \to U_{m-n}(M^m)$ we obtain the element $D_U \alpha (-\mathcal TM) \in U_{m-n}(M)$ represented by $Y_{\alpha} \stackrel{f_{\alpha}} \longrightarrow M$. Then, $a_*x \in U_{m-n}(X)$ is represented by the composite $Y_{\alpha} \stackrel{f_{\alpha}} \longrightarrow M \stackrel{f} \longrightarrow X$.

There is an isomorphism of left $\varOmega_U$-modules 
\[
  A^U=U^*(MU)\cong\varOmega_U\mathbin{\widehat\otimes} S, 
\]
where $\widehat\otimes$ is the completed tensor product, and $S$ is the \emph{Landweber--Novikov algebra}, generated by the operations $S_\omega=\varphi^*(s^{\scriptscriptstyle U}_\omega)$ corresponding to universal characteristic classes $s^{\scriptscriptstyle U}_\omega\in U^*(BU)$ defined by symmetrising monomials $t_1^{i_1}\cdots t_k^{i_k}$ indexed by partitions $\omega=(i_1,\ldots,i_k)$. Therefore, any element $a\in A^U$ can be written uniquely as an infinite series $a = \sum_{\omega} \lambda_{\omega} S_{\omega}$ where $\lambda_{\omega}\in\varOmega_U$. The Hopf algebra structure of $S$ is described in~\cite{land67} and~\cite[\S5]{novi67}.
\end{construction}

Restricting to the case $X=\pt$, we obtain representations of $A^U$ on $\varOmega_U=U^*(\pt)$ and $\varOmega^U=U_*(\pt)$. Unlike the situation with the ordinary (co)homology, we have

\begin{lemma}[{see \cite[Lemma~3.1 and Lemma~5.2]{novi67}}]\label{faith}
The representations of $A^U$ on $\varOmega_U=U^*(\pt)$ and $\varOmega^U=U_*(\pt)$ are faithful.
\end{lemma}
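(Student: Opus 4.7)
The plan is to prove faithfulness on $\OU=U_*(\pt)$ by showing that any nonzero $a\in A^U$ is detected on some product-of-projective-space bordism class; faithfulness on $\varOmega_U=U^*(\pt)$ will then follow from the canonical identification $U^{-k}(\pt)\cong U_k(\pt)$, under which the cohomological and homological actions of $A^U$ correspond up to regrading.

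Suppose $a\in A^U$ acts as zero on $\OU$. Using the decomposition $A^U\cong\varOmega_U\widehat{\otimes}S$, write $a=\sum_\omega\lambda_\omega S_\omega$; I may assume $a$ is homogeneous of some degree $n$, so $\lambda_\omega\in\varOmega_U^{n-2|\omega|}\cong\OU_{2|\omega|-n}$. Assume for contradiction that $a\ne 0$, and pick a partition $\omega_0$ of minimal size $m:=|\omega_0|$ with $\lambda_{\omega_0}\ne 0$. I will then test the action on an arbitrary class $[M^{2m}]\in\OU_{2m}$: contributions with $|\omega|<m$ vanish by minimality, while contributions with $|\omega|>m$ vanish because $(S_\omega)_*[M^{2m}]$ lies in negative degree. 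Consequently,
\[
  a_*[M^{2m}]=\sum_{|\omega|=m}\lambda_\omega\cdot(S_\omega)_*[M^{2m}]\in\OU_{2m-n},
\]
and by the geometric description of the Landweber--Novikov operations recalled above, $(S_\omega)_*[M^{2m}]\in\OU_0=\Z$ equals the normal characteristic number $\bar s_\omega[M^{2m}]:=\langle s^U_\omega(-\mathcal TM),[M^{2m}]\rangle$.

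Thus $[M^{2m}]\mapsto a_*[M^{2m}]$ factors as
\[
  \OU_{2m}\xrightarrow{\;(\bar s_\omega)_{|\omega|=m}\;}\Z^{p(m)}\xrightarrow{\;L\;}\OU_{2m-n},
\]
where $p(m)$ is the number of partitions of $m$ and $L$ sends the standard basis vector $e_\omega$ to $\lambda_\omega$. The second arrow is nonzero, because $\lambda_{\omega_0}\ne 0$ and $\OU$ is torsion-free. The first arrow becomes surjective after $\otimes\Q$, since products of complex projective spaces provide a $\Q$-basis of $\OU_{2m}\otimes\Q$ on which the normal $s$-number matrix is triangular with nonzero diagonal entries (a standard consequence of Theorem~\ref{Ustructure}(c) and the invertible change of basis between the $c_\omega$, $s_\omega$ and $\bar s_\omega$ numbers). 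The composite is therefore nonzero, producing some $[M^{2m}]$ with $a_*[M^{2m}]\ne 0$, contradicting the assumption.

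The main obstacle lies in Step~2, namely the geometric identification $(S_\omega)_*[M^{2m}]=\bar s_\omega[M^{2m}]$: one must unwind the Thom-isomorphism description $S_\omega=\varphi^*(s^U_\omega)$ and track carefully whether tangential or normal Chern data enters (the orientation conventions around $-\mathcal TM$ are the fiddly part). Once this is in place, the rest is pure linear algebra, powered by the detection-by-characteristic-numbers content of the Milnor--Novikov theorem (Theorem~\ref{Ustructure}(c)).
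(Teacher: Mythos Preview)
Your argument is correct. The minimal-degree trick isolates the contribution of partitions of size $m$, and the identification $(S_\omega)_*[M^{2m}]=\langle s_\omega^{\scriptscriptstyle U}(-\mathcal TM),[M]\rangle\in\OU_0=\Z$ follows directly from the paper's geometric description of the $a_*$-action (the paragraph ending ``\dots represented by the composite $Y_\alpha\to M\to X$''); since the augmentation $U^0(\pt)\to H^0(\pt)$ is the identity, this really is the ordinary normal $s$-number. The linear-algebra step then goes through as you say, using that $\OU$ is torsion-free and that the $s_\omega$-numbers detect bordism (Theorem~\ref{Ustructure}(c)).

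The paper itself does not prove the lemma; it cites Novikov and in the subsequent Remark points to the general fact that for spectra $E,F$ of finite type, the map $F^*(E)\to\Hom^*(\pi_*(E),\pi_*(F))$ is injective whenever $\pi_*(F)$ and $H_*(E)$ are torsion-free. Specialising to $E=F=\MU$ gives the lemma in one stroke. Your route is the explicit, coordinate-based version of the same phenomenon: you are exhibiting the Kronecker pairing against $\OU$ and using the Milnor--Novikov theorem to see it is nondegenerate. The abstract statement is cleaner and reusable (and avoids the tangential-versus-normal bookkeeping you flag), while your hands-on argument has the virtue of being self-contained and making visible exactly which bordism class detects a given operation.
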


\begin{remark}
More generally, given spectra $E$, $F$ of finite type, the natural homomorphism $F^*(E) \to \Hom^*(\pi_*(E), \pi_*(F))$ is injective when $\pi_*(F)$ and $H_*(E)$ do not have torsion; see~\cite{rudy98} for details.
\end{remark}

Alongside with the representation of $A^U$ in the bordism $U_*(X)$ of any $X$, there is another representation $A^U$ in $U_*(BU)$ defined as follows.

\begin{construction}[representation of $A^U$ in $U_*(BU)$, $a \mapsto \widetilde a$]\label{atilde}
Let $a \in U^n(MU)$ be an element of $A^U$. We define 
\[
  \widetilde a:=\varphi_* a_* \varphi_*^{-1}\colon U_m(BU)\to U_{m-n}(BU).
\]
The geometrical meaning of this operation is described as follows.
Let $[M,\xi] \in U_m(BU)$ be a bordism class, where $\xi$ is the pullback of the (stable) tautological bundle over $BU$ along a singular manifold $M \to\BU$.  The element $a \in U^n(MU)$ defines a universal characteristic class $\alpha=(\varphi^*)^{-1}a \in U^n(BU)$ and a class $\alpha (\xi) \in U^n(M)$. Consider the Poincar\'e--Atiyah dual class $D_U (\alpha (\xi))=[Y_a, f_a]  \in U_{m-n} (M)$, where $Y_a \stackrel{f_a} \longrightarrow M$ is a singular manifold of~$M$. Then 
\[
  \widetilde a[M,\xi] 
  =[Y_a, f_a^* (\xi + \mathcal TM) - \mathcal TY_a] \in U_{m-n} (BU).
\]
Applying the augmentation $\varepsilon\colon U_*(\BU)\to\OU$ we obtain
\begin{equation}\label{kron1}
  \varepsilon(\widetilde a[M,\xi])=[Y_a]=\bigl\langle(\varphi^*)^{-1}a,[M,\xi]\bigr\rangle
  \in U_{m-n}(\pt)=\varOmega_{m-n}^U,
\end{equation}
where $\langle\;,\,\rangle$ denotes the Kronecker product in (co)bordism of~$BU$.
\end{construction}

\begin{lemma}\label{atilfaith}
The representation $A^U$ on $U_*(BU)$ given by $a \mapsto \widetilde a$ is faithful.
\end{lemma}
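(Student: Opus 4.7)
My plan is to reduce the faithfulness of $a\mapsto \widetilde a$ on $U_*(BU)$ to the faithfulness of the $A^U$-action on $\varOmega^U$ already established in Lemma~\ref{faith}. Since $\varphi_*\colon U_*(\MU)\to U_*(BU)$ is an isomorphism, the operator $\widetilde a=\varphi_*a_*\varphi_*^{-1}$ vanishes on $U_*(BU)$ if and only if $a_*$ vanishes on $U_*(\MU)\cong\pi_*(\MU\wedge\MU)$. It therefore suffices to show that the natural action of $A^U$ on $U_*(\MU)$ is faithful.

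The key observation is that $\MU$ is a commutative ring spectrum, with unit $\eta\colon S\to\MU$ and product $\mu\colon\MU\wedge\MU\to\MU$ satisfying $\mu\circ(\eta\wedge 1_{\MU})=1_{\MU}$. Hence the left unit
\[
  \eta_L:=\eta\wedge 1_{\MU}\colon\MU\longrightarrow\MU\wedge\MU
\]
induces a split monomorphism
\[
  (\eta_L)_*\colon\varOmega^U=\pi_*(\MU)\hookrightarrow\pi_*(\MU\wedge\MU)=U_*(\MU)
\]
with retraction $\mu_*$. I claim that this inclusion is $A^U$-equivariant. Indeed, for a class $a\colon\MU\to\varSigma^n\MU$ in $A^U$ and $x\colon\varSigma^mS\to\MU$ representing an element of $\varOmega^U$, both $a_*\bigl((\eta_L)_*x\bigr)$ and $(\eta_L)_*(a_*x)$ simplify by the interchange law for smash products to the common composite $(\eta\wedge a)\circ x$, using $(1\wedge a)\circ(\eta\wedge 1)=\eta\wedge a=(\eta\wedge 1)\circ(1\wedge a)$.

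Granting equivariance, if $a_*=0$ on $U_*(\MU)$ then for every $x\in\varOmega^U$ we have $(\eta_L)_*(a_*x)=a_*\bigl((\eta_L)_*x\bigr)=0$, and the injectivity of $(\eta_L)_*$ forces $a_*x=0$. Thus $a_*$ annihilates $\varOmega^U$, and Lemma~\ref{faith} delivers $a=0$.

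The only real hurdle I anticipate is notational: tracking which smash factor the operation $a$ acts on and verifying the interchange identity behind the equivariance check. An alternative route would use~\eqref{kron1} to deduce from $\varepsilon(\widetilde a[M,\xi])=\langle(\varphi^*)^{-1}a,[M,\xi]\rangle=0$ for all $[M,\xi]\in U_*(BU)$ that $(\varphi^*)^{-1}a=0$ by nondegeneracy of the Kronecker pairing between $U^*(BU)=\varOmega_U[[\cf_1,\cf_2,\ldots]]$ and $U_*(BU)$; but this would require additional duality input, whereas the approach above rests only on Lemma~\ref{faith}.
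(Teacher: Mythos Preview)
Your proof is correct and follows essentially the same strategy as the paper's: embed $\varOmega^U$ as an $A^U$-equivariant summand of the representation and invoke Lemma~\ref{faith}. The paper realises this embedding geometrically as $[M]\mapsto[M,-\mathcal TM]$ in $U_*(BU)$ (equivariance being the identity $\widetilde a[M,-\mathcal TM]=[Y_a,-\mathcal TY_a]$ from Construction~\ref{atilde}), whereas you realise it homotopically as the ring-spectrum left unit $(\eta\wedge 1)_*\colon\pi_*(\MU)\hookrightarrow\pi_*(\MU\wedge\MU)=U_*(\MU)$; under the Thom isomorphism these are the same idea.
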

\begin{proof}
Setting $\xi = -\mathcal TM$ in Construction~\ref{atilde}, we obtain 
\[
  \widetilde a [M, -\mathcal TM] = [Y_a, -\mathcal TY_a].
\]
This implies that we can consider the representation $a\mapsto a_*$ on $U_*(\pt)$ as a subrepresentation of the representation $a\mapsto\widetilde a$ on~$U_*(BU)$. Since $a\mapsto a_*$ is faithful by Lemma~\ref{faith}, the representation~$a\mapsto\widetilde a$ is also faithful.
\end{proof}

The main properties of the cohomological Adams--Novikov spectral sequence for complex cobordism are summarised next. Details can be found in~\cite{novi67}; see also~\cite{mish67}, \cite{baas70}, \cite{botv92}.

\begin{theorem}[Adams--Novikov spectral sequence for complex cobordism]\label{ANth}
Let $X$ be a connective spectrum whose ordinary homology with $\Z$-coefficients is torsion free and finitely generated in each dimension. Then there exists a spectral sequence 
\[
  \{E_r^{p, q},\quad d_r\colon E^{p, q}_r \to E^{p+r, q+r-1}_r,\quad r\ge2\} 
\]
with the following properties:

\begin{itemize}
\item[(a)] $E^{p, q}_2 = \Ext^{p,q}_{A^U}(U^*(X), U^*(\pt))$, where $U^*$ is the complex cobordism theory and $A^U=U^*(MU)$ is the algebra of operations.

\item[(b)] There exists a filtration 
\[
  \pi_n (X) = F^{0, n} \supset F^{1, n+1} \supset F^{2, n+2} \supset \cdots,
  \quad \bigcap_{s \geqslant 0} F^{s, n+s} = 0 
\]  
whose adjoint bigraded module coincides with the infinity term of the spectral sequence: $E^{p, q}_{\infty} \cong F^{p, q} / F^{p+1, q+1}$.

\item[(c)] The edge homomorphism 
\[
  \pi_n (X) = F^{0, n} \to E^{0, n}_{\infty} \to E^{0, n}_2 = \Hom_{A^U}^n(U^*(X), U^*(\pt))
\]  
coincides with the naturally defined map.
\end{itemize}
Furthermore, if $X$ is a ring spectrum, then the spectral sequence is multiplicative.
\end{theorem}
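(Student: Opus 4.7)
The plan is to build the canonical $\MU$-based Adams tower for $X$ and extract the spectral sequence by applying $\pi_*$. Let $\overline{\MU}$ denote the fiber of the unit map $S\to\MU$ of the ring spectrum, sitting in a cofiber sequence $\overline{\MU}\to S\to\MU$. Smashing repeatedly with $X$ yields the canonical resolution
\[
  X=X_0\longleftarrow X_1\longleftarrow X_2\longleftarrow\cdots,
  \qquad X_s:=X\wedge\overline{\MU}{}^{\wedge s},
\]
together with cofiber sequences $X_{s+1}\to X_s\to K_s$, where $K_s:=X_s\wedge\MU$. Applying $\pi_*$ assembles these into an exact couple whose derived spectral sequence satisfies $E_1^{s,t}=\pi_{t-s}(K_s)$, with $d_1$ induced by the boundary $K_s\to\varSigma X_{s+1}\to\varSigma K_{s+1}$. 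The image filtration $F^{s,n+s}:=\Im\bigl(\pi_n(X_s)\to\pi_n(X)\bigr)$ is the filtration claimed in~(b).

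For the $E_2$-identification in~(a), apply $U^*(\cdot)$ to the tower. Because $U^*(\MU)=A^U$, and the hypothesis on $H_*(X;\Z)$ yields a K\"unneth-type isomorphism $U^*(Y\wedge\MU)\cong U^*(Y)\mathbin{\widehat\otimes}_{\varOmega_U}A^U$ at each stage, every $U^*(K_s)$ is a free $A^U$-module of finite type in each degree, and the induced complex
\[
  \cdots\to U^*(K_2)\to U^*(K_1)\to U^*(K_0)\to U^*(X)\to0
\]
is a free $A^U$-resolution of $U^*(X)$. Freeness of $U^*(K_s)$ allows one to identify $\pi_{t-s}(K_s)=\Hom_{A^U}^{t-s}\bigl(U^*(K_s),U^*(\pt)\bigr)$, so that $E_1^{s,*}$ becomes the degree-$s$ term of the Hom-complex computing $\Ext^{s,*}_{A^U}(U^*(X),U^*(\pt))$, whence~(a).

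Convergence in~(b) reduces to connectivity: since $\overline{\MU}$ is $1$-connected, $X_s$ is roughly $(s-1)$-connected above the connectivity of~$X$, so $\pi_n(X_s)=0$ once $s$ is sufficiently large, giving $\bigcap_s F^{s,n+s}=0$ and $E^{p,q}_\infty\cong F^{p,q}/F^{p+1,q+1}$. The edge map in~(c) arises from the tautological factorisation $\pi_n(X)=F^{0,n}\twoheadrightarrow E^{0,n}_\infty\hookrightarrow E^{0,n}_2$; unwinding the construction identifies it with the map sending a homotopy class $\varSigma^nS\to X$ to the induced $A^U$-module homomorphism in $U$-cohomology. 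Finally, when $X$ is a ring spectrum, the multiplications on $X$ and on $\MU$ provide compatible pairings $X_p\wedge X_q\to X_{p+q}$ on the Adams tower; passing to $\pi_*$ turns these into a pairing of exact couples and yields the claimed multiplicative structure on the spectral sequence.

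The main obstacle is the $E_2$-identification: one must verify that the complex of $A^U$-modules produced above is genuinely a free resolution of $U^*(X)$ (not merely an acyclic complex with free terms), and that the duality pairing between $\pi_*$ and $\Hom$ commutes with the differentials. The torsion-freeness and finite generation of $H_*(X;\Z)$ enter precisely here: via the Atiyah--Hirzebruch spectral sequence for complex cobordism they ensure that $U^*(X)$ is free of finite type over $\varOmega_U$ in each degree, so that the K\"unneth isomorphism holds without $\mathop{\mathrm{Tor}}$-corrections and the $A^U$-module structure on $E_1$ is the one predicted by homological algebra.
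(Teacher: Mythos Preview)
The paper does not actually prove this theorem; it is stated as a known result with the sentence ``Details can be found in~\cite{novi67}; see also~\cite{mish67}, \cite{baas70}, \cite{botv92}'' and no further argument. So there is no proof in the paper to compare against.

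Your sketch is the standard construction of the Adams--Novikov spectral sequence via the canonical $\MU$-Adams resolution, and it is essentially correct as an outline. A couple of points deserve a bit more care if you want a complete proof rather than a sketch. First, when you invoke the K\"unneth isomorphism $U^*(Y\wedge\MU)\cong U^*(Y)\mathbin{\widehat\otimes}_{\varOmega_U}A^U$ ``at each stage'', you need this for $Y=X_s=X\wedge\overline{\MU}{}^{\wedge s}$, not just for $Y=X$; this requires observing that $\overline{\MU}$ (and hence each $X_s$) also has torsion-free, finitely generated homology, so the hypothesis propagates up the tower. Second, the assertion that the $U^*$-complex is a \emph{resolution} (not just a complex of free modules) is the heart of the $E_2$-identification; the standard argument is that after smashing with $\MU$ the tower splits, which gives exactness of the $U^*$-complex. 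You flag this as ``the main obstacle'' but do not actually carry it out. These are genuine steps rather than formalities, but they are routine and are exactly what the cited references supply.
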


\begin{remark}
The natural map $h \colon \pi_n (X) \to \Hom_{A^U}^n(U^*(X), U^*(pt))$ in Theorem~\ref{ANth}~(c) is defined as follows. Given an element $\alpha \in \pi_n (X)$ represented by a map $f \colon \varSigma^n S \to X$ and an element $\beta \in U^p(X)$ represented by a map $g \colon X \to \varSigma^p \MU$, the element $h (\alpha)(\beta) \in U^{p-n}(\pt)$ is represented by the composite
\[
  \varSigma^n S \stackrel{f}\lllra
  X \stackrel {g} \lllra \varSigma^{p} MU.
\]
\end{remark}

\section{The $A^U$-module structure of $U^*(MSU)$}\label{UMSUsect}

In order to apply Theorem~\ref{ANth} to the special unitary bordism spectrum $\MSU$ we need to describe the $A^U$-module $U^*(\MSU)$. The main result here (Theorem~\ref{umsu2}) is due to Novikov. We provide a complete proof by filling in some details missing in~\cite{novi67}.

Consider the universal characteristic class $\du\in U^2(BU)$ introduced at the end of Section~\ref{combor}, $\du(\xi)=\cf_1(\det\xi)$. We also set $\bdu=\cf_1(\overline{\det\xi})$. The spectral sequence of the fibration $\BSU \to \BU\stackrel{\det}\longrightarrow BU(1)$ implies that the homomorphism $U^*(\BU)\to U^*(\BSU)$ is surjective and its kernel is the ideal $I(\du)$ generated by~$\du$. Using the Thom isomorphisms 
\[
  \varphi^* \colon U^*(\BSU) \to U^*(\MSU)\quad\text{and}\quad
  \varphi^* \colon U^*(\BU)  \to U^*(\MU),
\]
we obtain that the natural map $MSU\to MU$ induces an epimorphism $U^*(\MU)  \to U^*(\MSU)$ with kernel $\varphi^*(I(\du))$. As $U^*(\MU)  \to U^*(\MSU)$ is an $A^U$-module map, we obtain 
\begin{equation}\label{umsu1}
  U^*(\MSU) = A^U/\varphi^*(I (\du))\quad
  \text{ as an $A^U$-module}.
\end{equation}
This is the first description of the required $A^U$-module structure.
 
\medskip

Next we define some important operations in $A^U$. Recall that every characteristic class $\alpha\in U^*(\BU)$ defines an operation $\varphi^*(\alpha)\in A^U=U^*(\MU)$.

\begin{construction}[operations $\varDelta_{(k_1, k_2)}$]\label{Delta}
Given positive integers $k_1$, $k_2$, define
\[
  \varDelta_{(k_1, k_2)} = \varphi^* \bigl((\bdu)^{k_1}(\du)^{k_2}\bigr) \in   
  (A^U)^{2k_1+2k_2}.
\]
The corresponding operation $\widetilde \varDelta_{(k_1, k_2)}\colon U_*(BU)\to U_{*-2k_1-2k_2}(BU)$ (see Construction~\ref{atilde}) can be described geometrically as follows.
Assume given $[M, \xi]\in U_n(BU)$. Let $i_1 \colon{Y_1 \hookrightarrow M}$ and $i_2 \colon Y_2 \hookrightarrow M$ be codimension-$2$ submanifolds Poincar\'e dual to $-c_1(\xi)$ and $c_1(\xi)$ respectively. 
We have $\nu({Y_1 \subset M}) = \overline{(\det \xi)} |_{Y_1}$ and $\nu(Y_2 \subset M) = (\det \xi) |_{Y_2}$. The same submanifolds are Poincar\'e--Atiyah dual to the classes $\cf_1(\overline{\det\xi})=\bdu(\xi)$ and $\cf_1(\xi)=\du(\xi)$, respectively. The submanifold Poincar\'e--Atiyah dual to $(\bdu(\xi))^{k_1} (\du(\xi))^{k_2}\in U^{2k_1+2k_2}(M)$ is given by the transverse intersection 
\[
  Y_{k_1, k_2} = \underbrace{Y_1 \cdots Y_1}_{k_1}\cdot
  \underbrace{Y_2 \cdots Y_2}_{k_2}.
\]  
with the complex structure in the normal bundle $\nu=\nu (Y_{k_1, k_2}\subset M)=(\overline {\det \xi})^{\oplus k_1} \oplus (\det \xi)^{\oplus k_2}|_{Y_{k_1,k_2}}$. Then we have
\[
  \widetilde \varDelta_{(k_1, k_2)} [M, \xi] = [Y_{k_1, k_2},  
  \xi|_{Y_{k_1,k_2}}\!+\nu\,]\in U_{n-2k_1-2k_2}(BU).
\]  
In the case when $\xi = -\mathcal TM$ we obtain $(\varDelta_{(k_1, k_2)})_* [M] = [M_{k_1, k_2}]$, where $M_{k_1, k_2}$ is the submanifold dual to $(\det\mathcal TM)^{\oplus k_1} \oplus (\overline {\det\mathcal TM})^{\oplus k_2}$.

\end{construction}

\begin{construction}[operations $\varPsi_{(k_1, k_2)}$]\label{Psi}
Given nonnegative integers $k_1,k_2$, set $k=k_1+k_2$. Let $\xi$ be a complex line bundle over $\C P^n$. Consider the projectivisation bundle
$p\colon\C P(\xi \oplus \underline{\C}^k)\to\C P^n$ where $\underline{\C}^k$ denotes the trivial bundle of rank~$k$. The tangent bundle of $\C P(\xi \oplus \underline{\C}^k)$ splits stably as
\[
   \mathcal T \C P(\xi \oplus \underline{\C}^k)\oplus\underline\C \cong
   p^*\mathcal T\C P^n\oplus(\bar\eta\otimes p^*(\xi\oplus\underline{\C}^k))=
   p^*\mathcal T\C P^n\oplus(\bar\eta\otimes p^*\xi)\oplus
   \bar\eta^{\oplus k},
\]
where $\eta$ denotes the tautological line bundle over $\C P(\xi \oplus \underline{\C}^k)$, see~\cite[Theorem~D.4.1]{bu-pa15}. We change the stably complex structure on $\C P(\xi \oplus \underline{\C}^k)$ to a new one, determined by the isomorphism of real vector bundles
\[
   \mathcal T \C P(\xi \oplus \underline{\C}^k)\oplus\underline\R^2 \cong
   p^*\mathcal T\C P^n\oplus(\bar\eta\otimes p^*\xi)\oplus\bar\eta^{\oplus k_1}
  \oplus\eta^{\oplus k_2},
\]
and denote the resulting stably complex manifold by $P^{(k_1, k_2)}(\xi)$.

We obtain a bordism class $[P^{(k_1, k_2)}(\xi), p] \in U_{2n+2k}(\C P^n)$. Its dual cobordism class $\chi_{(k_1, k_2)} (\xi):=(D_U)^{-1}[P^{(k_1, k_2)}(\xi), p] \in U^{-2k}(\C P^n)$ defines a universal cobordism characteristic class of line bundles, which we denote $\chi_{(k_1, k_2)} \in U^{-2k} (\C P^{\infty})$.

Now we can extend the definition of $\chi_{(k_1, k_2)}$ to complex vector bundles of arbitrary rank by setting $\chi_{(k_1, k_2)} (\xi):= \chi_{(k_1, k_2)} (\det \xi)$. As a result, we obtain a universal characteristic class $\chi_{(k_1, k_2)}  \in U^{-2k}(BU)$ and the corresponding operation
\[
  \varPsi_{(k_1, k_2)} = \varphi^* \chi_{(k_1, k_2)}  \in U^{-2(k_1+k_2)}(MU) =   
  (A^U)^{-2(k_1+k_2)}.
\]
Geometrically, $(\varPsi_{(k_1, k_2)})_* [M^{2n}]$ is the $(2n + 2k_1 + 2k_2)$-manifold $[\C P (\overline{\det \mathcal T M} \oplus \C^{k_1 +k_2})]$ with the stably complex structure $p^*(\mathcal T M)\oplus(\bar\eta \otimes p^*(\overline{\det \mathcal T M}))\oplus \bar\eta^{\oplus k_1} \oplus\eta^{\oplus k_2}$.

We use the following notation for particular operations:
\begin{equation}\label{partialDelta}
  \partial=\varDelta_{(1,0)},\quad \varDelta=\varDelta_{(1,1)},\quad
  \chi=\varPsi_{(1,0)},\quad\varPsi=\varPsi_{(1,1)}.
\end{equation}
Geometrically, $\partial_*[M]$ is represented by a submanifold dual to $c_1(\det\mathcal TM)=c_1(M)$, and $\chi_*[M]$ is represented by the manifold $\C P (\overline {\det \mathcal T M} \oplus\underline{\C})$ with the standard stably complex structure.
The operations $\partial_*$ and $\varDelta_*$ were studied in detail by Conner and Floyd~\cite{co-fl66m}, they denoted them simply by $\partial$ and~$\varDelta$. 
\end{construction}

The operations introduced above satisfy algebraic relations described next.

\begin{lemma}\label{algrel}
We have
\[
   \partial^2 = \varDelta \partial = 0,\quad
   \varDelta \varPsi =\id,\quad \partial \varPsi = 0,\quad
   \chi\partial=[\C P^1]\partial,\quad
   \partial \chi \partial = 2 \partial.
\]
\end{lemma}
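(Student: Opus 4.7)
The plan is to verify each of the six relations as identities of operators on~$\varOmega^U$. By Lemma~\ref{faith} the representation $A^U\to\End(\varOmega^U)$ is faithful, so it suffices to check each relation on the bordism class $[M]$ of an arbitrary stably complex manifold~$M$, using the geometric descriptions from Constructions~\ref{Delta} and~\ref{Psi}.

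The first four relations all reduce to a single observation: $N:=\partial_*[M]$ is an $SU$-manifold. Indeed, the induced stable structure gives $\mathcal TM|_N\cong\mathcal TN\oplus\det\mathcal TM|_N$, whence $c_1(N)=0$, and complex line bundles being classified by~$c_1$, both $\det\mathcal TN$ and $\overline{\det\mathcal TN}$ are trivial. Thus $\partial_*[N]=\varDelta_*[N]=0$, as these are Poincar\'e--Atiyah dual to $\cf_1$'s of trivial line bundles; this gives $\partial^2=\varDelta\partial=0$. For $\chi\partial=[\C P^1]\partial$: since $\overline{\det\mathcal TN}$ is trivial, $\C P(\overline{\det\mathcal TN}\oplus\underline\C)$ is diffeomorphic to $\C P^1\times N$, and the stably complex structure of Construction~\ref{Psi} (using $\bar\eta^{\oplus2}|_{\C P^1}\cong\mathcal T\C P^1\oplus\underline\C$) reduces to the standard product structure, giving $\chi_*[N]=[\C P^1]\cdot[N]$. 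Then $\partial_*[\C P^1\times N]$ is dual to $c_1(\C P^1\times N)=2h_1\otimes 1$ (the $N$-factor contributing nothing as $c_1(N)=0$), hence equals $2[\{\pt\}\times N]=2[N]=2\partial_*[M]$, yielding $\partial\chi\partial=2\partial$.

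For $\partial\varPsi=0$ and $\varDelta\varPsi=\id$ I compute on $P:=\varPsi_*[M]=\C P(\overline{\det\mathcal TM}\oplus\underline\C^2)$ with its modified stably complex structure. Taking determinants in the defining isomorphism of Construction~\ref{Psi} and using the cancellations $\det\mathcal TM\otimes\overline{\det\mathcal TM}=\underline\C$ and $\bar\eta_P\otimes\eta_P=\underline\C$, one finds $\det\mathcal TP\cong\bar\eta_P$. Hence $\partial_*[P]$ is represented by the $\C P^1$-subbundle $N':=\C P(\overline{\det\mathcal TM}\oplus\underline\C)\subset P$ (dual to~$h_P$), and $\varDelta_*[P]$ by a codimension-$4$ submanifold $L\subset P$ dual to $\cf_1(\bar\eta_P)\cdot\cf_1(\eta_P)$, each equipped with the induced stable structure. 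The crucial simplification is
\[
c(\mathcal TP)/c(\bar\eta_P\oplus\eta_P)=c(p^*\mathcal TM)\cdot\bigl(1+h_P-p^*c_1(M)\bigr),
\]
which is linear in~$h_P$. Combined with the projective bundle relation $h_P^3=p^*c_1(M)\cdot h_P^2$ in $H^*(P)$ and the projection formula for $p\colon P\to M$, every Chern number $c_\omega[L]$ collapses (via the substitution $h_P\mapsto p^*c_1(M)$, which annihilates the nontrivial factor in the display above and returns $p^*c_\omega(\mathcal TM)$) to the corresponding $c_\omega[M]$, giving $\varDelta\varPsi=\id$. Similarly, $c(\mathcal TN')$ simplifies to $p'^*\bigl(c(\mathcal TM)\cdot(1-c_1(M))\bigr)$, a pullback from~$M$; integration of any pullback over the $\C P^1$-fibers of $p\colon N'\to M$ vanishes by the projection formula, so $[N']=0$ and $\partial\varPsi=0$.

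The main obstacle will be the Chern-number bookkeeping in $\varDelta\varPsi=\id$ and the careful tracking of orientations in the identification of $[L]$ with $[M]$; the saving grace is the linear-in-$h_P$ cancellation displayed above, which is precisely what the modified stably complex structure of Construction~\ref{Psi} is engineered to produce, reducing every integral over $P$ to an integral over~$M$.
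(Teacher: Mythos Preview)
Your proposal is correct and follows essentially the same approach as the paper: reduce to $\varOmega^U$ via Lemma~\ref{faith}, use the fact that $\partial_*[M]$ is $SU$ for the first four identities, and compute Chern numbers of the relevant submanifolds of $P=\varPsi_*[M]$ for $\partial\varPsi=0$. The only difference is that the paper simply cites Conner--Floyd for $\varDelta\varPsi=\id$, whereas you sketch the Chern-number argument directly; your linear-in-$h_P$ observation is exactly the mechanism behind their proof, and the orientation subtlety you flag is real and resolves correctly (the $\varPsi$-structure on $P$ reverses the standard complex orientation, cancelling the sign from $e(\bar\eta_P\oplus\eta_P)=-h_P^2$).
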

\begin{proof}
By Lemma~\ref{faith}, it suffices to check the relations on~$\varOmega^U$, the bordism of point.
Recall that $\partial_*[M]$ is represented by a submanifold dual to $c_1(M)$, which is an $SU$-manifold. Therefore, $(\varDelta_{(k_1, k_2)})_* \partial_* = 0$. In particular $\partial_*^2 = \varDelta_* \partial_* = 0$.

The identity $\varDelta_* \varPsi_* =\id$ is proved in~\cite[Theorem~8.1]{co-fl66m}. The identity $\partial_*\varPsi_*=0$ is stated in~\cite[Theorem~8.2]{co-fl66m}, but its proof contains an inaccuracy in the calculation of characteristic classes. We give a correct argument below.

Take $[M^{2n}]\in\varOmega_{2n}^U$. Then $\varPsi_*[M^{2n}]$ is represented by the manifold $\C P(\overline{\det\mathcal TM}\oplus\underline{\C}^2)$ with the stably complex structure given by the isomorphism 
\[
   \mathcal T \C P(\overline{\det\mathcal TM}\oplus \underline{\C}^2)\oplus\underline\R^2 
   \cong p^*\mathcal TM\oplus(\bar\eta\otimes p^*\overline{\det\mathcal TM})
   \oplus\bar\eta\oplus\eta.
\]
We denote this stably complex manifold by $P^{2n+4}$. Now, $\partial_*\varPsi_*[M^{2n}]=\partial_*[P^{2n+4}]$ is represented by a submanifold $N^{2n+2}\subset P^{2n+4}$ dual to $c_1(P^{2n+4})=c_1(\bar\eta)$. We can take as $N^{2n+2}$ the submanifold $\C P(\overline{\det\mathcal TM}\oplus\C)$ with the stably complex structure given by the isomorphism 
\[
   \mathcal T \C P(\overline{\det\mathcal TM}\oplus \underline{\C})\oplus\underline\R^2 
   \cong p^*\mathcal TM\oplus(\bar\eta\otimes p^*\overline{\det\mathcal TM})
   \oplus\eta.
\]
Note that $[N^{2n+2}]$ is precisely $(\varPsi_{(0,1)})_*[M^{2n}]$. To see that $N^{2n+2}$ is null-bordant, we calculate its total Chern class. We denote $c_i=c_i(M)$, $d=c_1(\bar\eta)$, then we have a relation $d^2=p^*c_1\cdot d$. Now we calculate
\begin{multline*}
  c(N^{2n+2})=(1+p^*c_1+\cdots+p^*c_n)(1+d-p^*c_1)(1-d)\\
  =(1+p^*c_1+\cdots+p^*c_n)(1-p^*c_1)\\
  =1+p^*(c_2-c_1^2)+p^*(c_3-c_1c_2)+\cdots+p^*(c_n-c_1c_{n-1})
\end{multline*}
(this calculation was performed incorrectly in~\cite[pp.~36--37]{co-fl66m}). Hence, $c_\omega(N^{2n+2})=p^*c'_\omega(M^{2n})$, where $c'_i=c_i-c_1c_{i-1}$, and all characteristic numbers $c_\omega[N^{2n+2}]$ vanish for dimensional reasons.

The identity $\partial\varPsi=\varPsi_{0,1}=0$ can also be obtained geometrically, by observing that the stably complex structure on $N^{2n+2}$ restricts to a trivial stably complex structure on each fibre $\C P^1=S^2$ of the projectivisation, so it extends over the associated $3$-disk bundle.

To verify the identity~$\chi_*\partial_*=[\C P^1]\partial_*$, observe that $\partial_*[M^{2n}]=[Y^{2n-2}]$ where $Y^{2n-2}$ is an $SU$-manifold, so that $\det \mathcal T Y$ is trivial. Then
$\chi_*\partial_*[M^{2n}]$ is represented by $\C P (\overline {\det \mathcal T Y} \oplus\underline \C)=\C P^1\times Y$, which implies the required identity.

The last identity is obtained by applying $\partial_*$ to the both sides of $\chi_*\partial_*=[\C P^1]\partial_*$. In the notation of the previous paragraph, we need to verify that $\partial_*(\C P^1\times Y)=2Y$, which follows by observing that $2Y\subset\C P^1\times Y$ represents the homology class dual to $c_1(\C P^1 \times Y) = c_1(\C P^1) \otimes 1$.
\end{proof}

\begin{remark}
In~\cite[\S5]{novi67}, the identity $[\partial,\chi]=2$ is asserted instead of $\partial\chi\partial=2\partial$. However, $[\partial,\chi]=2$ cannot hold. Indeed, applying $\partial$ from the right we get $\partial\chi\partial=2\partial$, and applying $\partial$ from the left we get $-\partial\chi\partial=2\partial$,  which implies $\partial=0$. On the other hand, $\partial[\C P^1]=2$.
\end{remark}

\begin{corollary}\label{alg1}
If a relation $a \partial + b \varDelta = 0$ holds for some $a,b\in A^U$, then $b = 0$.
\end{corollary}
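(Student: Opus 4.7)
The plan is to isolate the coefficient $b$ by post-composing the relation $a\partial + b\varDelta = 0$ with the operation $\varPsi$ from Construction~\ref{Psi}. The key observation is that Lemma~\ref{algrel} provides two identities that together make $\varPsi$ the ideal test operation for this purpose: $\varDelta\varPsi = \id$ (so $\varPsi$ is a right inverse of $\varDelta$) and $\partial\varPsi = 0$ (so $\varPsi$ is annihilated by $\partial$). Thus right-multiplication by $\varPsi$ kills the $a\partial$ summand and converts the $b\varDelta$ summand into simply~$b$.

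Carrying out the step inside $A^U$, associativity of composition yields
\[
  0 = (a\partial + b\varDelta)\varPsi = a(\partial\varPsi) + b(\varDelta\varPsi) = a\cdot 0 + b\cdot\id = b,
\]
so $b = 0$. There is essentially no obstacle to overcome here: the real content sits upstream in the construction of $\varPsi$ and in the verification of the identities $\partial\varPsi = 0$ and $\varDelta\varPsi = \id$ in Lemma~\ref{algrel} (which in turn rests on the faithfulness of the representation of $A^U$ on $\varOmega^U$ via Lemma~\ref{faith}). Once these ingredients are available, the corollary is a one-line manipulation. Note that the argument does \emph{not} give $a = 0$, only $b = 0$; this asymmetry is intrinsic to the statement, since $a\partial$ can vanish even for nonzero $a$, whereas the presence of the right inverse $\varPsi$ for $\varDelta$ forces any $b\varDelta$ relation to be trivial.
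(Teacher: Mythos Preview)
Your proof is correct and is exactly the paper's argument: the paper's proof consists of the single sentence ``Applying $\varPsi$ from the right to the relation, we get $b=0$,'' which is precisely the computation you wrote out using $\partial\varPsi=0$ and $\varDelta\varPsi=\id$ from Lemma~\ref{algrel}.
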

\begin{proof}
Applying $\varPsi$ from the right to the relation, we get $b=0$.
\end{proof}

Now we can formulate the key result about $U^*(MSU)$, which will be used in the calculation of the corresponding Adams--Novikov spectral sequence.

\begin{theorem}[{\cite[Theorem~6.1]{novi67}}]\label{umsu2}
\ 
\begin{itemize}

\item[(a)] The left $A^U$-module $U^*(MSU)$ is isomorphic to $A^U/(A^U \varDelta + A^U \partial)$. The kernel of the natural homomorphism $A^U=U^*(MU)\to U^*(MSU)$ is identified with
$A^U \varDelta + A^U \partial$.

\item[(b)] The left annihilator of $\partial$ is equal to $A^U \varDelta + A^U \partial$.
\end{itemize}
\end{theorem}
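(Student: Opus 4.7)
My plan is to prove parts (a) and (b) together by establishing the equality
\[
  \varphi^*(I(\du)) \;=\; A^U\varDelta + A^U\partial \;=\; \mathop{\mathrm{LAnn}}\nolimits(\partial).
\]
The starting point is (umsu1): $U^*(MSU) \cong A^U/\varphi^*(I(\du))$ as left $A^U$-modules, which automatically makes $\varphi^*(I(\du))$ a left $A^U$-submodule of $A^U$ under composition. The two easy inclusions $A^U\varDelta + A^U\partial \subseteq \varphi^*(I(\du))$ and $A^U\varDelta + A^U\partial \subseteq \mathop{\mathrm{LAnn}}\nolimits(\partial)$ come almost for free: the formal group law gives $\bdu = [-1]\du = -\du + O(\du^2) \in I(\du)$, so $\partial = \varphi^*(\bdu)$ and $\varDelta = \varphi^*(\bdu\du)$ both lie in $\varphi^*(I(\du))$, and closure of $\varphi^*(I(\du))$ under left composition yields the first inclusion; the second is immediate from Lemma \ref{algrel}, which gives $\partial\circ\partial = 0$ and $\varDelta\circ\partial = 0$.

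For the reverse inclusion $\varphi^*(I(\du)) \subseteq A^U\varDelta + A^U\partial$ needed for (a), an arbitrary element has the form $d\cup\varphi^*(\beta)$ with $d = \varphi^*(\du)$, where $\cup$ is the cup product in $A^U = U^*(MU)$. The crux is to show that this cup-product ideal coincides with the composition left ideal $A^U\varDelta + A^U\partial$. I plan to argue inductively on the filtration of $U^*(BU) \cong A^U$ by the augmentation ideal of the Landweber--Novikov algebra: expand $\beta$ as a sum of monomials in the Conner--Floyd classes $\cf_i$ and in $\du,\bdu$, then use the formal group identity $\du + \bdu = -F_U(\du,\bdu)$ together with the relation $\varDelta = d\cup\partial$ to rewrite each cup-product step as a composition involving $\partial$ and $\varDelta$. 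The Landweber--Novikov operations $S_\omega \in A^U$ realise cup products with $\cf_\omega$ as left compositions up to lower-order Landweber--Novikov terms, so the induction terminates.

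For the reverse inclusion $\mathop{\mathrm{LAnn}}\nolimits(\partial) \subseteq A^U\varDelta + A^U\partial$ needed for (b), I would use the faithful representation $a \mapsto \widetilde a$ of $A^U$ on $U_*(BU)$ (Lemma \ref{atilfaith}). The hypothesis $a\partial = 0$ gives $\widetilde a\,\widetilde\partial = 0$ as operators on $U_*(BU)$; the explicit geometric description of $\widetilde\partial$ from Construction \ref{Delta} (sending $[M,\xi]$ to a codimension-two submanifold dual to $-c_1(\xi)$ carrying $\xi|_Y + \overline{\det\xi}|_Y$) identifies a large subgroup of $U_*(BU)$ on which $\widetilde a$ must vanish, and a comparison with the geometric description of $\widetilde\varDelta$ pins down $a$ modulo $A^U\varDelta + A^U\partial$.

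The main obstacle is the reverse inclusion in (a). The cup product on $A^U = U^*(MU)$ (inherited from the ring spectrum structure of $MU$, in which $\varphi^*(I(\du))$ is naturally a cup-product ideal) and the composition product on $A^U$ (in which $A^U\varDelta + A^U\partial$ is a left ideal) are genuinely different multiplications on the same underlying group. Translating between them systematically requires careful bookkeeping of the formal group law coefficients and the Hopf algebra coproduct of the Landweber--Novikov algebra; Corollary \ref{alg1} is the key rigidity statement making such a translation possible.
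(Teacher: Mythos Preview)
Your easy inclusions are fine, and your instinct to route part~(b) through the faithful representation $a\mapsto\widetilde a$ on $U_*(BU)$ is correct---the paper does exactly this. In fact, the paper's Part~I shows $\widetilde\partial(U_*(BU))=U_*(BSU)$, and Part~II then gives $\mathrm{Ann}_L\partial=\varphi^*(I(\du))$ by a clean Kronecker-duality argument: $a\partial=0$ forces $(\varphi^*)^{-1}a$ to vanish on all $SU$-bundle classes, hence to lie in~$I(\du)$. So~(b) reduces to the equality $\varphi^*(I(\du))=A^U\varDelta+A^U\partial$, and the whole weight of the theorem rests on the reverse inclusion you flagged as the main obstacle.

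Here your plan has a genuine gap. The proposed induction on a Landweber--Novikov filtration, using ``$S_\omega$ realises cup product with $\cf_\omega$ as left composition up to lower-order terms'' together with an asserted relation ``$\varDelta=d\cup\partial$'', does not go through. The Thom isomorphism $\varphi^*$ is not a ring map between the cup product on $U^*(BU)$ and either product on $U^*(MU)$, so identities like $\varDelta=d\cup\partial$ are not available, and there is no systematic way to convert multiplication by $\du$ in $U^*(BU)$ into left composition by $\partial$ or~$\varDelta$. The formal group identity $\du+\bdu\in I((\du)^2)$ and Corollary~\ref{alg1} do not supply enough leverage to make such an induction terminate.

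The paper's argument for the reverse inclusion is entirely different and indirect. One sets $N=\varphi^*(I(\du))/(A^U\varDelta\oplus A^U\partial)$ and shows $N=0$ in three steps. First, $N$ has no $\varOmega_U$-torsion: if $\lambda x\in A^U\varDelta\oplus A^U\partial$, right-multiply by $\varPsi$ to extract the $\varDelta$-component, then use Part~II and Kronecker duality to control the $\partial$-component. Second, the $A^U$-linear maps $\pd(a)=2a\varPsi\varDelta$ and $p_\partial(a)=a(1-\varPsi\varDelta)\chi\partial$ assemble into a ``near-splitting'' $p$ with $p\circ i=2\id$ on $A^U\varDelta\oplus A^U\partial$; a short algebraic lemma then injects $N$ into $A^U$. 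Third, tensoring the short exact sequence with $\Z$ over $\varOmega_U$ and counting ranks via $p(k-1)=p(k-2)+\widetilde p(k-1)$ shows $(N\otimes_{\varOmega_U}\Z)^{2k}$ is finite in every degree; since $N$ sits torsion-free inside $A^U$ and $\varOmega_U$ has no zero-divisors, this forces $N=0$. The key operations $\varPsi$ and $\chi$ from Construction~\ref{Psi}, satisfying $\varDelta\varPsi=\id$ and $\partial\chi\partial=2\partial$, are what make this work; your proposal does not invoke them.
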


\begin{proof}
The original proof in~\cite{novi67} is quite sketchy. Filling in the details required lots of technical work. The proof consists of three parts.

\medskip 

I. We show that $\widetilde \partial ( U_*(BU) ) = U_*(BSU)$. In other words, a bordism class $[X,\xi]\in U_m(BU)$ lies in the image of $\widetilde\partial$ if and only if represented by a pair $(X,\xi)$ where $\xi$ is an $SU$-bundle, i.\,e. $c_1(\xi)=0$. 

To prove the inclusion $\widetilde \partial ( U_*(BU) ) \supset U_*(BSU)$, take $[X,\xi]\in U_m(BU)$ with $c_1(\xi)=0$. Consider the bordism class $[X\times\C P^1,\xi\times\eta]\in U_{m+2}(BU)$, where $\eta$ is the tautological line bundle over~$\C P^1$. By the definition of $\widetilde\partial$ (Construction~\ref{atilde}), $\widetilde\partial[X\times\C P^1,\xi\times\eta]=[Y,\zeta]$, where $Y\subset X\times\C P^1$ is a codimension-$2$ submanifold dual to $c_1(\xi\times\eta)=1\otimes c_1(\eta)$, so we can take $Y=X$, and 
\[
  \zeta=\xi\times\eta|_X+\mathcal T(X\times\C P^1)|_X-\mathcal T X=\xi
\]
as stable bundles. Therefore, $[X,\xi]=\widetilde\partial[X\times\C P^1,\xi\times\eta]$. 

To prove the inclusion $\widetilde \partial ( U_*(BU) ) \subset U_*(BSU)$,
take $[Y, \zeta] = \widetilde \partial[X, \xi]$. We need to show that $\zeta$ is represented by an $SU$-bundle. By Construction~\ref{atilde},
\[
  \widetilde\partial [X, \xi]=[Y,\xi|_{Y}+\mathcal TX|_Y-\mathcal T Y]\in U_{m-2}(BU),
\]
where $Y\subset X$ is a codimension-$2$ submanifold with the normal bundle $\nu(Y\subset X)=
\overline{\det\xi}|_Y$. Then 
\[
  c_1(\zeta)=c_1(\xi|_{Y}+\mathcal TX|_Y-\mathcal T Y)
  =c_1(\xi|_{Y})+c_1(\nu)= c_1(\det\xi|_{Y})+
  c_1(\overline{\det\xi}|_Y)=0,
\]
so $\zeta$ is an $SU$-bundle.

\medskip

II. We show that $\mathrm{Ann}_L\partial=\varphi^*(I(\du))$, where $\mathrm{Ann}_L$ denotes the left annihilator of $\partial$ in~$A^U$. 
Let $a \partial =0$ for some $a\in A^U$. Then $\widetilde a \widetilde \partial = 0$, which is equivalent by part~I to $\widetilde a |_{U_*(BSU)} = 0$. In other words, $\widetilde a[X, \xi] = [Y_a, f_a^*(\xi + \mathcal TX) -\mathcal TY_a] = 0$ for any $SU$-bundle~$\xi$. In particular $[Y_a] = 0$ in $\varOmega_U$. On the other hand, $[Y_a]=\langle(\varphi^*)^{-1}a,
[X,\xi]\rangle$ by~\eqref{kron1}. It follows that $(\varphi^*)^{-1}a\in U^*(BU)=\Hom_{\varOmega^U}(U_*(BU),\varOmega^U)$ lies in the ideal $I(\du)$, because the latter consists precisely of homomorphisms $U_*(BU)\to\varOmega^U$ vanishing on bordism classes of $SU$-bundles. Thus, $a \in \varphi^*(I(\du))$ and $\mathrm{Ann}_L(\partial) \subset \varphi^*(I(\du))$. For the opposite inclusion, note that $a\in\varphi^* (I(\du))$ implies that $\widetilde a |_{U_*(BSU)} = 0$. By Part~I, $\widetilde a \widetilde \partial = 0$. Now, Lemma~\ref{atilfaith} gives $a\partial=0$, so $a\in \mathrm{Ann}_L(\partial)$.

\medskip

III. We show that $\varphi^*(I(\du))=A^U \varDelta + A^U \partial$.

Corollary~\ref{alg1} implies that $A^U \varDelta + A^U \partial$ is a direct sum, so we write it as $A^U \varDelta \oplus A^U \partial$.
 
Lemma~\ref{algrel} and Part~II give the inclusion $A^U \varDelta \oplus A^U\partial\subset\mathrm{Ann}_L\partial=\varphi^*(I(\du))$.
Consider the short exact sequence
\begin{equation}\label{exactN}
  0 \longrightarrow A^U \varDelta \oplus A^U \partial 
  \stackrel i\longrightarrow \varphi^* (I(\du)) 
  \longrightarrow \varphi^* (I(\du))/(A^U \varDelta \oplus A^U \partial) \longrightarrow 0
\end{equation}
of graded $\varOmega_U$-modules. Denote
\[
  N=\varphi^* (I(\du))/(A^U \varDelta \oplus A^U \partial)
\] 
We need to show that $N=0$.

First, we show that $N$ has no $\varOmega_U$-torsion. Suppose $\lambda n=0$ for a nonzero $\lambda\in~\varOmega_U$ and $n=x+(A^U \varDelta + A^U \partial)\in N$, $x \in \varphi^* (I(\du))$. That is, $\lambda x = a \varDelta + b \partial$ for some $a,b\in A^U$. Multiplying by $\varPsi$  from the right and using Proposition~\ref{algrel} we obtain $a = \lambda x \varPsi$ and $b \partial = \lambda x - \lambda x \varPsi \varDelta = \lambda y$. Therefore, $\widetilde b \widetilde \partial = \widetilde \lambda \widetilde y$. Now, for a bordism class $[Y, \zeta]\in U_*(BSU)$ we have
\[
  \bigl\langle(\varphi^*)^{-1}b,[Y, \zeta]\bigr\rangle = 
  \bigl\langle(\varphi^*)^{-1}b,\widetilde \partial[X, \xi]\bigr\rangle = 
  \varepsilon(\widetilde \lambda \widetilde y\,[X, \xi])=
  \lambda\varepsilon(\widetilde y\,[X, \xi]),
\]
where the first identity follows from part I, and the second from~\eqref{kron1}. Consider the natural projection $p \colon U^*(\BU) \to U^*(\BSU)$, which is Kronecker dual to the natural inclusion $U_*(\BSU) \hookrightarrow U_*(\BU)$. Then the above identity implies that $p((\varphi^*)^{-1}b)=\lambda w$ for some $w\in U^*(\BSU)$. We have $w = p(t)$ for some $t\in U^*(\BU)$, hence, $p((\varphi^*)^{-1}b - \lambda t) = 0$ and we obtain that $(\varphi^*)^{-1}b - \lambda t \in \Ker p = I(\du)$. Hence, $b-\lambda \varphi^*(t) \in \varphi^*(I(\du))$ and $b \partial = \lambda \varphi^*(t) \partial$ by part~II. It follows that $\lambda x = a \varDelta + b \partial = \lambda (x \varPsi \varDelta + \varphi^*(t) \partial)$. Since $A^U$ has no $\varOmega_U$-torsion, we conclude that $x = x \varPsi \varDelta + \varphi^*(t) \partial \in A^U \varDelta \oplus A^U \partial$ and therefore $n=0$.

Now consider the following $A^U$-linear maps:
\begin{align*}
\pd \colon& A^U \to A^U \varDelta,&\quad p_\partial \colon& A^U \to A^U \partial,\\
&\ a\mapsto 2 a \varPsi \varDelta, &\quad
&\ a \mapsto a (1 - \varPsi \varDelta) \chi \partial.
\end{align*}

These maps behave like mutually orthogonal projections. Namely, they satisfy the identities
\[
\pd |_{A^U\!\varDelta} = 2\, \mathrm {id}_{A^U\! \varDelta}, \qquad
\pd |_{A^U\partial} = 0,\qquad
p_\partial |_{A^U \partial} = 2\, \mathrm {id}_{A^U \partial},\qquad
p_\partial |_{A^U\!\varDelta} = 0.
\]
This is a direct calculation using Proposition~\ref{algrel}:
\begin{gather*}
  \pd (a \varDelta) = 2 a \varDelta\, \varPsi \varDelta = 2 a \varDelta, \qquad
  \pd (b \partial) = 2 b \partial\, \varPsi \varDelta = 0,\\ 
  p_\partial (a \varDelta) = a \varDelta (1 - \varPsi \varDelta) \chi \partial = (a \varDelta -   
  a \varDelta\, \varPsi \varDelta) \chi \partial = 0,\\ 
  p_\partial (b \partial) = b \partial (1 - \varPsi \varDelta) \chi \partial = ( b \partial - b   
  \partial\, \varPsi \varDelta) \chi \partial = b \partial \chi \partial = 2 b \partial.
\end{gather*}
We therefore have an $A^U$-linear map $p =  \pd\!+ p_\partial \colon A^U \to A^U\!\varDelta \oplus A^U \partial$ satisfying $p|_{A^U\!\varDelta \oplus A^U \partial} = 2 \mathop\mathrm{id}_{A^U\!\varDelta \oplus A^U \partial}$. We use the following algebraic fact. 

\begin{lemma}\label{semis}
Let\, $0 \to A \xrightarrow{i} B \xrightarrow{\pi} C \to 0$ be an exact sequence of abelian groups. Assume $A$ does not have $n$-torsion for a fixed $n\in\Z$ and there exists a homomorphism $p \colon B \to A$ satisfying $p \circ i = n \id_A$. Then there exists an injective homomorphism $s \colon nC \hookrightarrow B$. 

If we start with a short exact sequence of $R$-modules for a commutative ring~$R$, then $s$ is also an $R$-module map.
\end{lemma}
\begin{proof}
Let $nc \in nC$. If $nc = \pi (nb)$ then $nc = \pi (nb - i (p (b)))$ and $p(nb - i (p(b))) = n p(b) - n p(b) = 0$. Hence, there is an element $x:=nb-i(p(b)) \in B$ satisfying $\pi(x)=nc$ and $p(x)=0$. If $x'$ is another such element, then $\pi(x-x')=0$ so $x-x'=i(y)$ and $0=p(x-x')=p(i(y)) = n y$. Since $A$ has no $n$-torsion, $y = 0$ and $x=x'$. Hence, $x$ is defined uniquely and there is a well defined homomorphism $s \colon nC \to B$, $nc\mapsto x$, satisfying $p \circ s = 0$ and $\pi \circ s = \id_{nC}$. The latter identity implies that $s$ is injective. 
\end{proof}

Applying Lemma~\ref{semis} to the short exact sequence~\eqref{exactN} and $p =  \pd\!+ p_\partial$ restricted to $\varphi^*I(\du)$,  we conclude that $2N$ injects into $\varphi^*I(\du) \subset A^U$. 
Since $N$ has no $2$-torsion, $N$ itself also injects into $\varphi^*I(\du) \subset A^U$.
Furthermore, applying $\otimes_{\varOmega_U}\Z$ to~\eqref{exactN}, we obtain a short exact sequence of graded abelian groups
\begin{equation}\label{sexact}
  0\to\bigl((A^U \varDelta) \otimes_{\varOmega_U}\Z\bigr) \oplus \bigl((A^U      
  \partial)\otimes_{\varOmega_U}\Z\bigr) 
  \stackrel{i\otimes_{\varOmega_U}\Z}{\lllra} \varphi^* (I(\du))
  \otimes_{\varOmega_U}\Z \to N \otimes_{\varOmega_U}\Z \to 0.
\end{equation}
The injectivity of the second map follows from the identity $(p\otimes_{\varOmega_U}\Z)(i\otimes_{\varOmega_U}\Z)=2\id$ and the absence of torsion in $((A^U \varDelta) \otimes_{\varOmega_U}\Z\bigr) \oplus \bigl((A^U\partial)\otimes_{\varOmega_U}\Z)$ (the latter group is described below).
Note that $M \otimes_{\varOmega_U}\Z = M/(\varOmega^+_U M)$ for any $\varOmega_U$-module $M$, where $\varOmega^+_U$ denotes the ideal of nonzero (negatively) graded elements in~$\varOmega_U$.

\smallskip

Next, we show that $N \otimes_{\varOmega_U}\Z$ is finite in each degree using a dimension counting argument.

As $\varDelta$ has the right inverse $\varPsi$, the $A^U$-module $A^U\varDelta$ is free on a single $4$-dimensional generator. That is, $(A^U \varDelta)^{2k} = U^{2k-4}(MU)$. Hence, 
\[
  ((A^U \varDelta)\otimes_{\varOmega_U}\Z)^{2k} = (U^{*-4} 
  (MU)\otimes_{\varOmega_U}\Z)^{2k}=H^{2k-4}(MU; \Z)\cong
   \Z^{\vphantom{\bigl)}p(k-2)},
\]
where $p(k)$ denotes the number of integer partitions of~$k$. Furthermore, 
\begin{multline*}
  (A^U \partial)^{2k} = (A^U)^{2k-2} \partial   
  \cong(A^U)^{2k-2}/(\mathop{\mathrm{Ann}}\nolimits_L\partial)^{2k-2} 
  \\= (A^U)^{2k-2}/(\varphi^*I(\du))^{2k-2} = U^{2k-2}(MSU),
\end{multline*}
where the third identity follows from part~II of this proof, and the last one is~\eqref{umsu1}. It follows that 
\[
  ((A^U \partial)\otimes_{\varOmega_U}\Z)^{2k} \cong
  H^{2k-2}(\MSU; \Z)=\Z^{\vphantom{\bigl)}\widetilde p(k-1)},
\]
where $\widetilde p(k)$ is a number of integer partitions of $k$ without~$1$. 
Finally, $(\varphi^*I(\du))\otimes_{\varOmega_U}\Z = \varphi^*_H I(c_1)$, where $\varphi^*_H \colon H^*(BU;\Z) \to H^*(MU; \Z)$ is the Thom isomorphism in ordinary cohomology and $I(c_1)$ is the ideal in $H^*(BU; \Z)$ generated by the universal first Chern class~$c_1$. Therefore,
\[
  ((\varphi^*I(\du))\otimes_{\varOmega_U}\Z)^{2k} = (\varphi^*_H I(c_1))^{2k} = 
  \Z^{\vphantom{\bigl)}p(k-1)}.
\]
Plugging the identities above into the $(2k)$th homogeneous part of~\eqref{sexact} we obtain
\[
  0 \to \Z^{\vphantom{\bigl)}p(k-2)+\widetilde p(k-1)} \to 
  \Z^{\vphantom{\bigl)}p(k-1)} \to (N \otimes_{\varOmega_U}\Z)^{2k} \to 0.
\]
Now the identity $p(k-1) = p(k-2) + \widetilde p(k-1)$ implies that $(N \otimes_{\varOmega_U}\Z)^{2k}$ is a finite group.

We therefore have a graded $\varOmega_U$-submodule $N$ of $A^U$ such that $(N \otimes_{\varOmega_U}\Z)^{2k}$ is a finite group for any~$k$. We need to show that $N=0$. Consider the $\varOmega_U$-linear projection $p_{\omega} \colon A^U \to \varOmega_U$ which maps $a \in A^U$ to its coefficient $\lambda_\omega$ in the power series expansion $a = \sum_{\omega} \lambda_{\omega} S_{\omega}$, where $S_\omega\in A^U$ are the Landweber--Novikov operations.
As $N \otimes_{\varOmega_U}\Z=N/(\varOmega^+_U N)$ is finite in each dimension, we obtain that $p_\omega(N)/(\varOmega^+_U p_\omega(N))$ is also finite in each dimension. We claim that $p_\omega(N)=0$. The general algebraic setting is as follows. Let $R$ be a nonnegatively (or nonpositively) graded ring without torsion, and let $I \subset R$ be an ideal such that $I / (R^+ I)$ is finite in each dimension. Then $I=0$. Indeed, let $x \in I$ be an element of minimal degree. Then $nx \in R^+ I$ for some nonzero integer~$n$. As $\deg x$ is minimal in $I$, every nonzero element of $R^+ I$ has degree greater then~$\deg x$. Hence, $n x = 0$. As $R$ has no torsion, we conclude that $x=0$ and $I=0$. 
Returning to our situation, we obtain that $p_\omega(N)=0$ for any $\omega$. Thus, $N=0$ as claimed.

We have therefore proved that $\varphi^*(I(\du))=A^U \varDelta + A^U \partial$. Combining this identity with~\eqref{umsu1} we obtain statement~(a) of the theorem, and combining it with the  identity of part~II of the proof, we obtain that $\mathrm{Ann}_L\partial=A^U \varDelta + A^U \partial$, proving statement~(b).
\end{proof}

\section{Calculation with the spectral sequence}\label{calcsect}
Here we apply the Adams--Novikov spectral sequence (Theorem~\ref{ANth}) to the $\SU$-bordism spectrum $X=\MSU$. As a result, we obtain a multiplicative spectral sequence with the $E_2$-term
\[
  E^{p, q}_2 = \Ext^{p,q}_{A^U}(U^*(MSU),U^*(\pt)),
\]
converging to $\pi_*(\MSU)=\varOmega^{SU}_*$.

Theorem~\ref{umsu2} implies that there is a free resolution of left $A^U$-modules:
\[
  0\longleftarrow
  U^*(\MSU)\cong A^U/(A^U \partial + A^U \varDelta) {\longleftarrow} A^U \stackrel{f_0}  
  {\longleftarrow} A^U \oplus A^U \stackrel{f_1}{\longleftarrow} A^U \oplus A^U 
  \stackrel{f_2}{\longleftarrow} \ldots
\]
where $A^U\to A^U/(A^U \partial + A^U)$ is the quotient projection, $f_0(a, b) = a \partial + b \varDelta $ and $f_i(a, b)  = (a \partial  + b \varDelta, 0)$ for $i\ge1$. We rewrite it more formally as follows:

\begin{proposition}\label{aures}
There is a free resolution of left  $A^U$-modules:
\[
  0\longleftarrow
  U^*(\MSU){\longleftarrow} R^0 \stackrel{f_0}  
  {\longleftarrow} R^1 \stackrel{f_1}{\longleftarrow} R^2 \stackrel{f_2}  
  {\longleftarrow}\ldots
\]
where $R^0=A^U\langle u_0\rangle$ is a free module on a single generator of degree~$0$, $R^i=A^U\langle u_i,v_i\rangle$ is a free module on two generators, $\deg u_i=2i$, $\deg v_i=2i+2$, $i\ge1$, and $f_{i-1}(u_i)=\partial u_{i-1}$, $f_{i-1}(v_i)=\varDelta u_{i-1}$.
\end{proposition}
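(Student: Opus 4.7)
The plan is to construct the complex explicitly as described and then verify exactness inductively, invoking Theorem~\ref{umsu2} together with the algebraic relations of Lemma~\ref{algrel} and Corollary~\ref{alg1}. Freeness of each $R^i$ is automatic by construction; the content lies entirely in exactness. Degree bookkeeping is consistent: since $\partial\in (A^U)^2$ and $\varDelta\in (A^U)^4$, the assignments $f_{i-1}(u_i)=\partial u_{i-1}$ and $f_{i-1}(v_i)=\varDelta u_{i-1}$ are homogeneous of degrees $2i$ and $2i+2$, matching $\deg u_i$ and $\deg v_i$.

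The chain complex property is immediate. Exactness at $U^*(\MSU)$ is nothing but the isomorphism $U^*(\MSU)\cong A^U/(A^U\partial+A^U\varDelta)$ from Theorem~\ref{umsu2}(a), together with the identification $\mathop{\mathrm{Im}}f_0=(A^U\partial+A^U\varDelta)u_0$. For the higher differentials, I would compute
\[
  f_{i-1}f_i(u_{i+1})=\partial^2 u_{i-1}=0,\qquad
  f_{i-1}f_i(v_{i+1})=\varDelta\partial\, u_{i-1}=0,
\]
using the relations $\partial^2=\varDelta\partial=0$ of Lemma~\ref{algrel}.

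The core step is exactness at $R^i$ for $i\ge 1$. Suppose $au_i+bv_i\in\ker f_{i-1}$. Since $R^{i-1}$ is free on $u_{i-1}$ (and also $v_{i-1}$ when $i\ge2$) and $f_{i-1}(au_i+bv_i)=(a\partial+b\varDelta)u_{i-1}$ is concentrated in the $u_{i-1}$-summand, this vanishes if and only if $a\partial+b\varDelta=0$ in $A^U$. Corollary~\ref{alg1} then forces $b=0$, whence $a\partial=0$, and Theorem~\ref{umsu2}(b) gives $a\in\mathop{\mathrm{Ann}}\nolimits_L\partial=A^U\partial+A^U\varDelta$. Writing $a=a_1\partial+b_1\varDelta$ and using $A^U$-linearity, we obtain
\[
  au_i+bv_i=a_1\partial u_i+b_1\varDelta u_i=f_i(a_1 u_{i+1}+b_1 v_{i+1})\in\mathop{\mathrm{Im}}f_i,
\]
as required.

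The hard work has already been absorbed into Theorem~\ref{umsu2}; in particular, the non-trivial identification $\mathop{\mathrm{Ann}}\nolimits_L\partial=A^U\partial+A^U\varDelta$ is precisely what makes the inductive step close, and Corollary~\ref{alg1} is what ensures the $v_i$-coordinate of any cycle is automatically zero. Given those inputs, the verification is a clean and essentially formal induction. A useful interpretational remark is that the kernel of each $f_{i-1}$ sits entirely in the $u_i$-summand, so that every differential lands in the $u_{i-1}$-summand of $R^{i-1}$ and never hits $v_{i-1}$, yet both generators $u_{i+1}$ and $v_{i+1}$ are still needed at the next stage to cover $A^U\partial+A^U\varDelta$.
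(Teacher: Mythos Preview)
Your proof is correct and follows essentially the same approach as the paper: verify $f_{i-1}f_i=0$ via $\partial^2=\varDelta\partial=0$, invoke Theorem~\ref{umsu2}(a) for exactness at $R^0$, and for $i\ge1$ use Corollary~\ref{alg1} to kill the $v_i$-coefficient followed by Theorem~\ref{umsu2}(b) to exhibit the $u_i$-coefficient as lying in $A^U\partial+A^U\varDelta$. Your additional remarks on degree bookkeeping and the structural observation about cycles living in the $u_i$-summand are helpful glosses but not needed for the argument.
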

\begin{proof}
We have $f_{i-1}f_{i}=0$ because $\partial^2=\varDelta\partial=0$. The exactness at $R^0$ is Theorem~\ref{umsu2}. To prove the exactness at $R^i$ with $i\ge1$, suppose $0=f_{i-1}(au_i+bv_i)=(a\partial+b\varDelta)u_{i-1}$. Then $a\partial+b\varDelta=0$, which implies $b=0$ and $a\partial=0$ by Corollary~\ref{alg1}. Hence, $a\in\mathrm{Ann}_L\partial$, so $a=a'\partial+b'\varDelta$ by Theorem~\ref{umsu2}~(b). Thus, $au_i+bv_i=au_i=f_{i}(a'u_{i+1}+b'v_{i+1})$, as needed.
\end{proof}

Applying $\mathop\mathrm{Hom}_{A^U}^q(-,U^*(pt))$ to the resolution of Proposition~\ref{aures} and using the isomorphism $\varOmega_U^{-q}=\varOmega^U_q$, we obtain a complex whose homology is the terms $E^{*,q}_2$ of the spectral sequence:
\begin{equation}\label{e2com}
  0\longrightarrow 
  \varOmega_{q}^U\stackrel{d^0}{\longrightarrow} \varOmega^U_{q-2} \oplus 
  \varOmega^U_{q-4}\stackrel{d^1}{\longrightarrow} \varOmega^U_{q-4} \oplus
  \varOmega^U_{q-6}\stackrel{d^2}{\longrightarrow} \ldots
\end{equation}
The differentials are given by $d^0(a)=(\partial a,\varDelta a)$ and $d^i(a,b)=(\partial a,\varDelta a)$, $i\ge1$. Here we denote by $\partial$ and $\varDelta$ the action of the corresponding operations on~$\OU$, and continue using this notation below.

Conner and Floyd~\cite{co-fl66m} defined the groups
\[
\mathcal W_q=\Ker(\varDelta\colon\varOmega_{q}^U\to\varOmega_{q-4}^U).
\]
The identities $\partial^2=\varDelta\partial=0$ imply that the restriction of the differential $\partial \colon \mathcal W_k \to \mathcal W_{k-2}$ is defined.

\begin{proposition}\label{wcom}
The complex~\eqref{e2com} is quasi-isomorphic to its subcomplex
\[
  0\longrightarrow\mathcal W_q \stackrel{\partial}{\longrightarrow} 
  \mathcal W_{q-2} \stackrel{\partial}{\longrightarrow} 
  \mathcal W_{q-4} \stackrel{\partial}{\longrightarrow}\cdots.
\]
\end{proposition}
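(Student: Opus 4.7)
The plan is to exhibit the subcomplex $(\mathcal{W}_{q-2\bullet},\partial)$ as a deformation retract of~\eqref{e2com}. Denote the latter complex by $C^\bullet$, so that $C^0=\OU_q$ and $C^i=\OU_{q-2i}\oplus\OU_{q-2i-2}$ for $i\ge1$, with differentials $d^0(a)=(\partial a,\varDelta a)$ and $d^i(a,b)=(\partial a,\varDelta a)$. Define the inclusion $\iota\colon\mathcal{W}_{q-2i}\hookrightarrow C^i$ by $\iota(a)=a$ at $i=0$ and $\iota(a)=(a,0)$ for $i\ge1$. The first step is to verify that $\iota$ is a chain map and that its image lies in $C^\bullet$: for $a\in\mathcal{W}_{q-2i}$ one has $d^i(a,0)=(\partial a,0)$ since $\varDelta a=0$, and $\partial a\in\mathcal{W}_{q-2i-2}$ since $\varDelta\partial=0$ by Lemma~\ref{algrel}.

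Next I would identify the kernels and images of the differentials in $C^\bullet$ using the operator $\varPsi$ from Construction~\ref{Psi}, which satisfies $\varDelta\varPsi=\mathrm{id}$ and $\partial\varPsi=0$. The kernels are immediate: $\Ker d^0=\mathcal{W}_q\cap\Ker\partial$ and, for $i\ge1$, $\Ker d^i=(\mathcal{W}_{q-2i}\cap\Ker\partial)\oplus\OU_{q-2i-2}$. The key computation is for the image of $d^{i-1}$ with $i\ge1$: applying $d^{i-1}$ to $(\varPsi c,0)$ with $c\in\OU_{q-2i-2}$ yields $(0,c)$, so $\{0\}\oplus\OU_{q-2i-2}\subset\Im d^{i-1}$. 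Moreover, for any $a'\in\OU_{q-2i+2}$ the element $a'-\varPsi\varDelta a'$ lies in $\mathcal{W}_{q-2i+2}$ (because $\varDelta\varPsi=\mathrm{id}$) and satisfies $\partial(a'-\varPsi\varDelta a')=\partial a'$ (because $\partial\varPsi=0$). Hence $\partial\OU_{q-2i+2}=\partial\mathcal{W}_{q-2i+2}$, from which $\Im d^{i-1}=\partial\mathcal{W}_{q-2i+2}\oplus\OU_{q-2i-2}$.

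Taking the quotient at each position gives $H^i(C^\bullet)=(\mathcal{W}_{q-2i}\cap\Ker\partial)/\partial\mathcal{W}_{q-2i+2}=H^i(\mathcal{W}_{q-2\bullet},\partial)$ for all $i\ge0$, and the isomorphism is induced by $\iota$ by construction. I do not anticipate any genuine obstacle: the argument is purely formal given the operator identities $\partial^2=\varDelta\partial=\partial\varPsi=0$ and $\varDelta\varPsi=\mathrm{id}$ supplied by Lemma~\ref{algrel}. The only bookkeeping point is that $\varPsi$ raises $\OU$-degree by $4$ while $\varDelta$ lowers it by $4$, so $\varPsi\varDelta$ is degree-preserving and the displayed identities hold degree by degree.
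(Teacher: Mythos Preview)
Your proof is correct and follows essentially the same approach as the paper's: both define the same inclusion $\iota$, verify it is a chain map using $\varDelta\partial=0$, and then establish the quasi-isomorphism using the identities $\varDelta\varPsi=\mathrm{id}$ and $\partial\varPsi=0$ to split off the redundant $\OU$-summand. The paper phrases the last step as a direct check of injectivity and surjectivity of $\iota_*$, while you instead compute $\Ker d^i$ and $\Im d^{i-1}$ explicitly and read off the quotient; these are two presentations of the same computation. One cosmetic point: your opening sentence promises a deformation retract, but you never write down the retraction or the homotopy --- you simply compute homology, which is all that is needed. Also note that for $i=1$ the domain $C^0=\OU_q$ is not a pair, so one should apply $d^0$ to $\varPsi c$ rather than to $(\varPsi c,0)$; this is harmless.
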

\begin{proof}
Let $i\colon\mathcal W_k\to\varOmega^U_k\oplus\varOmega^U_{k-2}$ be the inclusion $w\mapsto (w,0)$, where $w\in\Ker\varDelta$. It is a map of chain complexes, because $i(\partial w)=(\partial w,0)=(\partial w,\varDelta w)=d(w,0)=di(w)$. The induced map in homology is injective, because $i(w)=d(a,b)$ implies $(w,0)=(\partial a,\varDelta a)$, hence $w=\partial a$ with $a\in\Ker\varDelta=\mathcal W_*$. To prove the surjectivity, take a cycle $(a,b)\in\varOmega^U_k\oplus\varOmega^U_{k-2}$. Then $0=d(a,b)=(\partial a,\varDelta a)$. Since $\varDelta\colon\varOmega_{k+2}^U\to\varOmega_{k-2}^U$ is surjective (it has a right inverse~$\varPsi$), there is $b'\in \varOmega_{k+2}^U$ such that $\varDelta b'=b$. Then $a-\partial b'\in\Ker\varDelta$ is a $\partial$-cycle, and $(a,b)-i(a-\partial b')=(a,b)-(a-\partial b',0)=(\partial b',b)=d(b',0)$, so $i(a-\partial b')$ represents the same homology class as $(a,b)$.
\end{proof}

\begin{proposition}\label{core2}
The $E_2$-term of the spectral sequence satisfies
\begin{itemize}
\item[(a)] $E^{0, q}_2 = \Ker(\partial\colon\mathcal W_q\to\mathcal W_{q-2})= (\Ker \partial)\cap(\Ker\varDelta)\subset \varOmega^U_q$;
\item[(b)] $E^{p, q}_2 = H_{q-2p}(\mathcal W_*, \partial)$ for $p>0$.
\item[(c)] the edge homomorphism $h\colon \OSU_q\to E_2^{0,q}$ coincides with the forgetful ho\-mo\-mor\-phism $\varOmega^{SU}_q\to\mathcal W_q$.
\end{itemize}
Therefore, the spectral sequence is concentrated in the first quadrant (i.\,e., $E_r^{p, q}=0$ for $p<0$ or $q<0$), $E_r^{p,q}=0$ for odd $q$ and for $q<2p$, and the differentials $d_r\colon E^{p, q}_r \to E^{p+r, q+r-1}_r$ are trivial for even~$r$.
\end{proposition}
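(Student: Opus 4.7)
The plan is to apply $\Hom^q_{A^U}(-,U^*(\pt))$ to the free resolution of Proposition~\ref{aures} and then invoke Proposition~\ref{wcom}. Since $R^0=A^U\langle u_0\rangle$ with $\deg u_0=0$, a degree-$q$ $A^U$-linear map $R^0\to U^*(\pt)$ is determined by $u_0\mapsto a\in\OU_q$; similarly for $R^i=A^U\langle u_i,v_i\rangle$, $i\ge1$, with $\deg u_i=2i$ and $\deg v_i=2i+2$, such a map is given by $(a,b)\in\OU_{q-2i}\oplus\OU_{q-2i-2}$. The dual of $f_{i-1}(u_i)=\partial u_{i-1}$, $f_{i-1}(v_i)=\varDelta u_{i-1}$ sends $(a,b)$ to $(\partial a,\varDelta a)$. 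This reproduces the complex~\eqref{e2com}, whose cohomology is by definition $E_2^{*,q}$.

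Proposition~\ref{wcom} identifies this cohomology with that of the subcomplex $0\to\mathcal W_q\xrightarrow{\partial}\mathcal W_{q-2}\xrightarrow{\partial}\cdots$. At position $p=0$ the cohomology is $\Ker(\partial\colon\mathcal W_q\to\mathcal W_{q-2})=\{x\in\OU_q\colon\partial x=\varDelta x=0\}$, which is~(a); at position $p\ge1$ it is $H_{q-2p}(\mathcal W_*,\partial)$, which is~(b). For the vanishing and parity statements, $\OU_k$ and hence $\mathcal W_k$ vanish for $k<0$ and for odd $k$, so $E_2^{p,q}=0$ whenever $q-2p$ is negative or odd; since $q-2p$ has the same parity as $q$, this gives $E_2^{p,q}=0$ for $p<0$, $q<0$, odd $q$, or $q<2p$. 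These vanishings descend to every $E_r$ as subquotients. Since $d_r$ changes $q$ by $r-1$, for even $r$ either the source or the target sits in forbidden parity, so $d_r=0$.

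For part~(c), I would unwind the natural map described in the Remark after Theorem~\ref{ANth}. Under the isomorphism $U^*(MSU)\cong A^U/(A^U\partial+A^U\varDelta)$, the generator $u_0\in R^0$ corresponds to the distinguished class $\bar 1\in U^0(MSU)$ represented by the forgetful map of spectra $MSU\to MU$ (the image of $1\in A^U=U^*(MU)$ under the map induced by $MSU\to MU$). For $[M]\in\OSU_q$ represented by $f\colon S^q\to MSU$, the element $h([M])\in\Hom^q_{A^U}(U^*(MSU),\OU)=E_2^{0,q}$ corresponds under the resolution to $h([M])(\bar 1)\in\OU_q$, which by the Remark is represented by the composite $S^q\xrightarrow{f}MSU\to MU$; this is by definition the forgetful image $[M]_U$. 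That $[M]_U$ lies in $(\Ker\partial)\cap(\Ker\varDelta)$ is automatic from $c_1(M)=0$.

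The main obstacle is essentially nonexistent: all substantive work has been carried out in Theorem~\ref{umsu2} and Propositions~\ref{aures} and~\ref{wcom}. What remains is a bookkeeping exercise with the free resolution, the statement of Theorem~\ref{ANth}(c), and a parity count for the vanishing of even differentials.
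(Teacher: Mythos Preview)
Your proposal is correct and follows essentially the same approach as the paper: (a) and~(b) are derived from Proposition~\ref{wcom} applied to the complex~\eqref{e2com}, (c) is obtained by evaluating the edge homomorphism on the class $\iota\in U^0(\MSU)$ represented by $\MSU\to\MU$, and the vanishing and parity statements come from $\mathcal W_*$ being concentrated in nonnegative even degrees. You spell out more of the bookkeeping than the paper does, but the argument is the same.
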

\begin{proof}
Statements~(a) and (b) follow from Proposition~\ref{wcom}. To prove~(c), recall that the edge homomorphism 
\[
  h \colon \OSU_q\to E^{0,q}_2=\Hom_{A^U}^q(U^*(\MSU),\varOmega_U)
\]
is defined as follows. Given an element $\alpha \in\OSU_q$ represented by a map $f \colon S^q \to\MSU$ and an element $\beta \in U^p(\MSU)$ represented by a map $g \colon\MSU \to \varSigma^p \MU$, the element $h (\alpha)(\beta) \in\varOmega_U^{p-q}$ is represented by the composite $g\circ f\colon S^q\to\varSigma^p\MU$. Through the identification of $E^{0,q}_2$ with $\Ker(\partial\colon\mathcal W_q\to\mathcal W_{q-2})$, an $A^U$-homomorphism $\varphi\colon U^*(\MSU)\to\varOmega_U^{*-q}$ is mapped to $\varphi(\iota)$, where $\iota\in U^0(\MSU)$ is the class represented by the canonical map of spectra $\MSU\to\MU$. The edge homomorphism therefore becomes $\OSU_q\to\OU_q$, $\alpha\mapsto h(\alpha)(\iota)$, which is precisely the forgetful homomorphism, proving~(c). The rest follows from the fact that $\mathcal W_*$ is concentrated in nonnegative even degrees.
\end{proof}

In particular, $d_2=0$ and $E_2=E_3$. We shall denote this term simply by~$E$.

We have $E^{1, 2} = H_0(\mathcal W_*,\partial) = \mathbb Z_2$, because $\mathcal W_0 = \varOmega^U_0 = \mathbb Z$, $\mathcal W_2 = \varOmega^U_2 = \mathbb Z$ generated by $[\mathbb C P^1]$, and $\partial [\mathbb C P^1] = 2$.
Let $\theta \in E^{1, 2}$ be the generator. By dimensional reasons, it is an infinite cycle, because it lies on the `border line' $q=2p$. 

\begin{proposition}\label{multheta}
The multiplication by $\theta$ defines an isomorphism $E^{p, q} \to E^{p+1, q+2}$ for $p>0$ and an epimorphism $E^{0,q}\to E^{1,q+2}$ with kernel $\mathop{\mathrm{Im}}\partial$.
\end{proposition}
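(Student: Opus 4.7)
The plan is to compute multiplication by $\theta$ directly on the complex \eqref{e2com} arising from the resolution of Proposition~\ref{aures}, and then translate the answer through the identifications of Proposition~\ref{core2}.

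First I would pin down $\theta$ at the chain level. At bidegree $(1,2)$ the complex \eqref{e2com} reads $\OU_0 \oplus \OU_{-2} = \Z \oplus 0$; every element is a cocycle for dimensional reasons, and the coboundaries from $(0,2)$ are generated by $d^0([\C P^1]) = (\partial[\C P^1],\Delta[\C P^1]) = (2,0)$. Hence $\theta\in E^{1,2}\cong\Z/2$ is represented by the cocycle $(1,0)$.

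Next I would identify the cup product with $\theta$ at the chain level. The multiplicative structure on the spectral sequence comes from the ring-spectrum structure $\MSU\wedge\MSU\to\MSU$, which makes $U^*(\MSU) = A^U/(A^U\partial + A^U\Delta)$ into a coalgebra over $A^U$; I would lift its coproduct to a chain map $\Phi\colon R^\bullet\to R^\bullet\otimes_{\OU} R^\bullet$ and read off $\cdot\theta$ from it. The expected outcome is that on a cocycle $(w,b)\in\OU_{q-2p}\oplus\OU_{q-2p-2}$ representing $x\in E^{p,q}$ with $p\ge1$, the product $\theta\cdot x$ is the cocycle $(w,0)$ at bidegree $(p+1,q+2)$ (note that the two chain groups have the same underlying summands, since $(q+2)-2(p+1)=q-2p$); likewise, for $p=0$ a cocycle $a\in\Ker(\partial|_{\mathcal W_q})\subset\OU_q$ is sent to the cocycle $(a,0)$ at $(1,q+2)$. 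The essential point is that $\theta$ sees only the $u$-strand of $R^\bullet$: any contribution of $v_1$ would force multiplication by $\Delta$ after pairing, but $\Delta\theta=0$ by Lemma~\ref{algrel}.

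With this chain-level formula in hand, the proposition follows quickly. For $p\ge1$, the normalization $(a,b)\mapsto(a,0)$ via $c=\Psi b$ (which works because $\partial\Psi=0$ and $\Delta\Psi=\id$, Lemma~\ref{algrel}) identifies both $E^{p,q}$ and $E^{p+1,q+2}$ with $H_{q-2p}(\mathcal W_*,\partial)$, and $\cdot\theta$ becomes the identity on this homology group, giving the isomorphism. For $p=0$ the same normalization is available only on the target, and $\cdot\theta$ becomes the natural projection $\Ker(\partial|_{\mathcal W_q})\twoheadrightarrow\Ker\partial/\partial\mathcal W_{q+2}=H_q(\mathcal W_*,\partial)$, which is surjective with kernel $\partial(\mathcal W_{q+2})=\mathop{\mathrm{Im}}\partial$, as claimed. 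The main obstacle is the chain-level formula for $\cdot\theta$: constructing the diagonal approximation $\Phi$ on $R^\bullet$ and checking that the $v$-strand contributes trivially takes some bookkeeping, but is essentially forced on the generators $u_i,v_i$ by the algebraic relations $\partial^2=\Delta\partial=0$, $\Delta\Psi=\id$ and $\partial\Psi=0$ of Lemma~\ref{algrel}.
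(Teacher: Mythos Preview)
Your approach is correct, but it is considerably more elaborate than the paper's. The paper's proof is two sentences: having identified $E^{p,q}\cong H_{q-2p}(\mathcal W_*,\partial)$ for $p>0$ and $E^{0,q}\cong\Ker(\partial|_{\mathcal W_q})$ in Proposition~\ref{core2}, it simply asserts that multiplication by~$\theta$ becomes the identity map $H_{q-2p}(\mathcal W_*)\to H_{q-2p}(\mathcal W_*)$ for $p>0$, and the natural projection $\Ker\partial\to H_q(\mathcal W_*)$ for $p=0$. No diagonal approximation is written down; the compatibility of the Ext-product with the ring structure on $H(\mathcal W_*,\partial)$ (under which $\theta$ corresponds to the unit $1\in H_0=\Z_2$; cf.\ the Remark after Theorem~\ref{cfcoh}) is taken as understood.

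Your route would actually \emph{justify} that compatibility rather than assume it, at the cost of the bookkeeping you flag as the main obstacle. The key step you leave open---that the diagonal $\Phi$ can be chosen so that pairing with the cocycle $(1,0)$ representing~$\theta$ sends a cocycle $(w,b)$ to $(w,\text{something})$, with the second component immaterial in cohomology---is correct and is essentially forced by the periodic shape of the resolution in Proposition~\ref{aures}: the image of each $f_i$ lies entirely in the $u$-summand, so the $v$-coordinate never interacts with the differential and can always be normalised away via $\Psi$. So both arguments reach the same conclusion; yours is self-contained where the paper's is terse.
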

\begin{proof}
For $p>0$, the map $E^{p, q}\stackrel{\cdot \theta}\longrightarrow E^{p+1, q+2}$ is the identity isomorphism $H_{q-2p}(\mathcal W_*)\to H_{q-2p}(\mathcal W_*)$. For $p=0$, the homomorphism $E^{0,q}\to E^{1,q+2}$ maps $\Ker(\partial\colon \mathcal W_q\to\mathcal W_{q-2})$ to $H_q(\mathcal W_*)$, so its kernel is $\mathop{\mathrm{Im}}\partial$.
\end{proof}

This implies that $E^{p, q} = \theta E^{p-1, q-2}$ for $p\ge1$. In particular, $E^{k, 2k}=\Z_2$ generated by~$\theta^k$, so the only nontrivial elements on the border line $q=2p$ are $1,\theta,\theta^2,\theta^3,\ldots$

Now consider $E^{0,4}=\Ker(\partial\colon\mathcal W_4\to\mathcal W_2)$. Note that $\partial|_{\varOmega^U_4}=0$, because $c_1$ is the only Chern number in~$\varOmega_2^U$. Hence, $E^{0,4}=\mathcal W_4$. Furthermore, $\mathcal W_4\cong\Z$ is generated by 
\[
  K=9 [\C P^1]^2-8 [\C P^2]
\]
(this bordism class has characteristic numbers $c_1^2=0$ and $c_2=12$). Therefore, $K$ represents a generator of $E^{0,4}=\Z$.

We have a potentially nontrivial differential $d_3\colon E^{0,4}\to E^{3,6}$, see Figure~\ref{Eterm}.

\begin{figure}
\begin{picture}(40,64)
  \multiput(0,0)(8,0){2}{\line(0,1){64}}
  \multiput(24,0)(8,0){2}{\line(0,1){64}}
  \put(16,0){\line(0,1){41}}
  \put(16,46){\line(0,1){18}}
  \multiput(0,0)(0,8){8}{\line(1,0){40}}
  \put(3,3){$1$}
  \put(11,19){$\theta$}
  \put(2.5,35){$K$}
  \put(19,35){$\theta^2$}
  \put(2.5,51){$S^6$}
  \put(10,51){$\theta K$}
  \put(26,51){$\theta^3$}
  \put(6.5,37){\vector(4,3){19}}
  \put(15.5,42){\footnotesize $d_3$}
  \put(3,-3){\footnotesize $0$}
  \put(11,-3){\footnotesize $1$}
  \put(19,-3){\footnotesize $2$}
  \put(27,-3){\footnotesize $3$}
  \put(3,-3){\footnotesize $0$}
  \put(-3,3){\footnotesize $0$}
  \put(-3,11){\footnotesize $1$}
  \put(-3,19){\footnotesize $2$}
  \put(-3,27){\footnotesize $3$}
  \put(-3,35){\footnotesize $4$}
  \put(-3,43){\footnotesize $5$}
  \put(-3,51){\footnotesize $6$}
\end{picture}%
\medskip

\caption{The term $E_2=E_3$ of the Adams--Novikov spectral sequence for $SU$-bordism.}
\label{Eterm}
\end{figure}

\begin{proposition}
We have $d_3(K)=\theta^3$.
\end{proposition}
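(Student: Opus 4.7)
The plan is to show $d_3(K)=\theta^3$ by checking that the generator $K\in\mathcal W_4$ does not lift to $\OSU_4$ while $2K$ does. First I would observe that $d_3\colon E_3^{0,4}\to E_3^{3,6}$ is the only possibly nontrivial differential out of $(0,4)$: for $r\ge 4$ the target $E_r^{r,r+3}$ vanishes by Proposition~\ref{core2}, since $r+3$ is either odd (when $r$ is even) or satisfies $r+3<2r$ (when $r\ge 5$ is odd). Consequently $E_\infty^{0,4}=\Ker(d_3)$, which is either $\mathbb Z\langle K\rangle$ or $2\mathbb Z\langle K\rangle$, and $d_3(K)=\theta^3$ if and only if the latter holds. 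By Proposition~\ref{core2}(c) this subgroup coincides with the image of the forgetful homomorphism $\alpha\colon\OSU_4\to\mathcal W_4$, so the task reduces to showing $\alpha(\OSU_4)=2\mathbb Z\langle K\rangle$.

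For the inclusion $2\mathbb Z\langle K\rangle\subset\alpha(\OSU_4)$ I would exhibit a $K3$ surface, a smooth complex surface with $c_1=0$ and Euler characteristic~$24$. As an $SU$-manifold it defines a class $[K3]\in\OSU_4$ with $s_2[K3]=c_1^2-2c_2=-48$. Since $\mathcal W_4=\mathbb Z\langle K\rangle$ is torsion free and $s_2(K)=-2c_2(K)=-24$, this forces $\alpha([K3])=2K$.

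For the opposite inclusion $\alpha(\OSU_4)\subset 2\mathbb Z\langle K\rangle$ I would invoke Rokhlin's theorem. Any $SU$-manifold $M^4$ has $c_1(M)=0$, hence $w_2(M)=c_1\bmod 2=0$, so $M$ is spin and Rokhlin gives $\sigma(M)\equiv 0\pmod{16}$. Combined with the Hirzebruch formula $\sigma=(c_1^2-2c_2)[M]/3=-2c_2[M]/3$ in complex dimension two, this yields $c_2[M]\equiv 0\pmod{24}$ and $s_2[M]=-2c_2[M]\equiv 0\pmod{48}$. Hence $\alpha(y)=nK$ with $y\in\OSU_4$ forces $-24n\equiv 0\pmod{48}$, so $n$ must be even. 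Together the two inclusions give $\alpha(\OSU_4)=2\mathbb Z\langle K\rangle$, and therefore $d_3(K)=\theta^3$.

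The main obstacle is the integrality step: the congruence $s_2[M]\equiv 0\pmod{48}$ for four-dimensional $SU$-manifolds is not a purely formal consequence of the $E_2$-page description but requires Rokhlin's theorem (equivalently, the integrality of the $\widehat A$-genus on spin manifolds). The remaining steps are bookkeeping with Proposition~\ref{core2} together with elementary computations of the $s_2$-numbers of $K$ and~$[K3]$.
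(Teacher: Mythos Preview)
Your proof is correct and follows essentially the same route as the paper: both arguments show that $E_\infty^{0,4}=\Ker d_3$ by ruling out higher differentials, identify this with the image of the forgetful map $\OSU_4\to\mathcal W_4$, and then invoke Rokhlin's theorem to see that $K$ is not in that image. The paper phrases the Rokhlin step via the Todd genus (an $SU$-fourfold has even $\mathrm{td}$, while $\mathrm{td}(K)=1$) rather than the signature, and it omits your $K3$ computation, which is not needed here since the target $E_3^{3,6}\cong\Z_2$ makes $d_3(K)\ne 0$ equivalent to $d_3(K)=\theta^3$.
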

\begin{proof}
Suppose that $d_3(K) = 0$. We also have $d_i(K)=0$ for $i>3$, because $d_i(K)\in E_i^{i,i+3}$ is below the border line $p=2q$. This implies that $K$ is an infinite cycle, so it represents an element in $E^{0,4}_\infty$. We obtain that $E^{0,4}_2=E^{0,4}_{\vphantom{2}\infty}$, which implies that the edge homomorphism $\OSU_4\to E^{0,4}_2$ is surjective. It coincides with the forgetful homomorphism $\OSU_4\to\mathcal W_4$ by Proposition~\ref{core2}~(c). On the other hand, the forgetful homomorphism is not surjective, as $\mathop\mathrm{td}(K)=1$, while the Todd genus of a $4$-dimensional $SU$-manifold is even (this follows from Rokhlin's signature theorem~\cite{rohl52}). A contradiction.
\end{proof}

\begin{proposition}\label{3columns}
We have $E^{p, q}_4 = 0$ for $p \geqslant 3$ and $E_4 = E_{\infty}$.
\end{proposition}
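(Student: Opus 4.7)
The plan is to use the multiplicative structure of the spectral sequence combined with the two key facts already established: that multiplication by $\theta$ is an isomorphism $E^{p,q}_3\to E^{p+1,q+2}_3$ for $p\ge 1$ (Proposition~\ref{multheta}), and that $d_3(K)=\theta^3$. The first step is to note that $\theta$ is a $d_3$-cycle, either because it lies on the border line $q=2p$ (so every $d_r(\theta)$ lands below that line and is therefore zero by Proposition~\ref{core2}), or directly because $d_3(\theta)\in E_3^{4,4}$ and $E_3^{p,q}=0$ for $q<2p$.

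Given a class $z\in E_3^{p,q}$ with $p\ge 3$, I would use Proposition~\ref{multheta} to write $z=\theta^3 w$ for some $w\in E_3^{p-3,q-6}$ (possibly non-unique when $p-3=0$, but that is harmless). Assuming $d_3(z)=0$, the Leibniz rule together with $d_3(\theta)=0$ gives $0=d_3(\theta^3 w)=\theta^3 d_3(w)$, where $d_3(w)\in E_3^{p,q-4}$. Since $p\ge 3\ge 1$, multiplication by $\theta^3\colon E_3^{p,q-4}\to E_3^{p+3,q+2}$ is injective by Proposition~\ref{multheta}, so $d_3(w)=0$.

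Now consider the product $Kw\in E_3^{p-3,q-2}$. By the Leibniz rule (the sign is trivial because $|K|=4$ is even, and in any case the relevant target is a $\Z_2$-module for $p-3\ge 1$),
\[
d_3(Kw)=d_3(K)\cdot w+K\cdot d_3(w)=\theta^3 w+0=z,
\]
so every $d_3$-cycle in $E_3^{p,q}$ with $p\ge 3$ is a $d_3$-boundary, proving $E^{p,q}_4=0$ for $p\ge 3$. The only subtle case is $p=3$, where $w\in E_3^{0,q-6}$; there one has to remember that the module action of $K\in E_3^{0,4}$ on $E_3^{0,q-6}$ is the honest product of bordism classes in $\mathcal W$, and that the equality $d_3(K)=\theta^3$ has already been established using this multiplicative structure.

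For the second claim $E_4=E_\infty$, I would argue that for every $r\ge 4$ and every $p\ge 0$, the target of $d_r\colon E_r^{p,q}\to E_r^{p+r,q+r-1}$ lies in column $p+r\ge 4>3$. Since $E_r$ is a subquotient of $E_4$ for $r\ge 4$, and $E_4^{p',q'}=0$ for $p'\ge 3$ by the first part, the target vanishes, so $d_r=0$. The main obstacle is verifying the first step carefully—in particular making sure that $\theta^3$ acts injectively in the required range and that the Leibniz identity applies with the correct (trivial) signs. Everything else is bookkeeping with the bigrading.
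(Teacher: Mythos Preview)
Your proof is correct and follows essentially the same approach as the paper: write a $d_3$-cycle in column $p\ge3$ as $\theta^3 w$, use injectivity of $\cdot\,\theta^3$ to conclude $d_3(w)=0$, then exhibit $Kw$ as a preimage via $d_3(K)=\theta^3$; the vanishing of higher differentials then follows because their targets lie in columns $\ge4$. Your extra care with the Leibniz rule, signs, and the $p=3$ edge case simply spells out details the paper leaves implicit.
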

\begin{proof}
Take a $d_3$-cycle $x \in E^{p, q}$ with $p\ge3$. We have $x =\theta^3y$ for some $y\in E^{p-3,q-6}$ and $0 = d_3x  = \theta^3d_3y$. Now, $d_3y \in E^{p,q-4}$, and the multiplication by $\theta^3$  is an isomorphism in this dimension by Proposition~\ref{multheta}, hence, $d_3y=0$. This implies that $x=\theta^3 y = d_3(K y)$. Hence, $x$ is a boundary, and
$E^{p, q}_4 = 0$ for $p\ge3$. For dimensional reasons, this implies $d_i=0$ for $i\ge 4$ and $E_{\infty} = E_4$.
\end{proof}

It follows that the infinite term of the spectral sequence consists of three columns only, and $E^{1, *}_{\infty} = \theta E^{0, *}_{\infty}$, $E^{2, *}_{\infty} = \theta E^{1, *}_{\infty}$. Furthermore, in the first three columns we have $E_{\infty}=\Ker d_3$, for dimensional reasons, and the multiplication by $\theta$ is injective on~$E^{1,*}_\infty$. In particular, $E^{k, 2k}_{\infty} = E^{k, 2k}$ is $\Z_2$ with generator $\theta^k$ for $0\le k\le2$, and $E^{k, 2k}_{\infty}=0$ for $k\ge3$.

Proposition~\ref{3columns} implies that the Adams--Novikov filtration in $\OSU$ satisfies $F^{p,q}=0$ for $p\ge3$, that is, the filtration consists of three terms only:
\[
  \varOmega^{SU}_n = F^{0, n} \supset F^{1, n+1} \supset F^{2, n+2} = 
  E^{2, n+2}_{\infty}.
\]

If $n=2k+1$ is odd, then $F^{0,2k+1}/F^{1,{2k+2}}=E_\infty^{0,2k+1}=0$ and $F^{2,2k+3}=E_\infty^{2,2k+3}=0$ by Proposition~\ref{core2}. Therefore
\begin{equation}\label{osuodd}
  \OSU_{2k+1} = E^{1, 2k+2}_{\infty}.
\end{equation}

If $n=2k$ is even, then $F^{1,2k+1}/F^{2,2k+2}=E_\infty^{1,2k+1}=0$, so we obtain a short exact sequence
\begin{equation}\label{osueven}
  0 \to  E^{2, 2k+2}_{\infty} \to \OSU_{2k} \to E^{0, 2k}_{\infty} \to 0.
\end{equation}

\begin{example}\label{lssuex}
In low dimensions we have:
\begin{itemize}
\item $\OSU_0=E^{0,0}_{\infty}=E^{0, 0} \cong \mathbb{Z}$, because $E^{2, 2}_{\infty} = 0$.

\smallskip

\item $\OSU_1 = E^{1, 2}_{\infty}=E^{1, 2}\cong\Z_2$ with generator $\theta$.

\smallskip

\item $\OSU_{2} = E^{2, 4}_{\infty}\cong\Z_2$ with generator $\theta^2$, because 
$0 = E^{0,2} = \Ker \partial \subset\mathcal W_2$ (recall that $\mathcal W_2$ is generated by $[\C P^1]$ and $\partial[\C P^2]=2$).

\smallskip

\item $\OSU_3 = E^{1, 4}_{\infty} = \theta E^{0, 2}_{\infty} = 0$.

\smallskip

\item $\OSU_{4} = E^{0, 4}_{\infty}\cong\Z$ with generator $2K$. The identity $\OSU_{4} = E^{0, 4}_{\infty}$ follows from~\eqref{osueven}, because $E^{2,6}_\infty=\theta^2E^{0,2}_\infty=0$. A generator of $E^{0,4}_\infty = \Ker d_3$ is $2K$, because ${d_3(K)=\theta^3}$.

\smallskip

\item $\OSU_5 = E^{1, 6}_{\infty} = \theta E^{0, 4}_{\infty}= 0$ because $\theta\cdot 2K=0$.
\end{itemize}
\end{example}

{\samepage
\begin{theorem}\label{osutors}\ 
\begin{itemize}
\item[(a)] The kernel of the forgetful homomorphism $\OSU\to\OU$ consists of torsion elements.

\item[(b)] Every torsion element in $\OSU$ has order~$2$. More precisely,
\[
  \OSU_{2k+1}=\theta\OSU_{2k},\quad\mathop\mathrm{Tors}\OSU_{2k}=\theta^2\OSU_{2k-2}.
\]  
\end{itemize}
\end{theorem}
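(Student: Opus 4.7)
The plan is to read off the theorem directly from the three-column structure of the Adams--Novikov spectral sequence established in Propositions~\ref{core2}--\ref{3columns}, together with the multiplicativity of the spectral sequence and the fact that $\theta\in\OSU_1\cong\Z_2$ has order~$2$.

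First I would unpack the filtration $\OSU_n=F^{0,n}\supset F^{1,n+1}\supset F^{2,n+2}\supset 0$ term by term. Since $E^{p,q}_\infty=0$ for odd $q$, the filtration on $\OSU_{2k+1}$ collapses to $\OSU_{2k+1}=F^{1,2k+2}=E^{1,2k+2}_\infty$, as in~\eqref{osuodd}; the filtration on $\OSU_{2k}$ collapses to the short exact sequence $0\to E^{2,2k+2}_\infty\to\OSU_{2k}\to E^{0,2k}_\infty\to 0$ of~\eqref{osueven}. Proposition~\ref{core2}(c) identifies the right-hand quotient map with the forgetful homomorphism $\OSU_{2k}\to\mathcal W_{2k}\hookrightarrow\OU_{2k}$. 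Hence the kernel of the forgetful homomorphism equals $E^{2,2k+2}_\infty$ in even degrees, and all of $\OSU_{2k+1}=E^{1,2k+2}_\infty$ in odd degrees.

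The technical heart is to upgrade Proposition~\ref{multheta} from $E_2=E_3$ to $E_\infty$. Since $\theta$ lies on the border line $q=2p$ it is a permanent cycle, so the derivation property of $d_3$ gives $d_3(\theta x)=\theta\, d_3(x)$. Combined with the fact from Proposition~\ref{multheta} that $\theta\colon E^{p,q}_3\to E^{p+1,q+2}_3$ is an isomorphism for $p\ge 1$, a short diagram chase then shows that $\theta\colon E^{p,q}_\infty\to E^{p+1,q+2}_\infty$ is an isomorphism for $p\ge1$ and remains surjective for $p=0$: e.g.\ if $y=\theta x\in E^{1,q+2}_\infty$ with $x\in E^{0,q}_2$, then $\theta\, d_3(x)=d_3(y)=0$ in $E^{4,q+4}_2$, and the isomorphism $\theta\colon E^{3,q+2}_2\to E^{4,q+4}_2$ forces $d_3(x)=0$, i.e.\ $x\in E^{0,q}_\infty$. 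Iterating yields $E^{1,2k+2}_\infty=\theta E^{0,2k}_\infty$ and $E^{2,2k+2}_\infty=\theta^2 E^{0,2k-2}_\infty$.

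Finally I would lift these identities from $E_\infty$ to $\OSU$ using multiplicativity. Since the edge map $\OSU_{2j}\twoheadrightarrow E^{0,2j}_\infty$ is surjective, any element of $E^{1,2k+2}_\infty$ or $E^{2,2k+2}_\infty$ is represented in $\OSU$ by $\theta\tilde z$ or $\theta^2\tilde z$ for a suitable lift~$\tilde z$. Under the literal equalities $\OSU_{2k+1}=E^{1,2k+2}_\infty$ and $\Ker(\OSU_{2k}\to\OU_{2k})=F^{2,2k+2}=E^{2,2k+2}_\infty$ (the next filtration step vanishes by Proposition~\ref{3columns}), this gives the two identities of part~(b). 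Because $2\theta=0$ in $\OSU_1$, both $\theta\OSU_{2k}$ and $\theta^2\OSU_{2k-2}$ consist of $2$-torsion elements, so every torsion element has order~$2$. Since $\OU$ is torsion-free, one has $\mathop{\mathrm{Tors}}\OSU_{2k}\subset\Ker(\OSU_{2k}\to\OU_{2k})$, and this inclusion is an equality because the right-hand side has just been shown to be $2$-torsion; this completes part~(a). The only nontrivial step is the $E_2\to E_\infty$ upgrade for multiplication by~$\theta$; everything else is bookkeeping with the filtration.
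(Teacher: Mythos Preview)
Your proof is correct and follows essentially the same route as the paper's: use the three-column structure of $E_\infty$ and the identities $E^{1,*}_\infty=\theta E^{0,*}_\infty$, $E^{2,*}_\infty=\theta E^{1,*}_\infty$, then lift to $\OSU$ via the surjectivity of the edge homomorphism and the vanishing of the next filtration step. The paper asserts these $E_\infty$-identities in the paragraph following Proposition~\ref{3columns} without spelling out the argument; your ``upgrade from $E_2$ to $E_\infty$'' paragraph supplies exactly that justification, so you are filling in a detail the paper leaves implicit rather than taking a different approach.
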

}
\begin{proof}
We have $\OSU_{2k+1}=E^{1, 2k+2}_{\infty}=\theta E^{0, 2k}_{\infty}=\theta\OSU_{2k}$, because $\OSU_{2k}\to E^{0,2k}_\infty$ is surjective. This also implies that $\OSU_{2k+1}$ consists of $2$-torsion, proving (a) and (b) in odd dimensions.

In even dimensions, we use the exact sequence~\eqref{osueven}. Since $E^{0,2k}_\infty\subset E^{0,2k}\subset\mathcal W_*\subset\OU$ is torsion-free and $E^{2,2k+2}_\infty=\theta^2 E_\infty^{0,2k-2}$ is a $2$-torsion, we obtain $\mathop\mathrm{Tors}\OSU_{2k}=E^{2,2k+2}_\infty=\theta^2 E_\infty^{0,2k-2}=\theta^2\OSU_{2k-2}$, proving (b). To finish the proof of (a), it remains to note that the kernel of $\OSU\to\OU$ coincides with the kernel of $\OSU_{2k} \to E^{0, 2k}_{\infty}$ by Proposition~\ref{core2}~(c), and the latter kernel is the torsion of $\OSU_{2k}$ by the above.
\end{proof}

The next lemma gives a short exact sequence, originally due to Conner and Floyd~\cite{co-fl66m}, which is the key ingredient in the calculation of the torsion in~$\OSU$.

\begin{lemma}\label{cf3}
There is a short exact sequence of $\Z_2$-modules
\[
  0 \to \varOmega^{SU}_{2k-1} \to H_{2k-2}(\mathcal W_*,\partial) \to \varOmega^{SU}_{2k-5} \to 0.
\]
\end{lemma}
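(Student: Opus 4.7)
The plan is to read off the desired exact sequence directly from the structure of the Adams--Novikov spectral sequence set up earlier in this section. By~\eqref{osuodd} the odd-degree bordism group is $\OSU_{2k-1}=E^{1,2k}_{\infty}$, while Proposition~\ref{core2}~(b) gives $E^{1,2k}_{3}=H_{2k-2}(\mathcal W_*,\partial)$. The only differential that can affect the bidegree $(1,2k)$ is the outgoing $d_3\colon E^{1,2k}_{3}\to E^{4,2k+2}_{3}$: the only candidate incoming $d_3\colon E^{-2,2k-2}\to E^{1,2k}$ starts outside the first quadrant, and $d_r=0$ for $r\ge4$ by Proposition~\ref{3columns}. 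This already produces the short exact sequence
\[
  0\longrightarrow\OSU_{2k-1}\longrightarrow H_{2k-2}(\mathcal W_*,\partial)
  \stackrel{d_3}\longrightarrow \Im\bigl(d_3\colon E^{1,2k}_{3}\to E^{4,2k+2}_{3}\bigr)
  \longrightarrow 0,
\]
and the remaining task is to identify the image of~$d_3$ with~$\OSU_{2k-5}$.

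I would carry out the identification by transporting the calculation on column~$4$ back to column~$1$ via multiplication by powers of~$\theta$. Proposition~\ref{multheta} says that multiplication by~$\theta$ is an isomorphism $E^{p,q}\to E^{p+1,q+2}$ for $p\ge1$, and since $\theta$ is a permanent cycle, the derivation property of~$d_3$ guarantees that $d_3$ commutes with these isomorphisms. Iterating, $\theta^{3}$ identifies the outgoing differential $d_3\colon E^{4,2k+2}_{3}\to E^{7,2k+4}_{3}$ with $d_3\colon E^{1,2k-4}_{3}\to E^{4,2k-2}_{3}$, whose kernel is $E^{1,2k-4}_{\infty}=\OSU_{2k-5}$ by~\eqref{osuodd} applied one more time. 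On the other hand, the vanishing $E^{4,2k+2}_{4}=0$ from Proposition~\ref{3columns} is precisely the assertion that on column~$4$ the kernel of the outgoing~$d_3$ coincides with the image of the incoming~$d_3$ from column~$1$. Combining these two observations yields $\Im(d_3\colon E^{1,2k}_{3}\to E^{4,2k+2}_{3})\cong\OSU_{2k-5}$, and substitution into the exact sequence above finishes the argument.

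Finally, each group in the sequence is naturally a $\Z_2$-module: the outer terms by Theorem~\ref{osutors}, and the middle term because $H_0(\mathcal W_*,\partial)=\Z/2$ (since $\partial[\C P^1]=2$) forces the whole ring $H_*(\mathcal W_*,\partial)$ to be annihilated by~$2$, via multiplication by the unit. The main point I expect to require care is verifying that multiplication by~$\theta^{3}$ really does intertwine the two $d_3$ differentials with the correct bidegree shifts; this is formal given multiplicativity and $d_3(\theta)=0$, but the bookkeeping between columns $1$ and $4$ is the easiest place to slip.
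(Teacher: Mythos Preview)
Your argument is correct and is essentially the same as the paper's: both identify $\OSU_{2k-1}=E^{1,2k}_\infty=\Ker d_3^{1,2k}$, use the vanishing of $E_4$ in column~$4$ to get $\Im d_3^{1,2k}=\Ker d_3^{4,2k+2}$, and then transport the latter via the $\theta^3$-isomorphism to $\Ker d_3^{1,2k-4}=\OSU_{2k-5}$. The paper packages this as a single commutative diagram with two exact rows, whereas you spell out the steps verbally, but the content is identical.

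One small point on your final paragraph: your justification that $H_*(\mathcal W_*,\partial)$ is $2$-torsion appeals to its ring structure (``multiplication by the unit''), but the ring structure on $\mathcal W$ and the induced product on $H_*(\mathcal W_*,\partial)$ are only developed \emph{after} this lemma in the paper. A self-contained argument available at this stage is simply Proposition~\ref{multheta}: multiplication by $\theta\in E^{1,2}\cong\Z_2$ is an epimorphism $E^{0,2k-2}\to E^{1,2k}$, so every element of $E^{1,2k}=H_{2k-2}(\mathcal W_*,\partial)$ has the form $\theta y$ and hence is annihilated by~$2$.
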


\begin{proof}Consider the commutative diagram
\[
\diagram
  0\rto &\OSU_{2k-1}=E^{1,2k}_\infty \rto& E^{1,2k}\rto^{d_3^{1,2k}} &
  E^{4,2k+2} \rto^{\ \ d_3^{4,2k+2}} & E^{7,2k+4}\\
  &\ \ \ \ \ \ \ \ \ \ \ \ \ \ \ \ 0\rto & \OSU_{2k-5} \rto & 
  E^{1,2k-4} \uto_{\cdot \theta^3}^\cong \rto^{\ \ d_3^{1,2k-4}} &
  E^{4,2k-2} \uto_{\cdot \theta^3}^\cong
\enddiagram
\]
The rows are exact by Proposition~\ref{3columns} and~\eqref{osuodd}. By the commutativity of the diagram, $\mathop\mathrm{Im} d_3^{1,2k}=
\Ker d_3^{4,2k+2}\cong\Ker d_3^{1,2k-4}=\OSU_{2k-5}$. We obtain a short exact sequence
\[
  0 \to \OSU_{2k-1} \to E^{1, 2k} \to \OSU_{2k-5} \to 0.
\]
It remains to note that $E^{1, 2k} = H_{2k-2}(\mathcal W_*,\partial)$. 
\end{proof}

\begin{remark}
The exact sequence of Lemma~\ref{cf3} is the derived exact sequence of the $5$-term exact sequence~\eqref{CF5seq} from the Introduction.
\end{remark}

Homology of $(\mathcal W_*,\partial)$ was described by Conner and Floyd. For the relation of this calculation to the Adams--Novikov spectral sequence, see~\cite[\S5]{botv90}.

\begin{theorem}[{\cite[Theorem~11.8]{co-fl66m}}]\label{cfcoh}
$H(\mathcal W_*,\partial)$ is the following polynomial algebra over~$\Z_2$:
\[
  H(\mathcal W_*,\partial)\cong\Z_2[\omega_2,\omega_{4k}\colon k\ge 2], \quad 
  \deg \omega_2=4,\;\deg \omega_{4k}=8k.
\]
\end{theorem}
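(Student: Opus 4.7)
The plan is to reduce the homology computation modulo~$2$, analyse the mod-$2$ complex via an $x_1$-adic filtration, and lift the resulting polynomial algebra back to $\mathcal W$ through explicit integral cycles. First I would show that $H(\mathcal W_*,\partial)$ is annihilated by~$2$: the Leibniz rule of Theorem~\ref{Wring} combined with $\partial x_1=2$ gives $\partial(x_1\mathbin{*} a)=2a$ for every cycle $a\in\mathcal W$, so $2\cdot H(\mathcal W_*,\partial)=0$, and the entire calculation can be carried out over~$\Z_2$.

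Modulo~$2$ the twisted product coincides with the ordinary polynomial product, giving $\mathcal W\otimes\Z_2\cong\Z_2[x_1,x_3,x_4,x_5,\ldots]$ with $\partial x_1=\partial x_{2i-1}=0$ and $\partial x_{2i}=x_{2i-1}$. Note that $\partial$ is \emph{not} a derivation mod~$2$ because of the residual term $x_1\,\partial a\,\partial b$; since this error is divisible by $x_1$ and $\partial x_1=0$, I would filter by the $x_1$-adic filtration. The associated graded complex is
\[
  E_0\cong\Z_2[x_1]\otimes\bigotimes_{i\ge2}\Z_2[x_{2i-1},x_{2i}],
\]
on which the induced differential acts as an honest Koszul-type derivation on the second factor. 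Each piece $\Z_2[x_{2i-1},x_{2i}]$ has cycles $x_{2i-1}^a x_{2i}^{2m}$ modulo boundaries $x_{2i-1}^{a+1}x_{2i}^{2m}$ ($a\ge1$), so its homology is $\Z_2[x_{2i}^2]$; by K\"unneth,
\[
  E_1\cong\Z_2[x_1,x_{2i}^2\colon i\ge2].
\]

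To see that the spectral sequence collapses, I would lift each generator $x_{2i}^2$ to a genuine cycle in~$\mathcal W$. A direct Leibniz-rule calculation produces the identity
\[
  \partial(x_{2i}\mathbin{*} x_{2i})=2\,x_{2i-1}\mathbin{*} x_{2i}-x_1\mathbin{*} x_{2i-1}^2
  =\partial(x_1\mathbin{*} x_{2i-1}\mathbin{*} x_{2i}),
\]
so that $\omega_{4i}:=x_{2i}\mathbin{*} x_{2i}-x_1\mathbin{*} x_{2i-1}\mathbin{*} x_{2i}$ is an integral $\partial$-cycle. Together with $\omega_2:=x_1\mathbin{*} x_1=K$ (a cycle because $\partial(x_1\mathbin{*} x_1)=4x_1-4x_1=0$), this supplies integral lifts of every multiplicative generator of $E_1$, so the filtration spectral sequence collapses and $H(\mathcal W\otimes\Z_2)\cong\Z_2[x_1,x_{2i}^2\colon i\ge2]$ as graded rings.

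Finally, since $H(\mathcal W,\partial)$ is $2$-torsion, the universal coefficient theorem applied to the chain complex $C_n=\mathcal W_{2n}$ yields $P_{\mathcal W\otimes\Z_2}(t)=(1+t^2)\,P_{\mathcal W}(t)$, whence
\[
  P_{\mathcal W}(t)=\frac{1}{1-t^4}\prod_{k\ge2}\frac{1}{1-t^{8k}},
\]
which is the Poincar\'e series of the claimed polynomial ring. The induced ring map $\Z_2[\omega_2,\omega_{4k}\colon k\ge2]\to H(\mathcal W,\partial)$ is injective because its mod-$2$ reduction sends $\omega_{4k}$ to $x_{2k}^2$, which are algebraically independent in $H(\mathcal W\otimes\Z_2)$; equality of Poincar\'e series then forces surjectivity. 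The main obstacle will be making the collapse argument fully rigorous: the explicit formula for $\omega_{4i}$ handles the multiplicative generators, but showing that all higher differentials vanish on arbitrary polynomial combinations requires either invoking multiplicativity of the filtration spectral sequence or inducting on filtration degree while carefully tracking the Leibniz error terms $x_1\,\partial a\,\partial b$ in the twisted product.
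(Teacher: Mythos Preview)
The paper does not give its own proof of this theorem; it is quoted directly from Conner and Floyd \cite[Theorem~11.8]{co-fl66m}. Your argument is therefore not a reworking of the paper's proof but an independent one, and it relies on the polynomial description of $\mathcal W$ and the twisted Leibniz rule from Theorem~\ref{Wring}, which the paper establishes only in the \emph{next} section. That forward reference is harmless---the proof of Theorem~\ref{Wring} uses Theorem~\ref{noviosu} and Stong's projection, not Theorem~\ref{cfcoh}---but you should flag the reordering if this is to be inserted into the text.

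On the mathematics: your outline is correct, and the concern you raise at the end is easy to dispatch. The $x_1$-adic filtration is multiplicative, and for $a\in F^p$ with $\partial a\in F^{p+r}$ and $b\in F^q$ with $\partial b\in F^{q+r}$ the error term $x_1\,\partial a\,\partial b$ lies in $F^{p+q+2r+1}\subset F^{p+q+r+1}$, hence contributes nothing to $d_r[ab]$. Thus every $d_r$ is a genuine derivation on~$E_r$. Since $x_1$ is a mod-$2$ cycle and your $\omega_{4i}$ is an integral cycle whose filtration-zero part is $x_{2i}^2$, both generators of $E_1$ are permanent cycles, and multiplicativity forces $E_1=E_\infty$. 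The rest of your argument (universal coefficients for the Poincar\'e series, injectivity via the associated graded) then goes through. Two small points: in your description of boundaries in $\Z_2[x_{2i-1},x_{2i}]$ the parenthetical ``$(a\ge1)$'' should read ``(exponent of $x_{2i-1}$ at least~$1$)''; and your injectivity sentence is slightly imprecise, since $\omega_{4k}$ maps to $x_{2k}^2+x_1x_{2k-1}x_{2k}$ rather than to $x_{2k}^2$ on the nose, but passing to the associated graded $E_\infty$ fixes this.

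Conner and Floyd's original argument is different in flavour: it proceeds via mod-$2$ characteristic numbers and the forgetful map to unoriented bordism rather than through the explicit polynomial generators~$x_i$. Your route is arguably cleaner once Theorem~\ref{Wring} is in hand, but it does invert the paper's order of exposition.
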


\begin{remark}
The multiplication in $H(\mathcal W_*,\partial)$ is induced by the multiplication in~$\OU$, see Section~\ref{Wsect}. It coincides with the multiplication in the $E_2$ term of the Adams--Novikov spectral sequence.
\end{remark}

We finally obtain the following information about the free and torsion parts of~$\OSU$:

\begin{theorem}\label{freetors}\
\begin{itemize}
\item[(a)] $\mathop{\mathrm{Tors}}\OSU_n=0$ unless $n=8k+1$
or $8k+2$, in which case $\mathop{\mathrm{Tors}}\OSU_n$ is a
$\Z_2$-vector space of rank equal to the number of partitions
of~$k$.

\smallskip

\item[(b)] $\OSU_{2i}/\mathop{\mathrm{Tors}}$ is isomorphic to the image of the forgetful homomorphism $\alpha\colon\OSU_{2i}\to\OU_{2i}$, which is
$\Ker(\partial\colon\mathcal W_{2i}\to\mathcal W_{2i-2})$ if $2i\not\equiv
4\mod 8$ and
$\mathop{\mathrm{Im}}(\partial\colon\mathcal W_{2i}\to\mathcal W_{2i-2})$ if
$2i\equiv 4\mod 8$.

\smallskip

\item[(c)] There exist $SU$-bordism classes $w_{4k}\in\OSU_{8k}$, $k\ge1$, such that every torsion element of $\OSU$ is uniquely expressible in the
form $P\cdot\theta$ or $P\cdot\theta^2$ where $P$ is a polynomial
in $w_{4k}$ with coefficients $0$ or~$1$. An element $w_{4k}\in\OSU_{8k}$ is determined by the condition that it represents a polynomial generator $\omega_{4k}$ in $H_{8k}(\mathcal W_*,\partial)$ for $k\ge 2$, and $w_4\in\OSU_8$ represents~$\omega_2^2$.
\end{itemize}
\end{theorem}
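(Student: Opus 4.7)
\emph{Proof strategy.}

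The plan is to derive the three assertions from the Adams--Novikov spectral sequence computed in this section. Proposition~\ref{core2}, the exact sequences~\eqref{osuodd}--\eqref{osueven}, Proposition~\ref{multheta}, and Theorem~\ref{osutors} together identify $\OSU_{2k-1}\cong E^{1,2k}_{\infty}$, the quotient $\OSU_{2i}/\mathop{\mathrm{Tors}}$ with $E^{0,2i}_{\infty}=\mathop{\mathrm{Im}}\alpha\subset\Ker\partial$, and $\mathop{\mathrm{Tors}}\OSU_{2i}$ with $E^{2,2i+2}_{\infty}\cong\theta\cdot\OSU_{2i-1}$. Thus~(a) reduces to computing $\dim_{\Z_2}\OSU_{2k-1}$, (b) to determining the kernel of $d_3\colon E^{0,2i}_2=\Ker\partial\to E^{3,2i+2}_2$, and (c) to lifting polynomial generators of $H(\mathcal W_*,\partial)$ to elements of~$\OSU$.

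For~(a), the degree conventions $\deg\omega_2=4$ and $\deg\omega_{4k}=8k$ of Theorem~\ref{cfcoh} yield, by a direct monomial count, $\dim_{\Z_2}H_{8m}(\mathcal W_*,\partial)=\dim_{\Z_2}H_{8m+4}(\mathcal W_*,\partial)=p(m)$, with $H_n(\mathcal W_*,\partial)=0$ otherwise. Substituting into the recursion $\dim\OSU_{2k-1}=\dim H_{2k-2}(\mathcal W_*,\partial)-\dim\OSU_{2k-5}$ from Lemma~\ref{cf3} and iterating from $\OSU_1=\Z_2$, a telescoping sum gives $\dim\OSU_{8m+1}=p(m)$ and $\OSU_n=0$ for all remaining odd~$n$. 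Theorem~\ref{osutors}(b) then transfers these counts to $\mathop{\mathrm{Tors}}\OSU_{8m+2}\cong\OSU_{8m+1}$, which completes~(a).

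For~(b), the inclusion $\mathop{\mathrm{Im}}\partial\subset\mathop{\mathrm{Im}}\alpha=E^{0,2i}_{\infty}$, which follows from $\alpha\beta=-\partial$ in~\eqref{CF5seq}, lets $d_3$ descend to a well-defined map $\bar d_3\colon H_{2i}(\mathcal W_*,\partial)\to H_{2i-4}(\mathcal W_*,\partial)$. Multiplicativity of the spectral sequence and $d_3(\theta)=0$ (because $\theta$ lies on the edge line $q=2p$) promote $\bar d_3$ to a derivation of the $\Z_2$-algebra $H(\mathcal W_*,\partial)\cong\Z_2[\omega_2,\omega_{4k}\colon k\ge 2]$, while the identity $d_3(K)=\theta^3$ together with $[K]=\omega_2\in H_4$ gives $\bar d_3(\omega_2)=1$. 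The technical heart of the proof is to show that $\bar d_3(\omega_{4k})=0$ for every $k\ge 2$, which I plan to establish by induction on~$k$. Assuming the vanishing at all lower generators (with $\bar d_3(\omega_2^2)=2\omega_2=0$ automatic over $\Z_2$), any element of $H_{8k-4}$ admits a unique factorisation $\omega_2 C$ with $C$ in $\Z_2[\omega_2^2,\omega_{4j}\colon 2\le j<k]_{8k-8}$, because the degree bound forbids $\omega_{4j}$ with $j\ge k$; on this subspace $\bar d_3$ acts by $\omega_2 C\mapsto C$. The identity $\bar d_3^2=0$ (inherited from $d_3^2=0$ by descent) then forces the candidate $\bar d_3(\omega_{4k})=\omega_2 C_k$ to satisfy $C_k=0$. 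Consequently $\bar d_3$ coincides with $\partial/\partial\omega_2$: its kernel on $H_{2i}$ is all of $H_{2i}$ when $2i\not\equiv 4\pmod 8$ (every monomial has even $\omega_2$-exponent), so $E^{0,2i}_{\infty}=\Ker\partial$, and is zero when $2i\equiv 4\pmod 8$ (every monomial has odd $\omega_2$-exponent), so $E^{0,2i}_{\infty}=\mathop{\mathrm{Im}}\partial$. This yields~(b).

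For~(c), part~(b) in degrees $8k$ makes $\alpha$ an injection $\OSU_{8k}\hookrightarrow\mathcal W_{8k}$ with image $\Ker\partial$, so the generators $\omega_{4k}\in H_{8k}$ for $k\ge 2$ and $\omega_2^2\in H_8$ lift to classes $w_{4k}\in\OSU_{8k}$ and $w_4\in\OSU_8$. The composite $\OSU\to\mathcal W\to H$ is a ring homomorphism (with respect to the twisted product on $\mathcal W$, for which $\Ker\partial$ is a subring containing $\mathop{\mathrm{Im}}\partial\cap\Ker\partial$ as an ideal), so the monomials $w_{4k_1}\cdots w_{4k_r}$ map bijectively onto a $\Z_2$-basis of $\Z_2[\omega_2^2,\omega_{4k}\colon k\ge 2]_{8m}=H_{8m}$ in each degree~$8m$. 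Multiplication by~$\theta$ induces an isomorphism $H_{8m}\cong\OSU_{8m+1}$, via the chain $\OSU_{8m}/\Ker\theta\cong\Ker\partial/\mathop{\mathrm{Im}}\partial$ read off from~\eqref{CF5seq} and~(b), transporting this basis to $\theta\cdot\Z_2[w_{4k}]_{8m}=\mathop{\mathrm{Tors}}\OSU_{8m+1}$; multiplication by~$\theta^2$ gives the same identification for $\mathop{\mathrm{Tors}}\OSU_{8m+2}$, and the rank count $p(m)$ from~(a) matches both sides. The main obstacle will be the identification $\bar d_3=\partial/\partial\omega_2$ in~(b): the inductive argument crucially relies on the degree observation that $H_{8k-4}$ contains no factor $\omega_{4j}$ with $j\ge k$, which is what allows the derivation recursion to close against $\bar d_3^2=0$.
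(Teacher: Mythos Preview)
Your proof is correct. Parts~(a) and~(c) match the paper's argument closely: the recursion from Lemma~\ref{cf3} together with the dimension count $\dim_{\Z_2} H_{8m}=\dim_{\Z_2} H_{8m+4}=p(m)$ is exactly how the paper proceeds (it phrases the telescoping as the identity $\OSU_{8k+1}\oplus\OSU_{8k-3}\cong H_{8k}\cong H_{8k+4}\cong\OSU_{8k+5}\oplus\OSU_{8k+1}$), and the lifting in~(c) via the identification $\OSU_{8k}\cong\Ker\partial_{8k}$ is the same.

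Part~(b) is where you take a genuinely different route. The paper first establishes~(a) and then feeds the vanishing $\OSU_{8k-3}=0$ back into the spectral sequence: since $E^{1,8k-2}_\infty=\OSU_{8k-3}=0$, the differential $d_3^{1,8k-2}$ is injective, hence by the $\theta$-periodicity of Proposition~\ref{multheta} so is $d_3^{3,8k+2}$, which forces $d_3^{0,8k}=0$; a short diagram chase with the same injectivity one step up handles $2i=8k+4$. Your argument instead identifies the induced map $\bar d_3$ intrinsically as a square-zero derivation on $\Z_2[\omega_2,\omega_{4k}\colon k\ge2]$ with $\bar d_3(\omega_2)=1$, and the induction against $\bar d_3^{\,2}=0$ (valid because multiplication by $\theta^3$ is injective on $E^{p,*}$ for $p\ge1$) pins it down as $\partial/\partial\omega_2$. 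This decouples~(b) from~(a) and yields an explicit formula for the induced differential, which the paper does not state; the paper's route is shorter because it reuses~(a), but yours is more structural.
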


\begin{remark}
The only indeterminacy in the definition of $w_{4k}$ is the choice of a $\partial$-cycle in $\mathcal W_{8k}$ representing a polynomial generator~$\omega_{4k}$ or $\omega_2^2$ from Theorem~\ref{cfcoh}. Once we fixed $w_{4k}\in\mathcal W_{8k}$, it lifts uniquely to $w_{4k}\in\OSU_{8k}$, since the forgetful homomorphism $\alpha\colon\OSU_{8k}\to\mathcal W_{8k}$ is injective onto $\Ker\partial$ in dimension~$8k$, by statements~(a) and~(b).
\end{remark}

\begin{proof}[Proof of Theorem~\ref{freetors}]
We prove (a). Theorem~\ref{cfcoh} gives that $H_{q-2p}(\mathcal W_*)=0$ unless $q-2p=8k$ or $q-2p=8k+4$. First consider the case of odd~$n$. Lemma~\ref{cf3} gives an exact sequence 
\[
  0\to\OSU_{8k-1}\to H_{8k-2}(\mathcal W_*)\to\OSU_{8k-5}\to0,
\]
which implies $\OSU_{8k-1}=\OSU_{8k-5}=0$. We also have an exact sequence 
\[
  0\to\OSU_{8k+1}\to H_{8k}(\mathcal W_*)\to\OSU_{8k-3}\to0,
\]
which splits because $H(\mathcal W_*)$ is a $\Z_2$-module. Hence, $\OSU_{8k+1}\oplus\OSU_{8k-3}\cong H_{8k}(\mathcal W_*)\cong H_{8k+4}(\mathcal W_*)\cong \OSU_{8k+5}\oplus\OSU_{8k+1}$. Hence, $\OSU_{8k-3}=\OSU_{8k+5}$. As this is valid for all $k$, we obtain $\OSU_{8k+5}=0$.
Therefore, the only nontrivial $\OSU_n$ with odd $n$ is $\OSU_{8k+1}$, and Lemma~\ref{cf3} gives an isomorphism $\OSU_{8k+1}\cong H_{8k}(\mathcal W_*)$. Now it follows from Theorem~\ref{cfcoh} that $\OSU_{8k+1}$ is a $\Z_2$-vector space of rank equal to the number of partitions of~$k$. 

For even $n=2m$,  Theorem~\ref{osutors} gives $\mathop\mathrm{Tors}\OSU_{2m}=\theta\OSU_{2m-1}$, which is nonzero only for $2m=8k+2$ by the previous paragraph. The multiplication by $\theta$ defines a homomorphism
\[
  \OSU_{8k+1}=E_\infty^{1,8k+2}\stackrel{\cdot\theta}\longrightarrow
   E_\infty^{2,8k+4}=\mathop{\mathrm{Tors}}\OSU_{8k+2},
\]
which is an isomorphism by Proposition~\ref{multheta}. This finishes the proof of~(a).

\medskip 

To prove (b), recall that $\mathop\mathrm{Tors}\OSU_q$ is the kernel of forgetful homomorphism $\OSU_q\to\mathcal W_q$ by Theorem~\ref{osutors}~(a), and the forgetful homomorphism coincides with the edge homomorphism $h\colon \OSU_q\to E^{0,q}_2$ by Proposition~\ref{core2}~(c). Hence, $\OSU/\mathrm{Tors}=\mathop\mathrm{Im} h$. Furthermore, $\mathop\mathrm{Im} h=\Ker(d_3\colon E^{0,*}_3\to E^{3,*+2})$ by Proposition~\ref{3columns}.

Now, if $2i\ne8k,8k+4$, then we have 
\[
  d_3(E^{0,2i})=\theta^{-1}d_3(\theta E^{0,2i})=\theta^{-1}d_3(E^{1,2i+2})=0
\]
because $E^{1,2i+2}=H_{2i}(\mathcal W_*)=0$ by Theorem~\ref{cfcoh}. Therefore, 
$\OSU_{2i}/\mathrm{Tors}=\Ker d_3=E^{0,2i}=\Ker\partial$ in this case.

For $2i=8k$, we observe that 
\[
  0=\OSU_{8k-3}=E_\infty^{1,8k-2}=\Ker d_3^{1,8k-2}\subset E^{1,8k-2}.
\]
This implies that 
\begin{equation}\label{injd3}
  0=\Ker( d_3^{1,8k-2}\theta^{-2})=\Ker(\theta^{-2}d_3^{3,8k+2})=\Ker d_3^{3,8k+2}.
\end{equation}
Hence, $\mathop\mathrm{Im} d^{0,8k}_3\subset \Ker d_3^{3,8k+2}=0$ and $\OSU_{8k}/\mathrm{Tors}=\Ker d^{0,8k}_3=E^{0,8k}=\Ker\partial$.

It remains to consider the case $2i=8k+4$. The exact sequence~\eqref{osueven} gives $\OSU_{8k+2}=E_\infty^{0,8k+4}$ because $E_\infty^{2,8k+6}\subset E^{2,8k+6}=H_{8k+2}(\mathcal W_*)=0$. Consider the commutative diagram with exact rows:
\[
\diagram
  0\rto &\OSU_{8k+4}=E^{0,8k+4}_\infty \rto&  
  E^{0,8k+4} \rto^{\ \ d_3^{0,8k+4}}\dto^{\cdot \theta^3} & 
  E^{3,8k+6}\dto^{\cdot \theta^3}_\cong\\
  &\ \ \ \ \ \ \ \ \ \ \ \ \ \ \ \ 0\rto & 
  E^{3,8k+10} \rto^{\ \ d_3^{3,8k+10}} & E^{6,8k+12}
\enddiagram
\]
The lower row is exact by~\eqref{injd3}. The diagram implies that
\[
  \OSU_{8k+4}\cong\Ker d_3^{0,8k+4}=
  \Ker(E^{0,8k+4}\stackrel{\cdot\theta^3}\longrightarrow E^{3,8k+10})=
  \Ker(E^{0,8k+4}\stackrel{\cdot\theta}\longrightarrow E^{1,8k+6})=
  \mathop\mathrm{Im}\partial,
\]
where the last two identities follow from Proposition~\ref{multheta}. This finishes the proof of~(b).

\smallskip

It remains to prove (c). Using statement (b) and Theorem~\ref{osutors}~(b) we identify the homomorphism $\OSU_{8k}\stackrel{\cdot\theta}\longrightarrow\OSU_{8k+1}$ with the projection $\Ker\partial\to\Ker\partial/\mathop\mathrm{Im}\partial=H_{8k}(\mathcal W_*)$. Take an element $\alpha\in\OSU_{8k+1}$ and write it as a polynomial $P(\omega_{4k})$  in~$\omega_{4k}$ with $\Z_2$-coefficients using Theorem~\ref{cfcoh}. (To simplify the notation, we use $\omega_2^2$ for the missing generator $\omega_4$ in this argument.) Choose lifts $w_{4k}\in\OSU_{8k}=\Ker\partial\subset\mathcal W_{4k}$ of $\omega_{4k}$; then $a=P(w_{4k})$ maps to~$\alpha$. In other words, $\alpha=P(w_{4k})\cdot\theta$, where $P$
is now considered as a polynomial with coefficients~$0$ and~$1$. If $\alpha=Q(w_{4k})\cdot\theta$ for another such~$Q$, then $P(\omega_{4k})=Q(\omega_{4k})$, which implies $P=Q$ because $\omega_{4k}$ are polynomial generators and both $P$ and $Q$ have coefficients $0$ and~$1$. Therefore, any element of $\OSU_{8k+1}$ is uniquely represented as $P\cdot\theta$, as needed. For the elements of $\mathop\mathrm{Tors}\OSU_{8k+2}$, recall that $\OSU_{8k+1}\stackrel{\cdot\theta}\longrightarrow\mathop\mathrm{Tors}\OSU_{8k+2}$ is an isomorphism. This finishes the proof.
\end{proof}

\section{The ring $\mathcal W$}\label{Wsect}
Theorem~\ref{freetors}~(b) relates the group $\OSU/\mathop{\mathrm{Tors}}$ to the subgroup $\Ker(\partial\colon\mathcal W\to\mathcal W)=(\Ker\partial)\cap(\Ker\varDelta)$ in $\OU$.
Although $\mathcal W=\Ker\varDelta$ is \emph{not} a subring of $\OU$, there is a product structure in~$\mathcal W$ such that $\OSU/\mathop{\mathrm{Tors}}\subset \mathcal W$ is a subring. This leads to a description of the ring structure in $\OSU/\mathop{\mathrm{Tors}}$. We review this approach here, following~\cite{co-fl66m}, \cite{wall66} and~\cite{ston68}.

We recall the geometric operations $\partial\colon\OU_{2n}\to\OU_{2n-2}$ and $\varDelta\colon\OU_{2n}\to\OU_{2n-4}$, see~\eqref{partialDelta}.

\begin{construction}[$\partial$ and $\varDelta$ revisited]\label{c1dual}
Consider a stably complex manifold $M=M^{2n}$ with the fundamental class $[M^{2n}]\in H_{2n}(M;\Z)$. Let $N=N^{2n-2}$ be a stably complex submanifold dual to the cohomology class $c_1(M)=c_1(\det\mathcal T M)$. That is, we have an inclusion
\[
  i\colon N^{2n-2}\hookrightarrow M^{2n}\quad\text{such that}\quad
  i_{*}([N])=c_{1}(M)\frown [M] \quad\text{in }H_*(M;\Z).
\]
The restriction of $\det\mathcal TM$ to $N$ is the normal bundle $\nu(N\subset M)$.
The stably complex structure on~$N$ is
defined via the isomorphism $\mathcal T M|_N\cong\mathcal T
N\oplus\nu(N\subset M)$. Then $c_1(N)=0$, so $N$ is an $SU$-manifold.

The homomorphism $\partial=\varDelta_{(1,0)}\colon\OU_{2n}\to\OU_{2n-2}$ sends a bordism class $[M]$ to the bordism class $[N]$ dual to $c_1(M)$ as described above. This operation is well defined on bordism classes, as $[N]=\varepsilon D_U(\cf_1(\det\mathcal T M))$, where $D_U\colon U^2(M)\to U_{2n-2}(M)$ is the Poincar\'e--Atiyah duality homomorphism, and $\varepsilon\colon U_{2n-2}(M)\to\OU_{2n-2}$ is the augmentation.
We have $\partial^2=0$ because $N$ is an $SU$-manifold. 

Similarly, the homomorphism $\varDelta=\varDelta_{(1,1)}\colon\OU_{2n}\to\OU_{2n-4}$ takes a bordism class $[M]$ to the bordism class of the submanifold $L=L^{2n-4}$ dual to 
$\det\mathcal TM\oplus\overline {\det\mathcal TM}$. That is, we have 
\[
  j\colon L^{2n-4}\hookrightarrow M^{2n}\quad\text{such that}\quad
  j_{*}([L])=-c^2_{1}(M)\frown [M] \quad\text{in }H_*(M;\Z).
\]

We also introduce the homomorphism $\partial_k=\varDelta_{(k,0)}\colon\OU_{2n}\to\OU_{2n-2k}$ taking a bordism class $[M]$ to the bordism class of the submanifold $[P]$ dual to $(\det\mathcal TM)^{\oplus k}$. We have $[P]=\varepsilon D_U(u^k)$, where $u=\cf_1(\det\mathcal T M)$.
\end{construction}

\begin{lemma}\label{dklemma}
Let $[M]\in\OU$ be a bordism class such that every Chern number of
$M$ of which $c_1^k$ is a factor vanishes. Then $\partial_k[M]=0$.
\end{lemma}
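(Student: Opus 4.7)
The plan is to represent $\partial_k[M]$ geometrically and then show that all Chern numbers of the representative vanish, so that $\partial_k[M]=0$ follows from Theorem~\ref{Ustructure}(c).

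By Construction~\ref{c1dual}, $\partial_k[M]$ is the bordism class of a submanifold $j\colon P^{2n-2k}\hookrightarrow M^{2n}$ Poincar\'e dual to $c_1^k(M)$, with normal bundle $\nu=(\det\mathcal TM)^{\oplus k}|_P$. The isomorphism $\mathcal TM|_P\cong\mathcal TP\oplus\nu$ shows that, stably, $\mathcal TP\cong j^*\bigl(\mathcal TM-k\,\det\mathcal TM\bigr)$. Hence for any partition $\omega$,
\[
  c_\omega[P]=\bigl\langle c_\omega(\mathcal TP),[P]\bigr\rangle
  =\bigl\langle j^*c_\omega(\mathcal TM-k\,\det\mathcal TM),[P]\bigr\rangle.
\]

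Now I would use the projection formula together with $j_*[P]=c_1^k(M)\frown[M]$:
\[
  c_\omega[P]=\bigl\langle c_\omega(\mathcal TM-k\,\det\mathcal TM)\cdot c_1^k(M),[M]\bigr\rangle.
\]
The key observation is that $c(\mathcal TM-k\,\det\mathcal TM)=c(\mathcal TM)\cdot(1+c_1(M))^{-k}$ is a formal power series in the Chern classes $c_1(M),c_2(M),\ldots$ of $M$, so $c_\omega(\mathcal TM-k\,\det\mathcal TM)$ is a polynomial in these classes. Multiplying by $c_1^k(M)$, every monomial in the resulting expression has $c_1^k$ as a factor. By the hypothesis on $[M]$, every such Chern number of $M$ vanishes, so $c_\omega[P]=0$ for all~$\omega$.

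Since all Chern numbers of $P$ vanish, Theorem~\ref{Ustructure}(c) yields $[P]=0$ in $\OU_{2n-2k}$, i.e.\ $\partial_k[M]=0$. There is no real obstacle here; the only point to keep in mind is that stable Chern classes of a virtual bundle like $\mathcal TM-k\,\det\mathcal TM$ are well-defined polynomials in the $c_i(\mathcal TM)$ via the Whitney formula, so the reduction to Chern numbers of $M$ divisible by $c_1^k$ is legitimate.
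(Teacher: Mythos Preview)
Your proof is correct and follows essentially the same approach as the paper: both compute $c(\mathcal TP)=j^*\bigl(c(\mathcal TM)/(1+c_1(M))^k\bigr)$, then use $j_*[P]=c_1^k(M)\frown[M]$ to rewrite each Chern number of $P$ as a Chern number of $M$ divisible by $c_1^k$, and conclude via Theorem~\ref{Ustructure}(c). The only cosmetic difference is that you phrase the bundle identity in virtual-bundle notation $\mathcal TP\cong j^*(\mathcal TM-k\,\det\mathcal TM)$, whereas the paper writes out the total Chern class formula directly.
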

\begin{proof}
We have $\partial_k[M]=[P]$, where $j\colon P\hookrightarrow M$ is a submanifold such that 
\[
  \mathcal T P\oplus j^*(\det\mathcal TM)^{\oplus k}=
  j^*(\mathcal T M).
\]
Assume that $c_1^kc_\omega[M]=0$ for any $\omega$. We need to prove that $c_\omega[P]=0$. Calculating the Chern classes for the bundles above we get
\[
  c(P)(1+j^*c_1(M))^k=j^*c(M)
\]
or
\[
  c(P)=j^*\biggl(\frac{c(M)}{(1+c_1(M))^k}\biggr)=j^*\,\widetilde c(M),
\]
where $\widetilde c(M)$ is a polynomial in Chern classes of~$M$.
Then for any $\omega=(i_1,\ldots,i_p)$ we have
\[
  \bigl\langle c_\omega(P),[P]\bigr\rangle=
  \bigl\langle j^*\,\widetilde c_\omega(M),[P]\bigr\rangle=
  \bigl\langle \widetilde c_\omega(M),c_1^k(M)\frown[M]\bigr\rangle=
  \bigl\langle c_1^k\,\widetilde c_\omega(M),[M]\bigr\rangle=0.\qedhere
\]
\end{proof}

The group $\mathcal W_{2n}$ was defined as
\[
  \mathcal W_{2n}=\Ker(\varDelta\colon\varOmega_{2n}^U\to\varOmega_{2n-4}^U).
\]
The same group can also be defined in terms of characteristic numbers and geometrically, as described next. A cohomology class $x\in H^2(M)$ is \emph{spherical} if $x=f^*(u)$ for a map $f\colon M\to\C P^1$, where $u=c_1(\bar\eta)$ and $\eta$ is the tautological line bundle over~$\C P^1$.

\begin{theorem}\label{W3def}
The following three groups are identical:
\begin{itemize}
\item[(a)]
the group $\mathcal W=\Ker\varDelta$;

\item[(b)] 
the subgroup of $\OU$ consisting of bordism classes $[M]$ such that every Chern number of
$M$ of which $c_1^2$ is a factor vanishes;

\item[(c)]
the subgroup of $\OU$ consisting of bordism classes $[M]$ for which $c_1(M)$ is a spherical class.
\end{itemize}
\end{theorem}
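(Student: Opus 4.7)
I would prove the chain (c)$\Rightarrow$(b)$\Leftrightarrow$(a)$\Rightarrow$(c), with the last implication being the crux.

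\emph{(a)$\Leftrightarrow$(b).} Mimicking the computation in Lemma~\ref{dklemma}, I would start by describing the Chern classes of the submanifold $j\colon L\hookrightarrow M$ representing $\varDelta[M]$. Since the normal bundle $\nu(L\subset M)=j^*(\det\mathcal TM\oplus\overline{\det\mathcal TM})$ has total Chern class $1-j^*c_1^2(M)$, we get
\[
  c(L)=j^*\!\left(\frac{c(M)}{1-c_1^2(M)}\right)=j^*\widetilde c(M),
\]
where $\widetilde c(M)=c(M)\bigl(1+c_1^2(M)+c_1^4(M)+\cdots\bigr)$ is a polynomial expression in the Chern classes (all but finitely many terms vanish by dimension). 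For any partition $\omega$, the projection formula combined with $j_*[L]=-c_1^2(M)\frown[M]$ gives
\[
  c_\omega[L]=\langle j^*\widetilde c_\omega(M),[L]\rangle=-c_1^2\widetilde c_\omega[M].
\]
By Theorem~\ref{Ustructure}(c), $\varDelta[M]=[L]$ vanishes in $\OU$ iff all $c_\omega[L]$ vanish. Since the transformation $c_\omega\leftrightarrow\widetilde c_\omega$ is unitriangular with respect to the $c_1$-adic filtration (one inverts via $c(M)=\widetilde c(M)(1-c_1^2)$), the families $\{c_1^2 c_\omega\}$ and $\{c_1^2\widetilde c_\omega\}$ span the same $\Z$-submodule of $H^*(BU;\Z)$, so they evaluate to zero on $[M]$ simultaneously. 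This yields (a)$\Leftrightarrow$(b).

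\emph{(c)$\Rightarrow$(b).} If $c_1(M)=f^*u$ for some $f\colon M\to\C P^1$, then $c_1^2(M)=f^*(u^2)=0$ in $H^4(M;\Z)$ because $u^2=0$ in $H^*(\C P^1)$. Consequently $c_1^2\cdot c_\omega(M)=0$ in $H^*(M;\Z)$ for every $\omega$, so every Chern number of $M$ divisible by $c_1^2$ vanishes.

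\emph{(b)$\Rightarrow$(c).} This is the main obstacle: sphericity of $c_1$ is not a bordism invariant of a single representative (the pointwise condition $c_1^2=0$ in $H^4(M)$ is strictly stronger than the Chern-number condition (b)), so we must replace $M$ by a bordant manifold. My plan is to work at the level of bordism of pairs $(M,f\colon M\to\C P^\infty)$, i.e., in $U_*(\C P^\infty)$. The classifying map for $c_1(M)$ gives such a pair $[M,g]$, and $U_*(\C P^\infty)\cong\OU\otimes H_*(\C P^\infty)$ as $\OU$-modules. I would show that the condition (b) forces the components of $[M,g]$ in degrees corresponding to $\beta_i$ with $i\ge2$ to be zero modulo bordism, so that $[M,g]$ is represented by a pair whose map factors through~$\C P^1$. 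The surgery step -- replacing a given representative $(M,g)$ by one whose map factors through $\C P^1$ -- can be carried out by killing embedded framed 2-spheres in $M$ dual to $g^*u^k$ for $k\ge 2$; the obstructions to these surgeries preserving the $U$-structure are exactly the Chern numbers with $c_1^2$ as a factor, which vanish by~(b). The hard part will be organizing this surgery inductively and verifying that the stably complex structure extends across each trace cobordism; this is where the original Conner--Floyd argument does most of its work.
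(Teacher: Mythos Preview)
Your argument for (a)$\Leftrightarrow$(b) is correct and essentially the paper's, packaged slightly differently: the paper also computes $c(L)$ from the normal bundle $\det\mathcal TM\oplus\overline{\det\mathcal TM}$ and uses the same unitriangularity (in the $c_1$-adic filtration) to pass between the two families of numbers. Your (c)$\Rightarrow$(b) is fine.

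The gap is in (b)$\Rightarrow$(c). Passing to $U_*(\C P^\infty)$ loses exactly the information you need: even if $[M,g]$ lies in the image of $U_*(\C P^1)\to U_*(\C P^\infty)$ (which you could indeed verify, using Corollary~\ref{deltadk} to show the higher coefficients $a_i$ vanish), a bordant pair $(M',g')$ with $g'$ factoring through $\C P^1$ has no reason to satisfy $g'^*u=c_1(M')$. The bordism relation in $U_*(\C P^\infty)$ does not constrain the determinant of the tangent bundle of the cobordism, so the map $g'$ and the class $c_1(M')$ become decoupled. Your surgery description also does not parse: the class $g^*u^k$ lives in $H^{2k}(M)$ and is dual to a codimension-$2k$ submanifold, not an embedded $2$-sphere, and there is no standard mechanism by which Chern numbers divisible by $c_1^2$ arise as obstructions to $U$-structured surgery in this setting. (Incidentally, this implication is due to Stong, not Conner--Floyd.)

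The paper's route, following Stong, is entirely different and avoids surgery. One introduces a projection $\pi\colon\OU\to\OU$ sending $[M]$ to the class of the submanifold $N\subset\C P^1\times M$ Poincar\'e dual to $c_1(\bar\eta\otimes\det\mathcal TM)$. By construction the normal bundle of $N$ is the restriction of $\bar\eta\otimes\det\mathcal TM$, and a short Chern-class computation shows $c_1(N)$ is pulled back from the $\C P^1$ factor, hence spherical. One then checks that $\pi$ is a projection with image~$\mathcal W$; so if $[M]\in\mathcal W$ then $\pi[M]=[M]$ already has a representative with spherical~$c_1$. This is an explicit geometric construction rather than an existence argument, and it sidesteps the bookkeeping problem you ran into.
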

\begin{proof}
The equivalence of (a) and (b) was proved in~\cite[(6.4)]{co-fl66m}. We give a more direct argument below. By definition, $\varDelta[M]=[L]$, where $j\colon L\hookrightarrow M$ is a submanifold such that 
\[
  \mathcal T L\oplus j^*\bigl(\det\mathcal TM\oplus\overline{\det\mathcal TM}\bigr)=
  j^*(\mathcal T M).
\]
Calculating the Chern classes, we get
\begin{gather*}
  c(L)(1+j^*c_1(M))(1-j^*c_1(M))=j^*c(M),\label{cML}\\
  c_i(L)-c_{i-2}(L)\cdot j^*c_1^2(M)=j^*c_i(M)\notag.
\end{gather*}
In particular, for $i=1$ we obtain $c_1(L)=j^*c_1(M)$, so we can rewrite the formula above~as
\[
  (c_i-c_1^2c_{i-2})(L)=j^*c_i(M).
\]
Given a partition $\omega=(i_1,\ldots,i_p)$ and the corresponding Chern class $c_\omega=c_{i_1}\cdots c_{i_p}$, we obtain the following relation on the characteristic numbers:
\[
  \bigl\langle (c_{i_1}-c_1^2c_{i_1-2})\cdots(c_{i_p}-c_1^2c_{i_p-2})(L),
  [L]\bigr\rangle=\bigl\langle j^*c_\omega(M),[L]\bigr\rangle=
  \bigl\langle -c_1^2c_\omega(M),[M]\bigr\rangle
\]
Now if $\varDelta[M]=[L]=0$, then the left hand side above vanishes, and we obtain from the right hand side that every Chern number of $M$ of which $c_1^2$ is a factor vanishes. 

For the opposite direction, assume that $-c_1^2c_\omega[M]=0$ for any $\omega$. We need to prove that $c_\omega[L]=0$. This is done in the same way as in the proof of Lemma~\ref{dklemma}. 

\smallskip

The equivalence of (a) and (c) is proved in~\cite[Chapter~VIII]{ston68}. 
\end{proof}

\begin{corollary}\label{deltadk}
If $[M]\in\mathcal W$, then $\partial_k[M]=0$ for any $k\ge2$.
\end{corollary}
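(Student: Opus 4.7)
The plan is to combine Theorem~\ref{W3def} with Lemma~\ref{dklemma} directly; the statement is essentially a one-line consequence once the two characterisations are lined up correctly.

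First, I would invoke Theorem~\ref{W3def}(b), which identifies $\mathcal W$ with the subgroup of $\OU$ consisting of bordism classes $[M]$ such that every Chern number of $M$ of which $c_1^2$ is a factor vanishes. So for $[M]\in\mathcal W$ we have $c_1^2 c_\omega[M]=0$ for every partition~$\omega$.

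Next, fix $k\ge 2$. Any Chern number of the form $c_1^k c_\omega[M]$ can be rewritten as $c_1^2\cdot(c_1^{k-2} c_\omega)[M]$, which is a Chern number of $M$ having $c_1^2$ as a factor. By the previous step, all such numbers vanish. In other words, $[M]$ satisfies the hypothesis of Lemma~\ref{dklemma} with exponent~$k$.

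Finally, applying Lemma~\ref{dklemma} directly yields $\partial_k[M]=0$, as required. There is no real obstacle here: the only content is recognising that the vanishing of $c_1^2$-divisible Chern numbers automatically implies the vanishing of $c_1^k$-divisible ones for all $k\ge 2$, so the hypothesis feeding Lemma~\ref{dklemma} is met.
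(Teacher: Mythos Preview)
Your proof is correct and follows exactly the same approach as the paper: invoke Theorem~\ref{W3def}(b) to get vanishing of all $c_1^2$-divisible Chern numbers, note that $c_1^k$-divisible numbers are in particular $c_1^2$-divisible for $k\ge2$, and conclude via Lemma~\ref{dklemma}.
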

\begin{proof}
By Theorem~\ref{W3def}, $[M]\in\mathcal W$ implies that every Chern number of
$M$ of which $c_1^2$ is a factor vanishes. Then every Chern number of
$M$ of which $c_1^k$ is a factor vanishes (as $k\ge2$). Thus, $\partial_k[M]=0$ by Lemma~\ref{dklemma}.
\end{proof}

\begin{remark}
For the operation $\partial=\partial_1$, there is no analogue of equivalence between (a) and (b) in Theorem~\ref{W3def}. More precisely, by Lemma~\ref{dklemma}, the group $\Ker\partial$ contains the subgroup of  $\OU$ consisting of bordism classes $[M]$ such that every Chern number of
$M$ of which $c_1$ is a factor vanishes. However, there is no opposite inclusion. For example, any element of $\OU_4$ is contained in $\Ker\partial$, but $c_1^2[\C P^2]\ne0$. In fact, the subgroup of  $\OU$ consisting of bordism classes $[M]$ such that every Chern number of
$M$ of which $c_1$ is a factor vanishes coincides with the intersection $\Ker\partial\cap\Ker\varDelta$.
\end{remark}

It follows from either of the descriptions of the group $\mathcal W_{2n}$ that we have forgetful
homomorphisms $\OSU_{2n}\to\mathcal
W_{2n}\to\OU_{2n}$, and the restriction of the boundary
homomorphism $\partial\colon\mathcal W_{2n}\to\mathcal W_{2n-2}$
is defined.

\begin{lemma}\label{Deltaab}
For any elements $a,b\in\mathcal W$, we have
\begin{gather*}
  \partial(a\cdot b)=a\cdot\partial b+\partial a\cdot b-[\C P^1]\cdot\partial a\cdot\partial b,\\
  \varDelta(a\cdot b)=-2\partial a\cdot\partial b,
\end{gather*}
where $a\cdot b$ denotes the product in~$\OU$.
\end{lemma}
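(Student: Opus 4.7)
The plan is to realise the product $a\cdot b$ as the bordism class $[M\times N]$ equipped with the product stably complex structure, and then to apply the formal group law $F_U$ of geometric cobordisms from Construction~\ref{confgl}. Since determinants are multiplicative under external tensor product, $\det\mathcal T(M\times N)\cong\pi_M^*\det\mathcal TM\otimes\pi_N^*\det\mathcal TN$, so writing $u=\pi_M^*\cf_1(\det\mathcal TM)$ and $v=\pi_N^*\cf_1(\det\mathcal TN)$ in $U^2(M\times N)$, we have $\du(\mathcal T(M\times N))=F_U(u,v)$ and $\bdu(\mathcal T(M\times N))=\overline{F_U(u,v)}=F_U(\bar u,\bar v)$.

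For the first identity I would expand $F_U(u,v)=u+v+\sum_{k,l\ge1}\alpha_{kl}\,u^kv^l$ and apply $\varepsilon D_U$ termwise to $\partial(a\cdot b)=\varepsilon D_U(F_U(u,v))$. Because $u$ and $v$ are external classes, Poincar\'e--Atiyah duality is multiplicative on the monomials $u^kv^l$, giving $\varepsilon D_U(u^kv^l)=\partial_k a\cdot\partial_l b$ (with the convention $\partial_0=\mathrm{id}$), as one sees by applying the factorwise geometric description of $\widetilde\varDelta_{(k,0)}$ from Construction~\ref{Delta}. Corollary~\ref{deltadk} kills every term with $k\ge2$ or $l\ge2$ on $\mathcal W$, leaving only $(k,l)\in\{(1,0),(0,1),(1,1)\}$; combined with the standard identity $\alpha_{11}=-[\C P^1]$ for the linear coefficient of the formal group law of geometric cobordisms (derivable directly from Construction~\ref{confgl}, e.g.\ by testing on $\C P^1\times\C P^1$), this yields the first formula.

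For the second identity the same vanishing from Corollary~\ref{deltadk} reduces $\varDelta(a\cdot b)=\varepsilon D_U(F_U(u,v)\cdot F_U(\bar u,\bar v))$ to extracting the coefficient of $uv$ in $F_U(u,v)\cdot F_U(\bar u,\bar v)$ modulo the ideal $(u^2)+(v^2)$. Since $\bar u\equiv-u$ and $\bar v\equiv-v$ modulo squares, one finds $F_U(\bar u,\bar v)\equiv -u-v-[\C P^1]uv$, while $F_U(u,v)\equiv u+v-[\C P^1]uv\pmod{(u^2,v^2)}$; expanding and discarding terms in $(u^2)+(v^2)$ yields $-2uv$, whence $\varDelta(a\cdot b)=-2\,\partial a\cdot\partial b$.

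The main technical point requiring care is the external multiplicativity $\varepsilon D_U(u^kv^l)=\partial_k a\cdot\partial_l b$, which is the bridge from the formal-group-law calculation back to bordism classes. It amounts to saying that a generic transverse section of $\pi_M^*(\det\mathcal TM)^{\otimes k}\otimes\pi_N^*(\det\mathcal TN)^{\otimes l}$ vanishes on a submanifold of the form $M_k\times N_l$ with the product stably complex structure, where $M_k\subset M$ and $N_l\subset N$ represent $\partial_k a$ and $\partial_l b$ respectively. Once this external multiplicativity and the value $\alpha_{11}=-[\C P^1]$ are in hand, the remainder is a short formal manipulation of power series.
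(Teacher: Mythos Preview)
Your proof is correct and follows essentially the same route as the paper: represent $a\cdot b$ by $M\times N$, write $\partial(a\cdot b)=\varepsilon D_U(F_U(u,v))$ and $\varDelta(a\cdot b)=\varepsilon D_U\bigl(F_U(u,v)\,\overline{F_U(u,v)}\bigr)$, use the external multiplicativity $\varepsilon D_U(u^kv^l)=\partial_k a\cdot\partial_l b$ together with Corollary~\ref{deltadk} to kill all terms with $k\ge2$ or $l\ge2$, and read off $\alpha_{11}=-[\C P^1]$. One small slip in your closing paragraph: the submanifold dual to $u^kv^l$ is the zero locus of a generic section of the \emph{direct sum} $\pi_M^*(\det\mathcal TM)^{\oplus k}\oplus\pi_N^*(\det\mathcal TN)^{\oplus l}$, not of the tensor power (which would be a line bundle with codimension-one zero locus); this does not affect your argument, since you only use the algebraic identity $\varepsilon D_U(u^kv^l)=\partial_k a\cdot\partial_l b$.
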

\begin{proof}Let $a=[M^{2m}]$ and $b=[N^{2n}]$ for some stably complex manifolds $M$ and~$N$. Then $\partial(a\cdot b)\in\OU_{2m+2n-2}$ is represented by a submanifold $X\subset M\times N$ dual to $c_1(M\times N)=x+y$, where $x=p_1^*c_1(M)$, $y=p_2^*c_1(M)$ and $p_1\colon M\times N\to M$, $p_2\colon M\times N\to N$ are the  projection maps. Let $u, v\in U^2(M\times N)$ be the geometric cobordisms corresponding to $x,y$, respectively (see Construction~\ref{confgl}). Then we have
\[
  \partial(a\cdot b)=[X]=\varepsilon D_U(u+_{\!{}_H}\!v).
\]
On the other hand,
\[
  u+_{\!{}_H}\!v=F_U(u,v)=u+v+\sum_{k\ge1,\,l\ge1}\alpha_{kl}\,u^kv^l.
\]
To identify $\partial(a\cdot b)=[X]$, we apply $\varepsilon D_U$ to both sides of this identity. We have $\varepsilon D_U(u)=\partial a\cdot b$ (the submanifold dual to $p_1^*c_1(M)$ in $M\times N$ is the product of the submanifold dual to $c_1(M)$ in $M$ with $N$). Similarly, $\varepsilon D_U(v)=a\cdot\partial b$ and $\varepsilon D_U(uv)=\partial a\cdot\partial b$. We claim that $\varepsilon D_U(u^kv^l)=0$ if $k\ge2$ or $l\ge2$. Indeed, $\varepsilon D_U(u^kv^l)$ is the bordism class of the submanifold in $M\times N$ dual to $p_1^*(\det\mathcal T M)^{\oplus k}\oplus
p_2^*(\det\mathcal T N)^{\oplus l}$.
This bordism class is $\partial_k a\cdot\partial_lb$. Since $a,b\in\mathcal W$, Corollary~\ref{deltadk} implies that $\partial_k a=0$ or $\partial_lb=0$. 
%

The first identity of the lemma follows by noting that $\alpha_{11}=-[\C P^1]$ (see~\cite[Theorem~E.2.3]{bu-pa15}, for example).

For the second identity, $\varDelta(a\cdot b)\in\OU_{2m+2n-4}$ is represented by a submanifold $L\subset M\times N$ dual to $-c^2_1(M\times N)=(x+y)(-x-y)$. Similarly to the previous argument,
\[
  \varDelta(a\cdot b)=[L]=\varepsilon D_U\bigl(F_U(u,v)\overline{F_U(u,v)}\bigr)=
  \varepsilon D_U(-2uv)=-2\partial a\cdot\partial b.\qedhere
\]
\end{proof}

The direct sum $\mathcal W=\bigoplus_{i\ge0}\mathcal W_{2i}$ is
\emph{not} a subring of~$\OU$: one has $[\C P^1]\in\mathcal W_2$,
but $c_1^2[\C P^1\times\C P^1]=8\ne0$, so $[\C P^1]\times[\C
P^1]\notin\mathcal W_4$. 

The ring structure in $\mathcal W$ will be defined using a projection operator $\rho\colon\OU\to\OU$ which is described next. Recall the operation $\varPsi \colon\OU_{2n}\to\OU_{2n+4}$ defined in Construction~\ref{Psi}.

\begin{proposition}\label{rhoproj}
The homomorphism $\rho=\id-\varPsi\varDelta\colon\OU\to\OU$ is a projection operator such that
$\Im\rho=\mathcal W$, $\Ker\rho=\varPsi(\OU)$ and $\partial\rho=\rho\partial=\partial$.
\end{proposition}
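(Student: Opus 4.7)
The plan is to deduce all four assertions directly from the algebraic identities of Lemma~\ref{algrel}, in particular from $\varDelta\varPsi=\id$, $\partial\varPsi=0$ and $\varDelta\partial=0$. Essentially, $\rho=\id-\varPsi\varDelta$ is exactly the projection onto $\Ker\varDelta$ afforded by the right inverse $\varPsi$ of $\varDelta$, and everything follows formally.

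First I would verify that $\rho$ is idempotent by expanding
\[
  \rho^2=(\id-\varPsi\varDelta)(\id-\varPsi\varDelta)
  =\id-2\varPsi\varDelta+\varPsi(\varDelta\varPsi)\varDelta
  =\id-2\varPsi\varDelta+\varPsi\varDelta=\rho,
\]
where the collapse of the middle term uses $\varDelta\varPsi=\id$. Next, applying $\varDelta$ to $\rho(a)=a-\varPsi\varDelta a$ gives $\varDelta\rho(a)=\varDelta a-(\varDelta\varPsi)\varDelta a=0$, so $\Im\rho\subset\Ker\varDelta=\mathcal W$; conversely if $a\in\mathcal W$ then $\rho(a)=a$, so this inclusion is an equality. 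For the kernel, one inclusion is $\rho\varPsi=\varPsi-\varPsi(\varDelta\varPsi)=0$, giving $\varPsi(\OU)\subset\Ker\rho$; the other inclusion is immediate from the identity $\rho(a)=0\Leftrightarrow a=\varPsi\varDelta a$.

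The commutation with $\partial$ is equally formal: $\partial\rho=\partial-(\partial\varPsi)\varDelta=\partial$ by $\partial\varPsi=0$, and $\rho\partial=\partial-\varPsi(\varDelta\partial)=\partial$ by $\varDelta\partial=0$.

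There is no genuine obstacle; the only care required is to respect the order of composition of operations (which are noncommutative elements of $A^U$) and to cite the correct identities from Lemma~\ref{algrel} at each step. The content of the proposition lies entirely in those identities, and the proof is a one-line calculation per clause.
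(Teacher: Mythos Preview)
Your proof is correct and essentially identical to the paper's: both derive idempotence from $\varDelta\varPsi=\id$, identify $\Im\rho$ and $\Ker\rho$ by the obvious inclusions, and obtain $\partial\rho=\rho\partial=\partial$ from $\partial\varPsi=0$ and $\varDelta\partial=0$ respectively. The paper's proof is slightly terser but follows exactly the same line-by-line computations you give.
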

\begin{proof}
The relation $\varDelta\varPsi=\id$ from Lemma~\ref{algrel} implies $(\id-\varPsi\varDelta)^2=\id-\varPsi\varDelta$, so $\rho$ is a projection. The same relation implies that $\varDelta\rho=0$, so $\Im\rho\subset\Ker\varDelta=\mathcal W$. The inclusion $\Im\rho\supset\Ker\varDelta$ is obvious. The identity $\Ker\rho=\Im\varPsi$ is proved similarly. Finally,
$\partial(\id-\varPsi\varDelta)=\partial-\partial\varPsi\varDelta=\partial$ because $\partial\varPsi=0$, and
$(\id-\varPsi\varDelta)\partial=\partial-\varPsi\varDelta\partial=\partial$ because $\varDelta\partial=0$.
\end{proof}

\begin{corollary}\label{OUW}
$\rank\mathcal W_{2n}=\rank\OU_{2n}-\rank\OU_{2n-4}$.
\end{corollary}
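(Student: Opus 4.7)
The plan is to deduce the rank identity directly from the projection structure established in Proposition~\ref{rhoproj}. Since $\rho=\id-\varPsi\varDelta$ is an idempotent endomorphism of $\OU$ with $\Im\rho=\mathcal W$ and $\Ker\rho=\varPsi(\OU)$, we obtain a canonical direct sum decomposition
\[
  \OU=\mathcal W\oplus\varPsi(\OU)
\]
of graded abelian groups, where each summand is regarded as a graded subgroup of~$\OU$. Restricting to degree $2n$ and noting that the operator $\varPsi$ raises degree by~$4$, this specialises to
\[
  \OU_{2n}=\mathcal W_{2n}\oplus\varPsi(\OU_{2n-4}).
\]

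Next I would observe that $\varPsi\colon\OU_{2n-4}\to\OU_{2n}$ is injective, since by Lemma~\ref{algrel} the composition $\varDelta\varPsi=\id$, so $\varPsi$ has a left inverse. Consequently $\rank\varPsi(\OU_{2n-4})=\rank\OU_{2n-4}$, and taking ranks in the direct sum decomposition above yields
\[
  \rank\OU_{2n}=\rank\mathcal W_{2n}+\rank\OU_{2n-4},
\]
which is equivalent to the claimed identity.

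There is no genuine obstacle here: the whole corollary is a bookkeeping consequence of Proposition~\ref{rhoproj} together with the one-line fact that $\varPsi$ is split-injective. The only point worth being careful about is to verify that the splitting $\OU_{2n}=\mathcal W_{2n}\oplus\varPsi(\OU_{2n-4})$ is an honest internal direct sum of finitely generated free abelian groups (so that ranks add), but this is immediate from $\rho$ being an idempotent on $\OU$, which is free of finite rank in each degree by Theorem~\ref{Ustructure}.
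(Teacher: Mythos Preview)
Your proof is correct and follows essentially the same approach as the paper: both use the idempotent $\rho$ from Proposition~\ref{rhoproj} to split $\OU_{2n}=\mathcal W_{2n}\oplus\varPsi(\OU_{2n-4})$, observe that $\varPsi$ is injective (since $\varDelta\varPsi=\id$), and then compare ranks. If anything, your version is slightly more detailed than the paper's, which simply states the decomposition and the injectivity of $\varPsi$ without further comment.
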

\begin{proof}
The previous proposition implies $\OU=\Ker\rho\oplus\Im\rho$. We have $(\Im\rho)_{2n}=\mathcal W_{2n}$ and $(\Ker\rho)_{2n}=\varPsi(\OU_{2n-4})\cong\OU_{2n-4}$ because $\varPsi$ is injective.
\end{proof}

Using the projection $\rho=\id-\varPsi\varDelta$, define the \emph{twisted
product} of elements $a,b\in\mathcal W$ as
\[
  a\mathbin{*}b=\rho(a\cdot b),
\]
where $\cdot$ denotes the product in $\OU$. A geometric description is given next.

\begin{proposition}\label{twprod}
We have
\[
  a\mathbin{*}b=a\cdot b+2[V^4]\cdot\partial a\cdot\partial b,
\]
where $V^4$ is the manifold $\C P^2$ with the stably complex structure
defined by the isomorphism $\mathcal T\C
P^2\oplus\underline{\R}^2\cong\bar\eta\oplus\bar\eta\oplus\eta$.
\end{proposition}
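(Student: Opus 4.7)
The plan is a direct computation from the definitions. By Proposition~\ref{rhoproj} we have $\rho=\id-\varPsi\varDelta$, hence
\[
  a\mathbin{*}b=\rho(a\cdot b)=a\cdot b-\varPsi\bigl(\varDelta(a\cdot b)\bigr).
\]
Since $a,b\in\mathcal W$, Lemma~\ref{Deltaab} gives $\varDelta(a\cdot b)=-2\,\partial a\cdot\partial b$, so
\[
  a\mathbin{*}b=a\cdot b+2\,\varPsi(\partial a\cdot\partial b).
\]
The formula therefore reduces to the identity $\varPsi(\partial a\cdot\partial b)=[V^4]\cdot\partial a\cdot\partial b$.

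I would deduce this from the more general statement that $\varPsi([X])=[X]\cdot[V^4]$ for any stably complex manifold $X$ with $c_1(X)=0$. This is enough because, by Construction~\ref{c1dual}, the classes $\partial a$ and $\partial b$ are represented by $SU$-manifolds, so $\partial a\cdot\partial b$ is represented by a product of $SU$-manifolds, which in particular has vanishing first Chern class.

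To establish the key claim, recall from Construction~\ref{Psi} that $\varPsi([X])$ is represented by $\C P(\overline{\det\mathcal T X}\oplus\underline{\C}^2)$ equipped with the stably complex structure $p^*\mathcal T X\oplus(\bar\eta\otimes p^*\overline{\det\mathcal T X})\oplus\bar\eta\oplus\eta$, where $\eta$ is the tautological line bundle on the projectivisation and $p$ is the projection to~$X$. The assumption $c_1(X)=0$ forces $\overline{\det\mathcal T X}$ to be a trivial complex line bundle, since complex line bundles over $X$ are classified by $c_1\in H^2(X;\Z)$. A choice of trivialisation identifies the projectivisation with $X\times\C P^2$; under this identification $\eta$ pulls back from the tautological line bundle on $\C P^2$, and the twisted summand $\bar\eta\otimes p^*\overline{\det\mathcal T X}$ becomes simply~$\bar\eta$. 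The stably complex structure on $\varPsi([X])$ is thereby identified with the external direct sum of $\mathcal T X$ on the first factor and $\bar\eta\oplus\bar\eta\oplus\eta$ on the second, the latter being precisely the definition of~$V^4$. Hence $\varPsi([X])=[X\times V^4]=[X]\cdot[V^4]$.

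The only delicate point in this plan is the bookkeeping in the last identification: one must verify that upon trivialising $\overline{\det\mathcal T X}$ both $\eta$ and $\bar\eta\otimes p^*\overline{\det\mathcal T X}$ correctly reduce to the appropriate untwisted bundles pulled back from~$\C P^2$. Given Lemma~\ref{Deltaab} and Proposition~\ref{rhoproj}, every other step is formal.
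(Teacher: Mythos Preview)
Your proof is correct and follows essentially the same route as the paper: both reduce the formula, via $\rho=\id-\varPsi\varDelta$ and Lemma~\ref{Deltaab}, to computing $\varPsi$ on an $SU$-manifold class, and then use the geometric description of $\varPsi$ from Construction~\ref{Psi} to see that triviality of $\det\mathcal TX$ makes the projectivisation split as $X\times V^4$. The paper's argument is slightly terser but contains no additional idea.
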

\begin{proof}
We need to verify that $\varPsi\varDelta(a\cdot b)=-2[V^4]\cdot\partial a\cdot\partial b$. By Lemma~\ref{Deltaab}, ${\varDelta(a\cdot b)}=-2\partial a\cdot\partial b$. Recall from Construction~\ref{Psi} that $\varPsi[M]$ is represented by the manifold $\C P(\overline{\det\mathcal TM}\oplus\underline{\C}^2)$ with the stably complex structure $p^*\mathcal TM\oplus(\bar\eta\otimes p^*\overline{\det\mathcal TM})\oplus\bar\eta\oplus\eta$. In  our case, $[M]=-2\partial a\cdot\partial b$, so $\det\mathcal TM$ is a trivial bundle. We obtain that the bordism class $\varPsi\varDelta(a\cdot b)=\varPsi[M]$ is represented by the total space of a trivial bundle over $M$ whose fibre is $\C P^2$ with the stably complex structure $\bar\eta\oplus\bar\eta\oplus\eta$. The latter bordism class is $[V^4]\cdot [M]=-2[V^4]\cdot \partial a\cdot\partial b$, as claimed.  
\end{proof}

\begin{remark}
We may also take $V^4=\C P^1\times\C P^1-\C P^2$ with the standard complex structure, as this manifold is bordant to the one described in Proposition~\ref{twprod}. 
\end{remark}

\begin{theorem}
The direct sum $\mathcal W=\bigoplus_{i\ge0}\mathcal W_{2i}$ is a 
commutative associative  unital ring with respect to the product~$*$.
\end{theorem}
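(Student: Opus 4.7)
The plan is to check closure, identity, commutativity, and associativity in turn, relying on the identity $a\mathbin{*}b=\rho(a\cdot b)$ and the Leibniz rule from Lemma~\ref{Deltaab}. Closure is immediate from Proposition~\ref{rhoproj}: since $\mathop{\mathrm{Im}}\rho=\mathcal{W}$, the product $a\mathbin{*}b=\rho(a\cdot b)$ lies in $\mathcal{W}$ for all $a,b\in\mathcal{W}$. For the unit, I take $1\in\mathcal{W}_0$ (the point class); then $\partial 1=0$, so the explicit formula of Proposition~\ref{twprod} gives $1\mathbin{*}a=1\cdot a=a$ and $a\mathbin{*}1=a$. Commutativity follows at once from the commutativity of the ordinary $\OU$-product, since $\partial a\cdot\partial b=\partial b\cdot\partial a$.

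The substantive step, which I expect to be the main obstacle, is associativity. The crucial preliminary identity is
\[
  \partial(a\mathbin{*}b)=\partial\rho(a\cdot b)=\partial(a\cdot b)
  =a\cdot\partial b+\partial a\cdot b-[\C P^1]\cdot\partial a\cdot\partial b,
\]
obtained by combining $\partial\rho=\partial$ (Proposition~\ref{rhoproj}) with Lemma~\ref{Deltaab}. Substituting this into the expansion of $(a\mathbin{*}b)\mathbin{*}c$ via Proposition~\ref{twprod} will yield
\begin{align*}
(a\mathbin{*}b)\mathbin{*}c
&= a\cdot b\cdot c+2[V^4]\cdot\partial a\cdot\partial b\cdot c\\
&\quad +2[V^4]\cdot a\cdot\partial b\cdot\partial c+2[V^4]\cdot\partial a\cdot b\cdot\partial c\\
&\quad -2[V^4]\cdot[\C P^1]\cdot\partial a\cdot\partial b\cdot\partial c.
\end{align*}

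The same substitution applied to $a\mathbin{*}(b\mathbin{*}c)$ produces exactly the same five-term expression, since every summand above is manifestly symmetric under permutation of the three factors. Hence the two triple products coincide. The hypothesis $a,b,c\in\mathcal{W}$ is essential throughout: without it, Lemma~\ref{Deltaab} would pick up additional contributions from higher powers of $c_1$ coming from the formal group law, which are killed precisely by the vanishing $\partial_k=0$ on $\mathcal{W}$ for $k\geq 2$ (Corollary~\ref{deltadk}). Once the symmetric expansion is in place, associativity is immediate and the proof is complete.
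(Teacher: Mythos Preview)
Your proof is correct and follows exactly the approach the paper indicates: the paper's proof simply says ``We need to verify that the product~$*$ is associative. This is a direct calculation using the formula from Proposition~\ref{twprod},'' and you have carried out precisely that calculation, using $\partial\rho=\partial$ together with Lemma~\ref{Deltaab} to expand both triple products into the same symmetric five-term expression.
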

\begin{proof}
We need to verify that the product~$*$ is associative. This is a direct calculation using the formula from Proposition~\ref{twprod}.
\end{proof}

The projection $\rho=\id-\varPsi\varDelta$ was defined by Conner and Floyd in~\cite[(8.4)]{co-fl66m} and used by Novikov~\cite[Remark~5.3]{novi67}. Stong~\cite[Chapter~VIII]{ston68} introduced another projection $\pi\colon\OU\to\OU$ with image $\mathcal W$, defined geometrically as follows. Take $[M]\in\OU$. Then $\pi[M]$ is the bordism class $[N]$ of the submanifold $N\subset\C P^1\times M$ dual to $\bar\eta\otimes\det\mathcal T M$. It follows easily from this geometric definition that $c_1(\pi[M])$ is a spherical class; in this way the equivalence of (a) and (c) in Theorem~\ref{W3def} is proved.

Buchstaber~\cite{buch72} used Stong's projection $\pi\colon\OU\to\mathcal W$ (under the name ``projection of Conner--Floyd type'') to define a complex-oriented cohomology theory with the coefficient ring~$\mathcal W$ and studied the corresponding formal group law. A general algebraic theory of projections of Conner--Floyd type was developed in~\cite{b-b-n-y00}; it was then used to classify stable associative multiplications in complex cobordism.

Both projection operators $\rho$ and $\pi$ have the same image $\mathcal W$ and coincide on the elements of the form $a\cdot b$ where $a,b\in\mathcal W$. Therefore, they define the same product in~$\mathcal W$. However the projections $\rho$ and $\pi$ are \emph{different}, as they have different kernels. Indeed, take $[M^6]=\varPsi[\C P^1]$. Then $\rho[M^6]=0$ because $[M^6]\in\Im\varPsi$. On the other hand, $\pi[M^6]\ne0$, because one can check that $c_1^3[M^6]=-2$, $c_3[M^6]=2$ and $c_3(\pi[M^6])=(-c_1^3+c_3)[M^6]=4$, which is nonzero. Also, $c_3(\rho[\C P^3])=68$, while $c_3(\pi[\C P^3])=-60$.

Recall from Theorem~\ref{Ustructure} that a bordism class $[M^{2i}]\in\OU_{2i}$ represents a polynomial generator of $\OU$ whenever $s_i[M^{2i}]=\pm m_i$, where the numbers $m_i$ are defined in~\eqref{mi}. A similar description for the ring $\mathcal W$ is given next.

\begin{theorem}\label{Wring}\samepage
$\mathcal W$ is a polynomial ring on generators in every even
degree except~$4$:
\[
  \mathcal W\cong
  \Z[x_1,x_i\colon i\ge3],\quad x_1=[\C P^1],\quad\deg x_i=2i.
\]
Polynomial generators $x_i$ are specified by the condition $s_i(x_i)=\pm m_im_{i-1}$ for $i\ge3$. 
The boundary operator
$\partial\colon\mathcal W\to\mathcal W$, $\partial^2=0$,
satisfies the identity
\begin{equation}\label{da*b}
  \partial(a\mathbin{*} b)=a\mathbin{*}\partial b+
  \partial a\mathbin{*} b-
  x_1\mathbin{*}\partial a\mathbin{*}\partial b.
\end{equation}
and the polynomial generators of $\mathcal W$ can be chosen so as to satisfy
\[
  \partial x_1=2, \quad \partial x_{2i}=x_{2i-1}.
\]
\end{theorem}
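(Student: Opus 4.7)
The plan is to decompose the proof into three parts: the differential identities, the additive rank count, and the construction of polynomial generators with the prescribed normalization.

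First I would establish $\partial^2 = 0$ and the Leibniz-like identity~\eqref{da*b}. The identity $\partial^2 = 0$ is immediate, since $\partial[M]$ is represented by an $SU$-submanifold (Construction~\ref{c1dual}), on which applying $\partial$ again yields the class dual to $c_1 = 0$, namely zero. For the Leibniz identity, I would expand $a*b = a\cdot b + 2[V^4]\,\partial a\,\partial b$ via Proposition~\ref{twprod}. Applying $\partial$ to the first summand and using Lemma~\ref{Deltaab} together with Corollary~\ref{deltadk} (which eliminates the higher $\partial_k a, \partial_k b$ contributions from the formal group law expansion for $a, b \in \mathcal W$) gives $\partial(a\cdot b) = a\cdot\partial b + \partial a\cdot b - [\C P^1]\,\partial a\,\partial b$. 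The second summand contributes nothing: $\partial a\cdot\partial b \in \mathcal W \cap \Ker\partial$ (apply Lemma~\ref{Deltaab} twice together with $\partial^2 = 0$), and $\partial[V^4] = 0$, since the $c_1$-dual submanifold of $V^4$ inherits a stably trivial complex structure on $\C P^1$; the general formula for $\partial$ on a product then yields $\partial([V^4]\,\partial a\,\partial b) = 0$. The three surviving terms translate into twisted products via the observation $c*d = c\cdot d$ whenever $c \in \mathcal W$ and $d \in \mathcal W \cap \Ker\partial$.

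Next I would handle the rank: Corollary~\ref{OUW} gives $\rank\mathcal W_{2n} = p(n) - p(n-2)$, the number of partitions of $n$ avoiding the part $2$, which matches the rank of $\Z[x_1, x_i : i \ge 3]$ in degree $2n$. Torsion-freeness of $\mathcal W$ follows from that of~$\OU$.

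To construct generators, I would set $x_1 = [\C P^1]$, so that $\partial x_1 = 2$, and build $x_i$ for $i \ge 3$ inductively. In odd degrees $i = 2k-1$, take the forgetful image in $\mathcal W$ of an $SU$-generator $y_i$ from Theorem~\ref{noviosu}; this image lies in $\Ker\partial$ and has $s$-number $\pm m_i m_{i-1}$ up to a power of~$2$, and I would adjust by decomposable classes (which contribute nothing to $s_i$) to realise $s_i(x_i) = \pm m_i m_{i-1}$ exactly. In even degrees $i = 2k \ge 4$, first exhibit $z_{2k}\in\mathcal W_{4k}$ with $s_{2k}(z_{2k}) = \pm m_{2k}m_{2k-1}$, for instance by applying $\rho = \id - \varPsi\varDelta$ to a suitable $\OU$-manifold and analysing the effect of $\varPsi\varDelta$ on the top $s$-number. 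The class $\partial z_{2k}$ is then a $\partial$-cycle in $\mathcal W_{4k-2}$, and the Leibniz identity makes $\partial$ of products of the form $x_1*(\text{decomposable})$ explicit, allowing $z_{2k}$ to be modified by such products into an $x_{2k}$ satisfying $\partial x_{2k} = x_{2k-1}$. Algebraic independence of $\{x_1, x_3, x_4, \ldots\}$ then follows from the rank match together with the primitivity of the $s_i$-number on $\OU$.

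The hardest step will be the construction of generators: verifying that the image of $s_i$ on indecomposables of $\mathcal W$ is exactly $m_i m_{i-1}\Z$ (and not a proper subgroup), and carrying out the inductive normalization $\partial x_{2k} = x_{2k-1}$ without disturbing the $s$-numbers already fixed at lower levels. Both rely on explicit computations with the projection $\rho$, the specific geometry of $V^4$, and the divisibility properties underlying the numbers~$m_i$.
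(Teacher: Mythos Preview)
Your handling of the Leibniz identity is fine (the paper does it more slickly via $\partial(a*b)=\partial\rho(ab)=\partial(ab)$ using Proposition~\ref{rhoproj}), and the rank count via Corollary~\ref{OUW} is the right move. But the construction of generators has a genuine gap.

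For odd $i$ you propose to take the image of a Novikov generator $y_i\in\OSU_{2i}$ and then ``adjust by decomposable classes (which contribute nothing to $s_i$) to realise $s_i(x_i)=\pm m_im_{i-1}$ exactly.'' This is self-defeating: precisely because decomposables do not affect $s_i$, no such adjustment can remove the power of $2$ in $s_i(y_i)=\pm 2^{k_i}m_im_{i-1}$. The same obstruction afflicts your even-degree plan; applying $\rho$ to a polynomial generator of $\OU$ does not, by itself, control the $2$-adic part of the resulting $s$-number. You correctly flag this as the hardest step, but the proposal contains no mechanism to accomplish it. The paper's solution is to construct, by hand, explicit classes $b_i\in\mathcal W_{2i}$ (images under Stong's projection $\pi$ of specific products $\C P^{2^p}\times\C P^{2^{p+1}q}$) whose $s$-numbers are \emph{odd} multiples of $m_im_{i-1}$; a B\'ezout combination of $b_i$ with the $y_i$ (which contribute the odd-prime divisibility) then hits $m_im_{i-1}$ on the nose.

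There is a second, related gap: algebraic independence plus a rank match does not imply that your $x_i$ generate $\mathcal W$ over~$\Z$ (an injection of free abelian groups of equal rank need not be surjective). The paper closes this by a two-prime argument: the mod-$2$ information on the $b_i$ shows $\mathcal W\otimes\Z_2$ is polynomial on the $b_i$, hence the map $\Z[x_1,x_i]\to\mathcal W$ has odd-index image in each degree; separately, one checks surjectivity after inverting $2$ by exhibiting $\mathcal W\otimes\Z[\tfrac12]$ as a free rank-$2$ module over $\OSU\otimes\Z[\tfrac12]$ with basis $1,x_1$. Your outline needs analogues of both halves.
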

\begin{proof}
We start by checking the identity~\eqref{da*b}:
\[
  \partial (a \mathbin{*} b) = \partial \rho (ab) = \partial (ab) = 
  a \partial b + b \partial a - [\C P^1] \partial a \partial b
  =a \mathbin{*} \partial b + b \mathbin{*} \partial a - 
  [\C P^1] \mathbin{*} \partial a \mathbin{*} \partial b.
\]
Here the second identity is by Proposition~\ref{rhoproj}, the third idenity is Lemma~\ref{Deltaab}, and the last identity also follows from Lemma~\ref{Deltaab}, as the identity $\varDelta(ab)=-2\partial a\partial b$ for $a,b\in\mathcal W$ implies that $a\mathbin{*}b=ab$ whenever $a\in\Im\partial$ or $b\in\Im\partial$.

In the rest of this proof we denote the product of elements in $\mathcal W$ by $a\mathbin{*}b$ only when it differs from the product in $\OU$; otherwise we denote it by $a\cdot b$ or simply $ab$.

We start by defining bordism classes $b_i\in\mathcal W_{2i}$ for each $i\ge1$ except $i=2$. Set
\[
  b_i= 
  \begin{cases}
    [\C P^1] &\text{if }i=1,\\
    \pi [\C P^{2^p}\times\C P^{2^{p+1}q}] &\text{if }i=2^p(2q+1),\;p\ge1,\;q\ge1,\\
    \pi[\C P^{2^p}\times\C P^{2^p}] &\text{if }i=2^{p+1},\;p\ge1,\\
    \partial \,b_{i+1} &\text{if $i$ is odd and $i\ge3$},
  \end{cases}
\]
where $\pi\colon\OU\to\mathcal W$ is Stong's projection defined above.
One can check that
\begin{equation}\label{sibi} 
\begin{aligned}
  &s_i(b_i)=1\mod 2\quad\text{if $i\ne2^k-1$, $i\ne 2^k$},\\
  &s_i(b_i)=2\mod 4\quad\text{if $i=2^k-1$,}\\
  &s_i(b_i)=2\mod 4\quad\text{if $i=2^{p+1}$,}\\
  &s_{(2^p,2^p)}(b_{2^{p+1}})=1\mod 2.
\end{aligned}
\end{equation}
Consider the inclusion $\iota\colon\mathcal W\otimes\Z_2\to \OU\otimes\Z_2$. The formula for the product in $\mathcal W$ from Proposition~\ref{twprod} implies that $\iota$ is a ring homomorphism. Relations~\eqref{sibi} imply that there are polynomial generators $a_i$ of the ring $\OU\otimes\Z_2\cong\Z_2[a_i\colon i\ge1]$ such that $\iota(b_i)=a_i$ for $i\ne 2^{p+1}$ and $\iota(b_{2^{p+1}})=(a_{2^p})^2+\cdots$, where $\cdots$ denotes decomposable elements corresponding to partitions strictly less than $(2^p,2^p)$ in the lexicographic order. It follows that  the elements $\iota(b_i)$ are algebraically independent in the polynomial ring $\OU\otimes\Z_2\cong\Z_2[a_i\colon i\ge1]$. Therefore, $\mathcal W\otimes\Z_2$ contains the polynomial subring $\Z_2[b_1, b_i\colon i\ge3]$. By comparing the ranks using Corollary~\ref{OUW} we conclude that
\[
  \mathcal W\otimes\Z_2\cong \Z_2[b_1,b_i\colon i\ge3].
\]
Next we observe that $s_i(b_i)$ is an odd multiple of $m_im_{i-1}$ for $i\ge 3$, that is,
\begin{equation}\label{bioddm}
  s_i(b_i)=(2q_i+1)m_im_{i-1},\quad i\ge3.
\end{equation}
For even $i$ this follows from~\eqref{sibi} and the fact that $s_i(b_i)$ is a multiple of $m_i$, see Theorem~\ref{Ustructure}~(b). For odd $i$ we have $b_i=\partial b_{i+1}$, so $b_i$ is represented by an $SU$-manifold, and~\eqref{bioddm} follows from~\eqref{sibi} and Proposition~\ref{pkSU}.

By Theorem~\ref{noviosu}, there exist elements $y_i\in\OSU_{2i}$, $i\ge2$, such that
\begin{equation}\label{yiki}
  s_i(y_i)=2^{k_i}m_im_{i-1},\quad k_i\ge0.
\end{equation}
For the integers $q_i$ from~\eqref{bioddm} and $k_i$ from~\eqref{yiki} we find integers $\beta_i$ and $\gamma_i$ such that 
\[
  \beta_i2^{k_i+1}+\gamma_i(2q_i+1)=1.
\]
Then $\gamma_i$ is odd, so we have $\gamma_i=2\alpha_i+1$ for an integer~$\alpha_i$. Now we set
$x_1=[\C P^1]$ and
\[
  x'_i=(2\alpha_i+1)b_i+2\beta_iy_i,\quad i\ge3.
\]
Then the identities above imply that $s_i(x'_i)=m_im_{i-1}$.
The required elements $x_i$ are obtained by modifying the $x'_i$ as follows:
\[
  x_{2i-1}=x'_{2i-1}, \quad x_{2i}=
  x'_{2i}-x_1\bigl((\alpha_{2i}-\alpha_{2i-1})b_{2i-1}-\beta_{2i-1}y_{2i-1}\bigr).
\]
Then we have 
\[
  s_i(x_i)=m_im_{i-1}
\]
because $x_i-x'_i$ is decomposable. 
The new element $x_{2i}$ still belongs to $\mathcal W$; to verify this we use the second identity of Lemma~\ref{Deltaab}:
\[
  \varDelta x_{2i}=\varDelta x'_{2i} +2\partial x_1
  \partial\bigl((\alpha_{2i}-\alpha_{2i-1})b_{2i-1}-\beta_{2i-1}y_{2i-1}\bigr)=0
\]
because $x'_{2i}\in\mathcal W=\Ker\varDelta$, $\partial b_{2i-1}=\partial^2b_{2i}=0$ and $\partial y_{2i-1}=0$ because $y_{2i-1}\in\OSU$.

To verify the identity $\partial x_{2i}=x_{2i-1}$ we use the first identity of Lemma~\ref{Deltaab}:
\begin{multline*}
  \partial x_{2i}=\partial x'_{2i}-
  \partial x_1\cdot\bigl((\alpha_{2i}-\alpha_{2i-1})b_{2i-1}-\beta_{2i-1}y_{2i-1}\bigr)=
  (2\alpha_{2i}+1)\partial b_{2i}\\
  -2\bigl((\alpha_{2i}-\alpha_{2i-1})b_{2i-1}-\beta_{2i-1}y_{2i-1}\bigr)=
  (2\alpha_{2i-1}+1)b_{2i-1}+2\beta_{2i-1}y_{2i-1}=x_{2i-1}.
\end{multline*}

Now we define a homomorphism
\[
  \varphi\colon\mathcal R= \Z[x_1,x_i\colon i\ge3]\to\mathcal W,
\]
which sends the polynomial generator $x_i$ to the corresponding element of~$\mathcal W$, defined above. Obseve that $\varphi\otimes\Z_2$ sends $x_i$ to $b_i$ modulo decomposable elements. As we have seen, $\mathcal W\otimes\Z_2\cong \Z_2[b_1,b_i\colon i\ge3]$, which implies that $\varphi\otimes\Z_2$ is an isomorphism. Since $\mathcal R$ and $\mathcal W$ are torsion free, $\varphi$ is injective and $\varphi(\mathcal R_n) \subset \mathcal W_n$ is a subgroup of odd index in each dimension.

We will show that $\varphi\colon\mathcal R\to\mathcal W$ becomes surjective after tensoring with $\Z[\frac{1}{2}]$. This will imply that $\varphi$ is an isomorphism.

Note that for any $\alpha \in \mathcal W$ we have 
\[
  \partial (x_1\mathbin{*} \alpha) = \partial x_1  \cdot \alpha + x_1 \cdot \partial \alpha - 
  x_1 \cdot \partial x_1 \cdot \partial \alpha = 2 \alpha - x_1 \partial \alpha.
\]
Hence, $\alpha = \frac{1}{2} \partial (x_1\mathbin{*} \alpha) + \frac{1}{2} x_1 \partial \alpha$ in $\mathcal W \otimes \Z[\frac{1}{2}]$. It follows that $\mathcal W \otimes \Z[\frac{1}{2}]$ is generated by $1$ and $x_1$ as a module over $\OSU \otimes \Z[\frac{1}{2}] \subset \mathcal W \otimes \Z[\frac{1}{2}]$ (note that $\OSU \otimes \Z[\frac{1}{2}]$ is a subring of $\mathcal W \otimes \Z[\frac{1}{2}]$, by the formula from Proposition~\ref{twprod}). Furthermore, this module is free because $0 = a + x_1 b$ with $a, b \in \OSU \otimes \Z[\frac{1}{2}]$ implies $0 =\partial (a + x_1 b) = \partial x_1 \cdot b = 2 b$ and therefore $b=0$ and $a=0$. Hence, 
\[
  \mathcal W \otimes \Z[{\textstyle\frac{1}{2}}] =
  \OSU \otimes \Z[\textstyle{\frac{1}{2}}] \langle1, x_1 \rangle.
\]
Now we define new elements in $\varphi(\mathcal R)\subset\mathcal W$:
\begin{equation}\label{yidefi}
\begin{aligned}
  &y_2=2x_1\mathbin{*}x_1=\partial(x_1\mathbin{*} x_1 \mathbin{*} x_1),\\
  &y_{2i} = \partial (x_1\mathbin{*} x_{2i}) = 2 x_{2i} - x_1 x_{2i-1},&\quad i\ge2,\\
  &y_{2i-1}=x_{2i-1}=\partial x_{2i},&\quad i\ge2.
\end{aligned}
\end{equation}
These elements actually lie in $\OSU$, because they belong to $\Im\partial$. Then
\begin{equation}\label{siyicalc}
\begin{aligned}
  &s_2(y_2)=2s_2(x_1 \cdot x_1 + 8 [V^4])=-16s_2(\C P^2)=-48=-8m_2m_1,\\
  &s_{2i}(y_{2i}) = 2 s_{2i} (x_{2i}) = 2 m_{2i} m_{2i-1},&\quad i\ge2,\\
  &s_{2i-1}(y_{2i-1}) = s_{2i-1}(x_{2i-1}) = m_{2i-1} m_{2i-2},&\quad i\ge2,
\end{aligned}
\end{equation}
and therefore the $y_i$ are polynomial generators of $\OSU \otimes \Z[\frac{1}{2}]$ by Theorem~\ref{noviosu}. It follows that $\mathcal W \otimes \Z[\frac{1}{2}]=\OSU \otimes \Z[\frac{1}{2}] \langle 1, x_1 \rangle\subset\varphi\bigl(\mathcal R \otimes \Z[\frac{1}{2}]\bigr)$.
Thus, $\varphi \otimes \Z[\frac{1}{2}]$ is epimorphism, which completes the proof.
\end{proof}

\section{The ring structure of $\OSU$}\label{ringosusec}
The forgetful map $\alpha\colon\OSU\to\mathcal W$ is a ring
homomorphism; this follows from Proposition~\ref{twprod} because
$\partial\alpha(x)=0$ for any $x\in\OSU$. Therefore, the ring $\OSU/\mathrm{Tors}$ can be described as a subring in~$\mathcal W$.

Note that we have
\begin{equation}\label{Wring2}
  \mathcal W\otimes\Z[{\textstyle\frac12}]\cong
  \Z[{\textstyle\frac12}][x_1,x_{2k-1},2x_{2k}-x_1x_{2k-1}\colon k\ge2],
\end{equation}
where $x_1^2=x_1\mathbin{*}x_1$ is a $\partial$-cycle, and each of the
elements $x_{2k-1}$ and $2x_{2k}-x_1x_{2k-1}$ with $k\ge2$ is a $\partial$-cycle.

For any integer $n\ge 3$ define
\begin{equation}\label{gn}
  g(n)=\begin{cases}
  2m_{n-1}m_{n-2}&\text{if $n>3$ is odd;}\\
  m_{n-1}m_{n-2} &\text{if $n>3$ is even;}\\
  -48 &\text{if $n=3$.}
\end{cases}
\end{equation}
These numbers appear in~\eqref{siyicalc}.
For example, $g(4)=6$, $g(5)=20$. For $n>3$, the number $g(n)$ can take the following values: $1$, $2$, $4$, $p$, $2p$, $4p$, where $p$ is an odd prime. 

\begin{theorem}\label{SUstructure}
There exist indecomposable elements $y_i\in\OSU_{2i}$, $i\ge2$, with minimal $s$-numbers given by $s_i(y_i)=g(i+1)$. These elements
are mapped as follows under the forgetful homomorphism
$\alpha\colon\OSU\to\mathcal W$:
\[
  y_2\mapsto 2x_1^2,\quad y_{2k-1}\mapsto x_{2k-1},\quad
  y_{2k}\mapsto 2x_{2k}-x_1x_{2k-1},\quad k\ge2,
\]
where the $x_i$ are polynomial generators of $\mathcal W$. In
particular, $\OSU\otimes\Z[\frac12]\cong\Z[\frac12][y_i\colon i\ge2]$ embeds into~\eqref{Wring2} as
the polynomial subring generated by $x_1^2$, $x_{2k-1}$ and
$2x_{2k}-x_1x_{2k-1}$.
\end{theorem}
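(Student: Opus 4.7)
The plan is to deduce the theorem essentially as a corollary of the proofs of Theorems \ref{Wring} and \ref{noviosu}, since the relevant elements have already been constructed and their $s$-numbers computed in the course of proving Theorem \ref{Wring}.

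First observe that $\alpha\colon\OSU\to\mathcal W$ is a ring homomorphism: since $\partial\alpha(x)=0$ for every $x\in\OSU$, the formula of Proposition~\ref{twprod} gives $\alpha(a)\mathbin{*}\alpha(b)=\alpha(a)\cdot\alpha(b)$ for $a,b\in\OSU$, so the twisted product in $\mathcal W$ reduces to the ordinary product in $\OU$ on the image $\alpha(\OSU)$. Combined with Theorem~\ref{osutors}(a), this identifies $\OSU/\mathop{\mathrm{Tors}}$ with a subring of $\mathcal W$, and the rest of the argument consists in pinning this subring down.

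Next I take the elements of $\mathcal W$ produced by~\eqref{yidefi},
\[
  y_2=\partial(x_1\mathbin{*} x_1\mathbin{*} x_1),\quad y_{2k-1}=\partial x_{2k},\quad y_{2k}=\partial(x_1\mathbin{*} x_{2k}),\quad k\ge2,
\]
and verify that each lies in $\alpha(\OSU)\subset\mathcal W$ using the criterion of Theorem~\ref{freetors}(b): in dimensions $\equiv 4\pmod{8}$ the image of $\alpha$ is $\mathop{\mathrm{Im}}\partial$, otherwise it is $\Ker\partial$. Indeed, $y_2$ sits in dimension $4$ and belongs to $\mathop{\mathrm{Im}}\partial$ by construction; each $y_{2k-1}$ sits in dimension $4k-2\equiv 2,6\pmod 8$ and is a $\partial$-cycle because $\partial^2=0$; each $y_{2k}$ sits in dimension $4k$ and lies in both $\mathop{\mathrm{Im}}\partial$ and $\Ker\partial$, covering both residues $0$ and $4$ modulo~$8$. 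Hence each $y_i$ lifts uniquely modulo torsion to an element of $\OSU$, which I continue to denote $y_i$. Their $s$-numbers were already computed in~\eqref{siyicalc}, and comparison with the definition~\eqref{gn} gives $s_i(y_i)=g(i+1)$ in every case.

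To conclude, I note that $g(i+1)$ equals $\pm m_im_{i-1}$ up to a power of $2$ in all three branches of~\eqref{gn}. Theorem~\ref{noviosu} therefore guarantees that the $y_i$ are polynomial generators of $\OSU\otimes\Z[{\textstyle\frac12}]$, producing
\[
  \OSU\otimes\Z[{\textstyle\frac12}]\cong\Z[{\textstyle\frac12}][y_i\colon i\ge2];
\]
the indecomposability of each $y_i$ over $\Z$ follows from the non-vanishing of $s_i(y_i)=g(i+1)$. Finally, under the presentation~\eqref{Wring2} the images of the $y_i$ are exactly $2x_1^2$, $x_{2k-1}$ and $2x_{2k}-x_1x_{2k-1}$, which generate a polynomial subring in the same number of variables over $\Z[{\textstyle\frac12}]$, realising the desired embedding. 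The only subtle point is the freeness $\mathcal W\otimes\Z[{\textstyle\frac12}]=\OSU\otimes\Z[{\textstyle\frac12}]\langle 1,x_1\rangle$ implicit in matching the two polynomial rings by a rank count, and this was already established inside the proof of Theorem~\ref{Wring}.
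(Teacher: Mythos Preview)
Your argument correctly produces the elements $y_i$, computes their $s$-numbers, and identifies the image of $\OSU\otimes\Z[\frac12]$ in $\mathcal W\otimes\Z[\frac12]$; this part coincides with what the paper does (indeed, these facts are essentially borrowed from the proof of Theorem~\ref{Wring}). But you have skipped the one claim in the statement that actually requires new work: that $g(i+1)$ is the \emph{minimal} possible nonzero $s$-number in $\OSU_{2i}$. You establish only that $s_i(y_i)=g(i+1)$ is nonzero (hence $y_i$ is indecomposable), not that no smaller value occurs.

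For odd $i=2k-1$ and for $i=2$ this gap is easy to close (in odd degrees $s_{2k-1}(y_{2k-1})=m_{2k-1}m_{2k-2}$ is already minimal in the larger ring $\mathcal W$; in degree $4$ one has $\OSU_4=\Z\langle y_2\rangle$). The substantive case is even $i=2k$ with $k\ge2$: here the minimal nonzero $s$-number in $\mathcal W_{4k}$ is $m_{2k}m_{2k-1}$, realised by $x_{2k}$, while $s_{2k}(y_{2k})=2m_{2k}m_{2k-1}$. So you must rule out an element of $\OSU_{4k}$ with $s$-number $m_{2k}m_{2k-1}$. The paper's argument does this by writing an arbitrary $a\in\OSU_{4k}\subset\Ker\partial$ in the basis~\eqref{Wring2} as $a=\lambda(2x_{2k}-x_1x_{2k-1})+b$ with $\lambda\in\Z[\frac12]$ and $b$ decomposable in $x_1^2,x_{2i-1},2x_{2i}-x_1x_{2i-1}$; since $b$ cannot contribute a monomial $x_1x_{2k-1}$ and $a$ has integer coefficients in the $x_i$, one forces $\lambda\in\Z$, whence $s_{2k}(a)\in 2m_{2k}m_{2k-1}\Z$. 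Without this step, the ``minimal'' assertion is unproved.
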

\begin{proof}
The elements $y_i\in \OSU_{2i}$ were defined in~\eqref{yidefi}, and their $s$-numbers were given by~\eqref{siyicalc}. We only need to check that the $s$-number of $y_i$ is minimal possible in~$\OSU_{2i}$. 

For $y_{2k-1}$, the number $m_{2k-1}m_{2k-2}$ is minimal possible for all elements in~$\mathcal W_{4k-2}$ by Theorem~\ref{Wring}, and therefore it is also minimal possible in~$\OSU_{4k-2}\subset\mathcal W_{4k-2}$. (Note that indecomposability in $\mathcal W$ with respect to the product $*$ is the same as indecomposability in~$\OU$ in dimensions~$>4$; this follows from Proposition~\ref{twprod}.)

For $y_2=2x_1^2$, we have $\OSU_4=\Im\partial=\Z\langle y_2\rangle$, where $y_2=2K$ in the notation of Example~\ref{lssuex}.

Now consider $y_{2k}$ with $k\ge2$. We have $s_{2k}(y_{2k})=2m_{2k}m_{2k-1}$. Take any element $a\in\OSU_{4k}\subset(\Ker\partial)_{4k}$.  It follows from~\eqref{Wring2} that $\Ker(\partial\colon\mathcal W\to\mathcal W)$ consists of $\Z[\frac12]$-polynomials in $x_1^2$, $x_{2i-1}$, $2x_{2i}-x_1x_{2i-1}$ which have integral coefficients in the $x_i$'s. Write
\[
  a=\lambda(2x_{2k}-x_1x_{2k-1})+b,
\]
where $\lambda\in\Z[\frac12]$ and $b$ is a decomposable element in $\Z[{\textstyle\frac12}][x_1^2,x_{2i-1},2x_{2i}-x_1x_{2i-1}]$. Then $b$ does not contain $x_1x_{2k-1}$, hence $\lambda\in\Z$. Therefore, $s_{2k}(a)=2\lambda s_{2k}(x_{2k})=\lambda\cdot2m_{2k}m_{2k-1}$, so
$2m_{2k}m_{2k-1}$ is the minimal possible $s$-number in $\OSU_{4k}$.
\end{proof}


Recall that the image of the forgetful homomorphism $\alpha\colon\OSU\to\mathcal W$ is $\OSU/\mathrm{Tors}$ by Theorem~\ref{osutors}~(a). Furthermore, by Theorem~\ref{freetors}~(b), $\OSU_{2i}/\mathop{\mathrm{Tors}}$ is isomorphic to
$\Ker(\partial\colon\mathcal W\to\mathcal W)$ if $2i\not\equiv
4\mod 8$ and is isomorphic to $\mathop{\mathrm{Im}}(\partial\colon\mathcal W\to\mathcal W)$ if
$2i\equiv 4\mod 8$. Combining this with Theorem~\ref{Wring2}, we obtain a description of $\OSU/\mathop{\mathrm{Tors}}$ as a subring in~$\mathcal W$. Finally, the multiplicative structure of the torsion elements is described by Theorem~\ref{freetors}~(c). Collecting these pieces of information together we obtain, in principle, a  full description of the ring~$\OSU$. However, as noted by Stong at the end of Chapter~X in~\cite{ston68}, an intrinsic description of this ring is extremely complicated. 
For example, the nontrivial graded components of $\OSU$ of dimension $\le10$ are described in terms of the elements $x_i$ and $y_i$ from Theorem~\ref{Wring2} as follows:
\begin{gather*}
  \OSU_0=\Z,\quad\OSU_1=\Z_2\langle\theta\rangle,\quad
  \OSU_2=\Z_2\langle\theta^2\rangle,\\[2pt]
  \varOmega^{SU}_4=\Z\langle y_2\rangle,\; y_2=2x_1^2,\quad
  \varOmega^{SU}_6=\Z\langle y_3\rangle,\; y_3=x_3,\quad
  \varOmega^{SU}_8=\Z\langle {\textstyle\frac14}y^2_2,y_4\rangle,\; y_4=2x_4-x_1x_3,\\[2pt]
  \OSU_9=\Z_2\langle\theta x_1^4\rangle,\quad
  \varOmega^{SU}_{10}=\Z\langle {\textstyle\frac12}y_2y_3,y_5\rangle
  \oplus\Z_2\langle\theta^2 x_1^4\rangle,\; y_5=x_5.
\end{gather*}
We have
\[
  y_2=2x_1^2=2\bigl(9[\C P^1]\times[\C P^1]-8[\C P^2]\bigr)
\]
as a $U$-bordism class. In dimension $8$ we have 
\[
  {\textstyle\frac14}y^2_2=x_1^4=\bigl(9[\C P^1]\times[\C P^1]-8[\C P^2]\bigr)\times
  \bigl(9[\C P^1]\times[\C P^1]-8[\C P^2]\bigr)
\]
as a $U$-bordism class, because $x_1^2=9[\C P^1]\times[\C P^1]-8[\C P^2]$ is a $\partial$-cycle.
Also, $\frac14y^2_2=x_1^4$ can be chosen as $w_4$ in Theorem~\ref{freetors}~(c).
We see that $8$ is the first dimension where $\OSU/\mathrm{Tors}$ differs from a polynomial ring, as the square of the $4$-dimensional generator $y_2$ is divisible by~$4$. Furthermore, the product of the $4$- and $6$- dimensional generators is divisible by~$2$.

\part{Geometric representatives}

\section{Toric varieties and quasitoric manifolds}\label{toricsect}

Here we collect the necessary information about toric varieties
and quasitoric manifolds. Standard references on toric
geometry include Danilov's survey~\cite{dani78} and books by
Oda~\cite{oda88}, Fulton~\cite{fult93} and Cox, Little and
Schenck~\cite{c-l-s11}. More information about quasitoric manifolds can be found
in~\cite[Chapter~6]{bu-pa15}.

A \emph{toric variety} is a normal complex algebraic variety~$V$
containing an algebraic torus $(\C^\times)^n$ as a Zariski open
subset in such a way that the natural action of $(\C^\times)^n$ on
itself extends to an action on~$V$. A nonsingular
complete (compact in the usual topology) toric variety is called a \emph{toric manifold}.

There is the fundamental correspondence of toric geometry between the isomorphism
classes of complex $n$-dimensional toric varieties and rational fans in~$\R^n$. 
Under this correspondence,
\begin{align*}
\text{cones }&\longleftrightarrow\text{ affine toric varieties}\\
\text{complete fans }&\longleftrightarrow\text{ complete (compact) toric varieties}\\
\text{normal fans of polytopes}&\longleftrightarrow\text{ projective toric varieties}\\
\text{nonsingular fans }&\longleftrightarrow\text{ nonsingular toric varieties}\\
\text{simplicial fans }&\longleftrightarrow\text{ toric orbifolds}
\end{align*}

A \emph{fan} is a finite collection $\Sigma=\{\sigma_1,\ldots,\sigma_s\}$ of strongly convex cones
$\sigma_j$ in $\R^n$ such that every face of a cone in $\Sigma$
belongs to $\Sigma$ and the intersection of any two cones in
$\Sigma$ is a face of each. A fan is \emph{rational} (with respect to the standard integer lattice $\Z^n\subset\R^n$) if each of its cones is generated by rational (or lattice) vectors. In particular, each one-dimensional cone of a rational fan $\Sigma$ is generated by a primitive vector $\mb a_i\in \Z^n$. A fan $\Sigma$ is \emph{simplicial} if
each of its cones $\sigma_j$ is generated by part of a basis of $\R^n$ (such a cone is also called \emph{simplicial}). A fan $\Sigma$ is \emph{nonsingular} if
each of its cones $\sigma_j$ is generated by part of a basis of
the lattice~$\Z^n$. A fan
$\Sigma$ is \emph{complete} if the union of its cones is the
whole~$\R^n$.

Projective toric varieties are particularly important. A
projective toric variety $V$ is defined by a  \emph{lattice polytope}, that is, a convex $n$-dimensional polytope $P$ with vertices in~$\Z^n$. The \emph{normal
fan} $\Sigma_P$ is the fan whose $n$-dimensional cones $\sigma_v$
correspond to the vertices $v$ of $P$, and $\sigma_v$ is generated
by the primitive inside-pointing normals to the facets of $P$
meeting at~$v$. The fan $\Sigma_P$ defines a
projective toric variety~$V_P$. Different lattice polytopes with the same normal fan produce different projective embeddings of the same toric variety.

A polytope $P$ is called \emph{nonsingular} or \emph{Delzant} when its normal fan $\Sigma_P$ is nonsingular. 
Projective toric manifolds correspond to nonsingular lattice polytopes. Note that a nonsingular $n$-dimensional polytope $P$ is necessarily \emph{simple}, that is, there are precisely $n$ facets meeting at every vertex of~$P$. 

Irreducible torus-invariant divisors on~$V$ are the toric
subvarieties of complex codimension~1 corresponding to the
one-dimensional cones of~$\Sigma$. When $V$ is projective, they
also correspond to the facets of~$P$. We assume that there are $m$
one-dimensional cones (or facets), denote the corresponding
primitive vectors by $\mb a_1,\ldots,\mb a_m$, and denote the
corresponding codimension-1 subvarieties (irreducible divisors) by~$D_1,\ldots,D_m$.

\enlargethispage{2\baselineskip}

\begin{theorem}[Danilov--Jurkiewicz]\label{cohomtoric}
Let $V$ be a toric manifold of complex dimension~$n$, with the
corresponding complete nonsingular fan~$\Sigma$. The cohomology ring
$H^*(V;\Z)$ is generated by the degree-two classes $v_i$ dual to
the invariant submanifolds $D_i$, and is given~by
\[
  H^*(V;\Z)\cong \Z[v_1,\ldots,v_m]/\mathcal I,\qquad\deg v_i=2,
\]
where $\mathcal I$ is the ideal generated by elements of the
following two types:
\begin{itemize}
\item[(a)] $v_{i_1}\cdots v_{i_k}$ such that $\mb a_{i_1},\ldots,\mb
a_{i_k}$ do not span a cone of~$\Sigma$;
\item[(b)] $\displaystyle\sum_{i=1}^m\langle\mb a_i,\mb x\rangle v_i$, for
any vector $\mb x\in\Z^n$.
\end{itemize}
\end{theorem}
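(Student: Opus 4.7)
My plan is to define a ring homomorphism
\[
  \varphi\colon \Z[v_1,\ldots,v_m]/\mathcal I \longrightarrow H^*(V;\Z),\quad v_i\mapsto [D_i],
\]
sending $v_i$ to the cohomology class $[D_i]\in H^2(V;\Z)$ Poincar\'e dual to the torus-invariant divisor $D_i$, and then to prove that $\varphi$ is well-defined and bijective. Verifying the relations of type~(a) is geometric: the orbit--cone correspondence for the fan $\Sigma$ implies that $D_{i_1}\cap\cdots\cap D_{i_k}=\varnothing$ whenever the primitive vectors $\mb a_{i_1},\ldots,\mb a_{i_k}$ do not generate a cone of~$\Sigma$. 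Since $V$ is smooth and the $D_i$ are smooth hypersurfaces meeting transversely along invariant subvarieties, the cup product $[D_{i_1}]\smile\cdots\smile[D_{i_k}]$ is Poincar\'e dual to the empty intersection and hence zero. The relations of type~(b) come from toric divisor theory: every lattice vector $\mb x\in\Z^n$ is a character of $(\C^\times)^n$ that extends to a rational function $\chi^{\mb x}$ on~$V$, whose principal divisor equals $\sum_i\langle\mb a_i,\mb x\rangle D_i$; principal divisors are null-homologous, so the image in $H^2(V;\Z)$ vanishes.

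To show that $\varphi$ is an isomorphism I would use a CW decomposition of $V$ with only even-dimensional cells. Choosing a generic one-parameter subgroup $\lambda\colon \C^\times\to T$, the associated Bia\l ynicki--Birula decomposition partitions $V$ into affine cells $B_\sigma$ indexed by the fixed points, equivalently by the $n$-dimensional cones $\sigma\in\Sigma$; the real dimension of $B_\sigma$ is twice the number of facets of $\sigma$ on which $\lambda$ is positive. This shows $H^*(V;\Z)$ is torsion-free, and summing over cells gives graded ranks equal to the $h$-vector of~$\Sigma$. On the algebraic side, the quotient of $\Z[v_1,\ldots,v_m]$ by the relations of type~(a) is the Stanley--Reisner face ring of~$\Sigma$; since $\Sigma$ is complete and simplicial, the $n$ linear relations of type~(b) determined by a basis of $\Z^n$ form a linear system of parameters, and the further quotient is a free $\Z$-module whose graded ranks are again the $h$-vector of~$\Sigma$. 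The closures $\overline{B_\sigma}$ are torus-invariant subvarieties expressible as transverse intersections of the divisors $D_i$ indexed by the rays of~$\sigma$, so the classes $\varphi(v_{i_1}\cdots v_{i_k})$ corresponding to the cones of $\Sigma$ already exhaust the cell basis of $H^*(V;\Z)$. Surjectivity of $\varphi$ follows, and the matching graded ranks force $\varphi$ to be an isomorphism.

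The main obstacle is the CW decomposition for a general smooth complete (possibly non-projective) toric variety. In the projective case this is the classical Bia\l ynicki--Birula theorem applied to the $T$-action with isolated fixed points; in the general smooth complete case one argues either by the standard extension of Bia\l ynicki--Birula to compact smooth varieties with a torus action having isolated fixed points (using that all tangent weights at each fixed point are nonzero, which holds because the corresponding $n$-dimensional cone is nonsingular), or by reducing to the projective case via an equivariant toric modification and tracking cohomology through the pushforward. A secondary technical point is to justify that principal divisors represent trivial cohomology classes integrally (not merely rationally), which follows from the exponential sequence together with $H^1(V;\mathcal O_V)=0$ for a smooth complete toric variety.
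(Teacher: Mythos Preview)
The paper does not give a proof; the theorem is quoted as the classical Danilov--Jurkiewicz result, with pointers to the standard toric references listed at the start of Section~\ref{toricsect}. Your outline is essentially the textbook argument found there: check the two families of relations via the orbit--cone correspondence and via principal divisors of characters, obtain an even-cell decomposition to see that $H^*(V;\Z)$ is torsion-free with Betti numbers given by the $h$-vector of~$\Sigma$, and match these against the Hilbert function of the face ring modulo the linear relations.

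One imprecision is worth fixing. The closures $\overline{B_\sigma}$ of the Bia\l ynicki--Birula cells are indexed by the fixed points, i.e.\ the maximal cones, and they are \emph{not} the transverse intersections $D_{i_1}\cap\cdots\cap D_{i_k}$ of the invariant divisors; those intersections are the orbit closures $V(\tau)$ for cones $\tau$ of arbitrary dimension, and for a maximal $\sigma$ the intersection is just the fixed point. Surjectivity of $\varphi$ follows instead from the observation that each $\overline{B_\sigma}$ is $T$-stable, hence a union of orbits, so its homology class lies in the span of the $[V(\tau)]$; since $[V(\tau)]$ is Poincar\'e dual to the cup product $\prod_j[D_{i_j}]$ over the rays $\mb a_{i_j}$ of~$\tau$, the image of $\varphi$ already contains a basis of $H^*(V;\Z)$. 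With this adjustment your argument is the standard one.
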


There is the same description of the cohomology ring for complete toric orbifolds with coefficients in~$\Q$.

It is convenient to consider the integer $n\times m$-matrix
\begin{equation}\label{Lambdatoric}
  \varLambda=\begin{pmatrix}
  a_{11}&\cdots& a_{1m}\\
  \vdots&\ddots&\vdots\\
  a_{n1}&\cdots& a_{nm}
  \end{pmatrix}
\end{equation}
whose columns are the vectors $\mb a_i$ written in the standard
basis of~$\Z^n$. Then part~(b) of the ideal $\mathcal I$ in Theorem~\ref{cohomtoric} is
generated by the $n$ linear forms $a_{j1}v_1+\cdots+a_{jm}v_m$
corresponding to the rows of~$\varLambda$.

\begin{theorem}\label{tangenttoric}
For a toric manifold $V$, there is the following isomorphism of complex vector bundles:
\[
  \mathcal T V\oplus\underline{\C}^{m-n}\cong
  \rho_1\oplus\cdots\oplus\rho_m,
\]
where $\mathcal T V$ is the tangent bundle, $\underline{\C}^{m-n}$
is the trivial $(m-n)$-plane bundle, and $\rho_i$ is the line
bundle corresponding to~$D_i$, with $c_1(\rho_i)=v_i$. In
particular, the total Chern class of~$V$ is given by
\[
  c(V)=(1+v_1)\cdots(1+v_m).
\]
\end{theorem}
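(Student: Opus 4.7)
The plan is to use the Cox (homogeneous coordinate) construction to realise $V$ as a geometric quotient and then descend a tautological decomposition from $\mathbb{C}^m$ to~$V$.

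First, I would recall that a nonsingular complete fan $\Sigma$ with one-dimensional cones generated by $\mathbf{a}_1,\ldots,\mathbf{a}_m$ determines a kernel
\[
  K=\bigl\{(t_1,\ldots,t_m)\in(\mathbb{C}^\times)^m :
  \prod_{i=1}^m t_i^{\langle\mathbf{a}_i,\mathbf{x}\rangle}=1 \text{ for all } \mathbf{x}\in\mathbb{Z}^n\bigr\}
\]
of the map $(\mathbb{C}^\times)^m\to(\mathbb{C}^\times)^n$ given by the matrix $\varLambda$ of~\eqref{Lambdatoric}. Since $\varLambda$ has rank~$n$ and every collection $\mathbf{a}_{i_1},\ldots,\mathbf{a}_{i_k}$ spanning a cone of $\Sigma$ extends to a basis of $\mathbb{Z}^n$, the group $K\cong(\mathbb{C}^\times)^{m-n}$ acts freely on the open set $U(\Sigma)=\mathbb{C}^m\setminus Z(\Sigma)$, where $Z(\Sigma)$ is the coordinate subspace arrangement whose components are indexed by the missing cones of~$\Sigma$, and $V\cong U(\Sigma)/K$.

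Next, for each $i=1,\ldots,m$, let $\chi_i\colon K\to\mathbb{C}^\times$ be the restriction of the $i$-th coordinate projection, and define
\[
  \rho_i := U(\Sigma)\times_{K,\chi_i}\mathbb{C},
\]
a complex line bundle over~$V$. The coordinate function $z_i$ on $\mathbb{C}^m$ is $K$-equivariant of weight $\chi_i$, hence descends to a holomorphic section of $\rho_i$ whose zero locus is exactly $D_i$; this identifies $\rho_i$ as the line bundle of the torus-invariant divisor~$D_i$, so $c_1(\rho_i)=v_i$.

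Now I would compute the tangent bundle. The trivial bundle $TU(\Sigma)=U(\Sigma)\times\mathbb{C}^m$ splits $K$-equivariantly as the sum $\bigoplus_{i=1}^m(U(\Sigma)\times\mathbb{C}_{\chi_i})$ using the coordinate directions, so it descends to $\rho_1\oplus\cdots\oplus\rho_m$ over~$V$. On the other hand, differentiating the principal $K$-bundle projection $p\colon U(\Sigma)\to V$ gives a short exact sequence of $K$-equivariant bundles
\[
  0\longrightarrow U(\Sigma)\times\mathfrak{k}\longrightarrow TU(\Sigma)\longrightarrow p^*\mathcal{T}V\longrightarrow 0,
\]
where $\mathfrak{k}\cong\mathbb{C}^{m-n}$ is the Lie algebra of~$K$. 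This sequence splits (for instance by choosing a hermitian metric), and the vertical subbundle descends to the trivial rank $m-n$ bundle over $V$ since $K$ acts trivially on its own Lie algebra. Taking the $K$-quotient of the split sequence yields precisely the asserted isomorphism
\[
  \mathcal{T}V\oplus\underline{\mathbb{C}}^{m-n}\cong\rho_1\oplus\cdots\oplus\rho_m.
\]

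Finally, the Chern class formula follows at once from the Whitney sum formula together with the fact that trivial summands contribute $1$:
\[
  c(V)=c(\mathcal{T}V\oplus\underline{\mathbb{C}}^{m-n})=\prod_{i=1}^m c(\rho_i)=\prod_{i=1}^m(1+v_i).
\]
The main technical point is the splitting of the Cox construction and the identification $c_1(\rho_i)=v_i$; once these are in place, the statement is essentially a descent computation.
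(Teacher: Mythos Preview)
The paper does not actually prove this theorem: it is stated in Section~\ref{toricsect} as part of the background on toric varieties, with the reader referred to the standard sources (Danilov, Oda, Fulton, Cox--Little--Schenck, and~\cite{bu-pa15}) listed at the beginning of that section. So there is no ``paper's own proof'' to compare against.

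Your argument via the Cox quotient $V\cong U(\Sigma)/K$ is correct and is precisely the standard route found in those references. Two small remarks. First, the claim $K\cong(\mathbb{C}^\times)^{m-n}$ uses that the vectors $\mathbf a_1,\ldots,\mathbf a_m$ span $\mathbb{Z}^n$; this holds because $\Sigma$ is complete and nonsingular, and it is worth saying so. Second, the splitting of the short exact sequence you write down is a splitting of smooth complex vector bundles, not of holomorphic ones; a hermitian metric gives an orthogonal complement which is a complex subbundle, and that is all the theorem asserts. With those points made explicit, your proof is complete.
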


\begin{example}
A basic example of a toric manifold is the complex projective
space $\C P^n$. The cones of the corresponding fan are generated
by proper subsets of the set of $m=n+1$ vectors $\mb
e_1,\ldots,\mb e_n,-\mb e_1-\cdots-\mb e_n$, where $\mb
e_i\in\Z^n$ is the $i$th standard basis vector. It is the normal
fan of the lattice simplex $\varDelta^n$ with the vertices at $\bf
0$ and $\mb e_1,\ldots,\mb e_n$. The matrix~\eqref{Lambdatoric} is
given by
\[
  \begin{pmatrix}
  1&0&0&-1\\
  0&\ddots&0&\vdots\\
  0&0&1&-1
  \end{pmatrix}
\]

Theorem~\ref{cohomtoric} gives the cohomology of $\C P^n$ as
\[
  H^*(\C P^n)\cong\Z[v_1,\ldots,v_{n+1}]/(v_1\cdots v_{n+1},
  v_1-v_{n+1},\ldots,v_n-v_{n+1})\cong\Z[v]/(v^{n+1}),
\]
where $v$ is any of the $v_i$. Theorem~\ref{tangenttoric} gives
the standard decomposition
\[
  \mathcal T\C P^n\oplus\underline{\C}\cong
  \bar\eta\oplus\cdots\oplus\bar\eta\qquad\text{($n+1$ summands)},
\]
where $\eta=\mathcal O(-1)$ is the \emph{tautological} (Hopf) line
bundle over $\C P^n$, and $\bar\eta=\mathcal O(1)$ is its
conjugate, or the line bundle corresponding to a hyperplane $\C
P^{n-1}\subset\C P^n$.
\end{example}

\begin{example}\label{projex1}
The complex projectivisation of a sum of line bundles over a projective
space is a toric manifold. This example will feature in several subsequent constructions.

Given two positive integers $n_1$, $n_2$ and a sequence of
integers $(i_1,\ldots,i_{n_2})$, consider the projectivisation
$V=\C P(\eta^{\otimes i_1}\oplus\cdots\oplus\eta^{\otimes
i_{n_2}}\oplus\underline{\C})$, where $\eta^{\otimes i}$ denotes
the $i$th tensor power of $\eta$ over $\C P^{n_1}$ when $i\ge0$
and the $i$th tensor power of $\bar\eta$ otherwise. The manifold
$V$ is the total space of a bundle over $\C P^{n_1}$ with fibre
$\C P^{n_2}$. It is also a projective toric manifold with the
corresponding matrix~\eqref{Lambdatoric} given by

\[
  \begin{pmatrix}
  \noalign{\vspace{-1\normalbaselineskip}}
  \multicolumn{3}{c}{\scriptstyle n_1}\\[-5pt]
  \multicolumn{3}{c}{$\downbracefill$}\,\\
  1&0&0&-1                                           & & & &  \\
  0&\ddots&0&\vdots    &&&\textrm{\huge 0} &                  \\
  0&0&1&-1                                           & & & &  \\
   & & &i_1       &                                   1&0&0&-1\\
   &\textrm{\huge 0} & &\vdots &             0&\ddots&0&\vdots\\
   & & &i_{n_2}   &                                  0&0&1&-1 \\[-5pt]
   &&&&\multicolumn{3}{c}{$\upbracefill$\ }\\[-5pt]
   &&&&\multicolumn{3}{c}{\scriptstyle n_2}\\
  \noalign{\vspace{-1\normalbaselineskip}}
  \end{pmatrix}
  \vspace{1\normalbaselineskip}
\]
The polytope $P$ here is combinatorially equivalent to a product
$\varDelta^{n_1}\times\varDelta^{n_2}$ of two simplices.
Theorem~\ref{cohomtoric} describes the cohomology of $V$ as
\[
  H^*(V)\cong\Z[v_1,\ldots,v_{n_1+1},v_{n_1+2},\ldots,v_{n_1+n_2+2}]/\mathcal
  I,
\]
where $\mathcal I$ is generated by the elements
\begin{gather*}
  v_1\cdots v_{n_1+1},\;v_{n_1+2}\cdots
  v_{n_1+n_2+2},\;v_1-v_{n_1+1},\ldots,v_{n_1}-v_{n_1+1},\\
  i_1v_{n_1+1}+v_{n_1+2}-v_{n_1+n_2+2},\ldots,
  i_{n_2}v_{n_1+1}+v_{n_1+n_2+1}-v_{n_1+n_2+2}.
\end{gather*}
In other words,
\begin{equation}\label{cohomproj1}
  H^*(V)\cong\Z[u,v]\big/\bigl(u^{n_1+1},v(v-i_1u)\cdots(v-i_{n_2}u)\bigr),
\end{equation}
where $u=v_{1}=\cdots=v_{n_1+1}$ and $v=v_{n_1+n_2+2}$.
Theorem~\ref{tangenttoric} gives
\begin{equation}\label{cproj1}
  c(V)=(1+u)^{n_1+1}(1+v-i_1u)\cdots(1+v-i_{n_2}u)(1+v).
\end{equation}
If $i_1=\cdots=i_{n_2}=0$, we obtain $V=\C P^{n_1}\times\C
P^{n_2}$.

The same information can be retrieved from the following
well-known description of the tangent bundle and the cohomology
ring of a complex projectivisation.

\begin{theorem}[Borel and Hirzebruch {\cite[\S15]{bo-hi58}}]\label{projd}
Let $p\colon \C P(\xi)\to X$ be the projectivisation of a complex
$n$-plane bundle $\xi$ over a complex manifold~$X$, and let
$\gamma$ be the tautological line bundle over~$\C P(\xi)$. Then
there is an isomorphism of vector bundles
\[
  \mathcal T\C P(\xi)\oplus\underline{\C}\cong
   p^*{\mathcal T}\!X\oplus(\bar\gamma\otimes p^*\xi).
\]
Furthermore, the integral cohomology ring of $\C P(\xi)$
is the quotient of the polynomial ring $H^*(X)[v]$ on one
generator $v=c_1(\bar\gamma)$ with coefficients in $H^*(X)$ by the
single relation
\begin{equation}\label{cntatf}
  v^n+c_1(\xi)v^{n-1}+\cdots+c_n(\xi)=0.
\end{equation}
\end{theorem}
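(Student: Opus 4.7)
\medskip

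\textbf{Proof proposal.}
The plan is to establish the two parts of the theorem separately, handling the tangent bundle splitting by producing an Euler-type short exact sequence for the projective bundle, and the cohomology description via Leray--Hirsch together with a vanishing argument for the top Chern class of $\bar\gamma\otimes p^*\xi$.

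First I would set up the relative tangent bundle $\mathcal T_p$ of the submersion $p\colon\C P(\xi)\to X$. Choosing a hermitian metric on $\xi$ gives a splitting $\mathcal T\C P(\xi)\cong p^*\mathcal TX\oplus\mathcal T_p$, so it suffices to identify $\mathcal T_p\oplus\underline{\C}\cong\bar\gamma\otimes p^*\xi$. For this I would use the pointwise description of the fiber tangent space: at a point $[\ell]\in\C P(\xi_x)$ the tangent space to $\C P(\xi_x)$ is canonically $\Hom(\ell,\xi_x/\ell)=\gamma^*\otimes(\xi/\gamma)|_{[\ell]}$. Tensoring the tautological short exact sequence $0\to\gamma\to p^*\xi\to p^*\xi/\gamma\to 0$ with $\bar\gamma=\gamma^*$ yields the \emph{Euler sequence for the projective bundle}
\[
  0\longrightarrow\underline{\C}\longrightarrow\bar\gamma\otimes p^*\xi\longrightarrow\mathcal T_p\longrightarrow 0,
\]
where the first map is induced by the inclusion $\gamma\hookrightarrow p^*\xi$, viewed as a nowhere-zero section of $\Hom(\gamma,p^*\xi)=\bar\gamma\otimes p^*\xi$. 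Since this sequence splits (any short exact sequence of complex vector bundles over a paracompact base splits smoothly), the first assertion follows.

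Next I would derive the cohomology statement using the Leray--Hirsch theorem. The class $v=c_1(\bar\gamma)\in H^2(\C P(\xi))$ restricts on each fiber $\C P^{n-1}$ to $c_1$ of the hyperplane line bundle, so $1,v,\dots,v^{n-1}$ restrict to an additive basis of $H^*(\C P^{n-1})$. Leray--Hirsch therefore implies that $H^*(\C P(\xi))$ is a free $H^*(X)$-module with basis $1,v,\dots,v^{n-1}$. It remains only to identify the relation expressing $v^n$ in terms of lower powers, i.e., to show that
\[
  v^n+p^*c_1(\xi)\,v^{n-1}+\cdots+p^*c_n(\xi)=0.
\]

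To establish this relation, I would first observe that its left-hand side is precisely $c_n(\bar\gamma\otimes p^*\xi)$: by the Whitney sum formula, if $\xi$ splits as $L_1\oplus\cdots\oplus L_n$ with Chern roots $x_i=c_1(L_i)$, then $c_n(\bar\gamma\otimes p^*\xi)=\prod_{i=1}^n(v+p^*x_i)$, which upon expansion is exactly the desired expression in the elementary symmetric functions $c_i(\xi)$ of the~$x_i$. The key geometric input is that $\bar\gamma\otimes p^*\xi$ admits a nowhere-zero section, namely the tautological inclusion $\gamma\hookrightarrow p^*\xi$ reinterpreted as a section of $\Hom(\gamma,p^*\xi)$; consequently its top Chern class vanishes. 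The splitting principle, applied to a flag-bundle resolution $\pi\colon F\to X$ for which $\pi^*$ is injective on cohomology and $\pi^*\xi$ splits, allows us to reduce the identification of the relation to the split case, where the computation above applies verbatim. The main subtlety, and the part I would take most care with, is keeping track of sign conventions (ensuring the use of $v=c_1(\bar\gamma)$ rather than $c_1(\gamma)$ is consistent throughout, so that the formula comes out with the stated signs $+c_i(\xi)$); after that, everything reduces to the Whitney formula and the vanishing of $c_n$ for a bundle with a nowhere-zero section.
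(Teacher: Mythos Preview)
Your proof is correct and follows the standard approach. The paper does not actually prove this theorem---it is stated with a citation to Borel--Hirzebruch~\cite[\S15]{bo-hi58}, accompanied only by the remark ``The relation above is just $c_n(\bar\gamma\otimes p^*\xi)=0$.'' Your argument expands precisely on this remark: you obtain the relation by observing that the tautological inclusion $\gamma\hookrightarrow p^*\xi$ furnishes a nowhere-zero section of $\bar\gamma\otimes p^*\xi$, forcing its top Chern class to vanish, and you handle the tangent bundle via the relative Euler sequence, which is the expected route.
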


The relation above is just $c_n(\bar\gamma\otimes p^*\xi)=0$.

In Example~\ref{projex1} we have $\xi=\eta^{\otimes
i_1}\oplus\cdots\oplus\eta^{\otimes i_{n_2}}\oplus\underline{\C}$
over $X=\C P^{n_1}$. We have $H^*(X)=\Z[u]/(u^{n_1+1})$ where
$u=c_1(\bar\eta)$, so that~\eqref{cntatf} becomes
$v(v-i_1u)\cdots(v-i_{n_2}u)=0$ and the ring $H^*(\C P(\xi))$
given by Theorem~\ref{projd} is precisely~\eqref{cohomproj1}.
Moreover, the total Chern class of $p^*{\mathcal
T}\!X\oplus(\bar\gamma\otimes p^*\xi)$ is given by~\eqref{cproj1}.
\end{example}

The quotient of the projective toric manifold $V_P$ by the action
of the compact torus $T^n\subset(\C^\times)^n$ is the simple
polytope~$P$. Davis and Januszkiewicz~\cite{da-ja91} introduced the following topological generalisation of projective toric manifolds.

A \emph{quasitoric manifold} over a simple
$n$-dimensional polytope $P$ is a smooth manifold $M$ of dimension $2n$
with a locally standard action of the torus $T^n$ and a continuous projection $\pi\colon M\to P$ whose fibres are $T^n$-orbits. (An
action of $T^n$ on $M^{2n}$ is \emph{locally standard} if every
point $x\in M^{2n}$ is contained in a $T^n$-invariant
neighbourhood equivariantly homeomorphic to an open subset in
$\C^n$ with the standard coordinatewise action of~$T^n$ twisted by
an automorphism of the torus.) The orbit space of a locally
standard action is a manifold with corners. The quotient of a quasitoric manifold
$M/T^n$ is homeomorphic, as a manifold with corners, to~$P$.

Not every simple polytope can be the quotient of a quasitoric
manifold. Nevertheless, quasitoric manifolds constitute a much
larger family than projective toric manifolds, and enjoy more
flexibility for topological applications.

If $F_1,\ldots,F_m$ are the facets of $P$, then each
$M_i=\pi^{-1}(F_i)$ is a quasitoric submanifold of $M$ of
codimension~2, called a \emph{characteristic submanifold}. The
characteristic submanifolds $M_i\subset M$ are analogues of the
invariant divisors $D_i$ on a toric manifold~$V$. Each $M_i$ is
fixed pointwise by a closed $1$-dimensional subgroup (a subcircle)
$T_i\subset T^n$ and therefore corresponds to a primitive vector
$\lambda_i\in\Z^n$ defined up to a sign. Choosing a direction of
$\lambda_i$ is equivalent to choosing an orientation for the
normal bundle $\nu(M_i\subset M)$ or, equivalently, choosing an
orientation for $M_i$, provided that $M$ itself is oriented. An
\emph{omniorientation} of a quasitoric manifold $M$ consists of a
choice of orientation for $M$ and each characteristic submanifold
$M_{i}$, $1\le i\le m$.

The vectors $\lambda_i$ play the role of the generators $\mb a_i$
of the one-dimensional cones of the fan corresponding to a toric
manifold~$V$ (or the normal vectors to the facets of
$P$ when $V$ is projective). However, the $\lambda_i$ need not be
the normal vectors to the facets of $P$ in general.

There is an analogue of Theorem~\ref{cohomtoric} for quasitoric
manifolds:

\begin{theorem}\label{cohomqtoric}
Let $M$ be an omnioriented quasitoric manifold of dimension~$2n$
over a polytope~$P$. The cohomology ring $H^*(M;\Z)$ is generated
by the degree-two classes $v_i$ dual to the oriented
characteristic submanifolds $M_i$, and is given by
\[
  H^*(M;\Z)\cong \Z[v_1,\ldots,v_m]/\mathcal I,\qquad\deg v_i=2,
\]
where $\mathcal I$ is the ideal generated by elements of the
following two types:
\begin{itemize}
\item[(a)] $v_{i_1}\cdots v_{i_k}$ such that $F_{i_1}\cap\cdots\cap F_{i_k}=\varnothing$ in~$P$;
\item[(b)] $\displaystyle\sum_{i=1}^m\langle\lambda_i,\mb x\rangle v_i$, for
any vector $\mb x\in\Z^n$.
\end{itemize}
\end{theorem}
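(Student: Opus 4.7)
The plan is to combine a Morse-theoretic argument, which fixes the additive structure of $H^*(M;\Z)$, with a comparison to equivariant cohomology, which pins down the multiplicative relations. Both ingredients mirror the original arguments of Davis and Januszkiewicz.

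First, I would choose a generic linear function $\varphi$ on $P$, non-constant on every edge. Composing with the projection $\pi\colon M\to P$ yields a smooth function $\pi^*\varphi$ whose critical points are precisely the preimages of the vertices of~$P$: over such a vertex $v$, the chart for $M$ is standard on $\C^n$ with $\pi$ the moment map, so $\pi^*\varphi$ is a nondegenerate quadratic form whose Morse index equals twice the number of edges of $P$ at $v$ along which $\varphi$ decreases. All indices being even, $M$ admits a CW structure with cells only in even dimensions, one $2k$-cell for each vertex of index~$2k$. In particular $H^*(M;\Z)$ is torsion free and concentrated in even degrees, with $\rank H^{2k}(M;\Z)=h_k(P)$, the $k$th entry of the $h$-vector of~$P$.

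Second, each characteristic submanifold $M_i\subset M$ is a closed oriented codimension-$2$ submanifold and hence defines a Poincar\'e dual class $v_i\in H^2(M;\Z)$. The relations of type~(a) are then immediate: if $F_{i_1}\cap\cdots\cap F_{i_k}=\varnothing$ in $P$ then $M_{i_1}\cap\cdots\cap M_{i_k}=\varnothing$ in $M$ (by definition of the projection $\pi$), so the cup product of the corresponding duals vanishes by transversality. For the relations of type~(b), I would observe that the oriented normal bundle of $M_i\subset M$ extends to a globally defined complex line bundle $\rho_i$ on $M$ (satisfying $v_i=c_1(\rho_i)$) on which the subcircle $T_i\subset T^n$ corresponding to $\lambda_i$ acts by standard rotation. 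For any $\mb x\in\Z^n$, the tensor product $\bigotimes_i\rho_i^{\otimes\langle\lambda_i,\mb x\rangle}$ is then $T^n$-equivariantly isomorphic to the pullback under the constant map $M\to\pt$ of the $1$-dimensional representation of $T^n$ specified by $\mb x$, and in particular is trivial as a complex line bundle. Taking first Chern classes gives $\sum_i\langle\lambda_i,\mb x\rangle v_i=0$.

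Third, to exclude further relations I would compare Hilbert series. Set $R=\Z[v_1,\dots,v_m]/\mathcal I$. The type~(a) relations are the Stanley--Reisner relations of the boundary complex of the dual simplicial polytope~$P^*$, and the $n$ linear forms in part~(b) form a regular sequence on this Stanley--Reisner ring (this uses that the $\lambda_i$ corresponding to the facets meeting at any vertex form a basis of $\Z^n$, which is the local-standardness hypothesis). A standard calculation in combinatorial commutative algebra then gives $\rank R^{2k}=h_k(P)$; matching the Betti numbers from the Morse argument, the obvious surjection $R\to H^*(M;\Z)$ is an isomorphism in each degree.

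The hard part is the second step: constructing the global line bundle $\rho_i$ extending the normal bundle of $M_i$ and showing the triviality of $\bigotimes_i\rho_i^{\otimes\langle\lambda_i,\mb x\rangle}$. This requires a careful analysis of how $M$ is assembled from standard torus charts governed by the characteristic function~$\lambda$, and it is here that the omniorientation enters: the signs of the $\lambda_i$ must be fixed for the formula $v_i=c_1(\rho_i)$ to make sense. The cleanest implementation is to realise $M$ as the quotient $\mathcal Z_P/K$ of the moment-angle manifold $\mathcal Z_P$ by the kernel $K$ of the homomorphism $T^m\to T^n$ encoded by~$\lambda$; on this equivariant model both families of relations become transparent.
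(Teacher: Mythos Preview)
The paper does not prove this theorem at all: it is stated as a known result (the quasitoric analogue of the Danilov--Jurkiewicz theorem), attributed to Davis and Januszkiewicz and referenced to \cite{da-ja91} and \cite[Chapter~6]{bu-pa15}. Your proposal is a correct outline of the standard Davis--Januszkiewicz argument --- Morse theory on a generic height function to obtain the additive structure and Betti numbers via the $h$-vector, geometric verification of the Stanley--Reisner and linear relations, and a Hilbert-series comparison using that the linear forms are a regular sequence on the face ring --- so there is nothing to compare.

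One minor remark on your sketch: the relations of type~(b) can be obtained without ever constructing the global line bundles $\rho_i$. Once the Morse/CW argument shows $H^*(M;\Z)$ is free and concentrated in even degrees, the Serre spectral sequence for $ET^n\times_{T^n}M\to BT^n$ collapses, giving $H^*_{T^n}(M;\Z)\cong H^*(BT^n)\otimes H^*(M)$ as an $H^*(BT^n)$-module; one then identifies $H^*_{T^n}(M;\Z)$ with the face ring $\Z[v_1,\ldots,v_m]/\mathcal I_{\mathrm{SR}}$ and obtains $H^*(M;\Z)$ by quotienting out the image of $H^{>0}(BT^n)$, which is exactly the ideal generated by the linear forms in~(b). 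This avoids the ``hard part'' you flagged. The construction of the $\rho_i$ is still needed elsewhere (for Theorem~\ref{tangentqtoric}), but not for the cohomology-ring statement itself.
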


By analogy with \eqref{Lambdatoric}, we consider the integer
$n\times m$-matrix
\begin{equation}\label{Lambdaqtoric}
  \varLambda=\begin{pmatrix}
  \lambda_{11}&\cdots& \lambda_{1m}\\
  \vdots&\ddots&\vdots\\
  \lambda_{n1}&\cdots& \lambda_{nm}
  \end{pmatrix}
\end{equation}
whose columns are the vectors $\lambda_i$ written in the standard
basis of~$\Z^n$. Changing a basis in the lattice results in
multiplying $\varLambda$ from the left by a matrix from
$\mbox{\textit{GL}}\,(n,\Z)$. The ideal~(b) of
Theorem~\ref{cohomqtoric} is generated by the $n$ linear forms
$\lambda_{j1}v_1+\cdots+\lambda_{jm}v_m$ corresponding to the rows
of~$\varLambda$. Also, $\varLambda$ has the property that
$\det(\lambda_{i_1},\ldots,\lambda_{i_n})=\pm1$ whenever the
facets $F_{i_1},\ldots,F_{i_n}$ intersect at a vertex of~$P$.

There is also an analogue of Theorem~\ref{tangenttoric}:

\begin{theorem}\label{tangentqtoric}
For a quasitoric manifold $M$ of dimension $2n$, there is an
isomorphism of real vector bundles:
\begin{equation}\label{TMqtiso}
  \mathcal T M\oplus\underline{\R}^{2(m-n)}\cong
  \rho_1\oplus\cdots\oplus\rho_m,
\end{equation}
where $\rho_i$ is the real $2$-plane bundle corresponding to the
orientable characteristic submanifold $M_i\subset M$, so that
$\rho_i|_{M_i}=\nu(M_i\subset M)$.
\end{theorem}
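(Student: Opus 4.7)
The plan is to prove~\eqref{TMqtiso} by realising $M$ as a quotient of the moment-angle manifold $\mathcal Z = \mathcal Z_P \subset \C^m$ (as in~\cite[Chapter~6]{bu-pa15}) and reducing the identity to a simple equivariant bundle calculation in $\C^m$. The key structural facts I would exploit are that $\mathcal Z$ is cut out of $\C^m$ by $m-n$ real, $T^m$-invariant equations, so its normal bundle in $\C^m$ is trivial of real rank $m-n$, and that $M \cong \mathcal Z/K$ for the $(m-n)$-dimensional subtorus $K \subset T^m$ given by the kernel of the characteristic homomorphism $T^m \to T^n$ encoded in~\eqref{Lambdaqtoric}. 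The nonsingularity condition $\det(\lambda_{i_1},\ldots,\lambda_{i_n})=\pm1$ at every vertex ensures that $K$ acts freely on $\mathcal Z$, so the quotient is smooth and carries a residual $T^n$-action reproducing the quasitoric structure.

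First, from the trivial normal bundle I would record the $T^m$-equivariant isomorphism of real vector bundles on $\mathcal Z$
\[
  \mathcal T\mathcal Z \oplus \underline{\R}^{m-n}
  \;\cong\; \mathcal T\C^m|_{\mathcal Z}
  \;\cong\; \bigoplus_{i=1}^m \underline{\C}_i,
\]
where $\underline{\C}_i$ denotes the trivial complex line bundle on which $T^m$ acts through the $i$-th coordinate character. I would then define $\rho_i$ as the descent of $\underline{\C}_i|_{\mathcal Z}$ along the free principal $K$-bundle $q \colon \mathcal Z \to M$, obtaining a complex line (real $2$-plane) bundle on~$M$. A local calculation near the preimage of the facet $F_i$ shows that $\rho_i|_{M_i} \cong \nu(M_i \subset M)$, because the normal direction to $\{z_i=0\}\cap\mathcal Z$ inside $\C^m$ is precisely the $i$-th coordinate line $\C_i$, which is exactly what descends to $\rho_i$.

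Next, I would split $\mathcal T\mathcal Z$ along $q$. A $K$-invariant Riemannian metric gives a $K$-equivariant orthogonal decomposition $\mathcal T\mathcal Z \cong q^*\mathcal TM \oplus V$, where the vertical subbundle $V$ is framed by the infinitesimal generators of the free $K$-action, so $V \cong \underline{\R}^{m-n}$. Substituting this into the displayed equivariant isomorphism and descending the resulting $K$-equivariant identity through $q$ produces
\[
  \mathcal TM \oplus \underline{\R}^{2(m-n)}
  \;\cong\; \rho_1 \oplus \cdots \oplus \rho_m
\]
on $M$, which is precisely~\eqref{TMqtiso}.

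The main obstacle I anticipate is not conceptual but lies in the careful tracking of equivariant data. One must verify that both trivial summands $\underline{\R}^{m-n}$ really descend to trivial bundles on $M$ (which holds because $K$ acts trivially on the normal directions to $\mathcal Z$, and $V$ is globally framed by the Lie algebra~$\mathfrak k$), and that the complex structure on each $\rho_i$ induced by the descent of $\underline{\C}_i$ matches the chosen omniorientation, so that the identification $\rho_i|_{M_i} \cong \nu(M_i \subset M)$ holds as oriented real $2$-plane bundles. Once this bookkeeping is in place, the theorem follows immediately.
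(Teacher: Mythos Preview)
The paper does not actually prove Theorem~\ref{tangentqtoric}; it states the result as a known fact, referring the reader to~\cite[Chapter~6]{bu-pa15} for details on quasitoric manifolds. Your proposal reproduces precisely the standard argument found there: realise $M$ as the free quotient $\mathcal Z_P/K$, use the $T^m$-equivariant trivial normal bundle of $\mathcal Z_P\subset\C^m$ to get $\mathcal T\mathcal Z_P\oplus\underline{\R}^{m-n}\cong\bigoplus_i\underline{\C}_i$, split off the vertical subbundle $V\cong\underline{\R}^{m-n}$ along the principal $K$-bundle $q$, and descend. The bookkeeping you flag (that both $\underline{\R}^{m-n}$ summands carry the trivial $K$-action, since the defining equations of $\mathcal Z_P$ are $T^m$-invariant and $V$ is framed by~$\mathfrak k$) is correct, and the identification $\rho_i|_{M_i}=\nu(M_i\subset M)$ is exactly as you describe. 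So your argument is sound and matches the intended source; there is simply no in-paper proof to compare against.
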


Buchstaber and Ray~\cite{bu-ra98} introduced a family of projective toric manifolds $\{B(n_1,n_2)\}$ that multiplicatively generates the unitary bordism ring $\OU$. The details of this construction can be found in~\cite[\S9.1]{bu-pa15}. 
We proceed to describe another family of toric generators
for~$\varOmega^U$.

\begin{construction}\label{Pn1n2}
Given two positive integers $n_1$, $n_2$, we define the manifold
$L(n_1,n_2)$ as the projectivisation $\C P
(\eta\oplus\underline{\C}^{n_2})$, where $\eta$ is the
tautological line bundle over~$\C P^{n_1}$. This $L(n_1,n_2)$ is a
particular case of manifolds from Example~\ref{projex1}, so it is
a projective toric manifold with the corresponding
matrix~\eqref{Lambdatoric} given by\\
\begin{equation}\label{LambdaPn1n2}
  \begin{pmatrix}
  \noalign{\vspace{-1\normalbaselineskip}}
  \multicolumn{3}{c}{\scriptstyle n_1}\\[-5pt]
  \multicolumn{3}{c}{$\downbracefill$}\,\\
  1&0&0&-1                                           & & & &  \\
  0&\ddots&0&\vdots    &&&\textrm{\huge 0} &                  \\
  0&0&1&-1                                           & & & &  \\
   & & &1       &                                   1&0&0&-1\\
   &\textrm{\huge 0} & &0 &             0&\ddots&0&\vdots\\
   & & &0   &                                  0&0&1&-1 \\[-5pt]
   &&&&\multicolumn{3}{c}{$\upbracefill$\ }\\[-5pt]
   &&&&\multicolumn{3}{c}{\scriptstyle n_2}\\
  \noalign{\vspace{-1\normalbaselineskip}}
  \end{pmatrix}
  \vspace{1\normalbaselineskip}
\end{equation}
The cohomology ring is given by
\begin{equation}\label{cohomproj2}
  H^*\bigl(L(n_1,n_2)\bigr)\cong\Z[u,v]\big/\bigl(u^{n_1+1},v^{n_2+1}-uv^{n_2}\bigr)
\end{equation}
with $u^{n_1}v^{n_2}\langle L(n_1,n_2)\rangle=1$. There is an
isomorphism of complex bundles
\begin{equation}\label{TPn1n2}
  \mathcal T
  L(n_1,n_2)\oplus\underline{\C}^2\cong
  \underbrace{p^*\bar\eta\oplus\cdots\oplus p^*\bar\eta}_{n_1+1}\oplus
  (\bar\gamma\otimes p^*\eta)\oplus
  \underbrace{\bar\gamma\oplus\cdots\oplus\bar\gamma}_{n_2},
\end{equation}
where $\gamma$ is the tautological line bundle over $L(n_1,n_2)=\C
P(\eta\oplus\underline{\C}^{n_2})$. The total Chern class is
\begin{equation}\label{cproj2}
  c\bigl(L(n_1,n_2)\bigr)=(1+u)^{n_1+1}(1+v-u)(1+v)^{n_2}
\end{equation}
with $u=c_1(p^*\bar\eta)$ and $v=c_1(\bar\gamma)$. We also set
$L(n_1,0)=\C P^{n_1}$ and $L(0,n_2)=\C P^{n_2}$, then the
identities \eqref{cohomproj2}--\eqref{cproj2} still hold.
\end{construction}

\begin{theorem}[{\cite[Theorem~3.8]{lu-pa16}}]\label{projcobgen}
The bordism classes $[L(n_1,n_2)]\in\varOmega^U_{2(n_1+n_2)}$
generate multiplicatively the unitary bordism ring~$\varOmega^U$.
\end{theorem}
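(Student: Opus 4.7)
By the Milnor--Novikov Theorem~\ref{Ustructure}, the ring $\OU$ is a graded polynomial ring with one generator in each positive even degree, and a class $[M^{2k}]$ can serve as a $2k$-dimensional polynomial generator precisely when $s_k[M^{2k}]=\pm m_k$. Since $s_k$ is additive on disjoint unions and vanishes on products of positive-dimensional classes (it corresponds to a primitive in $H_*(BU)$), the family $\{[L(n_1,n_2)]\}$ generates $\OU$ multiplicatively if and only if
\[
  \gcd_{\substack{n_1+n_2=k\\ n_1,n_2\ge 0}} s_k[L(n_1,n_2)]=m_k
  \qquad\text{for every }k\ge 1.
\]
Theorem~\ref{Ustructure}(b) already gives $m_k\mid s_k[L(n_1,n_2)]$, so the task is really to show that this gcd does not exceed $m_k$.

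The first step is a direct computation of $s_k[L(n_1,n_2)]$ from the Chern class formula~\eqref{cproj2}. The stable Chern roots of $L(n_1,n_2)$ are $u$ with multiplicity $n_1+1$, together with $v-u$ and $v$ with multiplicity $n_2$, so
\[
  s_k(L(n_1,n_2))=(n_1+1)u^k+(v-u)^k+n_2 v^k.
\]
Using the relations $u^{n_1+1}=0$, $v^{n_2+j}=u^j v^{n_2}$ (for $0\le j\le n_1$) from~\eqref{cohomproj2} and $\langle u^{n_1}v^{n_2},[L(n_1,n_2)]\rangle=1$, a short calculation collapses each term against the fundamental class. Combined with the partial-sum identity $\sum_{j=0}^{n_1}(-1)^j\binom{k}{j}=(-1)^{n_1}\binom{k-1}{n_1}$ (with $k=n_1+n_2$) this yields
\[
  s_k[L(n_1,n_2)]=n_2+(-1)^{n_1}\binom{k-1}{n_1}
  \qquad(n_2\ge 1),
\]
and $s_k[L(k,0)]=s_k[\C P^k]=k+1$ for the degenerate case.

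The remaining and main step is the number-theoretic verification that the gcd of these values equals $m_k$. Since $s_k[\C P^k]=k+1$, the gcd automatically divides $k+1$, so only prime factors of $k+1$ can appear in it. When $k+1=pq$ with $\gcd(p,q)=1$, Lucas' theorem gives $\binom{pq-1}{p-1}\equiv 1\pmod p$, which implies $s_k[L(p-1,k-p+1)]\equiv \pm 1\pmod p$; applying this to each prime divisor of $k+1$ forces the gcd to be coprime to $k+1$, hence equal to $1=m_k$. When $k+1=p^r$ one must locate a pair $(n_1,n_2)$ for which the $p$-adic valuation of $s_k[L(n_1,n_2)]$ is exactly $1$: for $p=2$ this is supplied by $s_k[L(k-1,1)]=1+(-1)^{k-1}=2$, and for odd $p$ one analyses $\binom{p^r-2}{i}\pmod{p^2}$ via Kummer's theorem and a direct manipulation of the factorial quotient to produce such an~$i$.

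The main obstacle is the last case, $k+1=p^r$ with $r\ge 2$ and $p$ odd, where both candidate summands in the formula for $s_k[L(n_1,n_2)]$ are divisible by $p$ and one must rule out additional cancellation modulo $p^2$. This is the combinatorial heart of the proof and is where the ``interesting results on binomial coefficients modulo a prime'' mentioned in the introduction enter; once the required non-cancellation is established for a suitable choice of $n_1$ (such as $n_1=p$ when $r=2$, and analogous choices for larger $r$), the gcd equals $m_k$ and the theorem follows.
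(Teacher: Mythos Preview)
The paper does not prove this theorem; it is quoted without argument from~\cite{lu-pa16}. Your strategy---compute $s_k[L(n_1,n_2)]$ from~\eqref{cproj2} and~\eqref{cohomproj2} and then show that the gcd of these numbers over all $n_1+n_2=k$ equals~$m_k$---is the standard one and is essentially how~\cite{lu-pa16} proceeds. Your derivation of the closed formula $s_k[L(n_1,n_2)]=n_2+(-1)^{n_1}\binom{k-1}{n_1}$ (for $n_1,n_2\ge1$) is correct.

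There is, however, a concrete error in your treatment of the non-prime-power case. With $n_1=p-1$ the relevant binomial coefficient is $\binom{k-1}{p-1}=\binom{pq-2}{p-1}$, not $\binom{pq-1}{p-1}$. The base-$p$ units digit of $pq-2$ is $p-2<p-1$, so Lucas gives $\binom{pq-2}{p-1}\equiv 0\pmod p$; combined with $n_2=p(q-1)\equiv 0\pmod p$ this yields $s_k[L(p-1,k-p+1)]\equiv 0\pmod p$, the opposite of what you claim. In addition, the phrase ``applying this to each prime divisor of $k+1$'' tacitly assumes that for every prime $p\mid k+1$ one can factor $k+1=pq$ with $\gcd(p,q)=1$, which fails whenever $p^2\mid k+1$. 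A choice that does work is $n_1=p^a$, where $p^a$ is the exact power of $p$ dividing $k+1$: writing $k+1=p^a m$ with $\gcd(p,m)=1$ and $m>1$, a short Lucas computation gives $s_k[L(p^a,k-p^a)]\equiv -m\pmod p$ for odd~$p$ (and $\equiv m\equiv 1\pmod 2$ for $p=2$), which is nonzero. With this fix the non-prime-power case is complete; the prime-power case, which you correctly flag as the substantive arithmetic content, remains to be carried out.
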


Theorem~\ref{projcobgen} implies that every unitary bordism class
can be represented by a disjoint union of products of projective
toric manifolds. Products of toric manifolds are toric, but
disjoint unions are not, as toric manifolds are connected. In
bordism theory, a disjoint union may be replaced by a connected
sum, representing the same bordism class. However, connected sum
is not an algebraic operation, and a connected sum of two
algebraic varieties is rarely algebraic. This can be remedied by
appealing to quasitoric manifolds, as explained next. Recall that
an omnioriented quasitoric manifold has an intrinsic stably
complex structure, arising from the isomorphism of
Theorem~\ref{tangentqtoric}. One can form the equivariant connected
sum of quasitoric manifolds, as explained in Davis and Januszkiewicz~\cite{da-ja91}, but
the resulting invariant stably complex structure does not
represent the bordism sum of the two original manifolds. A more
intricate connected sum construction is needed, as outlined below.
The details can be found in~\cite{b-p-r07} or~\cite[\S9.1]{bu-pa15}.

\begin{construction}\label{diamondsum}
The construction applies to two omnioriented $2n$-dimensional
quasitoric manifolds $M$ and $M'$ over $n$-polytopes $P$ and $P'$
respectively. The connected sum will be taken at the fixed points
of $M$ and $M'$ corresponding to vertices $v\in P$ and $v'\in P'$.
We need to assume that $v$ is the intersection of the first $n$
facets of $P$, i.e. $v=F_1\cap\cdots\cap F_n$, and the
corresponding characteristic matrix~\eqref{Lambdaqtoric} of~$M$ is
in the \emph{refined form}, i.e.
\[
\varLambda=\left(I\;|\;\varLambda_\star\right)\;=\;\begin{pmatrix}
  1&0&0&\lambda_{1,n+1}&\ldots&\lambda_{1,m}\\
  0&\ddots&0&\vdots&\ddots&\vdots\\
  0&0&1&\lambda_{n,n+1}&\ldots&\lambda_{n,m}
\end{pmatrix}
\]
where $I$ is the unit matrix and $\varLambda_\star$ is an
$n\times(m-n)$-matrix. The same assumptions are made for $M'$,
$P'$, $v'$ and $\varLambda'$.

The next step depends on the \emph{signs} of the fixed points,
$\omega(v)$ and $\omega(v')$. The sign of $v$ is determined by the
omniorientation data; it is $+1$ when the orientation of $\mathcal
T_v M$ induced from the global orientation of $M$ coincides with
the orientation arising from
$\rho_1\oplus\cdots\oplus\rho_n|_{v}$, and is $-1$ otherwise.

If $\omega(v)=-\omega(v')$, then we take the connected sum $M\cs
M'$ at $v$ and $v'$. It is a quasitoric manifold over $P\cs P'$
with the characteristic matrix
$\left(\varLambda_\star\;|\;I\;|\;\varLambda'_\star\right)$.

If $\omega(v)=\omega(v')$, then we need an additional connected
summand. Consider the quasitoric manifold $S=S^2\times\cdots\times
S^2$ over the $n$-cube~$I^n$, where each $S^2$ is the quasitoric
manifold over the segment $I$ with the characteristic matrix
$(1\;1)$. It represents zero in $\varOmega^U$, and may be thought
of as $\C P^1$ with the stably complex structure given by the
isomorphism $\mathcal T\C
P^1\oplus\underline{\R}^2\cong\bar\eta\oplus\eta$. The
characteristic matrix of $S$ is therefore $(I\;|\;I)$. Now
consider the connected sum $M\cs S\cs M'$. It is a quasitoric
manifold over $P\cs I^n\cs P'$ with the characteristic matrix
$\left(\varLambda_\star\;|\;I\;|\;I\;|\;\varLambda'_\star\right)$.

In either case, the resulting omnioriented quasitoric manifold
$M\cs M'$ or $M\cs S\cs M'$ with the canonical stably complex
structure represents the sum of bordism classes
$[M]+[M']\in\varOmega^U_{2n}$.
\end{construction}

The conclusion, which can be derived from the above construction
and any of the toric generating sets $\{B(n_1,n_2)\}$ or
$\{L(n_1,n_2)\}$ for~$\varOmega^U$, is as follows:

\begin{theorem}[\cite{b-p-r07}]\label{6.11}
In dimensions $>2$, every unitary bordism class contains a
quasitoric manifold, necessarily connected, whose stably complex
structure is induced by an omniorientation, and is therefore
compatible with the torus action.
\end{theorem}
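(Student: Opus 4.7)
The plan is to combine the multiplicative generating set for $\OU$ given by Theorem~\ref{projcobgen} with the equivariant connected sum operation of Construction~\ref{diamondsum}, producing a connected quasitoric representative of an arbitrary bordism class of dimension~$>2$. Fix a class $[X]\in\OU_{2n}$ with $n\ge 2$. Since $\OU$ is an integral polynomial ring generated by the projective toric manifolds $L(n_1,n_2)$, I would first expand $[X]$ as an integer polynomial in these generators and collect monomials to write
\[
  [X]=\sum_{i=1}^{r}\epsilon_{i}[M_{i}],\qquad \epsilon_{i}\in\{+1,-1\},
\]
where each $M_{i}$ is a product of manifolds of the form $L(n_1,n_2)$, hence a projective toric manifold of real dimension~$2n$ carrying a canonical omniorientation.

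To handle the negative signs $\epsilon_i=-1$, I would replace the corresponding $M_i$ by the same underlying smooth manifold equipped with the opposite omniorientation, obtained by reversing all characteristic vectors $\lambda_j\mapsto -\lambda_j$ (equivalently, reversing the global orientation together with the orientations of all characteristic submanifolds). Under the isomorphism of Theorem~\ref{tangentqtoric}, this yields the opposite stably complex structure, and hence a quasitoric representative of $-[M_{i}]$. The resulting disjoint union $\bigsqcup_{i}M_{i}^{\epsilon_i}$ is an omnioriented (but possibly disconnected) quasitoric object whose total bordism class equals~$[X]$.

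Next I would iteratively apply Construction~\ref{diamondsum} to merge the summands into a single connected quasitoric manifold. Given two summands $N_{1},N_{2}$, pick vertices $v,v'$ of the corresponding polytopes and change bases of $\Z^{n}$ so that the characteristic matrices at $v$ and $v'$ are in refined form; this is always possible since the characteristic vectors at a vertex form a $\Z$-basis. Then form either $N_{1}\cs N_{2}$ or $N_{1}\cs S\cs N_{2}$, depending on whether the fixed-point signs $\omega(v)$ and $\omega(v')$ disagree or agree, where $S=(S^{2})^{n}$ is null-bordant. Construction~\ref{diamondsum} ensures that the result is an omnioriented quasitoric manifold representing $[N_{1}]+[N_{2}]$, so after $r-1$ iterations I obtain a connected quasitoric representative of~$[X]$.

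The hypothesis $n\ge 2$ is used precisely so that simple $n$-polytopes admit the connected sum $P\cs P'$ and the insertion of an $n$-cube $I^{n}$; the argument collapses for $n=1$, where every connected quasitoric manifold is $S^{2}$. The main obstacle is the sign and omniorientation bookkeeping: one must verify both that the operation $\lambda_j\mapsto -\lambda_j$ really produces the opposite stably complex structure on the underlying smooth manifold, and that at each merging step the refined-form choice and the possible $S$-insertion interact correctly with the stably complex structure coming from~\eqref{TMqtiso}. Both points are already encoded in the sign function $\omega$ and the refined-form conventions of Construction~\ref{diamondsum}; beyond invoking these, the only remaining work is a dimension-count argument showing that the connected sum construction can be performed simultaneously at compatible vertices of products of $L(n_1,n_2)$'s, which is straightforward once the characteristic matrix~\eqref{LambdaPn1n2} is written out.
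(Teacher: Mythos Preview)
Your overall strategy—write the class as an integer polynomial in the toric generators $L(n_1,n_2)$, represent each monomial by a product of projective toric manifolds, and merge the summands via Construction~\ref{diamondsum}—is exactly what the paper has in mind; it states the theorem as a direct consequence of the generating set together with the connected-sum construction, referring to~\cite{b-p-r07} for the details.

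The one genuine gap is your recipe for the signs $\epsilon_i=-1$. Sending every $\lambda_j\mapsto-\lambda_j$ conjugates each line bundle $\rho_j$, so the stably complex structure $\xi=\bigoplus_j\rho_j$ is replaced by its conjugate $\bar\xi$; but $[M,\bar\xi]$ need not be $-[M,\xi]$. For instance, on $\C P^1\times\C P^1$ (where $m=4$ and $n=2$) the conjugate structure has the same Chern numbers $c_1^2=8$, $c_2=4$, and hence the \emph{same} bordism class, not its negative. Your parenthetical is also mistaken: reversing the global orientation \emph{together with} all $M_i$-orientations leaves every normal-bundle orientation $\nu(M_i\subset M)$, and hence every $\lambda_i$, unchanged—so that is not the operation you describe. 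The operation that actually produces $-[M]$, invoked by the paper itself in the proof of Theorem~\ref{mainth} and explained in~\cite{b-p-r07} and~\cite[\S9.1]{bu-pa15}, is to reverse \emph{only} the global orientation while keeping the characteristic data $\lambda_j$ fixed. With this correction in place, the rest of your argument goes through unchanged.
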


\section{Quasitoric $SU$-manifolds}\label{qtSUsect}
Omnioriented quasitoric manifolds whose
stably complex structures are $SU$ can be detected using the
following simple criterion:

\begin{proposition}[\cite{b-p-r10}]
An omnioriented quasitoric manifold $M$ has $c_1(M)=0$ if and only
if there exists a linear function $\varphi\colon\Z^n\to\Z$ such
that $\varphi(\lambda_i)=1$ for $i=1,\ldots,m$. Here the
$\lambda_i$ are the columns of matrix~\eqref{Lambdaqtoric}.

In particular, if some $n$ vectors of $\lambda_1,\ldots,\lambda_m$
form the standard basis $\mb e_1,\ldots,\mb e_n$, then $M$ is $SU$
if and only if the column sums of $\varLambda$ are all equal
to~$1$.
\end{proposition}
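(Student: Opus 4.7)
My plan is to reduce the claim to a linear-algebra statement about $H^2(M;\Z)$ via the two structural theorems already proved.

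First, I would compute $c_1(M)$ explicitly. By Theorem~\ref{tangentqtoric} we have a real bundle isomorphism
\[
  \mathcal T M\oplus\underline\R^{2(m-n)}\cong\rho_1\oplus\cdots\oplus\rho_m,
\]
and the omniorientation makes each $\rho_i$ a complex line bundle with $c_1(\rho_i)=v_i$, so
\[
  c_1(M)=v_1+\cdots+v_m\in H^2(M;\Z).
\]
Next, I would read off $H^2(M;\Z)$ from Theorem~\ref{cohomqtoric}: since the ideal $\mathcal I$ of part~(a) is generated in degrees $\ge4$ (every facet of $P$ is nonempty), we obtain
\[
  H^2(M;\Z)\cong\Z\langle v_1,\ldots,v_m\rangle/L,
\]
where $L\subset\Z^m$ is the subgroup spanned by the $n$ row vectors $(\lambda_{j1},\ldots,\lambda_{jm})$ of the matrix~\eqref{Lambdaqtoric}, $j=1,\ldots,n$.

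The condition $c_1(M)=0$ then translates to the assertion that the all-ones vector $(1,\ldots,1)\in\Z^m$ lies in~$L$; equivalently, there exists $\mb x=(x_1,\ldots,x_n)\in\Z^n$ with $\sum_{j=1}^n x_j\lambda_{ji}=1$ for every $i=1,\ldots,m$. Defining $\varphi\colon\Z^n\to\Z$ by $\varphi(\mb y)=\langle\mb x,\mb y\rangle$ gives precisely a linear function with $\varphi(\lambda_i)=1$ for all~$i$, and this correspondence $\mb x\leftrightarrow\varphi$ is a bijection between integral vectors and integer-valued linear functionals on $\Z^n$. This proves both implications.

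For the final assertion, suppose without loss of generality that $\lambda_1=\mb e_1,\ldots,\lambda_n=\mb e_n$. Any linear $\varphi$ satisfying $\varphi(\lambda_i)=1$ for $i\le n$ must have $\varphi(\mb e_j)=1$ for $j=1,\ldots,n$, hence $\varphi(\mb y)=y_1+\cdots+y_n$ is the sum-of-coordinates functional, uniquely determined. The remaining conditions $\varphi(\lambda_i)=1$ for $i>n$ then read exactly as: every column of $\varLambda$ has entries summing to~$1$ (the first $n$ columns satisfy this automatically). No serious obstacle is anticipated: the whole argument is a direct translation of $c_1(M)=0$ through the cohomology description, once one observes that the degree-two part of the Stanley--Reisner-type relations in Theorem~\ref{cohomqtoric}~(a) is trivially absent.
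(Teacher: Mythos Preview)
Your argument is correct and follows essentially the same route as the paper's proof: both compute $c_1(M)=v_1+\cdots+v_m$ from Theorem~\ref{tangentqtoric} and then use the description of $H^2(M;\Z)$ from Theorem~\ref{cohomqtoric} to translate vanishing of $c_1$ into the existence of the linear functional~$\varphi$. The paper's version is a two-line sketch, whereas you have spelled out explicitly why the Stanley--Reisner relations of type~(a) contribute nothing in degree~$2$ and how the row-span condition corresponds to~$\varphi$; the content is the same.
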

\begin{proof}
By Theorem~\ref{tangentqtoric}, $c_1(M)=v_1+\cdots+v_m$. By
Theorem~\ref{cohomqtoric}, $v_1+\cdots+v_m$ is zero in $H^2(M)$ if
and only if $v_1+\cdots+v_m=\sum_i\varphi(\lambda_i)v_i$ for some
linear function $\varphi\colon\Z^n\to\Z$, whence the result
follows.
\end{proof}

\begin{proposition}\label{noSU}
A toric manifold $V$ cannot be~$SU$.
\end{proposition}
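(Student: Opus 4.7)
The plan is to show that the first Chern class $c_1(V)\in H^2(V;\Z)$ is nonzero for any toric manifold $V$ of positive complex dimension; since an $SU$-structure on the natural complex structure of $V$ requires $c_1(V)=0$, this suffices.

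By Theorem~\ref{tangenttoric}, $c_1(V)=v_1+\cdots+v_m$, and by Theorem~\ref{cohomtoric} the only linear relations among $v_1,\ldots,v_m$ in $H^2(V;\Z)$ come from the $n$ rows of the matrix $\varLambda$ from~\eqref{Lambdatoric}. Consequently $c_1(V)=0$ in $H^2(V;\R)$ is equivalent to the vector $(1,\ldots,1)\in\R^m$ lying in the row span of $\varLambda$, which in turn amounts to the existence of $x\in\R^n$ satisfying $\langle \mb a_i,x\rangle=1$ for every primitive generator $\mb a_i$ of a one-dimensional cone of the fan~$\Sigma$.

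To rule this out, I would invoke the completeness of~$\Sigma$: the cones of $\Sigma$ cover $\R^n$, so the vector $-x$ lies in some cone and hence admits a non-negative expansion $-x=\sum_i\mu_i\mb a_i$ with $\mu_i\ge 0$. Pairing with $x$ yields
\[
  -\langle x,x\rangle \;=\; \sum_i\mu_i\langle \mb a_i,x\rangle \;=\; \sum_i\mu_i \;\ge\; 0,
\]
forcing $x=0$; but then $\langle \mb a_i,x\rangle=0\ne 1$, a contradiction. Hence no such $x$ exists, $c_1(V)\ne 0$, and $V$ admits no $SU$-structure.

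The argument is essentially immediate once the Danilov--Jurkiewicz description and the splitting of the stabilised tangent bundle are combined with the completeness of the fan, so there is no real obstacle; the only care required is in translating the cohomological vanishing of $c_1(V)$ into the linear-algebraic statement about the rows of~$\varLambda$, which uses that the degree-two part of the ideal in Theorem~\ref{cohomtoric} is generated precisely by these rows.
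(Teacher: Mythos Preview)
Your proof is correct and follows essentially the same approach as the paper. Both arguments reduce $c_1(V)=0$ to the existence of a linear functional taking the value $1$ on every primitive generator $\mb a_i$, and then use completeness of the fan to rule this out; the paper phrases the last step as ``the $\mb a_i$ lie in a positive halfspace, so they cannot span a complete fan'', while you make this explicit by expanding $-x$ in a cone and pairing with $x$.
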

\begin{proof}
If $\varphi(\lambda_i)=1$ for all~$i$, then the vectors
$\lambda_i$ lie in the positive halfspace of~$\varphi$, so they
cannot span a complete fan.
\end{proof}

A more subtle result also rules out low-dimensional quasitoric
manifolds:

\begin{theorem}[{\cite[Theorem~6.13]{b-p-r10}}]\label{lowdimqt}
A quasitoric $SU$-manifold $M^{2n}$ represents $0$ in $\OU_{2n}$
whenever~$n<5$.
\end{theorem}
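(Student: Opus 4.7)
The plan is as follows. By Theorem~\ref{Ustructure}(c) it suffices to show every Chern number of $M^{2n}$ vanishes. The hypothesis $c_1(M)=0$ annihilates every Chern number divisible by $c_1$, leaving only those indexed by partitions of $n$ into parts $\ge 2$. For $n\le 4$ these are: none if $n=1$; $c_2[M^4]$ if $n=2$; $c_3[M^6]$ if $n=3$; and $c_4[M^8]$ together with $c_2^2[M^8]$ if $n=4$. My strategy is to prove $c_n[M]=0$ for every $n\ge 2$ by a single Stokes-type argument (which handles $n\le 3$ in full and disposes of $c_4$ at $n=4$), and then to handle $c_2^2[M^8]$ separately via index-theoretic rigidity.

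For the top Chern number, the stably complex splitting of Theorem~\ref{tangentqtoric} gives $c_n(M)=e_n(v_1,\ldots,v_m)$, and intersection theory on the characteristic submanifolds yields $c_n[M]=\sum_v\omega(v)$, the sum ranging over vertices $v=F_{i_1}\cap\cdots\cap F_{i_n}$ of $P$, where $\omega(v)=\pm 1$ is the omniorientation sign and, up to the polytope orientation, equals $\det(\lambda_{i_1},\ldots,\lambda_{i_n})$. The $SU$-condition provides a linear functional $\varphi$ with $\varphi(\lambda_i)=1$, and a $\mathit{GL}(n,\Z)$-change of basis lets me write $\lambda_i=(1,\mu_i)$ with $\mu_i\in\Z^{n-1}$; a column operation then identifies
\[
\omega(v)=\det[\mu_{i_2}-\mu_{i_1},\ldots,\mu_{i_n}-\mu_{i_1}]=(n-1)!\cdot\mathrm{signvol}(\Delta_v),
\]
where $\Delta_v\subset\R^{n-1}$ is the simplex on $\mu_{i_1},\ldots,\mu_{i_n}$. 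The dual simplicial polytope $P^*$ of $P$ has boundary $\partial P^*$ a simplicial $(n-1)$-sphere whose vertices are $\{F_i\}$ and whose top simplices are $\{\Delta_v\}$. The piecewise-linear map $\pi\colon\partial P^*\to\R^{n-1}$, $F_i\mapsto\mu_i$, extends linearly to a PL map $\widehat\pi\colon P^*\to\R^{n-1}$ on the $n$-ball $P^*$. Applying Stokes' theorem to the closed top form $\omega=dx_1\wedge\cdots\wedge dx_{n-1}$ yields
\[
\sum_v\mathrm{signvol}(\Delta_v)=\int_{\partial P^*}\pi^*\omega=\int_{P^*}\widehat\pi^*\,d\omega=0,
\]
so $c_n[M]=0$.

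For the remaining Chern number $c_2^2[M^8]$: since $c_1(M)=0$ forces $w_2(M)=0$, the manifold $M$ is spin, and as a quasitoric manifold it carries an effective smooth $T^4$-action and hence a nontrivial $S^1$-action. The Atiyah--Hirzebruch rigidity theorem therefore gives $\widehat A[M]=0$. Expressing the $\widehat A$-genus in Chern classes via $p_1=-2c_2$ and $p_2=c_2^2+2c_4$ (both valid since $c_1=0$) yields $720\,\widehat A[M]=3c_2^2[M]-c_4[M]$; combining with the already-established $c_4[M]=0$ gives $c_2^2[M]=0$. This exhausts the Chern numbers, so $[M]=0$ in $\OU_{2n}$. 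The main obstacle I anticipate is precisely this last step: a purely combinatorial extension of the Stokes argument to degree-$4$ symmetric polynomials in the $v_i$ is not apparent, and the argument above relies essentially on the analytic input of Atiyah--Hirzebruch rigidity.
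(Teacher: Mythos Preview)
Your proof is correct and takes a genuinely different route from the paper. The paper does not prove the theorem directly but quotes \cite{b-p-r10} and gives the one-line explanation: the Krichever genus $\varphi_{\mathrm K}\colon\OU\to R_{\mathrm K}$ vanishes on every quasitoric $SU$-manifold (a rigidity statement proved in \cite{b-p-r10}), and $\varphi_{\mathrm K}$ happens to be injective in real dimension~$<10$, so the bordism class itself must vanish.

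Your approach instead kills the Chern numbers one by one. The Stokes/telescoping argument for $c_n[M]=\sum_v\omega(v)=0$ is clean and purely combinatorial: the $SU$-condition forces all $\lambda_i$ onto an affine hyperplane, so the ``degree'' of the induced simplicial map $\partial P^*\to\R^{n-1}$ is zero (in the $n=2$ case this is literally a telescoping sum $\sum_j(\mu_{j+1}-\mu_j)=0$). The remaining number $c_2^2$ in dimension~$8$ you dispatch with Atiyah--Hirzebruch $\widehat A$-rigidity and the identity $720\,\widehat A=3c_2^2-c_4$ for $SU$-manifolds; the computation is correct.

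What each approach buys: the Krichever-genus route is more conceptual and proves the much stronger statement that $\varphi_{\mathrm K}(M)=0$ for \emph{all} quasitoric $SU$-manifolds, of which the low-dimensional theorem is a corollary via injectivity. Your route is more elementary and self-contained, avoiding the elliptic/Krichever machinery entirely, but it is dimension-specific: it works precisely because for $n\le 4$ the only Chern numbers not killed by $c_1=0$ are $c_n$ and (at $n=4$) $c_2^2$, and would not extend beyond $n=4$ without new ideas. Both arguments ultimately invoke an index-theoretic rigidity theorem (Krichever versus Atiyah--Hirzebruch), so neither is purely combinatorial; you correctly flag this as the essential analytic input.
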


The reason for this is that the Krichever genus $\varphi_{\mathrm
K}\colon\OU\to R_{\mathrm K}$ (see~\cite[\S E.5]{bu-pa15}) vanishes on quasitoric
$SU$-manifolds, but $\varphi_{\mathrm K}$ is an isomorphism in
dimensions~$<10$.

First examples of quasitoric $SU$-manifolds representing nonzero bordism classes in $\OU_{2n}$ for all $n\ge5$, except~$n=6$, were constructed in~\cite{lu-wa16}. Subsequently, in~\cite{lu-pa16} there were constructed two general series of quasitoric $SU$-manifolds representing nonzero bordism classes in $\OU_{2n}$ (and therefore in~$\OSU_{2n}$) for all $n\ge5$, including~$n=6$. These series are presented next. They will be used below to provide geometric representatives for multiplicative generators in the $SU$-bordism ring.

\begin{construction}
Assume now that $n_1=2k_1$ is positive even and $n_2=2k_2+1$ is positive odd. We change the stably complex
structure~\eqref{TPn1n2} to the following:
\begin{multline*}
  \mathcal T
  L(n_1,n_2)\oplus\underline{\R}^4\\
  \cong
  \underbrace{p^*\bar\eta\oplus p^*\eta\oplus\cdots\oplus
  p^*\bar\eta\oplus p^*\eta}_{2k_1}\oplus p^*\bar\eta\oplus
  (\bar\gamma\otimes p^*\eta)\oplus
  \underbrace{\bar\gamma\oplus\gamma\oplus\cdots\oplus\bar\gamma\oplus\gamma}_{2k_2}
  \oplus\gamma
\end{multline*}
and denote the resulting stably complex manifold by $\widetilde
L(n_1,n_2)$. Its cohomology ring is given by the same
formula~\eqref{cohomproj2}, but
\begin{equation}\label{chernL}
  c\bigl(\widetilde
  L(n_1,n_2)\bigr)=(1-u^2)^{k_1}(1+u)(1+v-u)(1-v^2)^{k_2}(1-v),
\end{equation}
so $\widetilde L(n_1,n_2)$ is an $SU$-manifold of dimension
$2(n_1+n_2)=4(k_1+k_2)+2$.

Viewing $L(n_1,n_2)$ as a quasitoric manifold with the
omniorientation coming from the complex structure, we see that
changing a line bundle $\rho_i$ in~\eqref{TMqtiso} to its
conjugate results in changing $\lambda_i$ to $-\lambda_i$
in~\eqref{Lambdaqtoric}. By applying this operation to the
corresponding columns of~\eqref{LambdaPn1n2} and then multiplying
from the left by an appropriate matrix from
$\mbox{\textit{GL}}\,(n,\Z)$, we obtain that $\widetilde
L(n_1,n_2)$ is the omnioriented quasitoric manifold over
$\varDelta^{n_1}\times\varDelta^{n_2}$ corresponding to the
matrix\\
\[
  \left(\begin{array}{ccccccccccc}
  \noalign{\vspace{-1\normalbaselineskip}}
  \multicolumn{5}{c}{\scriptstyle n_1=2k_1}\\[-5pt]
  \multicolumn{5}{c}{$\downbracefill$}\,\\
  1&0&0&\cdots&0&1                                           & & & & & \\
  0&1&0&\cdots&0&-1                                          & & & & & \\
  \vdots&\ddots&\ddots&\ddots&\vdots&\vdots    &&&\textrm{\huge 0} & & \\
  0&0&0&1&0&1                                                & & & & & \\
  0&0&0&0&1&-1                                               & & & & & \\
   & & & & &1       &                                    1&0&\cdots&0&1\\
   & & & & &        &                                    0&1&\cdots&0&-1\\
   & &\textrm{\huge 0} & & & &            \vdots&\ddots&\ddots&0&\vdots\\
   & & &                 & & &                                0&0&0&1&1\\[-5pt]
   & & & & &        & \multicolumn{4}{c}{$\upbracefill$}\\[-3pt]
   & & & & &        & \multicolumn{4}{c}{\scriptstyle n_2=2k_2+1}\\
  \noalign{\vspace{-1\normalbaselineskip}}
  \end{array}\right)
  \vspace{1\normalbaselineskip}
\]
The column sums of this matrix are~$1$ by inspection.
\end{construction}

\begin{construction}\label{constrN} The previous construction can be iterated by
considering projectivisations of sums of line bundles
over~$L(n_1,n_2)$. We shall need just one particular family of
this sort.

Given positive even $n_1=2k_1$ and odd $n_2=2k_2+1$, consider the
omnioriented quasitoric manifold $\widetilde N(n_1,n_2)$ over
$\varDelta^1\times\varDelta^{n_1}\times\varDelta^{n_2}$ with the
characteristic matrix\\
\[
  \left(\begin{array}{cccccccccccccc}
  1&1\\
   & & 1&0&0&\cdots&0&1                                           & & & & & &\\
   & & 0&1&0&\cdots&0&-1                                          & & & & & &\\
  \textrm{\huge 0}\!\!\!\! & & \vdots&\ddots&\ddots&\ddots&\vdots&\vdots    &&&&\textrm{\huge 0}  & &\\
   & & 0&0&0&1&0&1                                                & & & & & &\\
   & & 0&0&0&0&1&-1                                               & & & & & &\\[5pt]
   \noalign{\vspace{-1\normalbaselineskip}}
   & & \multicolumn{5}{c}{$\upbracefill$}\,\\[-3pt]
   &-1& \multicolumn{5}{c}{\scriptstyle n_1=2k_1} &0   &      1&0&0&\cdots&0&1\\
   &1& & & & & &0       &                                    0&1&0&\cdots&0&-1\\
   &0& & & & & &1       &                                    0&0&1&\cdots&0&1\\
   & & & &\textrm{\huge 0} & & & &  \vdots&\vdots&\vdots&\ddots&\vdots&\vdots\\
   & & & & &                 & & &                                0&0&0&0&1&1\\[-5pt]
   & & & & & & &        & \multicolumn{5}{c}{$\upbracefill$}\\[-3pt]
   & & & & & & &        & \multicolumn{5}{c}{\scriptstyle n_2=2k_2+1}\\
  \noalign{\vspace{-1\normalbaselineskip}}
  \end{array}\right)
  \vspace{1\normalbaselineskip}
\]
The column sums are $1$ by inspection, so $\widetilde N(n_1,n_2)$
is a quasitoric $SU$-manifold of dimension
$2(1+n_1+n_2)=4(k_1+k_2)+4$.

It can be seen that $\widetilde N(n_1,n_2)$ is a projectivisation
of a sum of $n_2+1$ line bundles over $\C P^1\times\C P^{n_1}$
with an amended stably complex structure.

The cohomology ring given by Theorem~\ref{cohomqtoric} is
\begin{equation}\label{cohomN}
  H^*(\widetilde N(n_1,n_2))\cong\Z[u,v,w]\big/
  \bigl(u^2,v^{n_1+1},(w-u)^2(v+w)w^{n_2-2}\bigr)
\end{equation}
with $uv^{n_1}w^{n_2}\langle\widetilde N(n_1,n_2)\rangle=1$. The
total Chern class is
\begin{equation}\label{chernN}
  c(\widetilde N(n_1,n_2))=(1-v^2)^{k_1}(1+v)
  (1-(w-u)^2)(1-v-w)(1-w^2)^{k_2-1}(1+w).
\end{equation}
\end{construction}

\section{Quasitoric generators for the $SU$-bordism ring}\label{qtSUgensect} 
As shown in~\cite{lu-pa16}, the
elements $y_i\in\OSU_{2i}$ described in Theorem~\ref{SUstructure} can
be represented by quasitoric $SU$-manifolds when $i\ge5$. We outline the proof here, emphasising some interesting divisibility conditions for binomial coefficients. These divisibility properties arise from analysing the characteristic numbers of the quasitoric $SU$-manifolds $\widetilde L(n_1,n_2)$ and $\widetilde N(n_1,n_2)$ introduced in the previous section.

\begin{lemma}\label{snL}
For $n_1=2k_1>0$ and $n_2=2k_2+1>0$, we have
\[
  s_{n_1+n_2}\bigl[\widetilde L(n_1,n_2)\bigr]=
  -\bin{n_1+n_2}1+\bin{n_1+n_2}2-\cdots-\bin{n_1+n_2}{n_1-1}+\bin{n_1+n_2}{n_1}.
\]
\end{lemma}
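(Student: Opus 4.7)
The plan is to read off the Chern roots from the factorisation~\eqref{chernL}, compute $s_{n_1+n_2}$ as the corresponding power sum, and then reduce everything modulo the relations in $H^{*}(L(n_1,n_2))$ to obtain a single monomial $u^{n_1}v^{n_2}$ in each surviving term.

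First, I would rewrite \eqref{chernL} as
\[
c(\widetilde L(n_1,n_2))=(1+u)^{k_1}(1-u)^{k_1}(1+u)(1+(v-u))(1+v)^{k_2}(1-v)^{k_2}(1-v),
\]
so that the formal Chern roots are $u$ with multiplicity $k_1+1$, $-u$ with multiplicity $k_1$, $v-u$ once, $v$ with multiplicity $k_2$, and $-v$ with multiplicity $k_2+1$. Setting $n=n_1+n_2=2(k_1+k_2)+1$, the defining formula $s_n(M)=\sum x_i^n$ evaluated against $[M]$ gives
\[
s_n[\widetilde L(n_1,n_2)]=\bigl\langle(k_1{+}1)u^n+k_1(-u)^n+(v-u)^n+k_2v^n+(k_2{+}1)(-v)^n,\,[L(n_1,n_2)]\bigr\rangle.
\]
Since $n$ is odd, the coefficients of $u^n$ and $v^n$ telescope: the first two terms collapse to $u^n$ and the last two to $-v^n$. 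Thus
\[
s_n[\widetilde L(n_1,n_2)]=\bigl\langle u^n+(v-u)^n-v^n,\,[L(n_1,n_2)]\bigr\rangle.
\]

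Next I would expand $(v-u)^n=\sum_{i=0}^{n}(-1)^i\binom{n}{i}u^iv^{n-i}$. The endpoint terms $i=0$ and $i=n$ (using $(-1)^n=-1$) cancel exactly with the $-v^n$ and $u^n$ already present, leaving
\[
s_n[\widetilde L(n_1,n_2)]=\sum_{i=1}^{n-1}(-1)^i\binom{n}{i}\bigl\langle u^iv^{n-i},\,[L(n_1,n_2)]\bigr\rangle.
\]
Now I invoke the ring~\eqref{cohomproj2}: the relation $u^{n_1+1}=0$ kills every term with $i>n_1$, while the relation $v^{n_2+1}=uv^{n_2}$ iterates to $v^{n_2+j}=u^jv^{n_2}$ for $j\ge0$. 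For $1\le i\le n_1$ one has $n-i=n_2+(n_1-i)$ with $n_1-i\ge0$, hence
\[
u^iv^{n-i}=u^iv^{n_2+(n_1-i)}=u^i\cdot u^{n_1-i}v^{n_2}=u^{n_1}v^{n_2},
\]
and $\langle u^{n_1}v^{n_2},[L(n_1,n_2)]\rangle=1$.

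Putting these together, only the indices $1\le i\le n_1$ contribute, each with value $(-1)^i\binom{n}{i}$. Because $n_1=2k_1$ is even, the signs alternate from $-\binom{n}{1}$ and end with $+\binom{n}{n_1}$, yielding exactly the claimed formula. There is no serious obstacle here; the only point requiring care is tracking the cancellation in $u^n+(v-u)^n-v^n$, which hinges on the parity of $n=n_1+n_2$, itself forced by $n_1$ even and $n_2$ odd.
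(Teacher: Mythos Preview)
Your proof is correct and follows essentially the same approach as the paper: compute the power sum of the Chern roots read off from~\eqref{chernL}, use the parity of $n_1+n_2$ to collapse the $u$- and $v$-contributions, and then reduce the expansion of $(v-u)^{n_1+n_2}$ using the relations in~\eqref{cohomproj2}. The only cosmetic difference is that the paper drops the $u^{n_1+n_2}$ contributions immediately (since $u^{n_1+1}=0$), whereas you carry them and cancel them against the $i=n$ term of the binomial expansion; both routes lead to the same alternating sum.
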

\begin{proof}
Using~\eqref{chernL} and~\eqref{cohomproj2} we calculate
\begin{align*}
  s_{n_1+n_2}\bigl(\widetilde L(n_1,n_2)\bigr)
  &=(v-u)^{n_1+n_2}+(k_2+1)(-1)^{n_1+n_2}v^{n_1+n_2}+k_2v^{n_1+n_2}\\
  &=(v-u)^{n_1+n_2}-v^{n_1+n_2}\\
  &=\Bigl(-\bin{n_1+n_2}1+\bin{n_1+n_2}2-\cdots
  -\bin{n_1+n_2}{n_1-1}+\bin{n_1+n_2}{n_1}\Bigr)u^{n_1}v^{n_2},
\end{align*}
and the result follows by evaluating at the fundamental class of $\widetilde
L(n_1,n_2)$.
\end{proof}

Note that $s_3(\widetilde L(2,1))=0$ in accordance with
Theorem~\ref{lowdimqt}. On the other hand, $s_{2+n_2}(\widetilde
L(2,n_2))\ne0$ for $n_2>1$, providing an example of a
non-bounding quasitoric $SU$-manifold in each dimension $4k+2$
with $k>1$.

\begin{lemma}\label{yL}
For $k>1$, there is a linear combination $y_{2k+1}$ of
$SU$-bordism classes $[\widetilde L(n_1,n_2)]$ with $n_1+n_2=2k+1$
such that $s_{2k+1}(y_{2k+1})=m_{2k+1}m_{2k}$.
\end{lemma}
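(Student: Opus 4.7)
First, I would close-form the sum in Lemma~\ref{snL}. With $N=2k+1$ and $n_1$ even, the classical identity
\[
\sum_{j=0}^{n_1}(-1)^j\binom{N}{j} = (-1)^{n_1}\binom{N-1}{n_1}
\]
collapses Lemma~\ref{snL} to the closed form
\[
s_{2k+1}\bigl[\widetilde L(n_1,2k+1-n_1)\bigr] = \binom{2k}{n_1}-1,\qquad n_1\in\{2,4,\dots,2k\}.
\]
Constructing $y_{2k+1}$ as an integer linear combination with $s$-number equal to $m_{2k+1}m_{2k}$ is therefore equivalent to the purely arithmetic assertion that the ideal $I\subset\Z$ generated by these integers equals $(m_{2k+1}m_{2k})$.

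One inclusion is immediate: by Theorem~\ref{SUstructure}, the $s$-number of any class in $\OSU_{4k+2}$ is a multiple of $m_{2k+1}m_{2k}$, so $m_{2k+1}m_{2k}\mid \binom{2k}{n_1}-1$ for every admissible $n_1$. (This can also be reproved directly via Lucas's theorem: when $2k+1=p^\ell$ for an odd prime $p$, all base-$p$ digits of $2k$ equal $p-1$, so $\binom{2k}{n_1}\equiv(-1)^{n_1}\equiv 1\pmod p$ for even $n_1$; and when $2k+2=2^j$, Lucas mod~$2$ makes $\binom{2^j-2}{n_1}$ odd for every even~$n_1$.)

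The substance of the proof is the reverse inclusion. My starting point would be the useful factorisation
\[
\binom{2k}{2}-1 = (2k+1)(k-1),
\]
together with the fact that $\gcd(2k+1,k-1)$ divides~$3$. This already pins the odd part of the gcd down to the contribution of $m_{2k}$ except in two cases: (i)~$p=3$ with $3\mid 2k+1$, and (ii)~$2k+1=p^{\ell}$ with $\ell\ge 2$, where an extra factor of $p$ must be ruled out. In each such case I would bring in a second value $\binom{2k}{n_1}-1$ (typically $n_1=4$ or $n_1=6$) and analyse it modulo $p^2$ via lifts of Lucas's theorem to prime powers (the Jacobsthal--Kazandzidis--Granville congruences), showing that the gcd loses the extraneous factor. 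The prime~$2$ is treated in parallel using the binary expansion of $2k$ and Kummer's formula for $v_2\bigl(\binom{2k}{n_1}\bigr)$. These prime-local congruence calculations are the main obstacle -- they are precisely the ``interesting results on binomial coefficients modulo a prime'' advertised in the introduction. Once they are assembled, Bezout over $\Z$ produces explicit integer coefficients realising $y_{2k+1}$ with $s_{2k+1}(y_{2k+1}) = m_{2k+1}m_{2k}$.
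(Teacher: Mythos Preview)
Your closed-form computation is correct and is a genuine tidying of Lemma~\ref{snL}: with $N=2k+1$ and $n_1$ even, the alternating partial sum does collapse to $\binom{2k}{n_1}-1$. Your set of $s$-numbers $\{\binom{2k}{2i}-1:1\le i\le k\}$ and the paper's set $\{\binom{2k+1}{2i}-\binom{2k+1}{2i-1}:1\le i\le k\}$ generate the same subgroup of~$\Z$, since Pascal gives $\binom{2k+1}{2i}-\binom{2k+1}{2i-1}=\binom{2k}{2i}-\binom{2k}{2i-2}$, and these are the successive differences of your numbers. So your reformulation is equivalent to the paper's Lemma~\ref{gcddif}, and arguably cleaner to state. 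The paper does not prove the gcd identity here either; it just takes the differences $[\widetilde L(n_1,n_2)]-[\widetilde L(n_1-2,n_2+2)]$ and invokes Lemma~\ref{gcddif}, whose proof is deferred to~\cite{lu-pa16}.

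Where your outline goes wrong is the case analysis for the reverse divisibility. The factorisation $\binom{2k}{2}-1=(2k+1)(k-1)$ together with $\gcd(2k+1,k-1)\mid 3$ does \emph{not} pin the odd part of the gcd down to $m_{2k}$; it only gives an upper bound, and that upper bound can contain primes from the factor $k-1$ that have nothing to do with $m_{2k}$. Concretely, take $k=8$: then $2k+1=17$ is prime, $k-1=7$, and $\binom{16}{2}-1=119=7\cdot 17$, while $m_{2k+1}m_{2k}=17$. Neither of your enumerated cases applies ($3\nmid 17$, and $17=p^1$ so $\ell=1$), yet you must still kill the stray factor~$7$ using another value such as $\binom{16}{4}-1=1819\equiv -1\pmod 7$. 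More generally you need a separate argument for every odd prime dividing $k-1$ but not $2k+1$, and also for every odd prime dividing $2k+1$ when $2k+1$ is not a pure prime power. Your strategy of bringing in further values $\binom{2k}{n_1}-1$ and analysing them via Lucas and its refinements is the right one, but the list of primes requiring treatment is substantially longer than the two you name.
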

\begin{proof}
By the previous lemma,
\[
  s_{n_1+n_2}\bigl[\widetilde L(n_1,n_2)-\widetilde L(n_1-2,n_2+2)\bigr]
  =\bin{n_1+n_2}{n_1}-\bin{n_1+n_2}{n_1-1}.
\]
The result follows from the next lemma.
\end{proof}

\begin{lemma}[{\cite[Lemma~4.14]{lu-pa16}}]\label{gcddif}
For any integer $k>1$, we have
\[
  \gcd\bigl\{\bin{2k+1}{2i}-\bin{2k+1}{2i-1},\;0<i\le k\bigr\}
  =m_{2k+1}m_{2k}.
\]
\end{lemma}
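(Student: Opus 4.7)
The plan is to translate the gcd into a cleaner residue condition and analyze it one prime at a time. Pascal's identity $\binom{2k+1}{j} = \binom{2k}{j} + \binom{2k}{j-1}$ converts each difference into a telescoping one,
\[
d_i := \binom{2k+1}{2i} - \binom{2k+1}{2i-1} = \binom{2k}{2i} - \binom{2k}{2i-2}.
\]
Together with $\binom{2k}{0} = 1$, this shows that $g := \gcd\{d_i : 1 \le i \le k\}$ is the largest positive integer satisfying $\binom{2k}{2i} \equiv 1 \pmod g$ for every $0 \le i \le k$. In particular, for a prime $p$ one has $p \mid g$ if and only if $\binom{2k}{2i} \equiv 1 \pmod p$ for all such $i$. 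Every subsequent step will be phrased in terms of this criterion.

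The divisibility $m_{2k+1} m_{2k} \mid g$ is the easier direction. Since $m_{2k+1} \in \{1,2\}$ and $m_{2k}$ is either $1$ or an odd prime, the two factors are coprime and can be handled separately. If $m_{2k} = p$ is odd, then $2k+1 = p^\ell$, and the Frobenius congruence $(1+x)^{p^\ell} \equiv 1 + x^{p^\ell} \pmod p$ forces every intermediate $\binom{2k+1}{j}$ to vanish mod $p$, so $p \mid d_i$. If $m_{2k+1} = 2$, then $k+1$ is a power of $2$, so $2k = 2^\ell - 2$ has binary expansion $\underbrace{11\cdots1}_{\ell-1}0$; by Lucas's theorem every even $2i \in [0,2k]$ has binary digits bitwise $\le$ those of $2k$, so $\binom{2k}{2i}$ is odd and $2 \mid d_i$.

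For the reverse divisibility $g \mid m_{2k+1} m_{2k}$ I would argue prime by prime. The case $p = 2$ is immediate: if $k+1$ is not a power of $2$ then some $\binom{2k}{2i}$ is even (by Lucas), and if $k+1 = 2^\ell$ with $k > 1$ a quick computation yields $\binom{2k}{2} = (2^\ell-1)(2^{\ell+1}-3) \equiv 3 \pmod 4$, so $4 \nmid g$. For odd $p$, writing $2k = \sum_{r=0}^M a_r p^r$ in base $p$, the aim is to show that $p \mid g$ forces each $a_r = p-1$, so that $2k+1 = p^{M+1}$ and $p \mid m_{2k}$. The base digit is handled via $\binom{2k}{2} \equiv 1 \pmod p$, which gives $(a_0 - 2)(a_0 + 1) \equiv 0 \pmod p$, hence $a_0 \in \{2, p-1\}$; the option $a_0 = 2$ is then excluded by $\binom{2k}{4} \equiv 1 \pmod p$ for $p \ge 5$ (Lucas gives $\binom{2}{4} = 0$), while for $p = 3$ the set $\{2, p-1\}$ already collapses to $\{2\}$. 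For each subsequent digit $a_s$ with $s < M$, the same pair of tests $2i = 2p^s$ and $2i = 4p^s$ works, since $4p^s < p^{s+1} \le 2k$ for $p \ge 5$, and the case $p = 3$ is automatic.

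The main obstacle will be the top digit $a_M$, where the witness $2i = 4p^M$ can exceed $2k$. First note that $a_M \ne 1$: otherwise the already-established lower digits would give $2k = 2p^M - 1$, an odd number, a contradiction. Hence $a_M \ge 2$, and the test $2i = 2p^M \le 2k$ still pins $a_M \in \{2, p-1\}$. To refute $a_M = 2$, where $2k = 3p^M - 1$ and $4p^M > 2k$, I would use the witness $2i = p^M + 1$: this is even for odd $p$ and lies in $[0, 2k]$, and Lucas gives $\binom{2k}{p^M + 1} \equiv \binom{2}{1}\binom{p-1}{1} \equiv -2 \pmod p$, which is $\not\equiv 1$ for $p \ge 5$. (For $p = 3$ the case $a_M = 2$ coincides with $a_M = p-1$ and needs no refutation.) This completes the induction, forcing $a_r = p-1$ for every $r$ and yielding the claimed equality $g = m_{2k+1} m_{2k}$.
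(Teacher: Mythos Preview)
The paper does not actually prove this lemma; it is quoted from~\cite[Lemma~4.14]{lu-pa16} without argument, with an additional remark that it also follows from work of Buchstaber--Ustinov. Your direct elementary approach via Lucas's theorem is therefore of independent interest, and it is largely sound: the telescoping reduction to $\binom{2k}{2i}\equiv 1\pmod g$, the forward divisibility $m_{2k+1}m_{2k}\mid g$, the full treatment of the prime $2$, and the digit-by-digit argument forcing $2k+1=p^{M+1}$ whenever an odd prime $p$ divides $g$ are all correct. The choice of the witness $2i=p^M+1$ to kill the top-digit case $a_M=2$ is particularly clean.

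There is, however, one genuine gap. For odd primes your argument establishes only the implication $p\mid g \Rightarrow p\mid m_{2k}$, which is the contrapositive of $p\nmid m_{2k}\Rightarrow v_p(g)=0$. To conclude $g\mid m_{2k+1}m_{2k}$ you also need the bound $v_p(g)\le 1$ in the remaining case $m_{2k}=p$, i.e.\ when $2k+1=p^\ell$; nothing in your analysis modulo $p$ can give this. The fix is short: take $2i=p^{\ell-1}+1$, so that $2i-1=p^{\ell-1}$. Using $v_p\binom{p^\ell}{j}=\ell-v_p(j)$ one gets $v_p\binom{p^\ell}{p^{\ell-1}}=1$ while $v_p\binom{p^\ell}{p^{\ell-1}+1}=\ell\ge 2$ (for $\ell\ge2$), so $d_i\equiv -\binom{p^\ell}{p^{\ell-1}}\not\equiv 0\pmod{p^2}$. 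For $\ell=1$ one has $d_1=\binom{p}{2}-\binom{p}{1}=p\cdot\frac{p-3}{2}$, which has $p$-valuation exactly $1$ for $p\ge5$ (the case $p=3$, $\ell=1$ gives $k=1$ and is excluded). Adding this paragraph completes your proof.
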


Lemma~\ref{gcddif} also follows from the results of Buchstaber and Ustinov on the coefficient rings of universal formal group laws~\cite[\S9]{bu-us15}.

Now we turn our attention to the manifolds $\widetilde N(n_1,n_2)$
from Construction~\ref{constrN}.

\begin{lemma}\label{snN}
For $n_1=2k_1>0$ and $n_2=2k_2+1>0$, set $n=n_1+n_2+1$, so that
$\dim\widetilde N(n_1,n_2)=2n=4(k_1+k_2+1)$. Then
\[
  s_n\bigl[\widetilde N(n_1,n_2)\bigr]=2\bigl(-\bin{n}1+\bin{n}2-\cdots
  -\bin{n}{n_1-1}+\bin{n}{n_1}-n_1\bigr).
\]
\end{lemma}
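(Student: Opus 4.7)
The plan is to compute $s_n[\widetilde N(n_1,n_2)]$ by reading off the Chern roots from the factorisation \eqref{chernN}, evaluating the power sum $s_n = \sum x_i^n$ inside the ring \eqref{cohomN}, and finally pairing against the fundamental class normalised by $uv^{n_1}w^{n_2}[\widetilde N(n_1,n_2)] = 1$. Factor by factor, the Chern roots are: $v$ with multiplicity $k_1+1$ and $-v$ with multiplicity $k_1$ from $(1-v^2)^{k_1}(1+v)$; $\pm(w-u)$ from $1-(w-u)^2$; the single root $-(v+w)$ from $(1-v-w)$; and $w$ with multiplicity $k_2$ together with $-w$ with multiplicity $k_2-1$ from $(1-w^2)^{k_2-1}(1+w)$. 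Since $n=4(k_1+k_2+1)$ is even, opposite Chern roots contribute equally and
\[
  s_n = (2k_1+1)v^n + 2(w-u)^n + (v+w)^n + (2k_2-1)w^n.
\]
Using $u^2=0$ and $v^{n_1+1}=0$, this collapses to
\[
  s_n = (2k_2+1)w^n - 2nu w^{n-1} + \sum_{j=0}^{n_1}\binom{n}{j}v^j w^{n-j},
\]
because $v^n = 0$ for $n>n_1$ and $(w-u)^n = w^n - nuw^{n-1}$.

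The next step is to evaluate each monomial $v^a w^{n-a}$ (denoted $R_a$) and $uv^a w^{n-1-a}$ (denoted $S_a$) on the fundamental class, reducing modulo the codimension-three relation $(w-u)^2(v+w)w^{n_2-2}=0$. Expanding this relation mod $u^2$ produces the identity
\[
  w^{n_2+1} + vw^{n_2} = 2uw^{n_2} + 2uvw^{n_2-1},
\]
and multiplying by $u$ yields the cleaner consequence $uw^{n_2+1} = -uvw^{n_2}$. Combined with $S_{n_1}=1$ this immediately gives $S_a = -S_{a+1}$, so $S_a = (-1)^{n_1-a}$. Applied to $v^a w^{n-a}$ for $a<n_1$, the full identity yields $R_a = -R_{a+1} + 2(S_a+S_{a+1})$, and here the crucial cancellation $S_a + S_{a+1}=0$ forces $R_a = -R_{a+1}$; direct substitution using $v^{n_1+1}=0$ gives $R_{n_1}=2$, so $R_a = 2(-1)^{n_1-a}$.

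It only remains to assemble
\[
  s_n[\widetilde N(n_1,n_2)] = (2k_2+1)R_0 - 2n S_0 + \sum_{j=0}^{n_1}\binom{n}{j}R_j,
\]
plug in the values of $R_j$ and $S_0$ (using $n_1 = 2k_1$ even, so $(-1)^{n_1-j}=(-1)^j$), and use $n = 2k_1+2k_2+2$ to get $2(2k_2+1-n) = -2(n_1+1)$; the terms combine to give exactly the stated formula $2\bigl(-\binom{n}{1}+\binom{n}{2}-\cdots-\binom{n}{n_1-1}+\binom{n}{n_1}-n_1\bigr)$. The main obstacle is the reduction step in the second paragraph: the relation $(w-u)^2(v+w)w^{n_2-2}=0$ is not homogeneous in $w$-degree, and the successful simplification relies on the parity conspiracy $n_1$ even, $n_2$ odd, $n$ even that forces the $S$-terms in the $R$-recursion to cancel in pairs.
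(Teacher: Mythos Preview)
Your proof is correct and follows essentially the same approach as the paper: extract the Chern roots from \eqref{chernN}, reduce $s_n$ in the ring \eqref{cohomN} via the relation $w^{n_2+1}=-vw^{n_2}+2uw^{n_2}+2uvw^{n_2-1}$, establish the identities $uv^jw^{n-1-j}=(-1)^juv^{n_1}w^{n_2}$ and $v^jw^{n-j}=2(-1)^juv^{n_1}w^{n_2}$ by descending induction, and combine. Your notation $R_a,S_a$ and explicit recursion $R_a=-R_{a+1}+2(S_a+S_{a+1})$ is a mild repackaging of the paper's argument, but the substance is identical. One small typo: you write $n=4(k_1+k_2+1)$, but in fact $n=n_1+n_2+1=2(k_1+k_2+1)$ (the statement's $4(k_1+k_2+1)$ is $2n$, the real dimension); this does not affect anything since you only use that $n$ is even.
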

\begin{proof}
Using~\eqref{chernN} and \eqref{cohomN} we calculate
\begin{multline}\label{snN1}
  s_{n}\bigl(\widetilde N(n_1,n_2)\bigr)
  =2(w-u)^n+(v+w)^n+(2k_2-1)w^n\\
  =2w^n-2nuw^{n-1}+w^n+\bin n1vw^{n-1}+\cdots+\bin n{2k_1}v^{2k_1}w^{2k_2+2}+(2k_2-1)w^n\\
  =-2nuw^{n-1}+(n-n_1)w^n+\bin n1vw^{n-1}+\cdots+\bin
  n{n_1}v^{n_1}w^{n-n_1}.
\end{multline}
Now we have to express each monomial above via $uv^{n_1}w^{n_2}$
using the identities in~\eqref{cohomN}, namely
\begin{equation}\label{3ident}
  u^2=0,\quad v^{n_1+1}=0,\quad
  w^{n_2+1}=2uw^{n_2}-vw^{n_2}+2uvw^{n_2-1}.
\end{equation}
We have
\begin{multline}\label{uwn-1}
  uw^{n-1}=uw^{n_1-1}w^{n_2+1}=uw^{n_1-1}(2uw^{n_2}-vw^{n_2}+2uvw^{n_2-1})
  \\=-uvw^{n-2}=\cdots=(-1)^juv^jw^{n-j-1}=\cdots=uv^{n_1}w^{n_2}.
\end{multline}
Also, we show that
\begin{equation}\label{vjwn-j}
  v^jw^{n-j}=(-1)^j2uv^{n_1}w^{n_2},\quad 0\le j\le n_1,
\end{equation}
by verifying the identity successively for $j=n_1,n_1-1,\ldots,0$.
Indeed, $v^{n_1}w^{n-n_1}=v^{n_1}w^{n_2+1}=2uv^{n_1}w^{n_2}$
by~\eqref{3ident}. Now, we have
\begin{multline*}
  v^{j-1}w^{n-j+1}=v^{j-1}w^{n_1+1-j}w^{n_2+1}
  =v^{j-1}w^{n_1+1-j}(2uw^{n_2}-vw^{n_2}+2uvw^{n_2-1})\\
  =2uv^{j-1}w^{n-j}-v^jw^{n-j}+2uv^jw^{n-1-j}=-v^jw^{n-j},
\end{multline*}
where the last identity holds because of~\eqref{uwn-1}. The
identity~\eqref{vjwn-j} is therefore verified completely.
Plugging~\eqref{uwn-1} and~\eqref{vjwn-j} into~\eqref{snN1} we
obtain
\[
  s_{n}\bigl(\widetilde N(n_1,n_2)\bigr)=\bigl(-2n+2(n-n_1)
  -2\bin{n}1+2\bin{n}2-\cdots
  -2\bin{n}{n_1-1}+2\bin{n}{n_1}\bigr)uv^{n_1}w^{n_2}.
\]
The result follows by evaluating at $\langle\widetilde
N(n_1,n_2)\rangle$.
\end{proof}

Note that $s_4(\widetilde N(2,1))=0$ in accordance with
Theorem~\ref{lowdimqt}. On the other hand, $s_n(\widetilde
N(2,n_2))=n^2-3n-4>0$ for $n>4$, providing an example of a
non-bounding quasitoric $SU$-manifold in each dimension $4k$
with $k>2$. This includes a 12-dimensional quasitoric
$SU$-manifold $\widetilde N(2,3)$, which was missing
in~\cite{lu-wa16}.

\begin{lemma}\label{yN}
For $k>2$, there is a linear combination $y_{2k}$ of $SU$-bordism
classes $[\widetilde N(n_1,n_2)]$ with $n_1+n_2+1=2k$ such that
$s_{2k}(y_{2k})=2m_{2k}m_{2k-1}$.
\end{lemma}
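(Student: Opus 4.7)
The argument proceeds in close analogy with the proof of Lemma~\ref{yL}, with the formula of Lemma~\ref{snL} replaced by that of Lemma~\ref{snN}. Writing $n=2k$ and $A_j=\sum_{i=1}^{j}(-1)^i\binom{n}{i}$, Lemma~\ref{snN} gives
\[
  s_n[\widetilde N(n_1,n_2)]=2(A_{n_1}-n_1)
\]
for even $n_1$ in the range $2\le n_1\le n-2$. Taking successive differences,
\[
  s_n[\widetilde N(n_1,n_2)-\widetilde N(n_1-2,n_2+2)]
  =2\bigl(\textstyle\binom{n}{n_1}-\binom{n}{n_1-1}-2\bigr),
\]
using $A_{n_1}-A_{n_1-2}=\binom{n}{n_1}-\binom{n}{n_1-1}$ for even~$n_1$; note that the base case $s_n[\widetilde N(2,n-3)]=2\bigl(\binom{n}{2}-\binom{n}{1}-2\bigr)$ fits the same pattern, and that each $s_n[\widetilde N(2i,n-2i-1)]$ telescopes into a sum of such differences.

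Setting $B_i:=\binom{n}{2i}-\binom{n}{2i-1}$, the $s_n$-values of all integer combinations of the classes $[\widetilde N(n_1,n_2)]$ thus span the subgroup $2d\,\Z\subset\Z$, where
\[
  d=\gcd\{B_i-2\colon 1\le i\le k-1\}.
\]
The conclusion of the lemma is therefore equivalent to the identity $d=m_n m_{n-1}$, which is the even-$n$ counterpart of Lemma~\ref{gcddif}. To establish it, I would argue prime by prime. For an odd prime~$p$ with $n+1=p^s$, Lucas's theorem gives $\binom{n}{j}\equiv(-1)^j\pmod p$ for all $0\le j\le n$, whence $B_i\equiv 2\pmod p$ and so $p\mid B_i-2$; conversely, if $n+1$ is not a power of~$p$, one must exhibit some index~$i$ with $p\nmid B_i-2$. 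For $p=2$, if $n=2^s$ then all intermediate binomials $\binom{n}{j}$ are even by Kummer's theorem and hence $2\mid B_i$ for every~$i$; otherwise there is an index for which $\binom{n}{2i}$ and $\binom{n}{2i-1}$ have opposite parity, so $B_i$ is odd and $2\nmid d$.

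The main technical obstacle lies in the converse halves of the prime-by-prime analysis, namely in ruling out extraneous divisors by locating a suitable~$i$ for which $p\nmid B_i-2$. This is a delicate exercise in Kummer's and Lucas's theorems that naturally splits into cases according to the base-$p$ expansion of~$n$, in the same spirit as the proof of Lemma~\ref{gcddif} itself. Once the identity $d=m_n m_{n-1}$ is in hand, B\'ezout's identity produces an explicit integer linear combination $y_{2k}$ of the classes $[\widetilde N(n_1,n_2)]$ satisfying $s_{2k}(y_{2k})=2m_{2k}m_{2k-1}$, as required. The hypothesis $k>2$ is needed because in dimension~$8$ the only available class is $\widetilde N(2,1)$, which bords by Theorem~\ref{lowdimqt}.
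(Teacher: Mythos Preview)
Your approach is essentially the same as the paper's. The paper's proof of this lemma is one line: it appeals to Lemma~\ref{snN} together with two number-theoretic lemmata (\ref{Nmod2} and~\ref{Nmodp}, cited from~\cite{lu-pa16}) which compute the $2$-part and the odd-$p$-part of
\[
  \gcd\Bigl\{a_i=-\tbinom{2k}{1}+\tbinom{2k}{2}-\cdots-\tbinom{2k}{2i-1}+\tbinom{2k}{2i}-2i:\;0<i<k\Bigr\}.
\]
Your quantities $B_i-2$ are exactly the successive differences $a_i-a_{i-1}$ (with $a_1=B_1-2$), so the subgroup they generate coincides with the one generated by the~$a_i$, and your identity $d=m_{2k}m_{2k-1}$ is literally equivalent to Lemmata~\ref{Nmod2} and~\ref{Nmodp} combined. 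In that sense your reduction is correct and matches the paper's.

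Where your write-up remains incomplete is precisely where you say so yourself: you establish the divisibility direction (that $m_{2k}m_{2k-1}\mid d$) via Lucas/Kummer, but you do not carry out the converse (ruling out higher powers of $2$ when $2k=2^s$, ruling out $p$ when neither $2k$ nor $2k+1$ is a power of~$p$, and ruling out $p^2$ when $2k+1=p^s$). The paper does not prove these in the body either; it isolates them as Lemmata~\ref{Nmod2} and~\ref{Nmodp} with references. If you want a self-contained argument, those are the statements you would need to finish.
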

\begin{proof}
The result follows from Lemma~\ref{snN} and
Lemmata~\ref{Nmod2},~\ref{Nmodp} below.
\end{proof}

\begin{lemma}[{\cite[Lemma~4.17]{lu-pa16}}]\label{Nmod2}
For $k>2$, the largest power of $2$ which divides each number
\[
  a_i=-\bin{2k}1+\bin{2k}2-\cdots
  -\bin{2k}{2i-1}+\bin{2k}{2i}-2i,\quad 0<i< k,
\]
is $2$ if $2k=2^s$ and is $1$ otherwise.
\end{lemma}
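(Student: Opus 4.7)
I would begin by collapsing the alternating sum defining $a_i$ into a single binomial coefficient. The partial-sum identity $\sum_{j=0}^{n}(-1)^j\binom{m}{j}=(-1)^n\binom{m-1}{n}$, an immediate consequence of Pascal's rule, gives $\sum_{j=1}^{2i}(-1)^j\binom{2k}{j}=\binom{2k-1}{2i}-1$ and therefore
\[
  a_i \;=\; \binom{2k-1}{2i}-1-2i.
\]
This reduces the lemma to a $2$-adic statement about a single row of Pascal's triangle.

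The second step is to determine the parity of $a_i$ via Lucas's theorem applied to $\binom{2k-1}{2i}$ modulo~$2$. When $2k=2^s$, the number $2k-1$ has binary expansion $\underbrace{1\cdots1}_{s}$, so $\binom{2k-1}{2i}$ is odd for every admissible~$i$; together with the correction $-1-2i$ this makes every $a_i$ even. When $2k$ is not a power of~$2$, the binary expansion of $2k-1$ has a zero bit at some position $j\ge 1$ (position~$0$ is occupied by~$1$ since $2k-1$ is odd). Choosing $2i=2^j$ lands in the range $0<i<k$ and makes $\binom{2k-1}{2i}$ even by Lucas, so $a_i$ is odd and the $\gcd$ is odd, as claimed.

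The third step handles the sharpness in the case $2k=2^s$: it remains to produce one $a_i$ with $2$-adic valuation exactly~$1$. The natural candidate is $i=1$. Expanding $\binom{2^s-1}{2}=(2^s-1)(2^{s-1}-1)$ modulo~$4$ and using $s\ge 3$ (which is forced by the hypothesis $k>2$ combined with $2k=2^s$) shows this binomial is $\equiv 1\pmod 4$, whence $a_1\equiv 1-1-2\equiv 2\pmod 4$. This establishes $v_2\bigl(\gcd_i a_i\bigr)=1$ in the power-of-$2$ case.

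I do not expect a genuine obstacle; the argument is essentially a careful bookkeeping of binary digits. The only care required is to ensure that the selected zero bit of $2k-1$ produces an index in the admissible range $0<i<k$, and to remember that the degenerate case $k=2$ (where $a_1=\binom{3}{2}-3=0$) is excluded by hypothesis, which is precisely what ensures $s\ge 3$ in the computation $\binom{2^s-1}{2}\equiv 1\pmod 4$.
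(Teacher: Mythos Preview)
Your argument is correct and complete. The identity $a_i=\binom{2k-1}{2i}-1-2i$ is the key simplification, and the Lucas/Kummer analysis of the binary digits of $2k-1$ handles both cases cleanly; the check that the chosen zero-bit position $j$ satisfies $1\le j<\lfloor\log_2(2k-1)\rfloor$ (hence $0<2^{j-1}<k$) and the computation $\binom{2^s-1}{2}=(2^s-1)(2^{s-1}-1)\equiv 1\pmod 4$ for $s\ge 3$ are all in order.

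The paper itself does not give a proof of this lemma: it merely records the statement with a citation to~\cite{lu-pa16}. So your self-contained argument is a genuine addition rather than a paraphrase. (For what it is worth, the argument in~\cite{lu-pa16} proceeds along the same lines---reduce to a single binomial and read off the $2$-adic valuation via Kummer's theorem---so your approach is essentially the intended one.)
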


\begin{lemma}[{\cite[Lemma~4.18]{lu-pa16}}]\label{Nmodp}
For $k>2$, the largest power of odd prime $p$ which divides each
\[
  a_i=-\bin{2k}1+\bin{2k}2+\cdots
  -\bin{2k}{2i-1}+\bin{2k}{2i}-2i,\quad 0<i< k,
\]
is $p$ if $2k+1=p^s$ and is $1$ otherwise.
\end{lemma}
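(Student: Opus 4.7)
The starting point is to simplify $a_i$. The telescoping identity $\sum_{l=0}^{2i}(-1)^l\binom{2k}{l}=\binom{2k-1}{2i}$ yields the closed form $a_i=\binom{2k-1}{2i}-(2i+1)$, and combining this with the absorption identity $(2i+1)\binom{2k}{2i+1}=2k\binom{2k-1}{2i}$ rearranges it as
\[
2k\cdot a_i=(2i+1)\Bigl(\binom{2k}{2i+1}-2k\Bigr).
\]
Under the hypothesis $2k+1=p^s$ one has $\gcd(2k,p)=1$, so $v_p(a_i)=v_p(2i+1)+v_p\bigl(\binom{2k}{2i+1}-2k\bigr)$, and the problem reduces to understanding the residues of $\binom{p^s-1}{j}$ modulo powers of~$p$ for odd~$j$.

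For the divisibility by $p$ when $2k+1=p^s$, I would apply Lucas's theorem: every base-$p$ digit of $p^s-1$ is $p-1$, so $\binom{p^s-1}{j}\equiv (-1)^{\sigma_p(j)}\pmod p$, where $\sigma_p$ denotes the base-$p$ digit sum. By Legendre's formula $\sigma_p(j)\equiv j\pmod{p-1}$; since $p-1$ is even (as $p$ is odd) and $2i+1$ is odd, $\sigma_p(2i+1)$ is odd. Thus $\binom{p^s-1}{2i+1}\equiv-1\equiv p^s-1\pmod p$, which gives $p\mid\binom{2k}{2i+1}-2k$ and hence $p\mid a_i$ for each $0<i<k$.

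Sharpness, namely $v_p(a_{i_0})=1$ for some index~$i_0$, is proved by explicit computation. When $s=1$ and $p\geq 7$, the factorisation $a_1=(2k+1)(k-2)=p\cdot(p-5)/2$ suffices; the small cases $p\in\{3,5\}$ with $s=1$ are excluded by $k>2$. For $s\geq 2$, the plan is to locate an odd $j_0=2i_0+1$ coprime to~$p$ for which $\binom{p^s-1}{j_0}-(p^s-1)$ has valuation exactly one, using the product expansion
\[
\binom{p^s-1}{j}=(-1)^j\prod_{r=1}^{j}\Bigl(1-\frac{p^s}{r}\Bigr)
\]
modulo $p^2$, splitting the product into contributions of $r$ coprime to~$p$ and multiples of~$p$ (the latter giving factors $(1-p^{s-1}/k)$ that must be tracked recursively).

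Finally, for the non-divisibility claim when $2k+1\neq p^s$, the factorisation $a_1=(2k+1)(k-2)$ immediately handles the case $p\nmid(2k+1)(k-2)$. Otherwise $p\mid 2k+1$ without $2k+1$ being a $p$-power, so the base-$p$ expansion of $2k$ contains some digit strictly below $p-1$; Lucas's theorem then supplies an odd $j=2i+1$ coprime to~$p$ with $\binom{2k}{j}\not\equiv 2k\pmod p$, yielding $v_p(a_i)=0$. The main technical obstacle throughout is the sharpness step for $s\geq 2$: Lucas's theorem alone controls residues only modulo~$p$, so one is forced to work modulo~$p^2$ via the product expansion above, handling denominators divisible by~$p$ with care.
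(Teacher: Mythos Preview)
The paper does not prove this lemma; it is quoted verbatim from \cite{lu-pa16} and used as a black box in the proof of Lemma~\ref{yN}. So there is no proof in the present paper to compare against.

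That said, your closed form $a_i=\binom{2k-1}{2i}-(2i+1)$ and the divisibility argument via Lucas's theorem when $2k+1=p^s$ are correct and clean: the parity of the digit sum $\sigma_p(2i+1)$ really does force $\binom{p^s-1}{2i+1}\equiv-1\pmod p$.

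There are two genuine gaps, however. First, in the non-divisibility direction you write ``Otherwise $p\mid 2k+1$'', but the negation of $p\nmid(2k+1)(k-2)$ is $p\mid 2k+1$ \emph{or} $p\mid k-2$; you have not treated the case $p\nmid 2k+1$ and $p\mid k-2$. For instance, with $k=5$ and $p=3$ one has $a_1=(2k+1)(k-2)=33$, divisible by~$3$, so $a_1$ does not witness non-divisibility and you must exhibit another~$a_i$ (here $a_2=121$ works, but your argument does not produce it). Second, even within the case $p\mid 2k+1$ with $2k+1\ne p^s$, the claim that ``Lucas's theorem then supplies an odd $j=2i+1$ coprime to~$p$ with $\binom{2k}{j}\not\equiv 2k\pmod p$'' needs an actual construction: knowing that some base-$p$ digit of $2k$ is below $p-1$ does not by itself hand you an odd $j$ with $j_0\ne 0$ and with $\prod_r\binom{d_r}{j_r}\not\equiv-1$; you must specify the digits of~$j$ and check the parity constraint. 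Finally, the sharpness step for $s\ge2$ is only a plan; the ``handling denominators divisible by~$p$ with care'' is exactly where the work lies, and you have not done it.
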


We now obtain the following result about quasitoric representatives in $SU$-bordism:

\begin{theorem}\label{mainth}
There exist quasitoric $SU$-manifolds $M^{2i}$, $i\ge5$, with
$s_i(M^{2i})=m_im_{i-1}$ if $i$ is odd and
$s_i(M^{2i})=2m_{i}m_{i-1}$ if $i$ is even. These quasitoric
$SU$-manifolds have minimal possible $s_i$ numbers and represent polynomial generators
of~$\OSU\otimes\Z[\frac12]$.
\end{theorem}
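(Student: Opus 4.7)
The proof is essentially an assembly of the preceding lemmas in Sections~\ref{qtSUsect}--\ref{qtSUgensect}, together with the characterization of polynomial generators from Theorem~\ref{SUstructure}. My plan is to handle the odd and even cases separately, then package the conclusion.

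First, for odd $i = 2k+1 \geq 5$ (that is, $k \geq 2$), I would directly invoke Lemma~\ref{yL}: it produces an integral linear combination $y_{2k+1}$ of the bordism classes $[\widetilde L(n_1,n_2)]$ with $n_1 + n_2 = 2k+1$, $n_1 = 2k_1 > 0$ even, and $n_2 = 2k_2+1 > 0$ odd, for which $s_{2k+1}(y_{2k+1}) = m_{2k+1} m_{2k}$. Each summand $\widetilde L(n_1, n_2)$ is a quasitoric $SU$-manifold by the construction preceding Lemma~\ref{snL} (the column sums of its characteristic matrix are all equal to~$1$). Using the equivariant connected sum procedure of Construction~\ref{diamondsum}, refined to preserve an $SU$-structure (i.e.\ using only connected summands whose characteristic matrices have column sums~$1$, such as products of the $SU$-manifold $\widetilde L(2,1)$-type building blocks or taking opposite orientations appropriately), this linear combination can itself be realized by a single connected quasitoric $SU$-manifold of dimension~$2(2k+1)$.

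Second, for even $i = 2k \geq 6$ (that is, $k \geq 3$), I would invoke Lemma~\ref{yN} in exactly the same way, using the three-stage projectivisation manifolds $\widetilde N(n_1,n_2)$ of Construction~\ref{constrN} which are also quasitoric $SU$-manifolds. This produces an integral linear combination $y_{2k}$ with $s_{2k}(y_{2k}) = 2 m_{2k} m_{2k-1}$, which again can be geometrically realized by a single connected quasitoric $SU$-manifold after applying the connected sum construction.

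To conclude, Theorem~\ref{SUstructure} identifies the indecomposable generators $y_i \in \OSU_{2i}$, $i \geq 2$, of $\OSU\otimes\Z[\tfrac12]$ precisely by the $s$-numbers $s_i(y_i) = g(i+1)$ given in~\eqref{gn}, and states that these are the minimal $s_i$-numbers occurring in $\OSU_{2i}$. Since our constructions realize exactly these minimal values ($m_i m_{i-1}$ for odd $i$, $2 m_i m_{i-1}$ for even $i > 2$), the resulting manifolds represent polynomial generators of $\OSU \otimes \Z[\tfrac12]$ and have minimal $s$-numbers among all quasitoric (in fact, all) $SU$-manifolds in that dimension.

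The main obstacle, and indeed the substantive content of the whole theorem, lies in the divisibility Lemmas~\ref{gcddif}, \ref{Nmod2} and~\ref{Nmodp}, which state that the greatest common divisors of the expressions appearing in Lemmas~\ref{snL} and~\ref{snN} hit exactly the target values $m_{2k+1} m_{2k}$ and $2 m_{2k} m_{2k-1}$. These are nontrivial binomial-coefficient identities modulo primes; the proof reduces to careful $p$-adic analysis (Kummer's theorem on $p$-adic valuations of binomials) separating the cases $2k+1 = p^s$, $2k = 2^s$, etc., from the generic case where the common divisor drops to~$1$. Everything else --- the characteristic-number calculations in Lemmas~\ref{snL} and~\ref{snN}, and the $SU$-property of the building blocks --- is a direct computation using the cohomology ring formulas \eqref{cohomproj2}, \eqref{cohomN} and the Chern class expressions~\eqref{chernL}, \eqref{chernN}.
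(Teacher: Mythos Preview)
Your proposal is correct and follows essentially the same route as the paper: invoke Lemmata~\ref{yL} and~\ref{yN} for the odd and even cases, realize the resulting integer linear combinations by connected sums via Construction~\ref{diamondsum}, and appeal to Theorem~\ref{SUstructure} for minimality. One small correction: the auxiliary summand in Construction~\ref{diamondsum} is the manifold $S=(S^2)^n$ with characteristic matrix $(I\,|\,I)$ (which already has column sums~$1$, hence is $SU$), not a product of $\widetilde L(2,1)$'s; and the paper handles negatives simply by reversing the global orientation of~$M$, which preserves the column-sum-$1$ condition.
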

\begin{proof}
It follows from Lemmata~\ref{yL} and~\ref{yN} that there exist
linear combinations of $SU$-bordism classes represented by
quasitoric $SU$-manifolds with the required properties. We observe
that application of Construction~\ref{diamondsum} to two
quasitoric $SU$-manifolds $M$ and $M'$ produces a quasitoric
$SU$-manifold representing their bordism sum. Also, the
$SU$-bordism class $-[M]$ can be represented by the omnioriented
qua\-si\-toric $SU$-manifold obtained by reversing the global
orientation of~$M$. Therefore, we can replace the linear
combinations obtained using Lemmata~\ref{yL} and~\ref{yN} by
appropriate connected sums, which are quasitoric $SU$-manifolds.
\end{proof}

By analogy with Theorem~\ref{6.11}, we may ask the following:

\begin{question}\label{question}
Which $SU$-bordism classes of dimension $>8$ can be represented by
quasitoric $SU$-manifolds?
\end{question}

\section{$SU$-manifolds arising in toric geometry}\label{SUtoricsect}

We refer to a compact K\"ahler manifold $M$ with $c_1(M)=0$ as a \emph{Calabi--Yau manifold}. (Apparently, this is the most standard definition; however, other definitions of a Calabi--Yau manifold, sometimes inequivalent to this one, also appear in the literature.) According to the theorem of Yau, conjectured by Calabi, a Calabi--Yau manifold admits a K\"ahler metric with zero Ricci curvature (for this, only vanishing of the  first \emph{real} Chern class is required). By definition, a Calabi--Yau manifold is an $SU$-manifold.

The standard complex structure on a toric manifold is never $SU$ (Propostion~\ref{noSU}), so  there are no toric Calabi--Yau manifolds. However, the following construction gives Calabi--Yau hypersurfaces in special toric manifolds.

\begin{construction}[Batyrev~\cite{baty94}]\label{bacon}
A toric manifold $V$ is \emph{Fano} if its anticanonical class $D_1+\cdots+D_m$ (representing $c_1(V)$) is very ample. In geometric terms, the projective embedding $V\hookrightarrow \C P^s$ corresponding to $D_1+\cdots+D_m$ comes from a lattice polytope $P$ in which the lattice distance from $0$ to each hyperplane containing a facet is~$1$. Such a lattice polytope $P$ is called \emph{reflexive}; its polar polytope $P^*$ is also a lattice polytope.  

The submanifold $N$ dual to $c_1(V)$ (see Construction~\ref{c1dual}) is given by the hyperplane section of the embedding $V\hookrightarrow \C P^s$ defined by $D_1+\cdots+D_m$. Therefore, $N\subset V$ is a smooth algebraic hypersurface in~$V$, so $N$ is a Calabi--Yau manifold of complex dimension $n-1$.

In this way, any toric Fano manifold $V$ of dimension $n$ (or equivalently, any non-singular  reflexive  $n$-dimensional polytope~$P$) gives rise to a canonical $(n-1)$-dimensional Calabi--Yau manifold~$N_P$. 

Batyrev~\cite{baty94} also extended this construction to some singular toric Fano varieties. A complex normal irreducible $n$-dimensional projective algebraic variety
$W$ with only Gorenstein canonical singularities is called a \emph{Calabi--Yau variety} if $W$ has trivial canonical bundle and $H^{i}(W,\mathcal O_W)=0$ for $0<i<n$.

Suppose $f$ is a Laurent polynomial in $n$ variables, and let $P=P(f)$ be its Newton polytope (the convex hull of the lattice points corresponding to the nonzero coefficients of~$f$). Then $f$ defines an affine hypersurface $Z_f$ in the algebraic torus $(\mathbb{C}^\times)^{n}$, and its Zariski closure $\overline{Z}_{f,P}$, a hypersurface in the projective toric variety~$V_P$. A hypersurface $\overline{Z}_{f,P}$ is said to be \emph{$P$-regular} if it intersects each facial subvariety of $V_P$ at a subvariety of codimension one (in particular, it does not intersect the points fixed under the torus actions). By~\cite[Theorem 4.1.9]{baty94}, the following conditions are equivalent for a $P$-regular hypersurface $\overline{Z}_{f,P}$:
\begin{itemize}
\item[(a)] 
$\overline{Z}_{f,P}$ is a Calabi--Yau variety with canonical singularities;
\item[(b)]
$V_P$ is a toric Fano variety with Gorenstein singularities;
\item[(c)]
$P$ is a reflexive polytope (up to shifting the origin).
\end{itemize}
Furthermore, by~\cite[Theorem~4.2.2]{baty94}, there exists a special resolution of singularities $\widehat Z_{f,P}\to\overline Z_{f,P}$ (a toroidal MPCP-desingularization) such that $\widehat Z_{f,P}$ is a Calabi--Yau variety with singularities in codimension~$\ge4$. In particular, if $\dim P\le 4$, then we obtain a smooth Calabi--Yau manifold. 
This led to defining a family of \emph{mirror-dual} pairs of Calabi--Yau $3$-folds arising from reflexive $4$-polytopes and their polars. 
\end{construction}

The $s$-number of the Calabi--Yau manifold $N=N_P$ is given as follows.

\begin{lemma}\label{sN}
We have
\[
  s_{n-1}(N)=\bigl\langle(v_1+\cdots+v_m)(v_1^{n-1}+\cdots+v_m^{n-1})-
  (v_1+\cdots+v_m)^n,[V]\bigr\rangle.
\]
\end{lemma}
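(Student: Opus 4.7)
The plan is to compute $s_{n-1}(\mathcal T N)$ via the normal bundle sequence of $N\subset V$, then push the resulting characteristic class down to $V$ by the projection formula. All ingredients are already available from Construction~\ref{c1dual}, Theorem~\ref{tangenttoric}, and the basic additivity $s_k(A\oplus B)=s_k(A)+s_k(B)$ of the characteristic class~$s_k$.

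First, since $N\subset V$ is the smooth hypersurface Poincar\'e dual to $c_1(V)=v_1+\cdots+v_m$, the normal bundle $\nu=\nu(N\subset V)$ is a complex line bundle with $c_1(\nu)=i^*c_1(V)$, where $i\colon N\hookrightarrow V$ is the inclusion. The splitting $i^*\mathcal TV\cong\mathcal TN\oplus\nu$ together with additivity of $s_{n-1}$ gives
\[
  s_{n-1}(\mathcal TN)=i^*s_{n-1}(\mathcal TV)-s_{n-1}(\nu)
  =i^*s_{n-1}(\mathcal TV)-\bigl(i^*c_1(V)\bigr)^{n-1}.
\]
By Theorem~\ref{tangenttoric}, $\mathcal TV\oplus\underline{\C}^{m-n}\cong\rho_1\oplus\cdots\oplus\rho_m$ with $c_1(\rho_i)=v_i$, so additivity and the vanishing of $s_{n-1}$ on trivial summands yield $s_{n-1}(\mathcal TV)=v_1^{n-1}+\cdots+v_m^{n-1}$ in $H^*(V)$. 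Substituting and writing $\sigma=v_1+\cdots+v_m$,
\[
  s_{n-1}(\mathcal TN)=i^*\bigl(v_1^{n-1}+\cdots+v_m^{n-1}-\sigma^{n-1}\bigr).
\]

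Next, evaluate on the fundamental class of~$N$. The projection formula, together with $i_*[N]=c_1(V)\frown[V]=\sigma\frown[V]$, gives
\[
  s_{n-1}[N]=\bigl\langle s_{n-1}(\mathcal TN),[N]\bigr\rangle
  =\bigl\langle \sigma\cdot\bigl(v_1^{n-1}+\cdots+v_m^{n-1}-\sigma^{n-1}\bigr),[V]\bigr\rangle,
\]
which is exactly the claimed formula. No serious obstacle is expected: the only things to verify carefully are the identification $c_1(\nu)=i^*c_1(V)$ (which is Construction~\ref{c1dual}) and the additivity of $s_{n-1}$ on the stably trivial bundle $\mathcal TV\oplus\underline{\C}^{m-n}$, both of which are standard.
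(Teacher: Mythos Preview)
Your proof is correct and follows essentially the same approach as the paper: both use the normal bundle splitting $i^*\mathcal TV\cong\mathcal TN\oplus\nu$, additivity of $s_{n-1}$, the identification $c_1(\nu)=i^*c_1(V)$, and the projection formula $\langle i^*\alpha,[N]\rangle=\langle c_1(V)\cdot\alpha,[V]\rangle$. The only cosmetic difference is that you explicitly substitute $s_{n-1}(\mathcal TV)=v_1^{n-1}+\cdots+v_m^{n-1}$ from Theorem~\ref{tangenttoric} before evaluating, whereas the paper writes the answer in terms of $c_1(\mathcal TV)$ and $s_{n-1}(\mathcal TV)$ and leaves that substitution implicit.
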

\begin{proof}
We have an isomorphism of complex bundles $\mathcal TN\oplus\nu\cong i^*\mathcal T V$, where $\nu$ is the normal bundle of the embedding $i\colon N\hookrightarrow V$. Hence, $s_{n-1}(\mathcal TN)+s_{n-1}(\nu)=i^*s_{n-1}(\mathcal T V)$. Now we calculate
\begin{multline*}
  \bigl\langle s_{n-1}(\mathcal T N),[N]\bigr\rangle=
  \bigl\langle -s_{n-1}(\nu)+i^* s_{n-1}(\mathcal T V),[N]\bigr\rangle\\=
  \bigl\langle c_1(\mathcal TV) \bigl(-c_1^{n-1}(\mathcal TV)+s_{n-1}(\mathcal T V)\bigr),     
  [V]\bigr\rangle \\=
  \bigl\langle c_1(\mathcal TV)s_{n-1}(\mathcal T V)-c_1^n(\mathcal TV),     
  [V]\bigr\rangle.\qedhere
\end{multline*}
\end{proof}

\section{Calabi--Yau generators for the $SU$-bordism ring}\label{CYSUsect}
A family of Calabi--Yau manifolds whose $SU$-bordism classes generate the special unitary bordism ring $\varOmega^{SU}[\frac{1}{2}]\cong\mathbb{Z}[\frac{1}{2}][y_{i}\colon{i\ge 2}]$ was constructed in~\cite{l-l-p18}. This construction is reviewed below.

Let $\omega=(i_1,\ldots,i_k)$ be an unordered partition of $n$ into a sum of $k$ positive integers, that is, $i_1+\cdots+i_k=n$. Let $\varDelta^{i}$ be the standard reflexive simplex of dimension~$i$. Then $\varDelta^\omega=\varDelta^{i_1}\times\cdots\times\varDelta^{i_k}$ is a reflexive polytope with the corresponding toric Fano manifold $\C P^\omega=\C P^{i_1}\times\cdots\times\C P^{i_k}$. We denote by $N_\omega$ the Calabi--Yau hypersurface in~$\C P^\omega$ given by Construction~\ref{bacon}.

Let $\widehat P(n)$ be the set of all partitions $\omega$ with parts of size at most $n-2$. That is,
\[
  \widehat P(n)=\{\omega=(i_1,\ldots,i_k)\colon
  i_1+\cdots+i_k=n,\quad\omega\ne(n),(1,n-1).\} 
\]
The multinomial coefficient $\binom n\omega=\frac{n!}{i_1!\cdots i_k!}$ is defined for each~$\omega=(i_1,\ldots,i_k)$. We set
\begin{equation}\label{alphasigma}
  \alpha(\omega)=\binom{n}{\omega} 
  (i_{1}+1)^{i_{1}}\cdots(i_k+1)^{i_k}.
\end{equation}

\begin{lemma}\label{snumber}
For any $\omega\in \widehat P(n)$ we have
\[
  s_{n-1}(N_\omega)=-\alpha(\omega).
\]
\end{lemma}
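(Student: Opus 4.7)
The plan is to apply Lemma~\ref{sN} to the toric Fano manifold $V=\C P^\omega=\C P^{i_1}\times\cdots\times\C P^{i_k}$ and observe that the restriction $\omega\in\widehat P(n)$ (i.e.\ each $i_j\le n-2$) makes the ``$v_1^{n-1}+\cdots+v_m^{n-1}$'' piece of that formula vanish, so only the $c_1^n$ term contributes.

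First I would fix notation on the cohomology side. Writing $u_j\in H^2(\C P^{i_j})$ for the hyperplane class, we have $H^*(V)=\Z[u_1,\ldots,u_k]/(u_1^{i_1+1},\ldots,u_k^{i_k+1})$, and the $m=\sum_j(i_j+1)$ torus-invariant divisors $D_r$ of $V$ correspond, via Theorem~\ref{tangenttoric}, to the $i_j+1$ coordinate hyperplanes in each factor; hence each $v_r$ equals some $u_j$, with exactly $i_j+1$ of them equal to $u_j$. In particular
\[
  c_1(V)=v_1+\cdots+v_m=\sum_{j=1}^k(i_j+1)u_j,\qquad
  v_1^{n-1}+\cdots+v_m^{n-1}=\sum_{j=1}^k(i_j+1)u_j^{n-1}.
\]

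Next comes the key observation: for $\omega\in\widehat P(n)$ every part satisfies $i_j\le n-2$, so $u_j^{n-1}=0$ in $H^*(\C P^{i_j})$ for each $j$. Therefore $\sum_r v_r^{n-1}=0$ in $H^*(V)$, and Lemma~\ref{sN} collapses to
\[
  s_{n-1}(N_\omega)=-\bigl\langle c_1(V)^n,[V]\bigr\rangle
  =-\Bigl\langle\Bigl(\sum_{j=1}^k(i_j+1)u_j\Bigr)^{\!n},[V]\Bigr\rangle.
\]

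Finally I would expand by the multinomial theorem. Since $\langle u_1^{a_1}\cdots u_k^{a_k},[V]\rangle$ is nonzero only when $a_j=i_j$ for every $j$ (in which case it equals $1$), and then automatically $a_1+\cdots+a_k=n$, the only surviving term is
\[
  \binom{n}{i_1,\ldots,i_k}\prod_{j=1}^k(i_j+1)^{i_j}
  =\binom{n}{\omega}(i_1+1)^{i_1}\cdots(i_k+1)^{i_k}=\alpha(\omega),
\]
which gives $s_{n-1}(N_\omega)=-\alpha(\omega)$ as required. There is no real obstacle: the whole content is the vanishing $u_j^{n-1}=0$, which is exactly why the set $\widehat P(n)$ is defined by excluding $(n)$ and $(1,n-1)$.
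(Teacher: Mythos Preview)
Your proof is correct and follows essentially the same route as the paper: identify the $v_r$'s with the hyperplane classes $u_j$, use the bound $i_j\le n-2$ to kill the $\sum_r v_r^{\,n-1}$ term in Lemma~\ref{sN}, and then read off the coefficient of $u_1^{i_1}\cdots u_k^{i_k}$ in $c_1(V)^n$. Your write-up is slightly more explicit about the multinomial expansion, but there is no substantive difference.
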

\begin{proof}
The cohomology ring of $\C P^\omega=\C P^{i_1}\times\cdots\times\C P^{i_k}$ is given by
\[
  H^*(\C P^\omega;\Z)\cong\Z[u_1,\ldots,u_k]/(u_1^{i_1+1},\ldots,u_k^{i_k+1}),
\]
where $u_1:=v_1=\cdots=v_{i_1+1}$, $u_2:=v_{i_1+2}=\cdots=v_{i_1+i_2+2}$, $\ldots,$ $u_k:=v_{i_1+\cdots+i_{k-1}+k}=\cdots=v_{i_1+\cdots+i_k+k}=v_m$.
As $\omega\in \widehat P(n)$, we have $v_i^{n-1}=0$ in $H^*(\C P^\omega;\Z)$ for any~$i$. The formula from Lemma~\ref{sN} gives
\[
  s_{n-1}(N_\omega)=-\langle(v_1+\cdots+v_m)^n,[\C P^\omega]\rangle=
  -\langle((i_1+1)u_1+\cdots+(i_k+1)u_k)^n,[\C P^\omega]\rangle.
\]
Evaluating at $[\C P^\omega]$ gives the coefficient of $u_1^{i_1}\cdots u_k^{i_k}$ 
in the polynomial above, whence the result follows.
\end{proof}

\begin{lemma}[{\cite[Lemma~2.3]{l-l-p18}}]\label{gcd}
For $n\ge3$, we have
\[
  \gcd_{\omega\in\widehat{P}(n)}\alpha(\omega)=g(n),
\]
where the numbers $g(n)$ and $\alpha(\omega)$ are given by~\eqref{gn} and~\eqref{alphasigma} respectively.
\end{lemma}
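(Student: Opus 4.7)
The plan is to prove the two inequalities $g(n)\mid\gcd_{\omega}\alpha(\omega)$ and $\gcd_{\omega}\alpha(\omega)\mid g(n)$ separately. The base case $n=3$ is immediate: $\widehat P(3)=\{(1,1,1)\}$ and $\alpha((1,1,1))=3!\cdot 2^3=48=|g(3)|$. For $n\ge 4$ write $g(n)=\varepsilon_n\,m_{n-1}m_{n-2}$, with $\varepsilon_n=2$ if $n$ is odd and $\varepsilon_n=1$ otherwise; since $n$ and $n-1$ are coprime, the (at most three) prime factors of $g(n)$ are distinct and each occurs to the first power, so the divisibility $g(n)\mid\alpha(\omega)$ can be verified prime by prime.

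For the possible factor $2$ coming from $\varepsilon_n$ (when $n$ is odd): since $i_1+\cdots+i_\ell=n$ is odd, at least one $i_j$ is odd, so $(i_j+1)^{i_j}$ is even and hence $2\mid\alpha(\omega)$. For a prime $p$ with $n=p^k$ (so $p\mid m_{n-1}$): Kummer's theorem writes the $p$-adic valuation of $\binom{n}{\omega}$ as $(S-1)/(p-1)$, where $S$ is the sum over $j$ of the base-$p$ digit sums of the $i_j$; this nonnegative integer vanishes only when $S=1$, which forces $\ell(\omega)=1$ and hence $\omega=(n)$, a partition excluded from $\widehat P(n)$. For a prime $p$ with $n-1=p^k$ (so $p\mid m_{n-2}$): the Calabi--Yau hypersurface $N_\omega$ is an $SU$-manifold of complex dimension $n-1=p^k$, so Proposition~\ref{pkSU} applied to $N_\omega$ yields $p\mid s_{n-1}[N_\omega]=-\alpha(\omega)$.

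For the reverse divisibility I would single out a small explicit subfamily of partitions in $\widehat P(n)$ --- such as $(1^n)$, the two-part partitions $(k,n-k)$, and a few three-part partitions of the form $(k,1,n-k-1)$ --- and compute their $\alpha$-values directly from the formula $\binom{n}{\omega}\prod_j(i_j+1)^{i_j}$. A $p$-adic analysis of suitable $\Z$-linear combinations, in the spirit of Lemmata~\ref{gcddif}, \ref{Nmod2} and~\ref{Nmodp}, then shows that the gcd of this subfamily already equals $g(n)$, so that $\gcd_\omega\alpha(\omega)$ cannot be any larger.

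The main obstacle is this last sharpness step. The divisibilities above all follow either from classical base-$p$ arithmetic or directly from the already-established Proposition~\ref{pkSU}, whereas producing partitions whose $\alpha$-values realise exactly $g(n)$ requires the same kind of case-by-case binomial-coefficient bookkeeping --- split according to whether $n$, $n-1$, or neither is a prime power --- that was carried out for $\widetilde L(n_1,n_2)$ and $\widetilde N(n_1,n_2)$ in Section~\ref{qtSUgensect}.
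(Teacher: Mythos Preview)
Your forward divisibility argument has a gap. The claim that the prime factors of $g(n)$ are distinct and occur to the first power fails precisely when $n$ is odd and $n-1=2^l$: then $\varepsilon_n=2$ and $m_{n-2}=2$, so $g(n)=4m_{n-1}$ and you must show $4\mid\alpha(\omega)$, not merely $2\mid\alpha(\omega)$. Your two arguments for the prime~$2$ --- an odd part forces an even factor in $\prod_j(i_j+1)^{i_j}$, and Proposition~\ref{pkSU} at $p=2$ --- each produce only a single factor of~$2$ and do not combine. The repair is short: if $\omega$ has at least two odd parts, or a single odd part $\ge3$, then $\prod_j(i_j+1)^{i_j}$ already has $2$-adic valuation~$\ge2$; the only remaining shape is $\omega=(1,i_2,\ldots,i_\ell)$ with $i_2,\ldots,i_\ell$ all even summing to $2^l$, and then Legendre's formula shows $\binom{n}{\omega}$ is even unless $\ell=2$ and $i_2=2^l$, which is exactly the excluded partition $(1,n-1)$. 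So the statement survives, but as written your reduction to ``one prime at a time'' is not justified.

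For the reverse divisibility you are candid that only a sketch is offered, and indeed that is where the work lies. The paper itself does not prove this lemma but cites~\cite{l-l-p18}, whose argument rests on Mosley's results~\cite{mosl} on the $p$-adic valuations of multinomial coefficients --- purely arithmetic input. Your use of Proposition~\ref{pkSU} together with Lemma~\ref{snumber} to handle the $m_{n-2}$ factor is a pleasant geometric shortcut absent from that treatment, but the sharpness step still demands the explicit binomial bookkeeping you only gesture at.
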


The proof of this Lemma given in~\cite{l-l-p18} uses the results of Mosley~\cite{mosl} on the divisibility of multinomial coefficients.

\begin{theorem}\label{SUCY}
The $SU$-bordism classes of the Calabi--Yau hypersurfaces $N_\omega$ in $\C P^{i_1}\times\cdots\times\C P^{i_k}$ 
with $\omega\in\widehat{P}(n)$, $n\ge 3$, multiplicatively generate the $SU$-bordism 
ring~$\varOmega^{SU}[\frac{1}{2}]$.
\end{theorem}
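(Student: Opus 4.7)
The plan is to combine the two preceding lemmata by a Bezout argument. Fix $i\ge2$ and set $n=i+1\ge3$. By Lemma~\ref{snumber}, the Calabi--Yau hypersurface $N_\omega\subset\C P^{i_1}\times\cdots\times\C P^{i_k}$ has top $s$-number $s_{n-1}(N_\omega)=-\alpha(\omega)$, where $\alpha(\omega)$ is the integer defined in~\eqref{alphasigma}. By Lemma~\ref{gcd}, the greatest common divisor of the $\alpha(\omega)$ as $\omega$ ranges over $\widehat P(n)$ equals $g(n)$. Bezout's identity therefore supplies integers $c_\omega$, almost all zero, with $\sum_\omega c_\omega\alpha(\omega)=g(n)$. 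Setting
\[
  z_{n-1}\;:=\;-\sum_{\omega\in\widehat P(n)}c_\omega[N_\omega]\;\in\;\OSU_{2(n-1)},
\]
we obtain $s_{n-1}(z_{n-1})=g(n)$; note that $z_{n-1}$ genuinely lies in $\OSU$ because each $N_\omega$ is a smooth Calabi--Yau hypersurface in the smooth toric Fano manifold $\C P^{i_1}\times\cdots\times\C P^{i_k}$ and therefore carries an $SU$-structure, and integer linear combinations of $SU$-bordism classes are $SU$-bordism classes.

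From the definition~\eqref{gn}, $g(n)$ equals $\pm m_{n-1}m_{n-2}$ up to a power of $2$. This is precisely the divisibility condition in Theorem~\ref{noviosu} under which an $SU$-bordism class may be taken as the polynomial generator $y_{n-1}$ of $\OSU\otimes\Z[\tfrac12]$; equivalently, $z_{n-1}$ realises the minimal $s$-number of Theorem~\ref{SUstructure}. Since
\[
  \OSU\otimes\Z[\tfrac12]\cong\Z[\tfrac12][y_i\colon i\ge2]
\]
is a polynomial algebra and each $y_i$ is exhibited as an integer linear combination of $[N_\omega]$'s with $\omega\in\widehat P(i+1)$, every element of $\OSU\otimes\Z[\tfrac12]$ is a $\Z[\tfrac12]$-linear combination of monomials in the bordism classes $[N_\omega]$. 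This is the claimed multiplicative generation.

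The substantive content of the proof lies entirely in Lemma~\ref{gcd}: everything else is Bezout bookkeeping combined with the tangent-bundle computation in Lemma~\ref{snumber}. Lemma~\ref{gcd} itself requires two opposite kinds of argument, namely that $g(n)$ divides every $\alpha(\omega)$ (a $p$-adic valuation estimate, uniform in $\omega$) and that for each relevant prime the minimum $p$-adic valuation of $g(n)$ is actually attained by some explicit $\omega\in\widehat P(n)$. The sharpness half is the hard one: at the anomalous dimensions $n=p^k$ and $n=p^k+1$, where $g(n)$ carries an extra factor of $p$ coming from $m_{n-1}$ or $m_{n-2}$, one must produce partitions whose multinomial coefficients $\binom{n}{\omega}$ together with the factors $(i_j+1)^{i_j}$ satisfy delicate Kummer-type congruences; this is where Mosley's arithmetic of multinomial coefficients enters. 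Once Lemma~\ref{gcd} is available, the construction of $z_{n-1}$ above completes the proof.
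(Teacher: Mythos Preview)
Your proof is correct and follows essentially the same approach as the paper: combine Lemma~\ref{snumber} and Lemma~\ref{gcd} via B\'ezout to produce an integral combination of the $[N_\omega]$ with $s$-number exactly $g(n)$, then invoke Theorem~\ref{noviosu} (or equivalently Theorem~\ref{SUstructure}) to conclude that this combination serves as the polynomial generator $y_{n-1}$ of $\OSU\otimes\Z[\tfrac12]$. Your final paragraph of commentary on the arithmetic content of Lemma~\ref{gcd} is accurate context but not part of the proof proper, which the paper likewise omits since Lemma~\ref{gcd} is cited from~\cite{l-l-p18}.
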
 
\begin{proof}
For any $n\ge3$ we use Lemma~\ref{gcd} and Lemma~\ref{snumber} to find a linear combination of the bordism classes $[N_\omega]\in\varOmega^{SU}_{2n-2}$ whose $s$-number is precisely~$g(n)$. This linear combination is the polynomial generator $y_{n-1}$ of $\varOmega^{SU}[\frac{1}{2}]$, as described in Theorem~\ref{SUstructure}.
\end{proof}

We actually prove an \emph{integral} result: the elements $y_i\in\varOmega^{SU}$ described in Theorem~\ref{SUstructure} can be represented by integral linear combinations of the bordism classes of Calabi--Yau manifolds~$N_\omega$. The element $y_i$ is part of a basis of the abelian group $\varOmega^{SU}_{2i}$. There is the following related question:

\begin{question}
Which bordism classes in $\varOmega^{SU}$ can be represented by Calabi--Yau manifolds?
\end{question}

This question is an $SU$-analogue of the following well-known problem of Hirzebruch: which bordism classes in $\varOmega^U$ contain connected (i.\,e., irreducible) non-singular algebraic varieties? If one drops the connectedness assumption, then any $U$-bordism class of positive dimension can be represented by an algebraic variety. Since a product and a positive integral linear combination of algebraic classes is an algebraic class (possibly, disconnected), one only needs to find in each dimension $i$ algebraic varieties $M$ and $N$ with $s_i(M)=m_i$ and $s_i(N)=-m_i$, see Theorem~\ref{Ustructure}. The corresponding argument, originally due to Milnor, is given in~\cite[p.~130]{ston68}. Note that it uses hypersurfaces in $\C P^n$ and a calculation similar to Lemma~\ref{sN}. For $SU$-bordism, the situation is different: if a class $a\in\varOmega^{SU}$ can be represented by a Calabi--Yau manifold, then $-a$ does not necessarily have this property. Therefore, the next step towards the answering the question above is whether $y_i$ and $-y_i$ can be simultaneously represented by Calabi--Yau manifolds. We elaborate on this in the next section.

\section{Low dimensional generators in the $SU$-bordism ring}\label{lowdimsect}

Here we describe geometric Calabi--Yau representatives for the generators $y_i$ of the $SU$-bordism ring (see Theorem~\ref{SUstructure}) in complex dimension $i\le 4$. Note that for $i\ge 5$, each generator $y_i\in\varOmega^{SU}_{2i}$ can be represented by a quasitoric manifold, by Theorem~\ref{mainth}. On the other hand, every quasitoric $SU$-manifold of real dimension $\le8$ is null-bordant by Theorem~\ref{lowdimqt}.

Recall from Section~\ref{ringosusec} that we have
\[
  \varOmega^{SU}_4=\Z\langle y_2\rangle,\quad
  \varOmega^{SU}_6=\Z\langle y_3\rangle,\quad
  \varOmega^{SU}_8=\Z\langle {\textstyle\frac14}y^2_2,y_4\rangle,
\]
with the values of the $s$-number given by
\[
  s_2(y_2)=-48, \quad s_3(y_3)=m_3m_2=6,\quad s_4(y_4)=2m_4m_3=20. 
\]

\begin{example}\label{CYsurface}
Consider the Calabi--Yau hypersurface $N_{(3)}\subset\C P^3$ corresponding to the partition $\omega=(3)$.
We have $c_1(\C P^3)=4u$, where $u\in H^2(\C P^3;\Z)$ is the canonical generator dual to a hyperplane section. Therefore, $N_{(3)}$ can be given by a generic quartic equation in homogeneous coordinates on $\C P^3$. The standard example is the quartic given by $z_0^4+z_1^4+z_2^4+z_3^4=0$, which is a $K3$-surface. Lemma~\ref{sN} gives
\[
  s_3(N_{(3)})=\langle4u^2\cdot4u-(4u)^3,[\C P^3]\rangle=-48,
\]
so $N_{(3)}$ represents the generator $y_2\in\varOmega^{SU}_4$.

Note that Theorem~\ref{SUCY} gives another representative for the same generator~$y_2$. Namely, the only partition of $n=3$ which belongs to $\widehat P(n)$ is $(1,1,1)$. The corresponding Calabi--Yau surface is $N_{(1,1,1)}\subset\C P^1\times\C P^1\times\C P^1$. We have 
\[
  c_1(\C P^1\times\C P^1\times\C P^1)=2u_1+2u_2+2u_3,
\]
so $N_{(1,1,1)}$ is a surface of multidegree $(2,2,2)$ in $\C P^1\times\C P^1\times\C P^1$. Lemma~\ref{snumber} gives $s_3(N_{(1,1,1)})=-\alpha(1,1,1)=-48$, so $N_{(1,1,1)}$ also represents~$y_2$.

On the other hand, the additive generator $-y_2\in\varOmega_4^{SU}$ cannot be represented by a compact complex surface. This is proved in~\cite[Theorem~3.2.5]{mosl16} by analysing the classification results on complex surfaces. It is easy to see that a complex surface $S$ with $H^1(S;\Z)=0$ (which holds for Calabi--Yau surfaces arising from toric Fano varieties) cannot represent~$-y_2$. Indeed, such $S$ has the Euler characteristic $c_2(S)=\chi(S)\ge2$, while $s_2(-y_2)=48=-2c_2(-y_2)$, so $c_2(-y_2)=-24$ is negative.
\end{example}

\begin{example}
The $6$-dimensional sphere $S^6$ has a $T^2$-invariant almost complex structure arising from its identification with the homogeneous space $G_{2}/SU(3)$ of the exceptional Lie group $G_2$, see~\cite[\S13]{bo-hi58}. Therefore, $S^6$ is an $SU$-manifold with $s_{3}[S^6]=3c_{3}[S^6]=6$. Hence, the $SU$-bordism class $[S^6]$ can be taken as~$y_{3}$.
\end{example}

\begin{example}
Here we show that the generator $-y_4\in\varOmega_8^{SU}$ can be represented by the Grassmannian $\mathit{Gr}_2(\C^4)$ of $2$-planes in $\C^4$ with an amended stably complex structure. 

Let $\gamma$ be the tautological $2$-plane bundle on~$\mathit{Gr}_2(\C^4)$, and $\gamma^\perp$ the orthogonal $2$-plane bundle. Then we have
$\mathcal T \mathit{Gr}_2(\C^4)\cong\Hom(\gamma,\gamma^\perp)$ and
\[
  \mathcal T \mathit{Gr}_2(\C^4)\oplus\Hom(\gamma,\gamma)\cong\Hom(\gamma,  
  \gamma^\perp\oplus\gamma)\cong\Hom(\gamma,\underline{\C}^4)\cong 
  \overline\gamma\oplus\overline\gamma\oplus\overline\gamma\oplus\overline\gamma. 
\]
The standard complex structure on $\mathit{Gr}_2(\C^4)$ is therefore given by the stable bundle isomorphism
\[
  \mathcal T \mathit{Gr}_2(\C^4)\cong 4\overline\gamma - \overline\gamma\gamma,
\]
where we denote $4\overline\gamma=
\overline\gamma\oplus\overline\gamma\oplus\overline\gamma\oplus\overline\gamma$ and
$\overline\gamma\gamma=\overline\gamma\otimes\gamma=\Hom(\gamma,\gamma)$.
We change the stable complex structure to the following:
\[
  \mathcal T \mathit{Gr}_2(\C^4)\cong 2\overline\gamma + 2\gamma -  
  \overline\gamma\gamma,
\]
and denote the resulting stably complex manifold by $\widetilde{\mathit{Gr}}_2(\C^4)$. Note that $c_1(\widetilde{\mathit{Gr}}_2(\C^4))=0$, so $\widetilde{\mathit{Gr}}_2(\C^4)$ is an $SU$-manifold. It has the same cohomology ring as the Grassmannian,
\[
  H^*(\mathit{Gr}_2(\C^4))\cong\mathbb Z[c_1, c_2]/(c_1^3 = 2 c_1 c_2, \, c_2^2 = c_1^2 c_2),
\]
where $c_i = c_i (\gamma)$. The top-degree cohomology $H^8(\mathit{Gr}_2(\C^4))\cong\Z$ is generated by $c_1^2 c_2$.

Now we calculate $s_4(\widetilde{\mathit{Gr}}_2(\C^4))=2s_4(\overline\gamma)+2s_4(\gamma)-s_4(\overline\gamma\gamma)$. We have
\[  
  s_4 = c_1^4 - 4c_1^2 c_2 + 4 c_1 c_3 + 2c^2_2 - 4c_4,
\]
so that
\[
  s_4(\overline\gamma) = s_4(\gamma) = c_1^4-4c_1^2 c_2 +2c_2^2= 2 c_1^2 c_2 - 4c_1^2 c_2 
  + 2 c_1^2 c_2 = 0.
\]
It remains to calculate $s_4(\overline\gamma\otimes\gamma)$. Using the splitting principle we write $\gamma=\eta_1+\eta_2$ for line bundles $\eta_1,\eta_2$ and calculate
\begin{multline*}
  c(\overline\gamma\gamma) = c((\overline {\eta}_1 + \overline{\eta}_2)( \eta_1 + \eta_2)) 
  = c(\overline{\eta}_1 \eta_2 + \overline{\eta}_2 \eta_1) = 
  c(\overline{\eta}_1 \eta_2)c(\overline{\eta_2} \eta_1) \\= 
  (1 - c_1(\eta_1) + c_1(\eta_2))(1- c_1(\eta_2) + c_1(\eta_1))  
  = 1 - c_1(\eta_1)^2 - c_1(\eta_2)^2 + 2 c_1(\eta_1)c_1(\eta_2) \\= 
  1 -(c_1(\eta_1)+c_1(\eta_2))^2+4c_1(\eta_1)c_1(\eta_2) = 1 - c_1(\gamma)^2 + 4 c_2(\gamma).
\end{multline*}
Hence, $c_1(\overline\gamma\gamma) =c_3(\overline\gamma\gamma) =c_4(\overline\gamma\gamma) =0$, and
\[
  s_4(\overline\gamma\gamma) = 2 c_2 (\overline\gamma\gamma)^2 = 2 (4 c_2 - c_1^2)^2 = 
  2(16 c_2^2 - 8 c_1^2 c_2 + c_1^4) = 20 c_1^2 c_2.
\]
It follows that $s_4[\widetilde{\mathit{Gr}}_2(\C^4)]=-20$, and $[\widetilde{\mathit{Gr}}_2(\C^4)]=-y_4\in\varOmega^{SU}_8$.
\end{example}

\begin{example}
Theorem~\ref{SUCY} gives the following representatives for the generators $y_3\in\varOmega_6^{SU}$ and $y_4\in\varOmega_8^{SU}$:
\[
  y_3=15 N_{(2,2)}-19 N_{(1,1,1,1)},\quad y_4=56 N_{(1,1,3)}-59 N_{(1,2,2)}.
\]
\end{example}

Unlike the situation in dimension~$2$, both $y_3$ and $-y_3$ can be represented by Calabi--Yau manifolds. The same holds in complex dimension~$4$, as shown by the next theorem.

\begin{theorem}\label{ldCYSUthm}
The following statements hold.
\begin{itemize}
\item[(a)] In complex dimension 2, the class $-y_{2}\in\OSU_{4}$ can be represented by a Calabi--Yau surface~$M$. 
One can take as $M$ any $K3$-surface different from a torus; it has Euler characteristic $\chi(M)=24$ and
$$
  h^{1,1}(M)=20.
$$
The class $y_2\in\OSU_4$ cannot be represented by a Calabi--Yau surface.

\smallskip

\item[(b)] In complex dimension 3, both $SU$-bordism classes $y_3$ and $-y_3$ can be represented by Calabi--Yau 3-folds.
These 3-folds $M$ can be obtained using Batyrev's construction from Fano toric varieties over reflexive $4$-polytopes. Such $M$ represents  $y_{3}\in\OSU_{6}$ if $\chi(M)=2$ or, equivalently,
$$
  h^{1,1}(M)-h^{2,1}(M)=1.
$$
Similarly, $M$ represents $-y_{3}\in\OSU_{6}$ if $\chi(M)=-2$ or, equivalently,
$$
  h^{1,1}(M)-h^{2,1}(M)=-1.
$$

\smallskip

\item[(c)] In complex dimension 4, both $SU$-bordism classes $y_4$ and $-y_4$ can be represented by Calabi--Yau 4-folds.
These 4-folds $M$ can be obtained using Batyrev's construction from Fano toric varieties over reflexive $5$-polytopes. Such $M$ represents  $y_{4}\in\OSU_{8}$ if $\chi(M)=282$ or, equivalently,
$$
  h^{1,1}(M)-h^{2,1}(M)+h^{3,1}(M)=39.
$$
Similarly, $M$ represents $-y_{4}\in\OSU_{8}$ if $\chi(M)=294$ or, equivalently,
$$
h^{1,1}(M)-h^{2,1}(M)+h^{3,1}(M)=41.
$$
\end{itemize}
\end{theorem}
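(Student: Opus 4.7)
The plan hinges on three ingredients: the detection of the generators $y_i\in\OSU_{2i}$ by their $s$-numbers (Theorem~\ref{SUstructure}); the simplification of $s_i[M]$ for a Calabi--Yau manifold caused by $c_1(M)=0$; and the linear relation between Chern numbers of a Calabi--Yau $i$-fold coming from Hirzebruch--Riemann--Roch applied to $\mathcal O_M$, which expresses $s_i[M]$ as an explicit affine function of the Euler characteristic $\chi(M)$. Given this reduction, each class $\pm y_i$ will be realised by exhibiting a Calabi--Yau $i$-fold with the prescribed value of $\chi$.

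For part~(a), direct expansion gives $s_2[M]=c_1^2-2c_2=-2\chi(M)$ for a Calabi--Yau surface. A $K3$-surface has $\chi=24$, so $s_2[K3]=-48$ matches the $s$-number of the generator indicated in the statement. The obstruction for the opposite sign follows from the Enriques--Kodaira classification: any compact K\"ahler surface with $c_1=0$ is either a $K3$-surface ($\chi=24$) or a complex torus ($\chi=0$), so no Calabi--Yau surface has $\chi<0$ and the opposite value $s_2=+48$ is unattainable.

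For part~(b), a Calabi--Yau 3-fold satisfies $s_3=c_1^3-3c_1c_2+3c_3=3\chi$, so representing $\pm y_3$ (for which $s_3=\pm 6$) amounts to exhibiting Calabi--Yau 3-folds with $\chi=\pm 2$; the standard CY 3-fold Hodge diamond gives $\chi=2(h^{1,1}-h^{2,1})$, yielding the stated Hodge-number condition. For part~(c), $c_1=0$ reduces the $s$-number to $s_4=2c_2^2-4c_4$; a Calabi--Yau 4-fold has $\chi(\mathcal O_M)=1-0+0-0+1=2$, and Hirzebruch--Riemann--Roch then gives $\mathrm{Td}_4[M]=(3c_2^2-c_4)/720=2$, whence $c_2^2=(1440+\chi)/3$ and therefore $s_4=960-10\chi/3$, so $s_4=\pm 20$ forces $\chi=282$ or $\chi=294$. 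The Hodge-number reformulation follows from the universal identity $\chi=6(8+h^{1,1}-h^{2,1}+h^{3,1})$ for Calabi--Yau 4-folds, a consequence of the linear constraint $h^{2,2}=2(22+2h^{1,1}+2h^{3,1}-h^{2,1})$ coming from Riemann--Roch applied to the sheaves $\Omega^p_M$.

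The final step is geometric: for parts~(b) and~(c) I would exhibit explicit Calabi--Yau hypersurfaces realising the required values of $\chi$, using Batyrev's construction (Construction~\ref{bacon}) applied to appropriate reflexive polytopes of dimensions~$4$ and~$5$. Examples in part~(b) can be extracted from the Kreuzer--Skarke~\cite{kr-sk} classification and the invariant tables of~\cite{a-g-h-j-n15}. This is the hard part of the proof: the reduction of an abstract $SU$-bordism condition to a single numerical constraint on $\chi$ is algebraically clean, but confirming that both signs of $y_i$ are geometrically realised --- in sharp contrast to the surface case, where one sign is obstructed by positivity of the Euler characteristic --- requires a search through the classifications of reflexive polytopes of the relevant dimension together with explicit computation of their Hodge numbers.
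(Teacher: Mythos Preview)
Your approach is essentially the paper's: simplify $s_i[M]$ using $c_1=0$, invoke Hirzebruch--Riemann--Roch together with the Calabi--Yau Hodge constraints ($h^{i,0}=0$ for $0<i<n$, $h^{n,0}=1$) to reduce the $s$-number condition to a single condition on $\chi(M)$ and hence on the Hodge numbers, and then appeal to the Kreuzer--Skarke database~\cite{kr-sk} for existence of reflexive polytopes realising the required values. The only differences are organizational. In~(a), the paper (via Example~\ref{CYsurface}) cites~\cite{mosl16} for the non-existence of a complex surface representing the opposite sign, with a short positivity argument in the $H^1=0$ case; your direct appeal to the Enriques--Kodaira classification is cleaner and fully adequate for the K\"ahler case actually stated. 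In~(c), the paper organizes the computation through three Riemann--Roch identities for the partial Euler characteristics $\chi_0,\chi_1,\chi_2$, whereas you use only the Todd-genus identity $720\chi_0=3c_2^2-c_4$ together with the constraint $h^{2,2}=2(22+2h^{1,1}+2h^{3,1}-h^{2,1})$; these are equivalent packagings of the same linear algebra in the two relevant Chern numbers $c_2^2,\,c_4$. One caution on signs: your computation gives $s_2[K3]=-48=s_2(y_2)$, so $K3$ represents $y_2$, not $-y_2$ as the theorem statement asserts; the signs in statement~(a) in fact contradict the paper's own Example~\ref{CYsurface} and the Introduction, so this is a typo in the statement rather than an error in your method.
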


\begin{proof}
We denote both the Chern characteristic classes and characteristic numbers of $M$ by~$c_i$ throughout this proof, denote the Hodge numbers by~$h^{i,j}$ and denote the (real) Betti numbers by~$b^i$, for $i=0,\ldots,\dim_\C M$. For a K\"ahler $n$-manifold $M$ we have $h^{p,q}=h^{q,p}$ (Hodge duality), $b^i=\sum_{p+q=i}h^{p,q}$ and $\chi(M)=\sum_{i=0}^{2n}(-1)^ib^i=\sum_{p,q=0}^n(-1)^{p+q}h^{p,q}$. Furthermore, a Calabi--Yau manifold $M$ obtained from Batyrev's construction is projective algebraic, so it satisfies $h^{p,q}=h^{n-p,n-q}$ (Serre duality). Finally, such a Calabi--Yau manifold $M$ has full $SU(n)$ holonomy and therefore $h^{n,0}=1$ and $h^{i,0}=0$ for $0<i<n$ (see~\cite[Theorem~4.1.9]{baty94}).

Statement (a) is a summary of Example~\ref{CYsurface}.

\smallskip

We prove (b).
For the generator $y_3\in\OSU_6$ we have $6=s_3(y_3)=3c_3(y_3)$, so the Euler characteristic of a complex $SU$-manifold $M$ representing $y_3$ satisfies
$\chi(N)=c_3(N)=2$. For a Calabi--Yau 3-fold $M$ obtained from Batyrev's construction we have 
\[
  b^1=2h^{1,0}=0,\quad b^2=2h^{2,0}+h^{1,1}=h^{1,1},
  \quad b^3=2h^{3,0}+2h^{2,1}=2+2h^{2,1},
\]
and
\[
  \chi(M)=2b^0-2b^1+2b^2-b^3=2(h^{1,1}-h^{2,1}).
\]
It follows that $M$ represents $y_3$ if and only if $h^{1,1}-h^{2,1}=1$. Similarly, $M$ represents $-y_3$ if and only if $h^{1,1}-h^{2,1}=-1$. 

The fact that such $M$ exist follows by analysing the database~\cite{kr-sk} (see also~\cite{a-g-h-j-n15}) of reflexive polytopes and the Calabi--Yau hypersurfaces in their corresponding toric Fano varieties. This database contains the full list of 473,800,776 reflexive polytopes in dimension $4$, and the list of Hodge numbers of the corresponding Calabi--Yau $3$-folds. From there one can see that for each $h^{1,1}$ satisfying $16\le h^{1,1}\le 90$ there exists a reflexive $4$-polytope with the corresponding Calabi--Yau $3$-fold satisfying $h^{1,1}-h^{2,1}=1$, and if $h^{1,1}$ is not within this range, then there is no Calabi--Yau $3$-fold with $h^{1,1}-h^{2,1}=1$ coming from a toric Fano variety. In the case of the identity $h^{1,1}-h^{2,1}=-1$, the possible range is $15\le h^{1,1}\le 89$. 

We note also that the Calabi--Yau 3-folds $M$ and $M^*$ representing $y_3$ and $-y_3$ can be chosen to be mirror dual in the sense of~\cite{baty94}, that is, to satisfy the condition $h^{1,1}(M)=h^{2,1}(M^*)$ and $h^{2,1}(M)=h^{1,1}(M^*)$.

\smallskip

We prove (c). It is convenient to use the partial Euler characteristics $\chi_{k}=\sum_{i=0}^{4}(-1)^{i}h^{i,k}$, for $0\le k\le 4$. In particular, $\chi_0$ is the Todd genus of a complex manifold.
For a Calabi--Yau 4-fold $M$ obtained from Batyrev's construction we have
\begin{align*}
  \chi_0&=h^{0,0}-h^{1,0}+h^{2,0}-h^{3,0}+h^{4,0}=2;\\
  \chi_1&=h^{0,1}-h^{1,1}+h^{2,1}-h^{3,1}+h^{4,1}=-h^{1,1}+h^{2,1}-h^{3,1};\\
  \chi_2&=h^{0,2}-h^{1,2}+h^{2,2}-h^{3,2}+h^{4,2}=-2h^{2,1}+h^{2,2}.
\end{align*}
Therefore,
\begin{equation}\label{chiMy4}
  \chi(M)=\chi_0-\chi_1+\chi_2-\chi_3+\chi_4=2\chi_0-2\chi_1+\chi_2=
  2(2+h^{1,1}-2h^{2,1}+h^{3,1})+h^{2,2}.
\end{equation}
On the other hand, the Hirzebruch--Riemann--Roch theorem~\cite[Theorem~21.1.1]{hirz66} implies the following identities in terms of the Chern numbers:
$$
  720\chi_{0}=-c_4+3c_2^2,\quad 180\chi_{1}=-31c_4+3c_2^2,  
  \quad 120\chi_{2}=79c_{4}+3c_2^2.
$$

For the generator $y_4\in\OSU_8$, we have $s_4=2c_2^2-4c_4=20$. Since $\chi_0=2$, the identity $2c_2^2-4c_4=20$ is equivalent to any of the following:
\[
  \chi(M)=c_4=282\quad\text{or}\quad -\chi_{1}=h^{1,1}(M)-h^{2,1}(M)+h^{3,1}(M)=39,
\]
as claimed.

Similarly, for $-y_4$, the condition $s_4=2c_2^2-4c_4=-20$ is equivalent to
\[
  \chi(M)=c_4=294\quad\text{or}\quad -\chi_{1}=h^{1,1}(M)-h^{2,1}(M)+h^{3,1}(M)=41.
\]

The existence of $M$ follows by analysing the database~\cite{kr-sk} as in~(b). In particular, there exist a Calabi--Yau fourfold with $h^{1,1}=16$, $h^{2,1}=30$,
$h^{3,1}=53$, representing $y_4$, and a Calabi--Yau fourfold with
$h^{1,1}=17$, $h^{2,1}=45$,
$h^{3,1}=69$, representing $-y_4$.
\end{proof}

The class $-y_4\in\OSU_8$ can also be represented by a Calabi--Yau manifold $Z_S$ of Borcea--Voisin type, constructed in~\cite{c-g-m18} as a crepant resolution of the
quotient of a hyperk\"ahler manifold by a non-symplectic involution. This follows by comparing the formula in Theorem~\ref{ldCYSUthm}~(c) with the calculation of the Hodge numbers in~\cite[\S5.2]{c-g-m18}. 

The generator $\frac14y_2^2=x_1^4=w_4$ of  the group $\varOmega^{SU}_8=\Z\langle {\textstyle\frac14}y^2_2,y_4\rangle$ cannot be represented by a Calabi--Yau fourfold with full $SU(4)$ holonomy. Indeed, as noted at the end of Section~\ref{ringosusec},
\[
  {\textstyle\frac14}y^2_2=x_1^4=\bigl(9[\C P^1]\times[\C P^1]-8[\C P^2]\bigr)\times
  \bigl(9[\C P^1]\times[\C P^1]-8[\C P^2]\bigr),
\]
so the Todd genus of~${\textstyle\frac14}y^2_2$ is~$1$. On the other hand, a Calabi--Yau fourfold with full $SU(4)$ holonomy has $h^{0,1}=h^{0,2}=h^{0,3}=0$, so its Todd genus is equal to $h^{0,0}+h^{0,4}=2$.


\begin{thebibliography}{99}

\bibitem{a-g-h-j-n15}
Altman, Ross; Gray, James; He, Yang-Hui; Jejjala, Vishnu; Nelson, Brent D. A. \emph{Calabi--Yau database: threefolds constructed from the Kreuzer--Skarke list.} J. High Energy Phys. 2015 (2015), no.~2, 158.

\bibitem{a-b-p66}
Anderson, Donald W.; Brown, Edgar H., Jr.; Peterson, Franklin P. \emph{$SU$-cobordism, $KO$-characteristic numbers, and the Kervaire invariant.} Ann. of Math.~(2)~83 (1966), 54--67.

\bibitem{atiy61}
Atiyah, Michael F.
\emph{Bordism and cobordism.} 
Proc. Cambridge Philos. Soc.~57 (1961) 200--208. 

\bibitem{aver59}
Averbuh, Boris G. \emph{Algebraic structure of cobordism groups} Dokl. Akad. Nauk SSSR~125 (1959), 11--14 (Russian).

\bibitem{baas70}
Baas, Nils A. \emph{On the convergence of the Adams spectral sequences.} Math. Scand.~27 (1970), 145--150.

\bibitem{baty94} 
Batyrev, Victor V. \emph{Dual polyhedra and mirror symmetry for Calabi--Yau hypersurfaces in toric varieties}. 
J. Algebraic Geom.~3 (1994), no.~3, 493--535.

\bibitem{bo-hi58}
Borel, Armand; Hirzebruch, Friedrich. \emph{Characteristic
classes and homogeneous spaces. I.} Amer. J. Math.~81
(1958), 458--538.

\bibitem{botv90}
Botvinnik, Boris I.
\emph{Structure of the ring $MSU_*$}. Mat. Sbornik~181 (1990), no.~4, 540--555 (Russian); Math. USSR-Sbornik~69 (1991), no.~2, 581--596 (English translation).

\bibitem{botv92}
Botvinnik, Boris I.
\emph{Manifolds with singularities and the Adams--Novikov spectral sequence.} 
London Mathematical Society Lecture Note Series,~170. Cambridge University Press, Cambridge, 1992. 

\bibitem{b-b-n-y00}
Botvinnik, Boris I.; Buchstaber [Bukhshtaber], Victor M.; Novikov, Sergey P.; Yuzvinskii, Sergey A. \emph{Algebraic aspects of multiplication theory in complex cobordisms}. Uspekhi Mat. Nauk~55 (2000), no.~4, 5--24 (Russian); Russian Math. Surveys~55 (2000), no.~4, 613--633 (English translation).

\bibitem{buch72} 
Buchstaber [Buh\v staber], Victor M. \emph{Projectors in unitary cobordisms that are related to $SU$-theory}. Uspekhi Mat. Nauk~27 (1972), no.~6, 231--232.

\bibitem{buch75}
Buchstaber [Buh\v staber], Victor M.
\emph{Cobordism in problems of algebraic topology.} Algebra. Topology. Geometry, vol.~13, pp.~231--271. Akad. Nauk SSSR, VINITI, Moscow, 1975 (Russian).

\bibitem{buch12}
Buchstaber [Bukhshtaber], Victor M.
\emph{Complex cobordism and formal groups.} 
Uspekhi Mat. Nauk~67 (2012), no.~5, 111--174 (Russian); 
Russian Math. Surveys~67 (2012), no.~5, 891--950 (English translation).

\bibitem{buch18}
Buchstaber, Victor M. \emph{Cobordism, manifolds with torus action, and functional equations}.
Trudy Mat. Inst. Steklova~302 (2018), 57--97 (Russian); 
Proc. Steklov Inst. Math.~302 (2018), 48--87 (English translation).

\bibitem{bu-pa15}
Buchstaber, Victor; Panov, Taras. \emph{Toric topology}. Math.
Surv. and Monogr.,~204. Amer. Math. Soc., Providence, RI, 2015.

\bibitem{b-p-r07}
Buchstaber, Victor; Panov, Taras; Ray, Nigel. 
\emph{Spaces of polytopes and cobordism of quasitoric manifolds}. Moscow Math.
J.~7 (2007), no.~2, 219--242.

\bibitem{b-p-r10}
Buchstaber, Victor; Panov, Taras; Ray, Nigel. \emph{Toric
genera}. Internat. Math. Res. Notices~2010, no.~16, 3207--3262.

\bibitem{bu-ra98}
Buchstaber, Victor M.; Ray, Nigel. \emph{Toric
manifolds and complex cobordisms}. Uspekhi Mat. Nauk~53
(1998), no.~2, 139--140 (Russian); Russian Math. Surveys~53
(1998), no.~2, 371--373 (English translation).

\bibitem{bu-us15}
Buchstaber, Victor M.; Ustinov, Alexei V. 
\emph{Coefficient rings of formal group laws.} Mat. Sbornik~206 (2015), no.~11, 19--60 (Russian); 
Sbornik Math.~206 (2015), no.~11, 1524--1563 (English translation).

\bibitem{c-g-m18}
Camere, Chiara; Garbagnati, Alice; Mongardi, Giovanni.
\emph{Calabi--Yau quotients of hyperk\"ahler four-folds}. Canad. J. Math., to appear;
arXiv:1607.02416.

\bibitem{co-fl64}
Conner, Pierre E.; Floyd, Edwin E. \emph{Differentiable Periodic
Maps}. Academic Press Inc., New York, 1964.

\bibitem{co-fl66m}
Conner, Pierre E.; Floyd, Edwin E. \emph{Torsion in
$SU$-bordism}. Mem. Amer. Math. Soc.~60, 1966.

\bibitem{c-l-s11}
Cox, David A.; Little, John B.; Schenck, Henry K. 
\emph{Toric varieties.} Graduate Studies in Mathematics,~124. Amer. Math.
Soc., Providence, RI, 2011.

\bibitem{dani78}
Danilov, Vladimir I. \emph{The geometry of toric varieties}.
Uspekhi Mat. Nauk~33 (1978), no.~2, 85--134 (Russian);
Russian Math. Surveys~33 (1978), no.~2, 97--154 (English translation).

\bibitem{da-ja91}
Davis, Michael W.; Januszkiewicz, Tadeusz. 
\emph{Convex polytopes, Coxeter orbifolds and torus actions}. 
Duke Math. J.~62 (1991), no.~2, 417--451.

\bibitem{fult93}
Fulton, William. \emph{Introduction to Toric Varieties}.  Annals of
Mathematics Studies,~131. The William H.~Roever Lectures in
Geometry. Princeton Univ. Press, Princeton, NJ, 1993.

\bibitem{hirz66}
Hirzebruch, Friedrich. \emph{Topological Methods in Algebraic
Geometry}. Third edition. Springer, Berlin--Heidelberg, 1966.

\bibitem{kr-sk} 
Kreuzer, Maximilian; Skarke, Harald. \emph{Calabi--Yau data}, {\tt http://hep.itp.tuwien.ac.at/\~{}kreuzer/CY/}

\bibitem{land67}
Landweber, Peter S. 
\emph{Cobordism operations and Hopf algebras}. 
Trans. Amer. Math. Soc.~129 (1967), 94--110.

\bibitem{l-l-p18}
Limonchenko, Ivan; L\"u, Zhi; Panov, Taras. 
\emph{Calabi--Yau hypersurfaces and SU-bordism}. 
Trudy Mat. Inst. Steklova~302 (2018), 287--295 (Russian); 
Proc. Steklov Inst. Math.~302 (2018), 270--278 (English translation).

\bibitem{lu-pa16}
L\"u, Zhi; Panov, Taras. 
\emph{On toric generators in the unitary and special unitary bordism rings}. 
Algebr. Geom. Topol~16 (2016), no.~5, 2865--2893.

\bibitem{lu-wa16}
L\"u, Zhi; Wang, Wei.
\emph{Examples of quasitoric manifolds as special unitary manifolds.}
Math. Res. Lett.~23 (2016), no.~5, 1453--1468. 

\bibitem{miln60}
Milnor, John. \emph{On the cobordism ring $\Omega^{\ast}$ and a
complex analogue. I.} Amer. J. Math.~82 (1960), 505--521.

\bibitem{miro75}
Mironov, Oleg K. \emph{Existence of multiplicative structures in the theory of cobordism with singularities.} Izv. Akad. Nauk SSR Ser. Mat.~39 (1975), no.~5, 1065--1092 (Russian); Math. USSR~--
Izv.~9 (1975), no.~5, 1007--1034	 (English translation).

\bibitem{mish67} 
Mishchenko, Alexander S. \emph{Spectral sequences of Adams type.} Mat. Zametki~1 (1967), no.~3, 339--346 (Russian); Math. Notes~1 (1968), 226--230 (English translation).

\bibitem{mosl}
Mosley, John. E.
\emph{The greatest common divisor of multinomial coefficients.}
arXiv:1411.0706.

\bibitem{mosl16} 
Mosley, John. E. \emph{In search of a class of representatives for SU-bordism using the Witten genus.}  Thesis (Ph.D.)--University of Kentucky, 2016.

\bibitem{novi60}
Novikov, Sergei P. \emph{Some problems in the topology of
manifolds connected with the theory of Thom spaces.} Dokl. Akad.
Nauk SSSR~132 (1960), 1031--1034 (Russian); Soviet Math.
Dokl.~1 (1960), 717--720 (English translation).

\bibitem{novi62}
Novikov, Sergei P. \emph{Homotopy properties of Thom complexes.}
Mat. Sbornik~57 (1962), no.~4, 407--442 (Russian);
English translation with the author's comments available at {\tt
http://www.mi.ras.ru/\~{}snovikov/6.pdf}.

\bibitem{novi67}
Novikov, Sergei P. \emph{Methods of algebraic topology from the
point of view of cobordism theory.} Izv. Akad. Nauk SSSR, Ser.
Mat.~31 (1967), no.~4, 855--951 (Russian); Math. USSR~--
Izv.~1 (1967), no.~4, 827--913 (English translation).

\bibitem{ocha81}
Ochanine [Oshanin], Serge. \emph{The signature of \textrm{SU}-manifolds.}
Mat. Zametki~13 (1973), no.~1, 97--102 (Russian); Math. Notes~13 (1973), no.~1, 57--60 (English translation).

\bibitem{oda88}
Oda, Tadao. \emph{Convex Bodies and Algebraic Geometry. An
introduction to the theory of toric varieties.} Ergebnisse der
Mathematik und ihrer Grenzgebiete (3) [Results in Mathematics and
Related Areas (3)],~15. Springer, New York, 1988.

\bibitem{pont55}
Pontryagin, Lev S. \emph{Smooth manifolds and their applications in homotopy theory} (Russian). Trudy Mat. Inst. Steklova,~45. Izdat. Akad. Nauk SSSR, Moscow, 1955. 

\bibitem{quil71}
Quillen, Daniel.
\emph{Elementary proofs of some results of cobordism theory using Steenrod operations.} 
Advances in Math.~7 (1971), 29--56.

\bibitem{rave86}
Ravenel, Douglas C. \emph{Complex Cobordism and Stable Homotopy Groups of Spheres}.
Pure and Applied Mathematics,~121. Academic Press Inc., Orlando, FL, 1986.

\bibitem{rohl52}
Rohlin, Vladimir A.
\emph{New results in the theory of four-dimensional manifolds.}
Doklady Akad. Nauk SSSR (N.S.)~84 (1952), 221--224 (Russian). 

\bibitem{rohl59}
Rohlin, Vladimir A.
\emph{Theory of intrinsic homologies.} 
Uspekhi Mat. Nauk~14 (1959), no.~4, 3--20 (Russian).

\bibitem{rudy98}
Rudyak, Yuli B.
\emph{On Thom spectra, orientability, and cobordism.} Springer Monographs in Mathematics. Springer-Verlag, Berlin, 1998.

\bibitem{so-us16}
Solomadin, Grigori D.; Ustinovskiy, Yuri M. \emph{Projective toric polynomial generators in the complex cobordism ring}. Mat. Sbornik~207 (2016), no.~11, 127--152 (Russian); Sbornik Math.~207 (2016), no.~11--12, 1601--1624 (English translation).

\bibitem{ston68}
Stong, Robert. \emph{Notes on Cobordism Theory}.  Math.
Notes,~7. Princeton Univ. Press, Princeton, NJ, 1968.

\bibitem{thom54}
Thom, Ren\'e. \emph{Quelques propri\'et\'es globales des
vari\'et\'es diff\'erentiables}. Comment. Math. Helv.~28
(1954), 17--86 (French).

\bibitem{vers93}
Vershinin, Vladimir V.
\emph{Cobordisms and spectral sequences.}
Translations of Mathematical Monographs,~130. American Mathematical Society, Providence, RI, 1993.

\bibitem{wall60}
Wall, C.\,T.\,C. \emph{Determination of the cobordism ring.} Ann.
of Math. (2)~72 (1960), 292--311.

\bibitem{wall66}
Wall, C.\,T.\,C.
\emph{Addendum to a paper of Conner and Floyd.} Proc. Cambridge Philos. Soc.~62 (1966), 171--175.

\bibitem{wilf14}
Wilfong, Andrew.
Smooth projective toric variety representatives in complex cobordism.
Proc. Steklov Inst. Math.~286 (2014), 324--344. 

\bibitem{wilf16}
Wilfong, Andrew. \emph{Toric polynomial generators of complex cobordism.} Algebr. Geom. Topol.~16 (2016), no.~3, 1473--1491.
\end{thebibliography}
\end{document}